\def\choixcompteur{subsection}
\newtheorem{theo}[\choixcompteur]{Th\'eor\`eme}
\newtheorem{prop}[\choixcompteur]{Proposition}
\newtheorem{lemm}[\choixcompteur]{Lemme}
\newtheorem{coro}[\choixcompteur]{Corollaire}
\theoremstyle{definition}
\newtheorem{defi}[\choixcompteur]{D\'efinition}
\newtheorem{exem}[\choixcompteur]{Exemple}
\newtheorem{exems}[\choixcompteur]{Exemples}
\newtheorem{rema}[\choixcompteur]{Remarque}
\newtheorem{remas}[\choixcompteur]{Remarques}
\newtheorem*{exem*}{Exemple}
\newtheorem*{exems*}{Exemples}
\newtheorem*{exam*}{Exemple}
\newtheorem*{exams*}{Exemples}
\newtheorem*{rema*}{Remarque}
\newtheorem*{remas*}{Remarques}
\newtheorem*{NB}{N.B}
\theoremstyle{definition}
\newtheorem*{defi*}{D\'efinition}
\newtheorem*{defiprop*}{D\'efinition-Proposition}
\theoremstyle{plain}
\newtheorem*{prop*}{Proposition}
\newtheorem*{lemm*}{Lemme}
\newtheorem*{coro*}{Corollaire}
\newtheorem*{theo*}{Th\'eor\'eme}
 \def\cdr@enoncedef{%
 \newenvironment{enonce*}[2][plain]%
 {\let\cdrenonce\relax \theoremstyle{##1}%
 \newtheorem*{cdrenonce}{##2}%
 \begin{cdrenonce}}%
 {\end{cdrenonce}}   }%
\def\cf{{\it cf.\/}\ }
\def\ie{{\it i.e.\/}\ }
\def\eg{{\it e.g.\/}\ }
\def\lc{{\it l.c.\/}\ }
\def\ed{ \'editeur}
\def\eds{\'editeurs}
\def\bc{\buildrel\circ\over}
\def\N{{\mathbb N}}    
\def\Z{{\mathbb Z}}
\def\Q{{\mathbb Q}}
\def\R{{\mathbb R}}
\def\C{{\mathbb C}}
\def\F{{\mathbb F}}
\def\A{{\mathbb A}}
\newcommand{\g}[1]{\mathfrak{#1}} 
\def\qa{\alpha}     
\def\qb{\beta}
\def\qd{\delta}
\def\qe{\varepsilon}
\def\qf{\varphi}
\def\qg{\gamma}
 \def\ql{\lambda}
\def\qm{\mu}
\def\qp{\pi}
\def\qr{\rho}
\def\qs {\sigma}
\def\qt{\tau}
\def\QD{\Delta}
\def\QF{\Phi}
\def\QL{\Lambda}
\def\QO{\Omega}
\def\shb{{\mathcal B}}
\def\shf{{\mathcal F}}
\def\shi{{\mathscr I}}
\def\shm{{\mathcal M}}
\def\sho{{\mathcal O}}
\def\shp{{\mathcal P}}
\def\shs{{\mathcal S}}
\def\sht{{\mathcal T}}
\def\shu{{\mathcal U}}
\def\shv{{\mathcal V}}
\begin{document}


\title{Groupes de Kac-Moody d\'eploy\'es \\sur un corps local,\goodbreak II.  Masures ordonn\'ees}
\author{Guy Rousseau}

\date{29 F\'evrier 2012}

\maketitle

%






\begin{abstract}
Pour un groupe de Kac-Moody d\'eploy\'e (au sens de J. Tits) sur un corps r\'eellement valu\'e quelconque, on construit une masure affine ordonn\'ee sur laquelle ce groupe agit.
Cette construction g\'en\'eralise celle d\'ej\`a effectu\'ee par S. Gaussent et l'auteur quand le corps r\'esiduel contient le corps des complexes \cite{GR-08} et celle de F. Bruhat et J. Tits quand le groupe est r\'eductif. On montre que cette masure v\'erifie bien toutes les propri\'et\'es des masures affines ordonn\'ees comme d\'efinies dans \cite{Ru-10}.
On utilise le groupe de Kac-Moody maximal au sens d'O. Mathieu et on montre quelques r\'esultats pour celui-ci sur un corps quelconque; en particulier on prouve, dans certains cas, un r\'esultat de simplicit\'e pour ce groupe maximal.
\end{abstract}

\selectlanguage{english}
\begin{abstract}
For a split Kac-Moody group (in J. Tits' definition) over a field endowed with a real valuation, we build an ordered affine hovel on which the group acts. This construction generalizes the one already done by S. Gaussent and the author when the residue field contains the complex field \cite{GR-08} and the one by F. Bruhat and J. Tits when the group is reductive.
We prove that this hovel has all properties of ordered affine hovels (masures affines ordonn\'ees) as defined in \cite{Ru-10}. We use the maximal Kac-Moody group as defined by O. Mathieu and we prove a few new results about it over any field; in particular we prove, in some cases, a simplicity result for this group.
\end{abstract}
\selectlanguage{french}

\setcounter{tocdepth}{1}    
\tableofcontents

\section*{Introduction}
\label{seIntro}

\bigskip
\par L'\'etude des groupes de Kac-Moody sur un corps local a \'et\'e initi\'ee par Howard Garland \cite{Gd-95} pour certains groupes de lacets. Dans \cite{Ru-06} on a construit un immeuble "microaffine" pour tous les groupes de Kac-Moody (minimaux au sens de J. Tits) sur un corps muni d'une valuation r\'eelle. C'est un immeuble (en g\'en\'eral non discret) avec les bonnes propri\'et\'es habituelles des immeubles. Cependant cet immeuble microaffine n'est pas l'analogue des immeubles de F. Bruhat et J. Tits pour les groupes r\'eductifs. Il correspond plut\^ot \`a leur fronti\`ere dans la compactification de Satake ou compactification poly\'edrique. De plus il ne traduit pas les d\'ecompositions de Cartan de \cite{Gd-95}.

\par Une autre construction est envisageable pour un groupe de Kac-Moody sur un corps r\'eellement valu\'e. Elle est la g\'en\'eralisation directe de celle de Bruhat-Tits (\cite{BtT-72} et \cite{BtT-84a}) et traduit les d\'ecompositions de Cartan de  \cite{Gd-95} dans le cas des groupes de lacets. Cependant, comme ces d\'ecompositions ne sont v\'erifi\'ees qu'apr\`es torsion, l'espace $\shi$ ainsi construit \`a la Bruhat-Tits est tel que deux points quelconques ne sont pas toujours dans un m\^eme appartement. Il ne m\'erite donc pas le nom d'immeuble. Il s'est av\'er\'e cependant utile et a donc \'et\'e consid\'er\'e sous le nom de masure (hovel).

\par La construction de cette masure a \'et\'e effectu\'ee dans \cite{GR-08} et on a pu l'utiliser pour des r\'esultats en th\'eorie des repr\'esentations. Le corps valu\'e int\'eressant dans ce cadre est le corps des s\'eries de Laurent complexes $\C(\!(t)\!)$. On s'est donc plac\'e dans le cas d'un corps $K$ muni d'une valuation discr\`ete avec un corps r\'esiduel contenant $\C$. Cette situation d'\'egale caract\'eristique $0$ simplifie les raisonnements et permet, en particulier, d'utiliser les r\'esultats du livre de S. Kumar \cite{Kr-02}. On a cependant d\^u faire quelques hypoth\`eses restrictives sur le groupe de Kac-Moody (en particulier la sym\'etrisabilit\'e).

\par Par ailleurs dans \cite{Ru-10} on a \'elabor\'e une d\'efinition abstraite de masure affine (inspir\'ee de la d\'efinition abstraite des immeubles affines de \cite{T-86a}) et on a montr\'e qu'elle est satisfaite par la plupart des masures d\'efinies pr\'ec\'edemment. De cette d\'efinition d\'ecoulent des propri\'et\'es int\'eressantes: les r\'esidus en chaque point sont des immeubles jumel\'es, \`a l'infini on trouve des immeubles jumel\'es et deux immeubles microaffines, il existe un pr\'eordre invariant sur la masure.

\par Le but du pr\'esent article est de construire la masure affine d'un groupe de Kac-Moody d\'eploy\'e sur un corps muni d'une valuation r\'eelle non triviale et de montrer qu'elle satisfait aux axiomes abstraits de masure affine ordonn\'ee de \cite{Ru-10}. Ceci est r\'ealis\'e sans aucune restriction sur le groupe de Kac-Moody ni sur le corps valu\'e. Pour cela on a essentiellement remplac\'e dans \cite{GR-08} les groupes de Kac-Moody \`a la Kumar par ceux d'Olivier Mathieu \cite{M-89} et la repr\'esentation adjointe dans l'alg\`ebre de Lie par celle dans l'alg\`ebre enveloppante enti\`ere, puisqu'on va consid\'erer aussi de la caract\'eristique (r\'esiduelle) positive.

\par Il y a en fait beaucoup de choix possibles pour les groupes de Kac-Moody, \cf \cite{T-89b}. On consid\`ere ici les groupes d\'eploy\'es "minimaux" tels que d\'efinis par Jacques Tits \cite{T-87b}; leurs propri\'et\'es essentielles sont expliqu\'ees dans \cite{T-92a} et aussi \cite{Ry-02a}. On a r\'esum\'e celles-ci et prouv\'e quelques compl\'ements (essentiellement sur les morphismes) dans la premi\`ere partie de cet article.

\par La seconde partie est consacr\'ee \`a l'alg\`ebre enveloppante enti\`ere introduite par J. Tits pour construire ses groupes et \`a la repr\'esentation adjointe, d\'ej\`a largement utilis\'ee par B. R\'emy. On y montre un th\'eor\`eme de Poincar\'e-Birkhoff-Witt (avec des puissances divis\'ees tordues) et on y construit des exponentielles tordues. Ces r\'esultats tirent leur origine dans un travail de g\'en\'eralisation du th\'eor\`eme de simplicit\'e de R. Moody \cite{My-82}, qui est expliqu\'e dans l'appendice.

\par Les relations de commutation dans un groupe de Kac-Moody minimal $G$ sont compliqu\'ees (voire inexistantes). Pour mener \`a bien des calculs on va raisonner dans un groupe maximal qui ici sera celui ($G^{pma}$ ou $G^{nma}$) d\'efini par O. Mathieu. On a cependant besoin d'une connaissance plus concr\`ete de celui-ci, c'est le but de la troisi\`eme partie. Il est en partie accompli gr\^ace aux exponentielles tordues de la partie pr\'ec\'edente.

\par Dans ces trois premi\`eres parties on \'etudie les groupes de Kac-Moody de mani\`ere g\'en\'erale sur un anneau quelconque, ou sur un corps si on veut plus de r\'esultats de structure. On les consid\`ere ensuite sur un corps valu\'e (mais aussi sur son anneau des entiers). Dans la quatri\`eme partie on construit l'appartement t\'emoin et les sous-groupes parahoriques (ou assimil\'es) associ\'es aux sous-ensembles ou filtres de cet appartement t\'emoin. C'est la partie la plus technique. Comme dans \cite{GR-08}, ces sous-groupes parahoriques du groupe de Kac-Moody $G$ sont construits en plusieurs \'etapes en utilisant les groupes maximaux $G^{pma}$ et $G^{nma}$ contenant $G$. On a cependant d\^u apporter des changements substantiels aux raisonnements de \cite{GR-08}.

\par On r\'ecolte enfin dans la cinqui\`eme partie les fruits du travail pr\'ec\'edent. Par un proc\'ed\'e classique utilisant l'appartement t\'emoin et les sous-groupes parahoriques, on y construit la masure affine (ordonn\'ee) d'un groupe d\'eploy\'e sur un corps r\'eellement valu\'e. Et on montre les m\^emes r\'esultats qu'en \cite{GR-08} ou \cite{Ru-10}. Les raisonnements sont sans changement substantiel, mais, gr\^ace aux parties pr\'ec\'edentes, il n'y a plus d'hypoth\`ese technique superflue.

\par L'appendice rassemble des r\'esultats sur le groupe de Kac-Moody maximal \`a la Mathieu $G^{pma}$ (sur un corps $k$ quelconque) qui sont cons\'equences des r\'esultats des parties \ref{s2} et \ref{s3}. On y compare $G^{pma}$ avec d'autres groupes de Kac-Moody maximaux d\'efinis par M. Ronan et B. R\'emy \cite{RR-06} ou L. Carbone et H. Garland \cite{CG-03}. On y g\'en\'eralise aussi le th\'eor\`eme de simplicit\'e d'un sous-quotient de $G^{pma}$ d\^u \`a R. Moody \cite{My-82} en caract\'eristique  $0$; on l'obtient ici en caract\'eristique $p$ assez grande, pour un corps non alg\'ebrique sur $\F_p$.

\par Je remercie Bertrand R\'emy pour m'avoir sugg\'er\'e d'\'etudier l'article \cite{My-82} de R. Moody et Olivier Mathieu pour m'avoir signal\'e son article \cite{M-96} et pour quelques \'eclaircissements sur la construction de son groupe de Kac-Moody ou la simplicit\'e des alg\`ebres de Kac-Moody.

\section{Le groupe de Kac-Moody minimal (\`a la Tits)}\label{s1}

\par On introduit ici le groupe de Kac-Moody objet principal d'\'etude de cet article et on y \'etudie en particulier sa fonctorialit\'e.

\subsection{Syst\`emes g\'en\'erateurs de racines}\label{1.1}

\par
\par 1) Une {\it matrice de Kac-Moody} (ou matrice de Cartan g\'en\'eralis\'ee)
 est une matrice carr\'ee $A = (a_{i,j})_{i,j\in I}$ , \`a coefficients entiers,
 index\'ee par un ensemble fini $I$ et qui v\'erifie:
\par (i) $a_{i,i} = 2 \quad \forall i \in I$ ,
\par (ii) $a_{i,j} \leq 0 \quad \forall i \not = j $ ,
\par (iii) $a_{i,j} = 0 \iff a_{j,i} = 0 $.

\medskip
\par 2) Un {\it syst\`eme g\'en\'erateur de racines } (en abr\'eg\'e SGR) \cite{By-96} est un quadruplet
${\mathcal S}=(A,Y,({\overline{\alpha_i}})_{i\in I},({\alpha}{_i^\vee})_{i\in I})$ form\'e d'une matrice
 de Kac-Moody $A$ index\'ee par $I$, d'un $\mathbb Z-$module libre $Y$ de rang fini $n$,  d'une famille $({\overline{\alpha_i}})_{i\in I}$ dans son dual $X=Y^*$ et
 d'une famille $({\alpha}{_i^\vee})_{i\in I}$ dans $Y$. Ces donn\'ees sont
soumises \`a la condition de compatibilit\'e suivante: $\;$
 $a_{i,j} = {\overline{\alpha_j}}({\alpha}{_i^\vee}) $.

 \par On dit que $\vert I\vert$ est le {\it rang} du SGR ${\mathcal S}$ et $n$ sa {\it dimension}.

 \par On dit que le SGR ${\mathcal S}$ est {\it libre} (ou {\it adjoint}) (resp. {\it colibre}  (ou {\it coadjoint})) si $({\overline{\alpha_i}})_{i\in I}$ (resp. $({\alpha}{_i^\vee})_{i\in I}$) est libre dans (ou engendre) $X$ (resp. $Y$).

 \par Par exemple le SGR {\it simplement connexe} ${\mathcal S}_A=(A,Y_A,({\overline{\alpha_i}})_{i\in I},({\alpha}{_i^\vee})_{i\in I})$ avec $Y_A$ de base les ${\alpha}{_i^\vee}$ est le seul SGR colibre et coadjoint associ\'e \`a $A$.

\medskip
\par 3) On introduit le $\Z-$module libre $Q=\bigoplus_{i\in I}\,\Z\alpha_i$. Il y a donc un homomorphisme de groupes $bar: Q\rightarrow X$ , $\alpha\mapsto {\overline\alpha}$ tel que $bar(\alpha_i)={\overline{\alpha_i}}$.
 On note $Q^+=\sum_{i\in I}\,\N\alpha_i\subset Q$ et $Q^-=-Q^+$. Quand ${\mathcal S}$ est libre on identifie $Q$ \`a un sous-module de $X$  et on ne fait pas de diff\'erence entre $\alpha$ et ${\overline\alpha}$.
 Le SGR {\it adjoint minimal} $\shs_{Am}=(A,Q^*,({{\alpha_i}})_{i\in I},({\alpha}{_i^\vee})_{i\in I})$ est libre et adjoint.

\par On note $Q^\vee=\sum_{i\in I}\Z\alpha_i{^\vee}$. On dit que le SGR ${\mathcal S}$ est {\it sans cotorsion} si $Q^\vee$ est facteur direct dans $Y$ \ie $Y/Q^\vee$ est sans torsion. C'est le cas de ${\mathcal S}_A$ pour lequel $Q^\vee=Y$.

\medskip
\par 4) On note $V = Y\otimes_\Z\mathbb R$ et $V_\C = Y\otimes_\Z\mathbb C$; pour $\alpha\in Q$ et $h\in V_\C$, on \'ecrit $\alpha(h)={\overline{\alpha}}(h)$. L'{\it alg\`ebre de Kac-Moody complexe} ${\mathfrak g}_{\mathcal S}$ associ\'ee \`a $\mathcal S$ est engendr\'ee par ${\mathfrak h}_{\mathcal S}=V_\C$ et des \'el\'ements $(e_i,f_i)_{i\in I}$ avec les relations suivantes (pour $h,h'\in{\mathfrak h}_{\mathcal S}$ et $i\neq j\in I$):
\medskip
\par (KMT1) \quad$[h,h']=0$\quad;\;$[h,e_i]=\alpha_i(h)e_i$\quad;\;$[h,f_i]=-\alpha_i(h)f_i$\quad;\;$[e_i,f_i]=-\alpha_i^{\vee}$.
\medskip
\par (KMT2) \quad$[e_i,f_j]=0$\quad;\quad$(ad\,e_i)^{1-a_{i,j}}(e_j)=(ad\,f_i)^{1-a_{i,j}}(f_j)=0$.
\medskip
\par On a alors une graduation de l'alg\`ebre de Lie $\g g_\shs$ par $Q$ : $\g g_\shs=\g h_\shs\oplus(\oplus_{\alpha\in\Delta}\;\g g_\alpha)$ o\`u $\Delta\subset Q\setminus\{0\}$ est le {\it syst\`eme de racines} de $\g g_\shs$.
On a $\g h_\shs=(\g g_\shs)_0$, $\g g_{\alpha_i}=\C.e_i$ et $\g g_{-\alpha_i}=\C.f_i$, de plus $\g g_\alpha\subset \g g_{\overline\alpha}=\{x\in\g g_\shs\;\mid\;[h,x]={\overline\alpha}(x)\;\forall h\in\g h\}$ est non nul pour $\alpha\in\Delta\cup\{0\}$.

\par On sait que $\Delta$ et les $\g g_\alpha$ ne d\'ependent que de $A$ et non de $\shs$. On note
$\g g_A=\g g_{\shs_A}$.

\par Les racines $\alpha_i$ sont dites {\it simples}. Les racines de $\Delta^+=\Delta\cap Q^+$ (resp. $\Delta^-=-\Delta^+$) sont dites {\it positives} (resp. {\it n\'egatives}). On a $\Delta=\Delta^-\bigsqcup\Delta^+$.

\par La sous-alg\`ebre {\it nilpotente} (resp. {\it de Borel}) {\it positive} est $\g n^+_\shs=\oplus_{\alpha\in\Delta^+}\;\g g_\alpha$ (ind\'ependante de $\shs$ et donc not\'ee aussi $\g n^+_A$) (resp. $\g b^+_\shs=\g h_\shs\oplus\g n^+_\shs$). On a de m\^eme des sous-alg\`ebres n\'egatives $\g n^-_\shs=\g n^-_A$ et $\g b^-_\shs=\g h_\shs\oplus\g n^-_\shs$ avec la d\'ecomposition triangulaire $\g g^-_\shs=\g n^-_\shs\oplus\g h_\shs\oplus\g n^+_\shs$.

\medskip
\par 5) Le {\it groupe de Weyl (vectoriel)} $W^v$ associ\'e \`a $A$ est un groupe de Coxeter de syst\`eme de g\'en\'erateurs l'ensemble $S=\{\,s_i\,\mid\,i\in I\}$ des automorphismes de $Q$ d\'efinis par $s_i(\alpha) = \alpha - {\alpha}(\alpha{_i^\vee}) \alpha_i$. Il stabilise $\Delta$ et agit aussi sur $X$ et $Y$ par des formules semblables.

\par On note $\Phi=\Delta_{re}$ l'ensemble des {\it racines r\'eelles} c'est \`a dire
 des \'el\'ements de $\Delta$ ou $Q$ de la forme $\alpha = w(\alpha_i)$ avec
$w \in W^v$ et $ i \in I$. Si $\alpha \in \Phi$, alors $s_{\alpha} =
w.s_i.w^{-1} $ est bien d\'etermin\'e par $\alpha$, ind\'ependamment du choix
de $w$ et de $i$ tels que $\alpha = w(\alpha_i)$. Pour $\beta\in Q$ on a
$s_{\alpha}(\beta) = \beta -{ \beta}({\alpha}{^\vee })\alpha $ pour un $\alpha{^\vee } \in Y $ avec
 ${\alpha}(\alpha{^\vee }) = 2 $.  Si $\Phi^+ = \Phi\cap\Delta^+$ et $\Phi^- = - \Phi^+$, on a
$\Phi = \Phi^+ \bigsqcup \Phi^-$.

 \par Les racines de $\Delta_{im}=\Delta\setminus\Phi$ sont dites {\it imaginaires}.

 Si on a deux parties $\Psi\subset\Psi'$ de $\Delta\cup\{0\}$, on dit que $\Psi$ est {\it close} (resp. un {\it id\'eal} de $\Psi'$) si : \quad $\alpha , \beta \in \Psi$, (resp.  $\alpha\in\Psi , \beta \in \Psi'$), $p,q\geq{}1, p\alpha +q \beta \in \Delta\cup\{0\} \Rightarrow p\alpha + q\beta \in \Psi$.
 La partie $\Psi$ est dite {\it pr\'enilpotente} s'il
 existe $w , w' \in W^v$ tels que $w\Psi \subset \Delta^+$ et $w'\Psi \subset
 \Delta^-$, alors $\Psi$ est finie et contenue dans la partie
$w^{-1}(\Phi^+) \cap (w')^{-1}(\Phi^-)$ de $\Phi$ qui est {\it nilpotente} (\ie
pr\'enilpotente et close).

\medskip
\par 6) Un {\it morphisme de SGR}, $\varphi\;:\;{\mathcal S}=(A,Y,({\overline{\alpha_i}})_{i\in I},({\alpha}{_i^\vee})_{i\in I})\rightarrow{\mathcal S}'=(A',Y',({\overline{\alpha'_i}})_{i\in I'},({\alpha'}{_i^\vee})_{i\in I'})$ est une application lin\'eaire $\varphi \;:\;Y\rightarrow Y'$  et une injection $I\rightarrow I'$ , $i\mapsto i$ telles que $A=A'_{\vert I\times I}$, $\varphi(\alpha_i^\vee)={\alpha'}{_i^\vee}$ et ${\overline{\alpha'_i}}\circ\varphi={\overline{\alpha_i}}$ $\forall i\in I$. On note $\varphi^*\;:\;X'={Y'}^*\rightarrow X=Y^*$ l'application duale


\par Cette d\'efinition de morphisme est duale de celle de \cite[4.1.1]{By-96}  qui est  valable dans un cadre plus g\'en\'eral.

\par Pour un morphisme de SGR $\varphi\;:\;{\mathcal S}\rightarrow{\mathcal S}'$, on d\'efinit de mani\`ere \'evidente un morphisme $\g g_\varphi\;:\;\g g_\shs\rightarrow \g g_{\shs'}$ entre les alg\`ebres de Lie correspondantes.

\medskip
\par 7) Soit  $\varphi\;:\;{\mathcal S}\rightarrow{\mathcal S}'$ un morphisme de SGR. On dit que:

\par $\shs$ est une {\it extension centrale torique} de $\shs'$ si $\varphi$ est surjective et $I=I'$; elle est dite de plus {\it scind\'ee} si  Ker$(\varphi)$ a un suppl\'ementaire contenant $Q^\vee$,

\par $\shs$ est une {\it extension centrale finie} de $\shs'$ si $\varphi$ est injective, $I=I'$ et les dimensions sont \'egales,

\par plus g\'en\'eralement $\shs$ est {\it extension centrale} de $\shs'$ si $\varphi^*$ est injective et $I=I'$,

\par $\shs$ est un {\it sous-SGR} de $\shs'$ si $\varphi$ est injective et $Y'/\varphi(Y)$ est sans torsion,

\par $\shs'$ est une {\it extension semi-directe} (torique) de $\shs$ si $\shs$ est un  sous-SGR de $\shs'$ et si $I=I'$. Elle est dite {\it directe} si, de plus, il existe un suppl\'ementaire de $\varphi(Y)$ dans $Y'$ contenu dans Ker$({\overline \alpha_i})$, $\forall i \in I$.

\par $\varphi$ est une {\it extension commutative} si $I=I'$ (alors $A=A'$ et $Q=Q'$).

\subsection{Tores associ\'es}\label{1.2}

\par \`A un SGR $\shs$, on associe un $\Z-$sch\'ema en groupe $\g T_\shs=\g T_Y$ (vu comme foncteur en groupe sur la cat\'egorie des anneaux) d\'efini par $\g T_Y(k)=Y\otimes_\Z\,k^*$ (pour tout anneau $k$) ou par $\g T_Y={\mathrm Spec}(\Z[X])$. C'est un tore (isomorphe \`a $(\g{Mult})^{dim(\shs)}$) de groupe de cocaract\`eres (resp. caract\`eres) $Y$ (resp. $X$), voir \cite{DG-70}.

\par Un morphisme $\varphi\;:\;{\mathcal S}\rightarrow{\mathcal S}'$ de SGR donne un homomorphisme $\g T_\varphi\,:\,\g T_\shs\rightarrow\g T_{\shs'}$ de tores. Le groupe de Weyl $W^v$ agit sur $\g T_\shs$ via ses actions sur $Y$ et $X$.

\par Si $\varphi$ est extension centrale, $\g T_\varphi$ est surjectif (au sens sch\'ematique) de noyau le groupe multiplicatif $\g Z$ de groupe des caract\`eres $X/\varphi^*(X')$.
Si $\varphi$ est extension centrale torique, $\g T_\varphi$ est surjectif (au sens fonctoriel) de noyau le tore $\g T_{{\mathrm Ker}(\varphi)}$.
Si $\varphi$ est extension centrale finie, alors le th\'eor\`eme des diviseurs \'el\'ementaires appliqu\'e \`a $\varphi^*(X')\subset X$ donne une description explicite de $\g Z(k)$ comme produit de groupes de racines de l'unit\'e et l'homomorphisme $\g T_\varphi(k)$ n'est pas surjectif pour tout anneau $k$.

\par Si $\shs$ est un {\it sous-SGR} de $\shs'$, $\g T_\varphi$ identifie $\g T_\shs$ \`a un sous-tore facteur direct de $\g T_{\shs'}$.

\begin{prop}
\label{1.3} Soit $\shs=(A,Y,({\overline{\alpha_i}})_{i\in I},({\alpha}{_i^\vee})_{i\in I})$ un SGR.

\par a) Il existe une extension centrale finie $\shs^s$ de $\shs$ qui est sans cotorsion.
 Le SGR $\shs$ est extension centrale de $\shs^{ad}=(A,\overline Q^*,({\overline{\alpha_i}})_{i\in I},({\alpha}{_i^\vee})_{i\in I})$ qui est adjoint.
Si $\shs$ est libre alors $\shs^s$ et $\shs^{ad}$ aussi.

\par b) Le SGR $\shs^1=(A,Q^\vee,({\overline{\alpha_i}}_{\vert Q^\vee})_{i\in I},({\alpha}{_i^\vee})_{i\in I})$  n'est en g\'en\'eral pas libre. Le SGR $\shs^s$ (resp. $\shs_A$) en est extension semi-directe (resp. centrale torique).

\par c) Il existe une extension centrale torique $\shs^{sc}$ de $\shs$ qui est  sans cotorsion et colibre. Si $\shs$ est colibre et sans cotorsion, cette extension est scind\'ee. Si $\shs$ est libre ou si $Q$ est facteur direct dans $X$, alors c'est \'egalement vrai pour $\shs^{sc}$.

\par d) Il existe une extension semi-directe $\shs^{\ell}$ de $\shs$ qui est libre (et avec $Q^\ell$ facteur direct de $X^\ell$). Si $\shs$ est libre et $Q$ facteur direct de $X$, cette extension est directe. Si $\shs$ est colibre ou si $Q^\vee$ est facteur direct dans $Y$, alors c'est \'egalement vrai pour $\shs^{\ell}$.

\par e) Si $\shs$ est libre, colibre et sans cotorsion, alors $\shs$ est extension semi-directe d'un SGR $\shs^{mat}$ libre, colibre, sans cotorsion et de dimension $2r-s$ o\`u $r$ est le rang de $\shs$ et $s$ ($\leq{}r$) le rang de la matrice $A$.

\end{prop}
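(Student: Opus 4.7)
My plan is to prove each item by constructing the required SGR $\shs^{?}$ explicitly from $\shs$, via controlled modifications of the lattice $Y$, and then verifying the relevant direction of morphism. For (a), I decompose the finitely generated abelian group $Y/Q^\vee = T \oplus F$ with $T$ torsion and $F$ free, and let $Y^s \subset Y$ be the preimage of $F$; then $Q^\vee \subset Y^s$, the cokernel $Y/Y^s \cong T$ is finite (so dimensions agree), and $Y^s/Q^\vee \cong F$ is torsion-free. Setting $\overline{\alpha_i^s}$ to be the restriction of $\overline{\alpha_i}$ and $\alpha_i^{s,\vee} = \alpha_i^\vee$ yields an extension centrale finie with $\shs^s$ sans cotorsion. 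For $\shs^{ad}$, I use the natural map $\varphi : Y \to \overline{Q}^{*} = Y^{ad}$, $y \mapsto (\overline{\alpha} \mapsto \overline{\alpha}(y))$; its dual $\varphi^{*}$ is the inclusion $\overline{Q} \hookrightarrow X$, so $\varphi$ is central, and the $\overline{\alpha_i^{ad}} = \overline{\alpha_i}$ generate $X^{ad} = \overline{Q}$, making $\shs^{ad}$ adjoint. If $\shs$ is libre then $bar: Q \to \overline{Q}$ is an isomorphism, and the images of $\overline{\alpha_i}$ in $X^s$ (respectively $X^{ad}$) remain free.

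For (b), the restrictions $\overline{\alpha_i}_{|Q^\vee}$ satisfy the compatibility relation, so $\shs^1$ is a valid SGR; since $Q^\vee$ is a direct factor in $Y^s$ by (a), the inclusion $\shs^1 \hookrightarrow \shs^s$ (with $I$ preserved) is a sub-SGR, and the surjection $Y_A = \bigoplus_i \Z e_i \twoheadrightarrow Q^\vee$ sending $e_i$ to $\alpha_i^\vee$ displays $\shs_A$ as a central toric extension of $\shs^1$. For (c), I set $Y^{sc} = \bigoplus_{i \in I} \Z e_i \oplus Y$ with $\varphi^{sc}(v, y) = \sum_i v_i \alpha_i^\vee + y$ (surjective), $\alpha_i^{sc,\vee} = e_i$, and $\overline{\alpha_i^{sc}}(v, y) = \sum_j a_{i,j} v_j + \overline{\alpha_i}(y)$; then the $e_i$ are free and form a direct factor of $Y^{sc}$, giving colibre and sans cotorsion. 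Part (d) is dual to (c): I take $Y^\ell = Y \oplus \bigoplus_i \Z g_i$, $\overline{\alpha_i^\ell}(y, (t_j)) = \overline{\alpha_i}(y) + t_i$, $\alpha_i^{\ell, \vee} = (\alpha_i^\vee, 0)$; the $\overline{\alpha_i^\ell}$ are visibly free in $X^\ell$ and belong to a basis, so $Q^\ell$ is a direct factor of $X^\ell$, and $Y \hookrightarrow Y^\ell$ is manifestly a direct factor. The refinements (scindée in (c), directe in (d)) follow by picking the auxiliary generators inside suitable complements when the extra hypotheses apply, and the preservation of libre-ness or of $Q$ being a direct factor under these constructions is immediate.

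The main technical step is (e). By hypothesis $Y = Q^\vee \oplus Y_0$ with $(\alpha_i^\vee)$ a basis of $Q^\vee$, $Y_0$ free; $(\overline{\alpha_i})$ free in $X$, while $\overline{\alpha_i}_{|Q^\vee}$ has rank $s$. The map $Y \to \Z^{|I|}$, $y \mapsto (\overline{\alpha_i}(y))_i$, has $\Q$-rank $r$ (freeness), and its restriction to $Q^\vee$ has $\Q$-rank $s$ (the column span of $A$), so its restriction to $Y_0$ projects onto a $\Q$-rank $r-s$ quotient of $\Z^{|I|}$ modulo that column span. Using the structure theorem for finitely generated abelian groups, I choose an adapted basis of $Y_0$ to extract a direct factor $Y_0^{mat} \subset Y_0$ of rank $r-s$ on which the composition becomes injective modulo the column span of $A$. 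Setting $Y^{mat} = Q^\vee \oplus Y_0^{mat}$ (rank $2r-s$, direct factor of $Y$) makes $\shs^{mat}$ colibre and sans cotorsion automatically, and libre-ness reduces to the matrix $[A \mid B]$ (where $B$ records the $\overline{\alpha_i}$ values on a basis of $Y_0^{mat}$) having full rank $r$, which follows from the rank condition on $Y_0^{mat}$. The main obstacle is arranging $Y_0^{mat}$ as a direct factor of $Y_0$ with precisely the right rank condition so that all three of libre, colibre, sans cotorsion hold simultaneously with $Y/Y^{mat}$ torsion-free; the structure theorem makes this routine but requires care in choosing the splittings.
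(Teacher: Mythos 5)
Your constructions are essentially the paper's: (a) is the same saturation/free-part argument, (d) is identical, (c) is the paper's construction written in a different basis (the paper keeps $\overline{\alpha_i^{sc}}=\overline{\alpha_i}$ and sets $\alpha_i^{sc\vee}=\alpha_i^\vee+u_i^\vee$, which is your $\shs^{sc}$ after the unimodular change of basis $u_i^\vee\mapsto e_i$), and (e) is the same rank argument via $\psi:y\mapsto(\overline{\alpha_i}(y))_{i}$, your direct factor $Y_0^{mat}\subset Y_0$ replacing the paper's lift to $Y$ of a basis of a complement of $\Q\psi(Q^\vee)\cap\psi(Y)$ in $\psi(Y)$ — your variant even makes $Y/Y^{mat}$ visibly torsion-free, a point the paper leaves implicit. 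Two small corrections are needed. First, with the paper's convention $a_{i,j}=\overline{\alpha_j}(\alpha_i^\vee)$, your formula in (c) must be $\overline{\alpha_i^{sc}}(v,y)=\sum_j a_{j,i}v_j+\overline{\alpha_i}(y)$, i.e. $\overline{\alpha_i}\circ\varphi^{sc}$; as written (with $a_{i,j}$) the map $\varphi^{sc}$ is not a morphism of SGR and the compatibility $\overline{\alpha_j^{sc}}(\alpha_i^{sc\vee})=a_{i,j}$ fails whenever $A$ is not symmetric. Second, in your normalization of (c) the preservation of ``$Q$ facteur direct dans $X$'' is not quite immediate: one must observe that any relation $\sum_i n_i\overline{\alpha_i}=0$ also annihilates the new coordinates, because $\sum_i n_i a_{j,i}=(\sum_i n_i\overline{\alpha_i})(\alpha_j^\vee)=0$; with the paper's normalization, where the roots are unchanged, both preservation claims are literally trivial. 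These are minor and easily repaired; otherwise the argument is correct and follows the paper's route.
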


\begin{remas*}

\par 1) On a $\shs^{sc\ell}=\shs^{\ell sc}$ et les constructions de c) et d) sont fonctorielles en $\shs$.

\par 2) Dans le cas e) le SGR $\shs^{mat}$ v\'erifie donc les hypoth\`eses de \cite[p 16]{M-88a}. Mais $\shs$ n'est pas toujours extension directe de $\shs^{mat}$.

\par 3) Quand la matrice $A$ est inversible (par exemple dans le cas classique d'une matrice de Cartan) tout SGR est libre, colibre et extension semi-directe (ou aussi centrale torique) d'un SGR de dimension \'egale au rang (\ie semi-simple dans le cas classique).

\end{remas*}

\begin{proof}

\par On consid\`ere un suppl\'ementaire $Y_0$ de $(Q^\vee\otimes\Q)\cap Y$ dans $Y$ et on pose $Y^s=Q^\vee\oplus Y_0$. Les assertions a) et b) sont alors \'evidentes.

\par c) On d\'efinit $Y^{sc}=Y\oplus(\oplus_{i\in I}\,\Z u_i^\vee)$, son dual est donc $X^{sc}=X\oplus(\oplus_{i\in I}\,\Z u_i)$ o\`u les $u_i$ forment une base duale des $u_i^\vee$. On note $\overline \alpha_i^{sc}=\overline \alpha_i\in X$, $\alpha_i^{sc\vee}=\alpha_i^\vee+u_i^\vee$ et $\varphi$ est la projection de $Y^{sc}$ sur $Y$. \cf \cite[7.1.2]{Ry-02a}.

\par d) On d\'efinit $Y^{\ell}=Y\oplus(\oplus_{i\in I}\,\Z v_i^\vee)$, son dual est donc $X^{\ell}=X\oplus(\oplus_{i\in I}\,\Z v_i)$ o\`u les $v_i$ forment une base duale des $v_i^\vee$. On note $\overline \alpha_i^{\ell}=\overline \alpha_i+v_i$, $\alpha_i^{\ell\vee}=\alpha_i^\vee$ et $\varphi$ est l'injection canonique de $Y$ dans $Y^{\ell}$ .

\par e) Soit $\psi\,:\,Y\rightarrow\Z^r$, $y\mapsto(\alpha_i(y))_{i\in I}$. Comme $\shs$ est libre, $\psi(Y)$ est de rang $r$, tandis que $\psi(Q^\vee)$ est de rang $s$. Le module $\Q\psi(Q^\vee)\cap\psi(Y)$ a un suppl\'ementaire dans $\psi(Y)$ de rang $r-s$; on suppose que la base de ce suppl\'ementaire est $\psi(v_1),\cdots,\psi(v_{r-s})$ pour des $v_i\in Y$. Comme $\shs$ est colibre $Y^{mat}=Q^\vee\oplus(\oplus_{j=1}^{r-s}\,\Z v_j)$ est de rang $2r-s$. Comme le quotient de $Y$ par $Q^\vee$ est sans torsion , il en est de m\^eme pour $Y^{mat}$. On note $\alpha_i^{mat}=\alpha_{i}\vert_{ Y^{mat}}$ et $\alpha_i^{mat\vee}=\alpha_i^\vee$. L'injection de $Y^{mat}$ dans $Y$ fait bien de $\shs$ une extension semi-directe de $\shs^{mat}$ et $\shs^{mat}$ est colibre, sans cotorsion, de dimension $2r-s$. Comme $\psi(Y^{mat})=\psi(Q^\vee)\oplus(\oplus_{j=1}^{r-s}\,\Z \psi(v_j))$ est de rang $r$, les $\alpha_i^{mat}$ sont bien libres dans $X^{mat}=(Y^{mat})^*$.
\end{proof}

\subsection{Relations dans ${\mathrm Aut}(\g g_A)$ }\label{1.4} \cf \cite[3.3 et 3.6]{T-87b}

\par Pour $i\in I$, $ad(e_i)$ et $ad(f_i)$ sont  localement nilpotents dans $\g g_A$ ou $\g g_\shs$, $exp(ad(e_i))$, $exp(ad(f_i))$ sont des automorphismes de $\g g_A$ ou $\g g_\shs$ et l'on a $exp(ad(e_i)).exp(ad(f_i)).exp(ad(e_i))$ $=exp(ad(f_i)).exp(ad(e_i)).exp(ad(f_i))$, que l'on note $s_i^*$ ou $s^*_{\alpha_i}$. L'application $s_i^*\mapsto s_i$ s'\'etend en un homomorphisme surjectif $w^*\mapsto w$ du sous-groupe $W^*$ de ${\mathrm Aut}(\g g_A)$ engendr\'e par les $s_i^*$ sur le groupe de Weyl $W^v$. Le groupe $W^*$ stabilise $\g h$ et permute les $\g g_\alpha$ pour $\alpha\in\Delta$; les actions induites sur $\g h$ ou $\Delta$ se d\'eduisent des actions de $W^v$ via l'homomorphisme pr\'ec\'edent. En particulier $W^*$ est le m\^eme qu'il soit d\'efini sur $\g g_A$ ou $\g g_\shs$.

\par Pour $\alpha\in\Phi$, l'espace $\g g_\alpha$ est de dimension $1$ et on peut choisir les \'el\'ements de base $(e_\alpha)_{\alpha\in\Phi}$ de fa\c{c}on que $e_{\alpha_i}=e_i$, $[e_\alpha,f_\alpha]=-\alpha^\vee$ et $w^*e_\alpha=\pm{}e_{w\alpha}$ pour $\alpha\in\Phi$, $i\in I$ et $w^*\in W^*$.

\par Si $\alpha,\beta\in\Phi$ forment une paire pr\'enilpotente de racines, l'ensemble $[\alpha,\beta]=\{\,p\alpha+q\beta\in\Phi\,\vert\,p,q\in\N\}$ est fini. Si $\alpha$ et $\beta$ sont non colin\'eaires, on note $]\alpha,\beta[=[\alpha,\beta]\setminus\{\alpha,\beta\}$, et on l'ordonne, par exemple de fa\c{c}on que $p/q$ soit croissant; on a alors la formule suivante pour les commutateurs (pour $r,r'\in\C$):

$$(exp(ad(re_\alpha)),exp(ad(r'e_\beta)))=\prod\;exp(ad(C_{p,q}^{\alpha,\beta}r^pr'^qe_\gamma))$$

\par o\`u $\gamma=p\alpha+q\beta$ parcourt $]\alpha,\beta[$ et les $C_{p,q}^{\alpha,\beta}$ sont des entiers bien d\'efinis.

\subsection{Foncteur de Steinberg}\label{1.5}  \cf \cite[3.6]{T-87b}

\par Ce foncteur $\g{St}_A$ de la cat\'egorie des anneaux dans celle des groupes est engendr\'e par  $\vert\Phi\vert$ exemplaires du groupe additif $\g{Add}$. Plus pr\'ecis\'ement, pour $\alpha\in\Phi$, $k$ un anneau et $r\in k$, on introduit le symbole $\g x_\alpha(r)$ et le groupe $\g{St}_A(k)$ est engendr\'e par les \'el\'ements  $\g x_\alpha(r)$ pour $\alpha\in\Phi$, $r\in k$ soumis aux relations $\g x_\alpha(r+r')=\g x_\alpha(r).\g x_\alpha(r')$ et aux relations de commutation:

\medskip
\par (KMT3)\quad$(\g x_\alpha(r),\g x_\beta(r'))=\prod\;\g x_\gamma(C_{p,q}^{\alpha,\beta}r^pr'^q)$
\medskip
\par pour $r,r'\in k$, $\{\alpha,\beta\}$ pr\'enilpotente et $\gamma$, $C_{p,q}^{\alpha,\beta}$ comme ci-dessus en \ref{1.4}.

\medskip
\par Si $\Psi$ est une partie nilpotente de $\Phi$ et si on remplace ci-dessus $\Phi$ par $\Psi$, on d\'efinit un foncteur en groupes $\g U_\Psi$. C'est un groupe alg\'ebrique unipotent isomorphe comme sch\'ema \`a $(\g{Add})^{\vert\Psi\vert}$ et qui ne d\'epend que de $\Psi$ et $A$ (donc $\Phi$) mais pas de $\shs$. Si $\alpha\in\Phi$ et $\Psi=\{\alpha\}$, $\g U_{\alpha}=\g U_{\{\alpha\}}$ est un sous-foncteur en groupes de $\g{St}_A$ isomorphe \`a $\g{Add}$ par $\g x_\alpha$. En fait $\g{St}_A$ est l'amalgame des groupes  $\g U_{\alpha}$ et  $\g U_{[\alpha,\beta]}$ pour  $\alpha\in\Phi$ et $\{\alpha,\beta\}$ pr\'enilpotente.

\par Pour $\alpha\in\Phi$, $k$ un anneau et $r\in k^*$, on d\'efinit dans $\g{St}_A(k)$, $\tilde s_\alpha(r)=\g x_\alpha(r).\g x_{-\alpha}(r^{-1}).\g x_\alpha(r)$, $\tilde s_\alpha=\tilde s_\alpha(1)$ et $\alpha^*(r)=\tilde s_\alpha^{-1}.\tilde s_\alpha(r^{-1})$. On a $\tilde s_\alpha(-r)=\tilde s_\alpha(r)^{-1}=\tilde s_\alpha.\alpha^*(-r^{-1})$. En particulier $\tilde s_\alpha^{-1}=\tilde s_\alpha(-1)$.

\begin{rema*}
 Si jamais le quotient ${\overline{\g g}}_A$ de $\g g_A$ par son id\'eal gradu\'e maximal d'intersection triviale avec $\g h_A$ est diff\'erent de $\g g_A$, la diff\'erence n'affecte que les espaces radiciels imaginaires. Le remplacement de $\g g_A$ par ${\overline{\g g}}_A$ ne change donc pas $\g{St}_A$ (ni $\g G_\shs$ d\'efini ci-dessous); ce n'est par contre pas vrai pour le groupe $\g G_\shs^{pma}$ du {\S{}} 3.
\end{rema*}

\subsection{Le groupe de Kac-Moody minimal $\g G_\shs$ (\`a la Tits)}\label{1.6} \cf \cite[3.6]{T-87b}, \cite[8.3.3]{Ry-02a}

\par Pour un anneau $k$, on d\'efinit le groupe $\g G_\shs(k)$ comme le quotient du produit libre $\g{St}_A(k)*\g T_\shs(k)$ par les 4 relations suivantes, pour $\alpha$ racine simple, $\beta\in\Phi$, $r\in k$ et $t\in\g T_\shs(k)$:
\medskip
\par(KMT4) \quad  $t.\g x_\alpha(r).t^{-1}=\g x_\alpha(\alpha(t)r)$,
\medskip
\par (KMT5) \quad $\tilde s_\alpha.t.\tilde s_\alpha^{-1}=s_\alpha(t)$,
\medskip
\par (KMT6) \quad $\tilde s_\alpha(r^{-1})=\tilde s_\alpha.\alpha^\vee(r)$,
\medskip
\par (KMT7) \quad $\tilde s_\alpha.\g x_\beta(r).\tilde s_\alpha^{-1}=\g x_\gamma(\epsilon r)$,
\par \qquad\qquad\qquad si $\gamma=s_\alpha(\beta)$ et $s^*_\alpha(e_\beta)=\epsilon e_\gamma$ (avec $\epsilon=\pm{}1$).
\medskip
\begin{rema*}

Les relations (KMT5) et (KMT7) permettent aussit\^ot de g\'en\'eraliser (KMT4) au cas o\`u $\alpha\in\Phi$ n'est pas simple.

\end{rema*}

\begin{enonce*}[definition]{Propri\'et\'es}

\par 1) D'apr\`es \cite[3.10.b]{T-87b} il existe des foncteurs en groupes $\g G_\#$ et $\g U^\pm_\#$  satisfaisant aux axiomes (KMG 1 \`a 9) de \lc. D'apr\`es le th\'eor\`eme 1 (i) p. 553 de \lc, il en r\'esulte que, pour tout anneau $k$, l'homomorphisme canonique de $\g T_\shs(k)$ dans $\g G_\shs(k)$ est injectif.

\medskip
\par 2) Pour tout $\alpha\in\Phi$, l'homomorphisme $\g x_{\alpha k}\,:\,k\rightarrow\g U_\alpha(k)\rightarrow \g G_\shs(k)$ est injectif, voir la d\'emonstration de \cite[9.6.1]{Ry-02a} si $Y/\Z\alpha_i^\vee$ est sans torsion ($\forall i\in I$) ou si $k$ n'a pas d'\'el\'ements nilpotents; le cas g\'en\'eral se traite comme dans la partie b) de la d\'emonstration du lemme \ref{4.11} ci-dessous. Plus g\'en\'eralement ces m\^emes raisonnements montrent que, pour toute partie nilpotente $\Psi$ de $\Phi$, l'homomorphisme canonique de $\g U_\Psi(k)$ dans $\g G_\shs(k)$ est injectif et aussi que, pour $\alpha\not=\beta$ et $r$ ou $r'$ non nul $\g x_\alpha(r)\not=\g x_\beta(r')$.

\medskip
\par 3) On identifie donc $\g T_\shs$, $\g U_\alpha$ et $\g U_\Psi$ \`a des sous-foncteurs en groupes de $\g G_\shs$; si $\Psi'$ est un id\'eal de $\Psi$, $\g U_{\Psi'}$ est distingu\'e dans $\g U_\Psi$. De m\^eme on note $\g U_\shs^\pm{}$ (resp. $\g B_\shs^\pm{}$) le sous-foncteur en groupe de $\g G_\shs$ tel que, pour un anneau $k$, $\g U_\shs^\pm{}(k)$ (resp. $\g B_\shs^\pm{}(k)$) est le sous-groupe engendr\'e par les $\g U_\alpha(k)$ pour $\alpha\in\Phi^\pm{}$ (resp. et par $\g T_\shs(k)$).

\medskip
\par 4) On note $\g N_\shs$ le sous-foncteur de $\g G_\shs$ tel que $\g N_\shs(k)$ soit le sous-groupe engendr\'e par $\g T_\shs(k)$ et les $\tilde s_\alpha$ pour $\alpha$ racine simple. On a un homomorphisme $\nu^v$ de $\g N_\shs$ sur $W^v$ trivial sur  $\g T_\shs$ tel que $\nu^v(\tilde s_\alpha)=s_\alpha$ et $\g N_\shs(k)$ agit sur $\g T_\shs$ et $\Phi$ via $\nu^v$, \cf (KMT4), (KMT5), (KMT7) et 2) ci-dessus. D'apr\`es \cite[3.7.d]{T-87b} les $\tilde s_{\alpha_i}$ satisfont aux relations de tresse, d'apr\`es (KMT6) $(\tilde s_{\alpha_i})^2\in\g T_\shs(k)$, le noyau de $\nu^v$ est donc \'egal \`a $\g T_\shs(k)$.

\par Si $k$ est un corps avec au moins quatre \'el\'ements, $\g N_\shs(k)$ est le normalisateur de $\g T_\shs(k)$ dans $\g G_\shs(k)$  \cite[8.4.1]{Ry-02a}. Si $k$ est un corps infini, on sait de plus que tous les sous-tores $k-$d\'eploy\'es maximaux de $\g G_\shs$ sont conjugu\'es \`a $\g T_\shs$ par $\g G_\shs(k)$ \cite[10.4.1]{Ry-02a}.

\medskip
\par 5) Les th\'eor\`emes 1  et 1' p. 553 de  \cite{T-87b} montrent que, sur les corps, $\g G_\shs$ v\'erifie les axiomes (KMG 1 \`a 9) de \lc: il existe un homomorphisme $\pi$ de foncteurs en groupes $\g G_\shs\rightarrow\g G_\#$ qui est un isomorphisme sur les corps et v\'erifie $\pi(\g U_\shs^\pm)\subset\g U_\#^\pm$. En particulier (KMG4) dit que, pour une extension de corps $k\rightarrow k'$, $\g G_\shs(k)$ s'injecte dans $\g G_\shs(k')$.

\par Si $k$ est un corps, $(\g G_\shs(k),(\g U_\alpha(k))_{\alpha\in\Phi},\g T_\shs(k))$ est une donn\'ee radicielle de type $\Phi$, au sens de \cite[1.4]{Ru-06}, ou donn\'ee radicielle jumel\'ee enti\`ere \cite[8.4.1]{Ry-02a} (c'est plus pr\'ecis que les donn\'ees radicielles jumel\'ees de \lc, \ie les RGD-systems de \cite[8.6.1]{AB-08}).
 En particulier, pour $u\in \g U_\alpha(k)\setminus\{1\}$, il existe $u',u''\in \g U_{-\alpha}(k)$ tels que $m(u):=u'uu''$ conjugue $\g U_{\beta}(k)$ en $\g U_{s_\alpha(\beta)}(k)$, $\forall\beta\in\Phi$, donc $m(u)\in\g N_\shs(k)$; on a $m(\g x_\alpha(r))=\widetilde s_{-\alpha}(r^{-1})$.
On montre que $\g B_\shs^\pm{}(k)$ est produit semi-direct de $\g T_\shs(k)$ et $\g U_\shs^\pm{}(k)$ \cite[1.5.4]{Ry-02a}. Pour  une partie nilpotente de la forme $\Psi=\Phi^+\cap w\Phi^-$, on a  $\g U_\Psi(k)=\g U_\shs^+(k)\cap w\g U_\shs^-(k)w^{-1}$ \cf \lc 3.5.4. Le centre de $\g G_\shs(k)$ est $\{t\in \g T_\shs(k)\mid\qa_i(t)=1,\forall i\in I\}$  \cf \lc 8.4.3.

\par Toujours si $k$ est un corps, la donn\'ee radicielle ci-dessus permet de construire des immeubles combinatoires jumel\'es $\shi^v_+(k)$ et $\shi^v_-(k)$, agit\'es par $\g G_\shs(k)$: l'immeuble $\shi^v_\epsilon(k)$ de $\g G_\shs$ sur $k$ est associ\'e au syst\`eme de Tits $(\g G_\shs(k),\g B^\epsilon_\shs(k),\g N_\shs(k))$.

\end{enonce*}

\subsection{Un quotient du foncteur de Steinberg}\label{1.7}

\par Pour un anneau k, on d\'efinit $\overline{\g{St}}_A(k)$ comme le quotient de ${\g{St}}_A(k)$ par les relations suivantes, pour $\alpha,\beta$ racines simples, $\gamma\in\Phi$ et $r,r'\in k$:

\medskip
\par ($\overline{\mathrm KMT}$5) \; $\tilde s_\alpha.\beta^*(r).\tilde s_\alpha^{-1}=\beta^*(r).\alpha^*(r^{-\alpha(\beta^\vee)})$,

\medskip
\par ($\overline{\mathrm KMT}$7) \; $\tilde s_\alpha(r).\g x_\gamma(r').\tilde s_\alpha(r)^{-1}=\g x_\delta(\epsilon.r^{-\gamma(\alpha^\vee)}.r')$,
\par\qquad\qquad\qquad si $\delta=s_\alpha(\gamma)$ et $s^*_\alpha(e_\gamma)=\epsilon.e_\delta$,

\medskip
\par ($\overline{\mathrm KMT}$8) \; a) $\alpha^*\,:\, k^*\rightarrow\overline{\g{St}}_A(k)$ est un homomorphisme de groupes,

\par\qquad\qquad b) $\alpha^*(r).\beta^*(r')=\beta^*(r').\alpha^*(r)$, on note ${\g{T}}_A(k)$ le groupe commutatif engendr\'e par tous ces \'el\'ements (pour $\alpha,\beta$ racines simples et $r,r'\in k$),

\par\qquad\qquad c) La formule $\gamma(\alpha^*(r))=r^{\gamma(\alpha^\vee)}$ d\'efinit un homomorphisme de groupes ${\g{T}}_A(k)\rightarrow k^*$, not\'e $\gamma$.

\par On note de la m\^eme mani\`ere les \'el\'ements et leurs images dans $\overline{\g{St}}_A(k)$.

\begin{remas*} 1) Dans le cas classique, Steinberg consid\`ere lui aussi un quotient de ${\g{St}}_A$ un peu analogue: comparer ($\overline{\mathrm KMT}$7) et ($\overline{\mathrm KMT}$8) respectivement aux conditions (B') et (C) de \cite{Sg-62} ou \cite{Sg-68}.

\medskip
\par 2) On n'a pas cherch\'e ici un syst\`eme minimal de relations pour $\overline{\g{St}}_A(k)$. Le lecteur trouvera des r\'eductions faciles par lui-m\^eme et d'autres moins \'evidentes dans \cite[3.7 ou 3.8]{T-87b}, \cite{Sg-62} ou \cite{Sg-68}.

\medskip
\par 3) L'endomorphisme $s'_\beta$ du groupe ${\g{T}}_A(k)$ d\'efini par $s'_\beta(t)=t.\beta^*(\beta(t))^{-1}$ est une involution, car $s'_\beta(s'_\beta(t))=t.\beta^*(\beta(t))^{-1}.\beta^*(\beta(t.\beta^*(\beta(t))^{-1}))^{-1}=t.\beta^*(\beta(t))^{-1}.\beta^*(\beta(t))^{-1}.\beta^*(\beta(\beta^*(\beta(t))))$ $=t$ (car $\beta\circ\beta^*$ est l'\'el\'evation au carr\'e). En fait pour $t=\gamma^*(r)$ on a $s'_\beta(t)=s'_\beta(\gamma^*(r))=\gamma^*(r).\beta^*(\beta(\gamma^*(r)))^{-1}=\gamma^*(r).\beta^*(r^{-\beta(\gamma^\vee)})$ ( $=(\gamma^*-\beta(\gamma^\vee)\beta^*)(r)$ avec une notation additive pour Hom$(k^*,\overline{\g{St}}_A(k))$ ). En particulier ($\overline{\mathrm KMT}$5) et (KMT5) sont semblables.

\medskip
\par 4) La relation ($\overline{\mathrm KMT}$4)\;: \; $t.\g x_\alpha(r).t^{-1}=\g x_\alpha(\alpha(t)r)$, pour $\alpha\in\Phi$, $r\in k$ et $t\in{\g{T}}_A(k)$ est cons\'equence des relations ci-dessus:

\par En effet pour $\alpha$, $\beta$ racines simples, $r\in k^*$, $r'\in k$, on a : $\beta^*(r).\g x_\alpha(r').\beta^*(r)^{-1}=\tilde s_\beta^{-1}.\tilde s_\beta(r^{-1}).\g x_\alpha(r').\tilde s_\beta(r^{-1})^{-1}.\tilde s_\beta=\tilde s_\beta^{-1}.\g x_\gamma(\epsilon.r^{\alpha(\beta^\vee)}.r').\tilde s_\beta=\g x_\alpha(r^{\alpha(\beta^\vee)}.r')=\g x_\alpha(\alpha(\beta^*(r)).r')$ (avec $\gamma=s_\beta(\alpha)$
 et $s^*_\beta(e_\alpha)=\epsilon.e_\gamma$).

\par Cette relation s'\'etend au cas $\alpha$ non simple d'apr\`es ($\overline{\mathrm KMT}$5), ($\overline{\mathrm KMT}$7) et ($\overline{\mathrm KMT}$8): si $\alpha$, $\beta$ sont simples et $\gamma$ satisfait \`a  ($\overline{\mathrm KMT}$4),on a:

\par $\beta^*(r).\g x_{s_\alpha(\gamma)}((-1)^{\gamma(\alpha^\vee)}.\epsilon.r').\beta^*(r)^{-1}=\beta^*(r).\tilde s_\alpha^{-1}.\g x_\gamma(r').\tilde s_\alpha.\beta^*(r)^{-1}$
\par\noindent$=\tilde s_\alpha^{-1}.\beta^*(r).\alpha^*(r^{-\alpha(\beta^\vee)}).\g x_\gamma(r').\alpha^*(r^{\alpha(\beta^\vee)}).\beta^*(r)^{-1}.\tilde s_\alpha$
\par\noindent$=\tilde s_\alpha^{-1}.\g x_\gamma(r^{\gamma(\beta^\vee)}.(r^{-\alpha(\beta^\vee)})^{\gamma(\alpha^\vee)}.r').\tilde s_\alpha=\g x_{s_\alpha(\gamma)}((-1)^{\gamma(\alpha^\vee)}.\epsilon.r^{\gamma(\beta^\vee)-\alpha(\beta^\vee).\gamma(\alpha^\vee)}.r')$
\par et $\gamma(\beta^\vee)-\alpha(\beta^\vee).\gamma(\alpha^\vee)=s_\alpha(\gamma)(\beta^\vee)$. Donc $s_\alpha(\gamma)$ satisfait aussi \`a ($\overline{\mathrm KMT}$4).

\medskip
\par 5) On peut appliquer ($\overline{\mathrm KMT}$7) \`a $\qg=\pm{}\qa$. On sait que $s^*_\qa(e_{\pm{}\qa})=e_{\mp\qa}$. Donc $\widetilde s_\qa(r).x_{\pm{}\qa}(r')$ $.\widetilde s_\qa(r)^{-1}=x_{\mp\qa}(r^{\mp2}.r')$ et
 $\widetilde s_\qa(r)=\widetilde s_\qa(r).\widetilde s_\qa(r).\widetilde s_\qa(r)^{-1}=x_{-\qa}(r^{-1}).x_\qa(r).x_{-\qa}(r^{-1})=\widetilde s_{-\qa}(r^{-1})$.

\end{remas*}

\begin{prop}
\label{1.8}

\par 1) L'homomorphisme canonique de $\g{St}_A$ dans $\g G_\shs$ se factorise par $\overline{\g{St}}_A$, autrement dit les relations ($\overline{\mathrm KMT}$5), ($\overline{\mathrm KMT}$7) et ($\overline{\mathrm KMT}$8) sont satisfaites dans $\g G_\shs$.

\par 2) Pour un anneau $k$, $\g G_\shs(k)$ est le quotient du produit libre $\overline{\g{St}}_A(k)*\g T_\shs(k)$ par les 2 relations suivantes (pour $\alpha$ racine simple, $\beta\in\Phi$, $r\in k$ et $t\in\g T_\shs(k)$) :

\par(KMT4) \quad  $t.\g x_\beta(r).t^{-1}=\g x_\beta(\beta(t).r)$,

\par ($\overline{\mathrm KMT}$6) \quad $\alpha^*(r)=\alpha^\vee(r)$ si $r\in k^*$.

\par 3) La relation ($\overline{\mathrm KMT}$6) induit un homomorphisme de foncteurs en groupes $\psi\,:\,\g T_A\rightarrow\g T_\shs$ dont le noyau $\g Z_\shs$ est central dans $\g{St}_A$. Le foncteur $\g G_\shs$ est le quotient (comme foncteur en groupes) du produit semi-direct $(\overline{\g{St}}_A/\g Z_\shs)\rtimes\g T_\shs$ par $\g T_A/\g Z_\shs$ antidiagonal. En particulier le quotient (comme foncteur en groupes) $\overline{\g{St}}_A/\g Z_\shs$ s'injecte dans $\g G_\shs$.

\end{prop}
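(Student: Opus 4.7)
The plan is to handle the three parts in sequence, using the explicit presentations of \ref{1.5} and \ref{1.6} together with the derived identities gathered in Remark~4 of \ref{1.7}. For part 1, the crux is that (KMT6) in $\g G_\shs$ forces $\alpha^*(r) = \tilde s_\alpha^{-1}.\tilde s_\alpha(r^{-1}) = \alpha^\vee(r)$, so every $\alpha^*(r)$ already lies in $\g T_\shs$. Then ($\overline{\mathrm KMT}$8a,b,c) reduce respectively to multiplicativity of the cocharacter $\alpha^\vee$, commutativity of $\g T_\shs$, and the character formula $\gamma(\alpha^\vee(r)) = r^{\gamma(\alpha^\vee)}$. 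Relation ($\overline{\mathrm KMT}$5) then comes from conjugating $\beta^\vee(r)$ by $\tilde s_\alpha$ via (KMT5), using $s_\alpha(\beta^\vee) = \beta^\vee - \alpha(\beta^\vee).\alpha^\vee$; relation ($\overline{\mathrm KMT}$7) follows by rewriting $\tilde s_\alpha(r) = \tilde s_\alpha.\alpha^\vee(r^{-1})$ via (KMT6) and combining (KMT4) with (KMT7).

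For part 2, part 1 already provides a surjection from the quotient of $\overline{\g{St}}_A(k)*\g T_\shs(k)$ by (KMT4) and ($\overline{\mathrm KMT}$6) onto $\g G_\shs(k)$. The reverse map is obtained by checking each of the four original relations of $\g G_\shs$ in the new presentation: (KMT4) is given, (KMT6) is literally ($\overline{\mathrm KMT}$6) after unpacking the definition $\alpha^*(r) = \tilde s_\alpha^{-1}.\tilde s_\alpha(r^{-1})$, and (KMT7) is the $r=1$ specialisation of ($\overline{\mathrm KMT}$7). For (KMT5), first compute $t.\tilde s_\alpha.t^{-1} = \tilde s_\alpha(\alpha(t))$ by applying (KMT4) to each of the three factors of $\tilde s_\alpha$, rewrite this as $\tilde s_\alpha.\alpha^\vee(\alpha(t))^{-1}$ via ($\overline{\mathrm KMT}$6), then solve for $\tilde s_\alpha.t.\tilde s_\alpha^{-1}$ using commutativity of $\g T_\shs$, which matches $s_\alpha(t) = t.\alpha^\vee(\alpha(t))^{-1}$.

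For part 3, first check that $\psi$ is well-defined: the subgroup of $\g T_\shs$ generated by the $\alpha_i^\vee(r)$ satisfies exactly the relations ($\overline{\mathrm KMT}$8). Centrality of $\g Z_\shs$ in $\overline{\g{St}}_A$ follows from the derived relation ($\overline{\mathrm KMT}$4) of Remark~4 in \ref{1.7}: for $z \in \g Z_\shs$ and $\gamma \in \Phi$ one has $\gamma(z) = \gamma(\psi(z)) = 1$ (because $\gamma \circ \psi = \gamma$ on $\g T_A$ by ($\overline{\mathrm KMT}$8c)), hence $z.\g x_\gamma(r).z^{-1} = \g x_\gamma(r)$; commutation with $\g T_A$ is ($\overline{\mathrm KMT}$8b). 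Next, define a $\g T_\shs$-action on $\overline{\g{St}}_A$ by $t.\g x_\alpha(r).t^{-1} = \g x_\alpha(\alpha(t).r)$ and verify it preserves every defining relation: (KMT3) is preserved because $\gamma(t) = \alpha(t)^p\beta(t)^q$ when $\gamma = p\alpha + q\beta$, and ($\overline{\mathrm KMT}$5), ($\overline{\mathrm KMT}$7), ($\overline{\mathrm KMT}$8) follow from the direct identities $t.\tilde s_\alpha(r).t^{-1} = \tilde s_\alpha(\alpha(t).r)$ and $t.\alpha^*(r).t^{-1} = \alpha^*(r)$ (the latter obtained from $\tilde s_\alpha(s) = \tilde s_\alpha.\alpha^*(s^{-1})$). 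This produces a genuine semi-direct product $\overline{\g{St}}_A \rtimes \g T_\shs$; modding out the central subgroup $\g Z_\shs$ gives $(\overline{\g{St}}_A/\g Z_\shs) \rtimes \g T_\shs$. The antidiagonal $D = \{(\bar z, \psi(z)^{-1}) : z \in \g T_A/\g Z_\shs\}$ is normal, since conjugation by $\g T_\shs$ fixes $\g T_A$ pointwise while conjugation by $s \in \overline{\g{St}}_A$ sees the $\g T_\shs$-action through $\psi(z)^{-1}$ as inner conjugation by $z^{-1} \in \g T_A$ (both act on $\g x_\alpha(r)$ by multiplication with $\alpha(z)^{-1}$). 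Part 2 then identifies $\g G_\shs$ with $((\overline{\g{St}}_A/\g Z_\shs) \rtimes \g T_\shs)/D$, and injectivity of $\overline{\g{St}}_A/\g Z_\shs \hookrightarrow \g G_\shs$ is equivalent to $D \cap ((\overline{\g{St}}_A/\g Z_\shs) \times \{1\}) = \{1\}$, which is immediate from injectivity of $\psi$ on $\g T_A/\g Z_\shs$.

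The main technical obstacle will be the bookkeeping in part 3: verifying that the prescribed formula defines a genuine $\g T_\shs$-action on $\overline{\g{St}}_A$ preserving every defining relation, and that the antidiagonal $D$ is normal in the resulting semi-direct product. Once this is in place, the identification of $\g G_\shs$ with the claimed quotient is a routine consequence of part 2, and the injectivity assertion reduces to the tautology that $\psi$ is injective on $\g T_A/\g Z_\shs$ by construction.
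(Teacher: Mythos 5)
Your parts 1 and 2 are correct and follow the paper's own route almost verbatim: (KMT6) gives $\alpha^*(r)=\alpha^\vee(r)$ in $\g G_\shs$, whence ($\overline{\mathrm KMT}$8) and, via $s_\alpha(\beta^\vee)=\beta^\vee-\alpha(\beta^\vee)\alpha^\vee$ (the coincidence of $s'_\beta$ and $s_\beta$ used in the paper), relation ($\overline{\mathrm KMT}$5); the computation with $\tilde s_\alpha(r)=\tilde s_\alpha.\alpha^\vee(r^{-1})$ gives ($\overline{\mathrm KMT}$7); and your derivation of (KMT5) in the new presentation from (KMT4), ($\overline{\mathrm KMT}$6) and $s_\alpha(t)=t.\alpha^\vee(\alpha(t))^{-1}$ is the same calculation as the paper's, merely organised as $t\tilde s_\alpha t^{-1}=\tilde s_\alpha(\alpha(t))$ instead of $t^{-1}\tilde s_\alpha t\tilde s_\alpha^{-1}$. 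Part 3 also has the paper's architecture (centrality of $\g Z_\shs$ via ($\overline{\mathrm KMT}$4) and $\gamma\circ\psi=\gamma$, the semi-direct product, the antidiagonal), and your explicit check that the $\g T_\shs$-action preserves the defining relations and that the antidiagonal $D$ is normal is a useful expansion of what the paper leaves terse.

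The genuine gap is your justification that $\psi:\g T_A\to\g T_\shs$, $\alpha^*(r)\mapsto\alpha^\vee(r)$, is well defined. Checking that the elements $\alpha^\vee(r)$ of $\g T_\shs$ satisfy the relations ($\overline{\mathrm KMT}$8) would suffice if $\g T_A$ were the abstract group presented by ($\overline{\mathrm KMT}$8); but by definition (1.7) $\g T_A(k)$ is the subgroup of $\overline{\g{St}}_A(k)$ generated by the $\alpha^*(r)$, so it may satisfy further relations forced by the full presentation of $\overline{\g{St}}_A$ (through the interaction with the $\g x_\gamma$), and each of these would also have to be verified among the $\alpha^\vee(r)$ in $\g T_\shs$; identifying the actual relations of $\g T_A$ is essentially the content of Corollary 1.9, which is deduced from this proposition and so cannot be invoked here. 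The missing idea is the one the paper uses: by part 1 the canonical morphism $\overline{\g{St}}_A(k)\to\g G_\shs(k)$ sends $\alpha^*(r)$ to $\alpha^\vee(r)$, hence maps $\g T_A(k)$ into the image of $\g T_\shs(k)$, and since $\g T_\shs(k)\to\g G_\shs(k)$ is injective (1.6.1), $\psi$ is simply the restriction of this morphism, well defined with no further verification. With $\psi$ defined this way, the compatibility $\gamma\circ\psi=\gamma$ and everything downstream in your argument goes through unchanged.
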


\begin{proof}

\par Par d\'efinition de $\alpha^*(r)$ et (KMT6), la relation ($\overline{\mathrm KMT}$6) est bien satisfaite dans  $\g G_\shs(k)$. On en d\'eduit aussit\^ot la relation ($\overline{\mathrm KMT}$8) et que l'image de $\g T_A(k)$ est dans $\g T_\shs(k)$. D'autre part les deux involutions $s'_\beta$ et $s_\beta$ co\"{\i}ncident sur cette image d'apr\`es les calculs de \ref{1.7}.3; ainsi ($\overline{\mathrm KMT}$5) est une cons\'equence de (KMT5). Enfin $\tilde s_\alpha(r).\g x_\gamma(r').\tilde s_\alpha(r)^{-1}=\tilde s_\alpha.\alpha^*(r^{-1}).\g x_\gamma(r').\alpha^*(r).\tilde s_\alpha^{-1}=\tilde s_\alpha.\g x_\gamma(\gamma(\alpha^*(r^{-1})).r').\tilde s_\alpha^{-1}=\tilde s_\alpha.\g x_\gamma(r^{-\gamma(\alpha^\vee)}.r').\tilde s_\alpha^{-1}=\g x_\delta(\epsilon.r^{-\gamma(\alpha^\vee)}.r')$, d'o\`u ($\overline{\mathrm KMT}$7).

\par On a ainsi montr\'e 1) et que les relations (KMT4), ($\overline{\mathrm KMT}$6) sont v\'erifi\'ees dans $\g G_\shs(k)$. Inversement il faut montrer que (KMT5), (KMT6) et (KMT7) sont des cons\'equences de (KMT4), ($\overline{\mathrm KMT}$5), ($\overline{\mathrm KMT}$6), ($\overline{\mathrm KMT}$7) et ($\overline{\mathrm KMT}$8). La relation (KMT7) est un cas particulier de ($\overline{\mathrm KMT}$7), (KMT6) d\'ecoule de ($\overline{\mathrm KMT}$6) et de la d\'efinition de $\alpha^*$. Pour $\alpha$ simple et $t\in\g T_\shs(k)$, on a : $t^{-1}.\tilde s_\alpha.t.\tilde s_\alpha^{-1}=t^{-1}.\g x_{\alpha}(1).\g x_{-\alpha}(1).\g x_{\alpha}(1).t.\tilde s_\alpha^{-1}=\g x_{\alpha}(\alpha(t)^{-1}).\g x_{-\alpha}(\alpha(t)).\g x_{\alpha}(\alpha(t)^{-1}).\tilde s_\alpha^{-1}=\tilde s_\alpha(\alpha(t)^{-1}).\tilde s_\alpha^{-1}=\tilde s_\alpha.\alpha^*(\alpha(t)).\tilde s_\alpha^{-1}=\alpha^*(\alpha(t)^{-1})=\alpha^\vee(\alpha(t)^{-1})=t^{-1}.s_\alpha(t)$ ; en effet pour $t=\lambda(r)$ avec $\lambda\in Y$ et $r\in k^*$ on a : $s_\alpha(\lambda(r))=(\lambda-\alpha(\lambda)\alpha^\vee)(r)=\lambda(r).\alpha^\vee(r^{-\alpha(\lambda)})=\lambda(r).\alpha^\vee(\alpha(\lambda(r))^{-1})$. Ainsi (KMT5) est satisfaite.

\par 3) On a vu en \ref{1.6}.1 que $\g T_\shs$ s'injecte dans $\g G_\shs$, la relation ($\overline{\mathrm KMT}$6) n'induit donc aucun quotient dans $\g T_\shs$. Par contre, comme $\g T_A$ est engendr\'e par les $\alpha^*$, cette relation se traduit par un homomorphisme $\psi\,:\,\g T_A\rightarrow\g T_\shs$ ; celui-ci est compatible avec les morphismes $\gamma\,:\,\g T_A(k)\rightarrow k^*$ et $\gamma\,:\,\g T_\shs(k)\rightarrow k^*$ (pour $\gamma\in\Phi$), d'apr\`es ($\overline{\mathrm KMT}$8c) et la relation correspondante dans $\g T_\shs$.
 Ainsi la relation ($\overline{\mathrm KMT}$4) (\ref{1.7}.4) montre que le noyau $\g Z_\shs$ est central dans $\g{St}_A$. La relation (KMT4) montre que $\g G_\shs$ est quotient du produit semi-direct indiqu\'e par des relations induites par ($\overline{\mathrm KMT}$6). La relation ($\overline{\mathrm KMT}$4) montre que ce quotient se limite \`a l'action antidiagonale de $\g T_A/\g Z_\shs$.
\end{proof}

\begin{coro}\label{1.9}

\par 1) Pour le SGR simplement connexe $\shs_A$, on a $\g T_A=\g T_{\shs_A}$ et $\overline{\g{St}}_A=\g G_{\shs_A}$, que l'on notera aussi $\g G_A$ ou $\g G^A$.

\par 2) Si $k$ est un corps, l'homomorphisme canonique de $\g U_{\shs_A}^{\pm{}}(k)$ dans $\g U_{\shs}^{\pm{}}(k)$ est un isomorphisme.

\end{coro}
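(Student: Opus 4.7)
\emph{Plan de démonstration.} The overall strategy is to prove part 1) by showing that the homomorphism $\psi : \g T_A \to \g T_{\shs_A}$ introduced in Proposition \ref{1.8}.3 is in fact an isomorphism of functors, and then to deduce part 2) by combining this identification with the Borel decomposition recorded in \ref{1.6}.

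For part 1), I would begin with the observation that the defining family $(\alpha_i^\vee)_{i \in I}$ is by construction a $\Z$-basis of $Y_A$, so $\g T_{\shs_A}(k) = Y_A \otimes_\Z k^* \cong \bigoplus_{i \in I} k^*$, the $i$-th coordinate being given by $\overline{\alpha_i}$. Since $\psi$ sends $\alpha_i^*(r) \mapsto \alpha_i^\vee(r)$, one obtains a natural factorization
\[ \bigoplus_{i \in I} k^* \twoheadrightarrow \g T_A(k) \xrightarrow{\psi} \g T_{\shs_A}(k) \cong \bigoplus_{i \in I} k^*, \]
where the first arrow $(r_i) \mapsto \prod_i \alpha_i^*(r_i)$ is surjective by $(\overline{\mathrm{KMT}}8)$ and the composition is the identity. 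Hence all three groups are naturally isomorphic and in particular $\g Z_{\shs_A} = \ker \psi$ is trivial. Proposition \ref{1.8}.3 then describes $\g G_{\shs_A}$ as the quotient of $\overline{\g{St}}_A \rtimes \g T_{\shs_A}$ by the antidiagonal copy of $\g T_A \simeq \g T_{\shs_A}$, and a direct check shows that this quotient is canonically isomorphic to $\overline{\g{St}}_A$ itself (the $\g T_{\shs_A}$ factor being absorbed against $\psi(\g T_A) = \g T_{\shs_A}$).

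For part 2), I would use the inclusions $\alpha_i^\vee \in Y$ to construct the natural morphism of SGR $\shs_A \to \shs$, the required compatibility $\overline{\alpha_j} \circ \varphi = \overline{\alpha_j^A}$ being exactly the defining relation $\overline{\alpha_j}(\alpha_i^\vee) = a_{i,j}$. This induces a morphism of functors $\g G_{\shs_A} \to \g G_\shs$ which is the identity on each $\g U_\alpha$, $\alpha \in \Phi$, giving immediately surjectivity of $\g U_{\shs_A}^\pm(k) \to \g U_\shs^\pm(k)$. For injectivity, I identify $\g G_{\shs_A}$ with $\overline{\g{St}}_A$ via part 1), and recall from Proposition \ref{1.8}.3 that the kernel of $\overline{\g{St}}_A \to \g G_\shs$ is the central subgroup $\g Z_\shs$, lying inside $\g T_A = \g T_{\shs_A}$. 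Over a field $k$, item 5 of \ref{1.6} yields the semidirect product decomposition $\g B_{\shs_A}^\pm(k) = \g T_{\shs_A}(k) \ltimes \g U_{\shs_A}^\pm(k)$, so $\g U_{\shs_A}^\pm(k) \cap \g T_{\shs_A}(k) = \{1\}$ and the restriction of the morphism to $\g U_{\shs_A}^\pm(k)$ is injective.

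The main point where care is needed is the first step: one must track the convention for the antidiagonal embedding so that the reduction of the quotient in Proposition \ref{1.8}.3 to the pure Steinberg functor $\overline{\g{St}}_A$ really takes place (and does not leave a twisted extension). Once $\psi$ is known to be an isomorphism, however, this is an elementary verification in the semidirect product, and everything else in the argument for part 2) is a routine application of the structure results already collected in \ref{1.6}.
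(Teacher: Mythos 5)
Your proof is correct and follows essentially the same route as the paper: you identify $\g T_A(k)$ with $\g T_{\shs_A}(k)$ using that the $\alpha_i^\vee$ form a $\Z$-basis of $Y_A$, then absorb the torus in the presentation of Proposition \ref{1.8} (the ``direct check'' you flag is exactly Remark \ref{1.7}.4, namely that the (KMT4)-action coincides with conjugation by $\g T_A$ inside $\overline{\g{St}}_A$), and for 2) you use, as the paper does, the semidirect decomposition $\g B_{\shs_A}^\pm(k)=\g T_{\shs_A}(k)\ltimes\g U_{\shs_A}^\pm(k)$ from \ref{1.6}.5 so that the central kernel $\g Z_\shs\subset\g T_A=\g T_{\shs_A}$ meets $\g U_{\shs_A}^\pm(k)$ trivially. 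One harmless slip: the $i$-th coordinate of $\g T_{\shs_A}(k)\cong\bigoplus_{i\in I}k^*$ is given by the basis dual to $(\alpha_i^\vee)$, not by $\overline{\alpha_i}$ (since $\overline{\alpha_j}(\alpha_i^\vee)=a_{i,j}$), but your argument never actually uses that description.
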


\begin{proof} 1) Comme $\g T_{\shs_A}(k)$ est produit direct des groupes $\alpha^\vee(k^*)$ (pour $\alpha$ racine simple) et s'injecte dans $\g G_{\shs_A}(k)$, la relation ($\overline{\mathrm KMT}$6) permet d'identifier $\g T_A(k)$ et $\g T_{\shs_A}(k)$. Alors (KMT4) se r\'eduit \`a ($\overline{\mathrm KMT}$4) dont on sait qu'il est v\'erifi\'e dans $\overline{\g{St}}_A(k)$ (\ref{1.7}.4), on a donc bien l'identification propos\'ee.

\par 2) On sait dans $\g G_{\shs_A}(k)$ que $\g T_{\shs_A}(k)$ et $\g U_{\shs_A}^{\pm{}}(k)$ forment un produit semi-direct (\ref{1.6}.5) le quotient par $\g Z_\shs(k)$ n'affecte donc pas $\g U_{\shs_A}^{\pm{}}(k)$.
\end{proof}

\subsection{Fonctorialit\'e}\label{1.10}

\par Si $\varphi\,:\,\shs\rightarrow\shs'$ est une extension commutative de SGR, la proposition \ref{1.8} permet de d\'efinir un morphisme $\g G_\varphi\,:\,\g G_\shs\rightarrow\g G_{\shs'}$ de foncteurs. En effet $\shs$ et $\shs'$ correspondent \`a la m\^eme matrice $A$, donc les m\^emes $\Phi$, $\g g_A$, ${\g{St}}_A(k)$ et $\overline{\g{St}}_A(k)$.
 La compatibilit\'e de $\varphi$ avec les racines et coracines permet  de d\'efinir $\g G_\varphi(k)$ comme le passage au quotient de ${\mathrm Id}*\g T_\varphi(k)$. Pour tout anneau $k$, les homomorphismes $\g G_\varphi(k)$ et $\g T_\varphi(k)$ ont m\^eme noyau; le groupe $\g G_{\shs'}(k)$ est engendr\'e par $\g T_{\shs'}(k)$ et le sous-groupe (distingu\'e) image de $\g G_\varphi(k)$  (\ref{1.8}.3);
on a $\g N_{\shs'}(k)=\varphi(\g N_{\shs}(k)).\g T_{\shs'}(k)$ .
De plus  l'image r\'eciproque de $\g N_{\shs'}(k)$ (resp. $\g T_{\shs'}(k)$) dans $\g G_{\shs}(k)$ est \'egale \`a $\g N_{\shs}(k)$ (resp. $\g T_{\shs}(k)$).
 Le noyau Ker$\g G_\qf$ est central dans $\g G_\shs$ (mais trivial si $\shs$ est un sous-SGR).
Par construction $\g G_\varphi$ induit un isomorphisme de $\g U^+_{\shs}(k)$ sur $\g U^+_{\shs'}(k)$ et de $\g U^-_{\shs}(k)$ sur $\g U^-_{\shs'}(k)$, pour tout corps $k$ (\ref{1.9}.2).

\par Sur un corps $k$, on voit facilement qu'il est possible d'identifier les immeubles combinatoires  $\shi^v_\pm$ de $\g G_{\shs}$ et $\g G_{\shs'}$ avec leurs facettes et leurs appartements. Les actions de $\g G_{\shs}(k)$ et $\g G_{\shs'}(k)$ sont compatibles (via $\g G_\varphi(k)$).

\par On va \'etudier quelques cas particuliers int\'eressants.

\begin{coro}\label{1.11}

\par Si le SGR $\shs'$ est extension semi-directe (resp. directe) du SGR $\shs$, alors $\g G_{\shs'}$ est produit semi-direct (resp. direct) de $\g G_{\shs}$ par un tore $\g T_{Z}$ o\`u $Z$ est un suppl\'ementaire de $Y$ dans $Y'$ et, pour un anneau $k$,  l'action de $\g T_{Z}(k)$ sur $\g G_{\shs}(k)$ est donn\'ee par les relations \quad $t.t'.t^{-1}=t'$ \;et\; $t.\g x_\alpha(r).t^{-1}=\g x_\alpha(\alpha(t).r)$\quad pour $t\in \g T_{Z}(k)$, $t'\in \g T_{\shs}(k)$, $\alpha\in\Phi$ et $r\in k$.

\end{coro}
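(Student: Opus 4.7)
The plan is to combine the presentation of $\g G_{\shs'}$ from Proposition \ref{1.8}(2) with the injectivity and preimage statements already recorded in \ref{1.10}.

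First, I would fix a direct complement $Z$ of $\qf(Y)$ in $Y'$, so that $Y'=\qf(Y)\oplus Z$; the corresponding factorisation of the tori reads $\g T_{\shs'}=\g T_\shs\times\g T_Z$ as subfunctors of $\g T_{\shs'}$. Since $\shs$ is a sub-SGR of $\shs'$, the statement recalled in \ref{1.10} gives $\mathrm{Ker}(\g G_\qf)=1$, so I may identify $\g G_\shs$ with its image in $\g G_{\shs'}$; moreover the preimage of $\g T_{\shs'}(k)$ in $\g G_\shs(k)$ equals $\g T_\shs(k)$.

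Next, I would apply Proposition \ref{1.8}(2) to $\shs'$: because $I=I'$ we have $A=A'$, hence the same $\Phi$ and the same $\overline{\g{St}}_A$, and $\g G_{\shs'}(k)$ is the quotient of $\overline{\g{St}}_A(k)*\g T_{\shs'}(k)$ by (KMT4) and ($\overline{\mathrm{KMT}}$6). The relation ($\overline{\mathrm{KMT}}$6), $\qa_i^*(r)=\qa_i^\vee(r)$, only involves $\qa_i^\vee\in\qf(Y)$, so it is already the relation appearing in the presentation of $\g G_\shs(k)$ and does not touch $\g T_Z(k)$. Relation (KMT4) splits along the decomposition $\g T_{\shs'}=\g T_\shs\times\g T_Z$: for $t\in\g T_\shs(k)$ it is the defining relation of $\g G_\shs(k)$, and for $t\in\g T_Z(k)$ it is precisely $t\cdot\g x_\qa(r)\cdot t^{-1}=\g x_\qa(\qa(t)r)$. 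Combined with commutativity of the torus $\g T_{\shs'}$, this exhibits $\g G_{\shs'}(k)$ as being generated by $\g G_\shs(k)$ and $\g T_Z(k)$ with exactly the relations asserted.

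To upgrade this generation to a semi-direct product, I would verify the two missing facts. Normality of $\g G_\shs(k)$ follows because the action of $\g T_Z(k)$ by conjugation preserves the generators $\g x_\qa(r)$ and $\g T_\shs(k)$. The trivial intersection
\[
\g G_\shs(k)\cap\g T_Z(k)\subset\g T_\shs(k)\cap\g T_Z(k)=\{1\}
\]
uses the preimage identity from \ref{1.10} together with the internal product decomposition $\g T_{\shs'}(k)=\g T_\shs(k)\times\g T_Z(k)$. Finally, in the \emph{direct} case we are given $Z\subset\bigcap_{i}\mathrm{Ker}(\overline{\qa_i})$, so $s_i(z)=z-\overline{\qa_i}(z)\,\qa_i^\vee=z$ for every $z\in Z$, hence $W^v$ fixes $Z$ pointwise. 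Since every $\qa\in\Phi$ is of the form $w\,\qa_i$ and the bar map is $W^v$-equivariant, $\overline{\qa}(z)=\overline{\qa_i}(w^{-1}z)=\overline{\qa_i}(z)=0$; equivalently $\qa(t)=1$ for all $\qa\in\Phi$ and $t\in\g T_Z(k)$. Then (KMT4) degenerates to $t\,\g x_\qa(r)\,t^{-1}=\g x_\qa(r)$, so $\g T_Z(k)$ commutes with all generators of $\g G_\shs(k)$ and the semi-direct product becomes direct.

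The only real obstacle is the step that guarantees no additional identification occurs between $\g G_\shs(k)$ and $\g T_Z(k)$ inside $\g G_{\shs'}(k)$; this is controlled entirely by the preimage assertion $\g G_\shs(k)\cap\g T_{\shs'}(k)=\g T_\shs(k)$ recorded in \ref{1.10}, which reduces the question to the (trivial) intersection of $\g T_\shs(k)$ and $\g T_Z(k)$ in the torus.
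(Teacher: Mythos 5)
Your proof is correct and takes essentially the same route as the paper: both apply the presentation of Proposition \ref{1.8}(2), split $\g T_{\shs'}=\g T_{\shs}\times\g T_{Z}$ along $Y'=\varphi(Y)\oplus Z$, and observe that ($\overline{\mathrm{KMT}}$6) only involves $\g T_{\shs}$ while (KMT4) decomposes into the defining relations of $\g G_{\shs}$ and the conjugation action of $\g T_{Z}$ on the $\g x_\alpha(r)$. Your additional verifications (trivial intersection and injectivity via the facts recorded in \ref{1.10}, and the $W^v$-invariance argument showing $\overline\alpha\vert_Z=0$ for all $\alpha\in\Phi$ in the direct case) merely make explicit what the paper's one-line proof leaves implicit.
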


\begin{proof}

\par On a $Y'=Y\oplus Z$, donc $\g T_{Y'}=\g T_{Y}\times\g T_{Z}\subset\g G_{\shs'}$. Dans $\overline{\g{St}}_A*(\g T_{Y}\times\g T_{Z})$ la relation ($\overline{\mathrm KMT}$6) n'implique que $\overline{\g{St}}_A*\g T_{Y}$ et (KMT4) d\'ecrit l'action de $\g T_{Z}$ sur $\overline{\g{St}}_A$, d'o\`u le r\'esultat.
\end{proof}

\begin{coro}\label{1.12}

\par Si le SGR $\shs$ est extension centrale torique du SGR $\shs'$, alors $\g G_{\shs'}$ est quotient (comme foncteur en groupes) de $\g G_{\shs}$ par un sous-tore facteur direct $\g T'$ de $\g T_\shs$, central dans $\g G_{\shs}$. Si l'extension centrale est scind\'ee, alors $\g G_{\shs}$ est isomorphe au produit direct de $\g G_{\shs'}$ et $\g T'$.

\end{coro}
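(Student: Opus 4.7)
The plan is to combine the general observations of \ref{1.10} on morphisms of SGRs with the structure given in Corollary~\ref{1.11}, after unpacking what the hypothesis ``extension centrale torique'' means concretely. Write $\varphi : \shs \to \shs'$ for the given morphism. By \ref{1.1}.7, $\varphi : Y \to Y'$ is surjective with $I = I'$; since $Y'$ is $\Z$-free, $\mathrm{Ker}(\varphi)$ is automatically a direct summand of $Y$, and by \ref{1.2} the sub-torus $\g T' := \g T_{\mathrm{Ker}(\varphi)}$ is a direct factor of $\g T_\shs$, sitting inside $\g G_\shs$ via \ref{1.6}.

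Next I would identify $\g G_{\shs'}$ with the quotient of $\g G_\shs$ by $\g T'$ as a functor in groups. By \ref{1.10}, the kernel of $\g G_\varphi(k)$ coincides with the kernel of $\g T_\varphi(k)$, which in the toric case (see \ref{1.2}) is exactly $\g T'(k)$; the same paragraph of \ref{1.10} already tells us that this kernel is central in $\g G_\shs$. Surjectivity of $\g G_\varphi$ at the functor-in-groups level follows because \ref{1.10} already provides isomorphisms $\g U_\shs^\pm \simeq \g U_{\shs'}^\pm$ on fields, while in the toric case \ref{1.2} gives that $\g T_\varphi$ is functorially surjective, so that $\g T_{\shs'}(k)$ lies in the image of $\g G_\varphi(k)$; combined with the fact (\ref{1.10}) that $\g G_{\shs'}(k)$ is generated by $\g T_{\shs'}(k)$ and the image of $\g G_\varphi(k)$, this yields the desired surjectivity.

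For the scind\'ee case, choose a supplement $Y_0 \supset Q^\vee$ of $\mathrm{Ker}(\varphi)$ in $Y$; the inclusion $Q^\vee \subset Y_0$ is precisely what permits one to define the SGR $\shs^0 := (A, Y_0, (\overline{\alpha_i}|_{Y_0})_{i \in I}, (\alpha_i^\vee)_{i \in I})$. Then $\shs$ is a semi-direct extension of $\shs^0$ in the sense of \ref{1.1}.7, and it is in fact \emph{direct}: for every $i \in I$ and $t \in \mathrm{Ker}(\varphi)$ one has $\overline{\alpha_i}(t) = \overline{\alpha'_i}(\varphi(t)) = 0$, so $\mathrm{Ker}(\varphi) \subset \bigcap_{i \in I} \mathrm{Ker}(\overline{\alpha_i})$. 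Corollary~\ref{1.11} then yields at once $\g G_\shs \simeq \g G_{\shs^0} \times \g T'$. Finally, the restriction $\varphi|_{Y_0} : Y_0 \to Y'$ is an isomorphism of SGRs (bijection of underlying modules from $Y = Y_0 \oplus \mathrm{Ker}(\varphi)$, and compatibility with simple roots and coroots since $I = I'$), inducing $\g G_{\shs^0} \simeq \g G_{\shs'}$. I do not anticipate a substantial obstacle; the main step to be careful with is directness of the semi-direct extension in the split case, which boils down to the observation that $\overline{\alpha_i}$ factors through $\varphi$.
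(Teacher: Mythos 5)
Your proof is correct and follows essentially the same route as the paper: the first assertion comes from identifying $\mathrm{Ker}\,\g G_\varphi=\g T_{\mathrm{Ker}\varphi}$, central, using the presentation of \ref{1.8} (which you access via the facts already packaged in \ref{1.10} and \ref{1.2}), and the split case is handled exactly as in the paper by choosing a supplement of $\mathrm{Ker}\varphi$ containing $Q^\vee$, checking the extension is directe, and applying \ref{1.11}. Your explicit verification that $\mathrm{Ker}\varphi\subset\bigcap_i\mathrm{Ker}(\overline{\alpha_i})$ is just a spelled-out version of the step the paper leaves implicit.
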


\begin{proof}

\par Comme $\varphi\,:\,Y\rightarrow Y'$ est surjective, $Z={\mathrm Ker}\varphi$ est facteur direct dans $Y$ et contenu dans les noyaux Ker$\beta$ pour $\beta\in\Phi$. On pose $\g T'=\g T_Z$ et la premi\`ere assertion est alors une cons\'equence directe de \ref{1.8}. Si l'extension est scind\'ee, $Z$ a un suppl\'ementaire $Y''$ contenant les coracines et $\varphi$ induit un isomorphisme de $\shs''$ sur $\shs'$, d'o\`u le r\'esultat.
\end{proof}

\begin{coro}\label{1.13}

\par Si $\varphi\,:\,\shs\rightarrow\shs'$ est une extension centrale finie, alors pour un anneau $k$, $\g T_\varphi(k)\,:\,\g T_\shs(k)\rightarrow\g T_{\shs'}(k)$ a pour noyau le groupe fini $\g Z(k)=\{\,z\in\g T_\shs(k)\mid\chi\circ\g T_\varphi(z)=1\;\forall\chi\in X'\,\}$ contenu dans tous les noyaux de racines de $\Phi$. Ce groupe $\g Z(k)$ est central dans $\g G_\shs(k)$, le quotient $\g G_\shs(k)/\g Z(k)$ est un sous-groupe distingu\'e de $\g G_{\shs'}(k)$ et $\g G_{\shs'}(k)=(\g G_\shs(k)/\g Z(k)).\g T_{\shs'}(k)$.

\end{coro}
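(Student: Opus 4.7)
Le plan est essentiellement d'assembler les r\'esultats d\'ej\`a \'etablis en \ref{1.10} avec une analyse \'el\'ementaire des tores par dualit\'e. Il n'y a en fait aucune construction nouvelle \`a faire.

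D'abord, je d\'ecrirais pr\'ecis\'ement $\g Z(k)$. Comme l'extension $\varphi\,:\,\shs\rightarrow\shs'$ est centrale finie (\ref{1.1}.7), $\varphi\,:\,Y\rightarrow Y'$ est injective entre $\Z$-modules libres de m\^eme rang, donc $Y'/\varphi(Y)$ est un groupe ab\'elien fini. En dualisant, $\varphi^*\,:\,X'\rightarrow X$ est injective \`a conoyau fini. Via les identifications $\g T_\shs(k)={\mathrm{Hom}}(X,k^*)$ et $\g T_{\shs'}(k)={\mathrm{Hom}}(X',k^*)$, l'homomorphisme $\g T_\varphi(k)$ est la pr\'ecomposition par $\varphi^*$, et son noyau s'identifie \`a ${\mathrm{Hom}}(X/\varphi^*(X'),k^*)$, qui est fini. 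Ce noyau co\"{\i}ncide avec la partie $\g Z(k)$ d\'ecrite dans l'\'enonc\'e, puisque la condition $\chi\circ\g T_\varphi(z)=1$ pour tout $\chi\in X'$ signifie pr\'ecis\'ement que le caract\`ere $f_z$ de $X$ associ\'e \`a $z$ s'annule sur $\varphi^*(X')$.

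Ensuite, je v\'erifierais que $\g Z(k)$ est contenu dans tous les noyaux des racines $\alpha\in\Phi$. La d\'efinition \ref{1.1}.6 d'un morphisme de SGR donne ${\overline{\alpha_i}}={\overline{\alpha'_i}}\circ\varphi=\varphi^*({\overline{\alpha'_i}})$ ; les racines simples sont donc dans $\varphi^*(X')$. Comme le groupe de Weyl $W^v$ agit de mani\`ere compatible avec $\varphi^*$ et stabilise $\Phi$, toute racine r\'eelle est dans $\varphi^*(X')$, donc triviale sur $\g Z(k)$.

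Enfin, pour la centralit\'e et la structure finale, j'utiliserais (KMT4) : comme $\alpha$ est triviale sur $\g Z(k)$ pour tout $\alpha\in\Phi$, $\g Z(k)$ commute avec chaque $\g U_\alpha(k)$ ; \'etant contenu dans le tore ab\'elien $\g T_\shs(k)$, il commute aussi avec celui-ci. Puisque $\g G_\shs(k)$ est engendr\'e par $\g T_\shs(k)$ et les $\g U_\alpha(k)$ (proposition \ref{1.8}), $\g Z(k)$ en est central. Les deux derni\`eres assertions sont d\'ej\`a contenues dans \ref{1.10} : le noyau de $\g G_\varphi(k)$ \'egale celui de $\g T_\varphi(k)$, \ie $\g Z(k)$ ; l'image de $\g G_\varphi(k)$ est distingu\'ee dans $\g G_{\shs'}(k)$ ; et $\g G_{\shs'}(k)$ est engendr\'e par cette image et $\g T_{\shs'}(k)$. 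La normalit\'e de l'image fait du produit ensembliste un sous-groupe, d'o\`u $\g G_{\shs'}(k)=(\g G_\shs(k)/\g Z(k)).\g T_{\shs'}(k)$. Il n'y a pas d'obstacle r\'eel ici : le point le plus d\'elicat est la finitude de $\g Z(k)$, qui repose sur l'\'egalit\'e des rangs dans la d\'efinition de l'extension centrale finie ; tout le reste n'est qu'une r\'e\'ecriture des propri\'et\'es de fonctorialit\'e d\'ej\`a \'etablies.
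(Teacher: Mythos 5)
Votre d\'emonstration est correcte et suit pour l'essentiel la m\^eme voie que l'article : la description du noyau $\g Z(k)$ par dualit\'e des tores reprend \ref{1.2}, la centralit\'e s'obtient via (KMT4) (\'etendue \`a tout $\alpha\in\Phi$ comme le note la remarque de \ref{1.6}), et les assertions finales (\'egalit\'e des noyaux de $\g G_\varphi(k)$ et $\g T_\varphi(k)$, image distingu\'ee, engendrement avec $\g T_{\shs'}(k)$) sont exactement les faits de fonctorialit\'e de \ref{1.8}/\ref{1.10} que la preuve du texte mobilise elle aussi, sous une forme \`a peine plus explicite (relation ($\overline{\mathrm KMT}$6) et injectivit\'e de $\g T_{\shs'}(k)$). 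La seule r\'eserve possible --- la finitude de $\g Z(k)\simeq{\mathrm{Hom}}(X/\varphi^*(X'),k^*)$ pour un anneau $k$ arbitraire --- figure telle quelle dans l'\'enonc\'e et dans \ref{1.2}, donc votre argument n'est pas en d\'efaut par rapport \`a celui de l'article.
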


\begin{NB} Sauf si $k$ est un corps alg\'ebriquement clos, $\g G_{\shs'}(k)$ n'est pas forc\'ement \'egal \`a $\g G_\shs(k)/\g Z(k)$, car $\g T_\varphi(k)$ n'est pas forc\'ement surjective, voir \ref{1.2}.

\end{NB}

\begin{proof}

\par L\`a encore ($\overline{\mathrm KMT}$6) dans $\g G_{\shs'}$ est cons\'equence de la m\^eme relation dans $\g G_{\shs}$ et (KMT4) montre que $\g T_{\shs'}(k)$ normalise l'image de $\g G_{\shs}(k)$. L'injection de $\g T_{\shs'}(k)$ dans $\g G_{\shs'}(k)$ permet de contr\^oler la situation.
\end{proof}


\section{Alg\`ebre enveloppante enti\`ere}\label{s2}

\par Cette alg\`ebre "enveloppante" est la base de la construction par J. Tits de son groupe de Kac-Moody \'etudi\'e ci-dessus. On va  prouver ci-dessous un th\'eor\`eme de Poincar\'e-Birkhoff-Witt et introduire des "exponentielles tordues" dans le compl\'et\'e de cette alg\`ebre.

\subsection{Rappels}\label{2.1} \cf \cite{T-87b}, \cite{Ry-02a}, \cite{M-88a}, \cite[VIII {\S{}} 12]{B-Lie}.

\medskip
\par 1) Dans l'alg\`ebre enveloppante $\shu_\C(\g g_\shs)$ de l'alg\`ebre de Lie complexe $\g g_\shs$, J. Tits
 \cite[{\S{}}  4]{T-87b} a introduit la sous$-\Z-$alg\`ebre $\shu_\shs$ engendr\'ee par les \'el\'ements suivants:
 \par\quad $e_i^{(n)}=\left.\begin{array}{c}e_i^n \\\hline n!\end{array}\right.$\qquad pour $i\in I$ et $n\in\N$,
 \par\quad $f_i^{(n)}=\left.\begin{array}{c}f_i^n \\\hline n!\end{array}\right.$\qquad pour $i\in I$ et $n\in\N$,
 \par\quad $\left(\begin{array}{c}h \\ n\end{array}\right)$ \qquad pour $h\in Y$ et $n\in\N$.

 \par La sous-alg\`ebre $\shu_\shs^+$ (resp. $\shu_\shs^-$, $\shu_\shs^0$) est engendr\'ee par les \'el\'ements du premier type (resp. du second type, du troisi\`eme type). On oubliera souvent l'indice $\shs$ dans les notations pr\'ec\'edentes, d'ailleurs $\shu_\shs^+$ et $\shu_\shs^-$ ne d\'ependent que de $A$ et non de $\shs$.

 \par On sait que $\shu_\shs$ (resp. $\shu_\shs^+$, $\shu_\shs^-$, $\shu_\shs^0$) est une $\Z-$forme de l'alg\`ebre enveloppante correspondante $\shu_\C(\g g_\shs)$ (resp $\shu_\C(\g n^+_\shs)$, $\shu_\C(\g n^-_\shs)$, $\shu_\C(\g h_\shs)$) et que l'on a la d\'ecomposition unique $\shu_\shs=\shu_\shs^+.\shu_\shs^0.\shu_\shs^-$.

 \medskip
 \par 2) La graduation de l'alg\`ebre de Lie $\g g_\shs$ par $Q$ induit une graduation de l'alg\`ebre associative $\shu_\shs$ par $Q$; le sous-espace de poids $\alpha\in Q$ est not\'e $\shu_{\shs\alpha}$. La sous-alg\`ebre $\shu_\shs^+$ (resp. $\shu_\shs^-$) est gradu\'ee par $Q^+$ (resp. $Q^-$), la sous-alg\`ebre $\shu_\shs^0$ est de poids $0$.

 \par L'alg\`ebre $\shu_\shs$ est un sous$-\shu_\shs-$module pour la repr\'esentation adjointe $ad$ de $\shu_\C(\g g_\shs)$ sur elle-m\^eme. En effet $(ad\,e_i^{(n)})(u)=\left.\begin{array}{c}(ad(e_i))^n \\\hline n!\end{array}\right. .(u)=\sum_{p+q=n}\,(-1)^q.e_i^{(p)}.u.e_i^{(q)}$ et, pour $h\in Y$, $ad\left(\begin{array}{c}h \\ n\end{array}\right)$ agit sur $\shu_\C(\g g_\shs)_\alpha$ par multiplication par $\left(\begin{array}{c}\alpha(h) \\ n\end{array}\right)\in\Z$.

 \par La filtration de $\shu_\C(\g g_\shs)$ induit une filtration sur $\shu_\shs$. Le niveau $1$ de cette filtration est donc somme directe de $\Z$ et de $\g g_{\shs\Z}=\g g_{\shs}\cap\shu_\shs$. L'alg\`ebre de Lie $\g g_{\shs\Z}$ est un sous$-\shu_\shs-$module pour l'action adjointe, elle a une d\'ecomposition triangulaire $\g g_{\shs\Z}=\g n^+_{\shs\Z}\oplus Y\oplus\g n^-_{\shs\Z}$ (o\`u $\g n^\pm{}_{\shs\Z}=\g n^\pm{}_{\shs}\cap\shu_\shs^\pm$ et $Y=\g h_{\shs\Z}=\g H_\shs\cap\shu_\shs^0$) et une d\'ecomposition radicielle $\g g_{\shs\Z}=Y\oplus(\oplus_{\alpha\in\Delta}\,\g g_{\shs\alpha\Z})$ (o\`u $\g g_{\shs\alpha\Z}=\g g_{\alpha}\cap\shu_\shs$).

 \medskip
 \par 3) L'alg\`ebre $\shu_\shs$ (ou $\shu_\shs^+$, $\shu_\shs^-$, $\shu_\shs^0$) a une structure de $\Z-$big\`ebre cocommutative, co-inversible, elle est non commutative (sauf $\shu_\shs^0$). Sa comultiplication $\nabla$, sa co-unit\'e $\epsilon$ et sa co-inversion (ou antiautomorphisme principal) $\tau$ respectent la graduation et la filtration: elles sont donn\'ees sur les g\'en\'erateurs par les formules suivantes (pour $i\in I$, $h\in Y$ et $n\in\N$):

\par $\nabla\left(\begin{array}{c}h \\ n\end{array}\right)=\sum_{p+q=n}\,\left(\begin{array}{c}h \\ p\end{array}\right)\otimes\left(\begin{array}{c}h \\ q\end{array}\right)$, $\epsilon\left(\begin{array}{c}h \\ n\end{array}\right)=0$ pour $n>0$

\par\noindent et $\tau\left(\begin{array}{c}h \\ n\end{array}\right)=\left(\begin{array}{c}-h \\ n\end{array}\right)=(-1)^n.\left(\begin{array}{c}h+n-1 \\ n\end{array}\right)=(-1)^n.\sum_{p+q=n}\,\left(\begin{array}{c}n-1 \\ p\end{array}\right)\left(\begin{array}{c}h \\ q\end{array}\right)$

\par $\nabla e_i^{(n)}=\sum_{p+q=n}\,e_i^{(p)}\otimes e_i^{(q)}$, $\epsilon e_i^{(n)}=0$ pour $n>0$ et $\tau e_i^{(n)}=(-1)^n e_i^{(n)}$

\par $\nabla f_i^{(n)}=\sum_{p+q=n}\,f_i^{(p)}\otimes f_i^{(q)}$, $\epsilon f_i^{(n)}=0$ pour $n>0$ et $\tau f_i^{(n)}=(-1)^n f_i^{(n)}$.

\medskip
4) Les automorphismes $s_i^*$ de \ref{1.4} s'\'etendent \`a $\shu_\C(\g g_\shs)$ et stabilisent $\shu_\shs$. On a une action de $W^*$ sur $\shu_\shs$. Ainsi, pour une racine r\'eelle $\alpha$ et un $e_\alpha$ base (sur $\Z$) de $\g g_{\alpha\Z}$, l'\'el\'ement $e_\alpha^{(n)}=\left.\begin{array}{c}e_\alpha^n \\\hline n!\end{array}\right.$ est dans $\shu_\shs$; c'est une base de $\shu_{\shs n\alpha}\cap\shu_\C(\g g_\alpha)$. On a comme ci-dessus $\nabla e_\alpha^{(n)}=\sum_{p+q=n}\,e_\alpha^{(p)}\otimes e_\alpha^{(q)}$, $\epsilon e_\alpha^{(n)}=0$ pour $n>0$ et $\tau e_\alpha^{(n)}=(-1)^n e_\alpha^{(n)}$.

\par L'automorphisme $exp(ad(e_\alpha))$ de $\shu_\C(\g g_\shs)$ stabilise $\shu_\shs$, puisqu'on a $\left.\begin{array}{c}(ad(e_\alpha))^n \\\hline n!\end{array}\right. .(u)=\sum_{p+q=n}\,(-1)^q.e_\alpha^{(p)}.u.e_\alpha^{(q)}$. On note $s^*_\alpha$ l'\'el\'ement  suivant  $exp(ad(e_\alpha)).exp(ad(f_\alpha)).exp(ad(e_\alpha))=exp(ad(f_\alpha)).exp(ad(e_\alpha)).exp(ad(f_\alpha))$; cet \'el\'ement de $W^*$ d\'epend du choix de $e_\alpha$ comme base de $\g g_{\alpha\Z}$, si on change $e_\alpha$ en $-e_\alpha$, on change $s^*_\alpha$ en $(s^*_\alpha)^{-1}$.

\par Plus g\'en\'eralement le foncteur en groupe $\g G_\shs$ admet une repr\'esentation adjointe $Ad$ dans les automorphismes de $\shu_\shs$ respectant sa filtration et donc aussi dans les automorphismes de $\g g_{\shs\Z}$ \cite[9.5]{Ry-02a}. Pour $\alpha\in\Phi$ et $r\in R$, $\g x_\alpha(r)$ agit sur $\shu_{\shs R}=\shu_\shs\otimes R$ comme $\sum_{n\geq0}\,ad(e_\alpha^{(n)})\otimes r^n$; le groupe $\g T_\shs$ agit selon la d\'ecomposition de $\shu_\shs$ ou $\g g_\Z$ selon les poids.

\medskip
\par 5)  L'alg\`ebre $\shu^+$ est gradu\'ee par $Q^+$. On peut la graduer par un {\it degr\'e total} dans $\N$ dont chaque facteur regroupe un nombre fini d'anciens facteurs. Ainsi sa compl\'etion positive (pour le degr\'e total) est ${\widehat\shu}^+=\prod_{\alpha\in Q^+}\,\shu^+_\alpha$; c'est une alg\`ebre pour le prolongement naturel de la multiplication de $\shu^+$. On peut de m\^eme consid\'erer l'alg\`ebre ${\widehat\shu}_k^+=\prod_{\alpha\in Q^+}\,\shu^+_\alpha\otimes k$ (en g\'en\'eral diff\'erente de ${\widehat\shu}^+\otimes k$) pour tout anneau $k$.

\par On peut consid\'erer la compl\'etion positive ${\widehat\shu}^p$ de $\shu$ selon le degr\'e total (dans $\Z$); c'est une alg\`ebre contenant ${\widehat\shu}^+$.  On d\'efinit de m\^eme ${\widehat\shu}^p_k$ qui contient ${\widehat\shu}_k^+$ . L'alg\`ebre ${\widehat\shu}^p_k$ contient l'alg\`ebre de Lie  ${\widehat{\g g}}^p_k=(\oplus_{\alpha<0}\,\g g_{\alpha k})\oplus\g h_k\oplus(\prod_{\alpha>0}\,\g g_{\alpha k})$.

\par L'action adjointe $ad$ de $\shu$ sur $\shu$ respecte la graduation. On peut donc la prolonger en une action de ${\widehat\shu}^p$  (ou ${\widehat\shu}^p_k$) sur elle m\^eme. L'alg\`ebre de Lie  ${\widehat{\g g}}^p_k$ est un sous$-ad({\widehat\shu}^p_k)-$module.

\par Les r\'esultats pr\'ec\'edents et suivants sont bien s\^ur encore valables si on remplace $\Delta^+$ par $\Delta^-$ ou par un conjugu\'e de $\Delta^\pm$ par un \'el\'ement du groupe de Weyl $W^v$. On peut en particulier d\'efinir des compl\'etions n\'egatives ${\widehat\shu}^-$, ${\widehat\shu}^n$ ou ${\widehat{\g g}}^n$ (selon l'oppos\'e du degr\'e total).

\begin{prop}\label{2.2} L'alg\`ebre de Lie $\g g_\Z$ (resp. $\g n^+_\Z$, $\g n^-_\Z$) est le $\shu-$module (resp. $\shu^+-$module, $\shu^--$module) engendr\'e par $\g h_{\shs\Z}=Y$ et les $e_i$, $f_i$ (resp. par les $e_i$, par les $f_i$) pour $i\in I$ (pour l'action adjointe de $\shu$). L'alg\`ebre de Lie $\g g_\Z$  contient la sous-alg\`ebre de Lie $\g g^1_\Z$ engendr\'ee par $\g h_{\shs\Z}$ et les $e_i$, $f_i$ pour $i\in I$.

\end{prop}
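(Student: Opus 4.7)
La derni\`ere assertion est imm\'ediate: $\g g_\Z=\g g_\shs\cap\shu_\shs$ est stable par crochet (puisque $\shu_\shs$ est une sous-alg\`ebre associative de $\shu_\C(\g g_\shs)$) et contient $Y$, les $e_i$ et les $f_i$; elle contient donc la sous-alg\`ebre de Lie $\g g^1_\Z$ engendr\'ee par ces \'el\'ements.

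Pour l'assertion principale, je poserai $M$ le sous-$\shu-$module de $\shu_\shs$ pour l'action adjointe engendr\'e par $Y\cup\{e_i,f_i\mid i\in I\}$. D'apr\`es \ref{2.1}.2, $\g g_\Z$ est un sous-$\shu-$module, donc $M\subset\g g_\Z$; il s'agit d'\'etablir l'inclusion inverse. En utilisant la graduation par $Q$, on obtient $M=\bigoplus_\alpha M_\alpha$ avec $M_\alpha\subset\g g_{\shs\alpha\Z}$ et $M_0=Y=\g g_{\shs0\Z}$. Il reste \`a montrer que $M_\alpha=\g g_{\shs\alpha\Z}$ pour chaque racine $\alpha\in\Delta$.

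Pour une racine r\'eelle $\alpha\in\Phi$, je proc\'ederai par r\'ecurrence sur la hauteur, en supposant $\alpha>0$ (le cas $\alpha<0$ \'etant sym\'etrique par \'echange des $e_i$ et des $f_i$). Le cas $\alpha=\alpha_i$ est imm\'ediat. Sinon, je choisirai $i\in I$ tel que $m=\alpha(\alpha_i^\vee)>0$, de sorte que $s_i(\alpha)=\alpha-m\alpha_i$ soit une racine r\'eelle positive de hauteur strictement plus petite. L'hypoth\`ese de r\'ecurrence donne $e_{s_i(\alpha)}\in M$, et la compatibilit\'e des bases $(e_\beta)_{\beta\in\Phi}$ avec l'action de $W^*$ (\ref{2.1}.4) fournit la relation $ad(e_i^{(m)})(e_{s_i(\alpha)})=\pm e_\alpha$, d'o\`u $\g g_{\shs\alpha\Z}=\Z e_\alpha\subset M$.

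Le point le plus d\'elicat sera le cas des racines imaginaires, o\`u $\g g_{\shs\alpha\Z}$ n'est plus n\'ecessairement de rang~$1$. Pour $\alpha\in\Delta_{im}^+$, on a $\g g_{\shs\alpha\Z}\subset\shu^+_\alpha$, et un \'el\'ement de $\shu_\shs$ de poids non nul appartient \`a $\g g_\shs$ si et seulement s'il est primitif pour la comultiplication $\nabla$ (c'est la caract\'erisation fournie par la structure de big\`ebre de \ref{2.1}.3). Sachant que $\shu^+$ est engendr\'ee comme $\Z-$alg\`ebre par les puissances divis\'ees $e_i^{(n)}$, l'id\'ee sera d'exprimer chaque \'el\'ement primitif de poids $\alpha$ comme combinaison $\Z-$lin\'eaire d'it\'er\'ees $ad(e_{i_1}^{(n_1)})\cdots ad(e_{i_k}^{(n_k)})(e_i)$, qui appartiennent \`a $M$ par construction. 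Les assertions pour $\g n^+_\Z$ et $\g n^-_\Z$ s'obtiendront par le m\^eme argument en travaillant respectivement dans $\shu^+$ ou $\shu^-$ au lieu de $\shu$.
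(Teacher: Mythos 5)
Votre traitement des racines r\'eelles repose sur une identit\'e fausse en g\'en\'eral : la relation $ad(e_i^{(m)})(e_{s_i(\alpha)})=\pm e_\alpha$ (avec $m=\alpha(\alpha_i^\vee)>0$) ne d\'ecoule pas de la compatibilit\'e des $e_\beta$ avec $W^*$ (celle-ci donne $s_i^*(e_{s_i\alpha})=\pm e_\alpha$, o\`u $s_i^*$ est le produit $exp(ad\,e_i)exp(ad\,f_i)exp(ad\,e_i)$, pas une seule puissance divis\'ee). Si la $\alpha_i$-cha\^{\i}ne passant par $\beta=s_i(\alpha)$ descend encore de $p>0$ crans, le coefficient exact est $\pm\binom{p+m}{m}$ : pour $G_2$ et $\alpha=\alpha_1+2\alpha_2$, le seul indice admissible est $i=2$ et $ad(e_2)(e_{\alpha_1+\alpha_2})=\pm2\,e_{\alpha_1+2\alpha_2}$, de sorte que votre r\'ecurrence ne fournit que $2e_\alpha\in M$, pas $e_\alpha$. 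Ce point se r\'epare ais\'ement : comme $exp(ad\,e_i)=\sum_n ad(e_i^{(n)})$ (somme finie sur tout \'el\'ement fix\'e, par nilpotence locale), chaque $s_i^*$ pr\'eserve le $\shu$-module $M$, et l'\'egalit\'e $e_\alpha=\pm w^*(e_j)$ de \ref{1.4} donne directement $e_\alpha\in M$ pour tout $\alpha\in\Phi$.

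Le d\'efaut s\'erieux est ailleurs : pour les racines imaginaires, qui sont tout l'enjeu de l'\'enonc\'e, vous ne donnez aucune d\'emonstration. Dire que \guillemotleft{} l'id\'ee sera d'exprimer chaque \'el\'ement primitif de poids $\alpha$ comme combinaison $\Z$-lin\'eaire d'it\'er\'ees $ad(e_{i_1}^{(n_1)})\cdots ad(e_{i_k}^{(n_k)})(e_i)$ \guillemotright{} est exactement la conclusion \`a \'etablir ; la caract\'erisation des \'el\'ements primitifs et la g\'en\'eration de $\shu^+$ par les $e_i^{(n)}$ ne la fournissent pas. Sur $\Q$ l'assertion est facile (l'alg\`ebre de Lie est engendr\'ee par $\g h$ et les $e_i,f_i$), mais toute la difficult\'e est dans l'int\'egralit\'e, c'est-\`a-dire le contr\^ole du r\'eseau $\g g_{\alpha\Z}=\g g_\alpha\cap\shu$ pour $\alpha\in\Delta_{im}$, dont le rang peut \^etre $>1$. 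La preuve du texte contourne pr\'ecis\'ement ce point : si le sous-$\shu$-module $\g g^2_\Z$ engendr\'e \'etait strictement plus petit que $\g g_\Z$, son image dans $\g g_\Z\otimes\F_p$ serait propre pour un certain nombre premier $p$, ce qui contredit le th\'eor\`eme de Mathieu \cite[1.7.2 p308]{M-96} (g\'en\'eration de $\g g_k$ comme $ad(\shu_k)$-module sur tout corps $k$). Rien dans votre esquisse ne joue le r\^ole de ce r\'esultat non trivial (ou d'un argument de r\'eduction modulo $p$ \'equivalent) ; en l'\'etat, la proposition n'est donc pas d\'emontr\'ee.
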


\begin{proof} Comme $\g g_\Z$ est une alg\`ebre de Lie, elle contient $\g g_\Z^1$; comme c'est un sous$-\shu-$module de $\shu$ elle contient le $\shu-$module $\g g_\Z^2$ engendr\'e par $\g h_{\shs\Z}$ et les $e_i$, $f_i$. Si jamais $\g g_\Z\not=\g g_\Z^2$, il existe un nombre premier $p$ tel que l'image de $\g g_\Z^2$ dans $\g g_\Z\otimes\F_p$ n'est pas tout $\g g_\Z\otimes\F_p$. Mais ceci contredit \cite[1.7.2 p308]{M-96}. Le raisonnement pour $\g n^\pm$ est analogue.
\end{proof}

\begin{coro}\label{2.3} a) Soit $K$ un corps de caract\'eristique $p$. Si, pour tout coefficient de la matrice de Kac-Moody $A$, on a $p>-a_{i,j}$ ou si $p=0$, alors l'alg\`ebre $\g g^1_K=\g g^1_\Z\otimes K$
 (engendr\'ee par $\g h_{\shs\Z}\otimes K$ et les $e_i$, $f_i$ pour $i\in I$) est \'egale \`a $\g g_K=\g g_\Z\otimes K$. De m\^eme l'alg\`ebre de Lie $\g n^+_K=\g n^+_\Z\otimes K$ (resp. $\g n^-_K=\g n^-_\Z\otimes K$) est engendr\'ee par  
 les $e_i$ (resp. les $f_i$).

 \par b) Dans le cas simplement lac\'e \ie si $\vert a_{i,j}\vert\leq{}1$ pour $i\not=j$, on a $\g g^1_\Z=\g g_\Z$.

\end{coro}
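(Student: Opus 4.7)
The plan is to derive both statements from Proposition \ref{2.2} by showing that the Lie subalgebra $\g g^1_K\subseteq\g g_K$ is stable under the full adjoint action of $\shu_K$. Once that is known, Proposition \ref{2.2} tensored with $K$ yields
\[
\g g_K=\shu_K\cdot\bigl(\g h_{\shs\Z}\otimes K+\textstyle\sum_{i}K\,e_i+\sum_{i}K\,f_i\bigr)\subseteq\g g^1_K,
\]
hence the desired equality; the two assertions about $\g n^\pm_K$ then drop out of the identical argument restricted to the positive (resp. negative) Chevalley generators and to the subalgebra $\shu^\pm_K$.

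To prove $\shu_K$-stability of $\g g^1_K$, note that $\shu_K$ is generated as a $K$-algebra by the divided powers $e_i^{(n)}$, $f_i^{(n)}$ and the binomials $\binom{h}{n}$. The binomials act on each weight space $\g g_{K\beta}$ by the integer scalar $\binom{\beta(h)}{n}$, so they preserve $\g g^1_K$ trivially. For the divided powers, the main tool is the divided-power Leibniz identity
\[
ad(e_i^{(n)})[x,y]=\sum_{p+q=n}\bigl[ad(e_i^{(p)})(x),\,ad(e_i^{(q)})(y)\bigr],
\]
which holds in $\shu_\Z$ by comparison with $(ad\,e_i)^n/n!$ in $\shu_\C$. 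A double induction on $n$ and on the bracket depth needed to write $x\in\g g^1_K$ as an iterated bracket of generators reduces $\shu_K$-stability to evaluating $ad(e_i^{(n)})$ (and symmetrically $ad(f_i^{(n)})$) on each individual Lie generator.

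I then dispose of those base cases by direct computation in $\shu_\Z$: $ad(e_i^{(n)})$ kills $f_j$ for $j\neq i$ and $n\geq 1$; $ad(e_i^{(n)})(h)$ equals $-\alpha_i(h)\,e_i$ for $n=1$ and vanishes for $n\geq 2$; and $ad(e_i^{(n)})(f_i)$ takes the values $-\alpha_i^\vee$, $e_i$, $0$ for $n=1,\,2,\,\geq 3$ respectively. Only $ad(e_i^{(n)})(e_j)$ is delicate: the integral Serre relation forces it to vanish for $n\geq 1-a_{i,j}$, while for $1\leq n\leq -a_{i,j}$ the identity $n!\cdot ad(e_i^{(n)})(e_j)=(ad\,e_i)^n(e_j)$ in $\g g_\Z$ exhibits it as an iterated bracket of generators — manifestly in $\g g^1_K$ — divided by $n!$. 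The hypothesis $p>-a_{i,j}$ (or $p=0$) of (a) is exactly what guarantees $n!\in K^*$ for every relevant $n\leq -a_{i,j}$, so the division is legitimate inside $\g g^1_K$; this divisibility issue is the main arithmetic obstacle the argument has to clear.

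For (b), the simply-laced assumption $|a_{i,j}|\leq 1$ for $i\neq j$ confines the division-requiring subcase to $n=1$, where $n!=1$; all other nontrivial base-case values (including $ad(e_i^{(2)})(f_i)=e_i$) are already integral after the direct computation in $\shu_\Z$. Consequently no genuine denominators ever appear, the stability argument runs verbatim over $\Z$, and the conclusion $\g g^1_\Z=\g g_\Z$ follows.
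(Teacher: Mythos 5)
Your proof is correct, and it is genuinely more self-contained than the paper's: the paper disposes of (a) by citation (clear for $p=0$, and Mathieu \cite[1.7.3]{M-96} for $p>0$) and gets (b) from (a) by the reduction-mod-$p$ trick already used to prove Proposition \ref{2.2}, merely remarking that one \emph{could} instead combine Proposition \ref{2.2} with \cite[prop. 1]{MT-72} or \cite[prop. 1]{T-81b}. What you wrote is essentially a worked-out version of that alternative route: you show that the subalgebra generated by $\g h_K$ and the $e_i,f_i$ is an $ad(\shu_K)$-submodule (divided-power Leibniz identity, which indeed holds over $\Z$ by comparison with $(ad\,e_i)^n/n!$ in characteristic $0$ plus torsion-freeness of $\g g_\Z$; scalar action of the $\binom{h}{n}$ on the weight spaces of a graded subalgebra; explicit base cases, where the signs $[e_i,f_i]=-\alpha_i^\vee$, $ad(e_i^{(2)})(f_i)=e_i$ match the paper's conventions), and then Proposition \ref{2.2} tensored with $K$ forces equality. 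The only arithmetic input is exactly where you put it: inverting $n!$ for $1\leq n\leq -a_{i,j}$ in $ad(e_i^{(n)})(e_j)=(ad\,e_i)^n(e_j)/n!$, which is what the hypothesis $p>-a_{i,j}$ (or $p=0$) buys, and which disappears in the simply laced case so that the same argument runs over $\Z$ and gives (b) directly rather than via reduction mod $p$. The trade-off: the paper's proof is two lines but outsources the real content; yours is longer but needs nothing beyond Proposition \ref{2.2} and the integral Serre relations, and it makes transparent why the bound on $p$ is the right one.
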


\begin{proof} Le a) est clair pour $p=0$; pour $p>0$ c'est \cite[1.7.3 p308]{M-96}. On en d\'eduit alors facilement le b) comme dans la d\'emonstration pr\'ec\'edente. On peut aussi utiliser la proposition \ref{2.2} et \cite[prop 1 p181]{MT-72} pour prouver a) et b). Voir aussi \cite[prop 1]{T-81b}.
\end{proof}

\begin{prop}\label{2.4} Soit $x\in\g g_\Z$. Il existe des \'el\'ements $x^{[n]}\in\shu$ pour $n\in\N$, tels que $x^{[0]}=1$, $x^{[1]}=x$ et (si on note $x^{(n)}=x^n/n!\in\shu_\C(\g g_\shs)$) l'\'el\'ement $x^{[n]}-x^{(n)}$ est de filtration $<n$ dans $\shu_\C(\g g_\shs)$. Si de plus $x\in\g g_{\alpha\Z}$, $\alpha\in Q$, on peut supposer $x^{[n]}\in\shu_{n\alpha}$.

\end{prop}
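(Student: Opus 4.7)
Le plan est de proc\'eder par r\'ecurrence sur $n$, le cas $n\leq 1$ \'etant impos\'e; on suppose $z^{[m]}\in\shu$ construit pour tout $z\in\g g_\Z$ et tout $m<n$. L'\'etape inductive se d\'ecompose en (1) une r\'eduction au cas homog\`ene $x\in\g g_{\alpha\Z}$ par recollage et (2) un traitement s\'epar\'e des trois cas $\alpha=0$, $\alpha\in\Phi$, $\alpha\in\Delta_{im}$. Pour la r\'eduction, une fois les $x_\alpha^{[n]}$ construits, on fixe un ordre total sur les poids et on pose $x^{[n]}:=\sum_{(k_\alpha):\sum k_\alpha=n}\prod_\alpha x_\alpha^{[k_\alpha]}\in\shu$. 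Modulo la filtration $<n$ les facteurs commutent (un commutateur d'\'el\'ements de filtration $1$ reste de filtration $1$), donc la formule multinomiale dans $S^n(\g g_\shs\otimes\C)$ donne l'image $x^{(n)}$. Le m\^eme proc\'ed\'e, appliqu\'e \`a des sommes $x_1+x_2$ dans $\g g_{\alpha\Z}$, montre qu'il suffit de traiter $x$ parcourant une famille de g\'en\'erateurs de $\g g_{\alpha\Z}$ sur $\Z$.

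Pour $x=h\in Y$, je prendrai $x^{[n]}=\binom{h}{n}\in\shu^0$ (g\'en\'erateur de \ref{2.1}.1): l'identit\'e $\binom{h}{n}=h(h-1)\cdots(h-n+1)/n!$ montre que $\binom{h}{n}-h^{(n)}$ est un polyn\^ome en $h$ de degr\'e $<n$, donc de filtration $<n$. Pour $x=me_\alpha\in\g g_{\alpha\Z}$ avec $\alpha\in\Phi$, je poserai $x^{[n]}=m^n e_\alpha^{(n)}=x^{(n)}\in\shu_{n\alpha}$ (voir \ref{2.1}.4). Le cas crucial est $\alpha\in\Delta_{im}$, o\`u $\g g_{\alpha\Z}$ peut \^etre de rang $>1$ et sans g\'en\'erateur canonique de type puissance divis\'ee. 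Par la proposition \ref{2.2}, $\g g_\Z$ est engendr\'ee comme $\shu$-module (action adjointe) par $Y\cup\{e_i,f_i\}_{i\in I}$, donc $\g g_{\alpha\Z}$ est engendr\'ee sur $\Z$ par des it\'er\'es $ad(v_1)\cdots ad(v_r)(y)$ avec $v_j\in\{e_i,f_i\}$ et $y\in Y\cup\{e_i,f_i\}$; combin\'ee \`a l'\'etape de r\'eduction, cela permet de se restreindre au cas $x=ad(v)(y)$ avec $v\in\{e_i,f_i\}$ et $y$ un it\'er\'e de complexit\'e strictement moindre, via une r\'ecurrence interne sur cette complexit\'e.

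Pour $x=ad(e_i)(y)$ (le cas $f_i$ \'etant sym\'etrique), l'id\'ee cl\'e est d'exploiter que $Ad(\exp(te_i))=\exp(t\,ad(e_i))=\sum_k t^k\,ad(e_i^{(k)})$ est un automorphisme d'alg\`ebre de $\shu_\C(\g g_\shs)[[t]]$, de sorte que
\[
Ad(\exp(te_i))\bigl(y^{(n)}\bigr)=\bigl(Ad(\exp(te_i))(y)\bigr)^{(n)}.
\]
En notant $v_k:=ad(e_i^{(k)})(y)\in\shu$ (avec $v_1=x$) et en extrayant le coefficient de $t^n$ des deux membres, on obtient
\[
ad\bigl(e_i^{(n)}\bigr)(y^{(n)})=x^{(n)}+\frac{1}{n!}\sum{}'\,v_{k_1}\cdots v_{k_n},
\]
la somme prim\'ee \'etant sur les suites ordonn\'ees $(k_1,\dots,k_n)$ de somme $n$ avec $(k_j)\neq(1,\dots,1)$. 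Regroup\'ee par multi-indice $\vec m=(m_k)$ et r\'eduite modulo la filtration $<n$, la correction devient $\sum'_{\vec m}\prod_k v_k^{(m_k)}$ avec $\sum m_k=n$, $\sum k\,m_k=n$ et $\vec m\neq(0,n,0,\dots)$; cette derni\`ere contrainte force $m_k<n$ pour tout $k$, si bien que l'hypoth\`ese de r\'ecurrence externe sur $n$ fournit des $v_k^{[m_k]}\in\shu$, et la r\'ecurrence interne fournit $y^{[n]}$. Je poserai finalement
\[
x^{[n]}:=ad\bigl(e_i^{(n)}\bigr)(y^{[n]})\,-\,\sum{}'_{\vec m}\prod_k v_k^{[m_k]}\in\shu_{n\alpha},
\]
lequel diff\`ere de $x^{(n)}$ d'un \'el\'ement de filtration $<n$. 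L'obstacle principal sera pr\'ecis\'ement ce cas imaginaire, o\`u l'argument repose sur l'\'equivariance $Ad$ des puissances divis\'ees sous l'action de $\exp(te_i)$, combin\'ee \`a une double r\'ecurrence (sur $n$ et sur la complexit\'e it\'erative de $x$ comme crochet d'\'el\'ements basiques).
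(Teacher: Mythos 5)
Your overall route is the same as the paper's: reduce to homogeneous generators, then exploit a conjugation-by-$\exp(te_i)$ identity degree by degree, with a double induction. Your central computation (apply $Ad(\exp(te_i))$ to $y^{(n)}$, extract the coefficient of $t^n$, regroup modulo filtration $<n$ using that commutators of filtration-one elements stay in filtration one) is sound, and it is essentially a reorganization of the comparison of weight-$n\gamma$ components in $(\exp e_i)(\exp x)(\exp(-e_i))=\exp\bigl(\sum_m ad(e_i^{(m)})(x)\bigr)$ carried out in the paper's Lemma \ref{2.5}.

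There is, however, a genuine gap in your reduction step. From Proposition \ref{2.2} (``$\g g_\Z$ is the $\shu$-module generated by $Y$ and the $e_i,f_i$ for the adjoint action'') you infer that $\g g_{\alpha\Z}$ is $\Z$-spanned by iterates $ad(v_1)\cdots ad(v_r)(y)$ with $v_j\in\{e_i,f_i\}$. That is not what \ref{2.2} says: the adjoint action of $\shu$ involves the divided powers $ad(e_i^{(k)})=ad(e_i)^k/k!$, and over $\Z$ the span of plain iterated brackets of $Y$ and the $e_i,f_i$ is exactly the subalgebra $\g g^1_\Z$, which the paper is careful to distinguish from $\g g_\Z$ and which is strictly smaller in general (equality holds only in special cases, cf.\ \ref{2.2} and \ref{2.3}: simply laced, or over a field of characteristic $0$ or $p>-a_{i,j}$ --- hypotheses not made here, and the statement is over $\Z$). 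So your internal induction on ``complexity'' only produces $x^{[n]}$ for $x\in\g g^1_{\alpha\Z}$, not for all generators of $\g g_{\alpha\Z}$. The fix is to treat $x=ad(e_i^{(k)})(y)$ for arbitrary $k$, which is precisely what Lemma \ref{2.5} does; your identity adapts readily: extract the coefficient of $t^{nk}$ instead of $t^n$, so the main term is $v_k^{(n)}=x^{(n)}$, and the constraints $\sum_j m_j=n$, $\sum_j j\,m_j=nk$, $\vec m\neq n\delta_k$ still force $m_j<n$ for every $j$, so the same double induction closes. As written, though, your proof does not cover the general element of $\g g_\Z$.
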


\begin{rema*} Pour $x\in Y=\g h_\Z$, on peut prendre  $x^{[n]}=\left(\begin{array}{c}h \\ n\end{array}\right)$. Pour $x\in\g g_{\alpha\Z}$, $\alpha\in\Phi$, on peut prendre $x^{[n]}=x^{(n)}$. Le probl\`eme se pose pour $x\in\g g_{\alpha\Z}$, $\alpha\in\Delta_{im}$. Des calculs explicites sont faits dans \cite{Mn-85}; on constate que $x^{[n]}$ n'est pas forc\'ement dans $\sum_{n\in\N}\,\Q x^n$.

\end{rema*}

\begin{proof}  Pour $a,b\in\Z$, $x,y\in\g g_\Z$, on a $(ax+by)^{(n)}=\sum_{p+q=n}\;a^p b^q x^{(p)} y^{(q)}$ modulo filtration $<n$. On d\'eduit ainsi l'existence des $(ax+by)^{[n]}$ de celle des $x^{[p]}$,  $y^{[q]}$. La d\'emonstration se r\'eduit donc au cas o\`u $x$ est homog\`ene et l'un des g\'en\'erateurs du $\Z-$module $\g g_\Z$. Le r\'esultat signal\'e en remarque pour $x=e_i$, $x=f_i$ ou $x\in Y$ et la proposition \ref{2.2} montrent qu'il suffit de d\'emontrer le lemme suivant (le cas n\'egatif est semblable).
\end{proof}

\begin{lemm}\label{2.5} Soient $\beta\in\Delta^+$ et $x\in\g g_{\beta\Z}$ tels qu'il existe des \'el\'ements $x^{[n]}\in\shu_{n\beta}$ satisfaisant aux conditions de la proposition pr\'ec\'edente. Pour $i\in I$, $k\in \N$, notons $y=(ad(e_i)^k/k!)(x)\in\g g_\Z$ et $\gamma=\beta+k\alpha_i$ ($\in\Delta^+$ si $y\not=0$). Alors il existe des \'el\'ements $y^{[n]}\in\shu_{n\gamma}$ (pour $n\in\N$) satisfaisant aux conditions de la proposition pr\'ec\'edente.

\end{lemm}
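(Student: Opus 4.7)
The plan is to construct, by strong induction on $n$, the family $\{y_j^{[n]}\}_{j\geq 0}$ \emph{simultaneously for all $j\in\N$}, where $y_j:=ad(e_i^{(j)})(x)\in\g g_\Z$; specialising $j=k$ then yields the desired $y^{[n]}$. Each $y_j$ is homogeneous of weight $\beta+j\alpha_i$, so we require $y_j^{[n]}\in\shu_{n(\beta+j\alpha_i)}$. Note that $y_0=x$, and the formula below will recover $y_0^{[n]}=x^{[n]}$ automatically.

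The guiding computation is a Leibniz expansion in $\shu_\C(\g g_\shs)$: since $ad(e_i)$ is a derivation,
\[
ad(e_i^{(jn)})(x^{(n)})=\frac{(ad\,e_i)^{jn}}{(jn)!}\Bigl(\frac{x^{n}}{n!}\Bigr)=\frac{1}{n!}\sum_{k_1+\cdots+k_n=jn}y_{k_1}\cdots y_{k_n}.
\]
Reducing modulo the filtration $<n$ (i.e.\ passing to the associated graded $S(\g g_\C)$), the ordered products collapse to a multinomial sum, yielding
\[
ad(e_i^{(jn)})(x^{(n)})\equiv\sum_{(m_l)}\,\prod_{l\geq 0}y_l^{(m_l)}\pmod{\text{filtration }<n},
\]
where $(m_l)_{l\geq 0}$ ranges over tuples with $\sum_l m_l=n$ and $\sum_l l\,m_l=jn$. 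The unique ``concentrated'' tuple having $m_j=n$ and all other $m_l=0$ contributes exactly $y_j^{(n)}$.

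This motivates the recursive definition
\[
y_j^{[n]}:=ad(e_i^{(jn)})(x^{[n]})-\sum_{(m_l)\neq\mathrm{conc.}}\,\prod_{l\geq 0}y_l^{[m_l]},
\]
where the sum omits the concentrated tuple and each product is taken in a fixed order (say, of increasing $l$). The sum is finite, and every factor $y_l^{[m_l]}$ appearing has $m_l<n$, so is available by the inductive hypothesis. The base case $y_j^{[0]}:=1$ is immediate; at $n=1$ the corrective sum is empty since only the concentrated tuple satisfies $\sum m_l=1$, giving $y_j^{[1]}=ad(e_i^{(j)})(x)=y_j$, so $y^{[1]}=y$ when $j=k$. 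Integrality $y_j^{[n]}\in\shu$ follows because $ad(e_i^{(jn)})$ stabilises $\shu$ (see \ref{2.1}.2) and each $y_l^{[m_l]}\in\shu$ by induction; the weight $n(\beta+j\alpha_i)$ is visible on every summand.

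It remains to verify that $y_j^{[n]}-y_j^{(n)}$ has filtration $<n$ in $\shu_\C(\g g_\shs)$. Since $ad(e_i^{(jn)})$ respects the filtration and $x^{[n]}-x^{(n)}$ has filtration $<n$, the difference $ad(e_i^{(jn)})(x^{[n]})-ad(e_i^{(jn)})(x^{(n)})$ has filtration $<n$. By induction $y_l^{[m_l]}-y_l^{(m_l)}$ has filtration $<m_l$ for $m_l<n$; expanding a product of such terms gives $\prod_l y_l^{[m_l]}-\prod_l y_l^{(m_l)}$ of filtration $<\sum_l m_l=n$. Combining with the Leibniz expansion, the error contributions cancel to filtration $<n$ and the remaining leading term in $S(\g g_\C)$ is precisely $y_j^{(n)}$. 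Specialising $j=k$ concludes the proof. The principal difficulty is pinning down the correct recursive formula so that the symmetric-algebra leading term isolates exactly $y_j^{(n)}$ among the many tuples produced by Leibniz; once this bookkeeping is fixed, the simultaneous induction is well-founded and everything else is automatic.
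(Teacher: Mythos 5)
Your proof is correct and is essentially the paper's argument: the paper obtains the same multinomial identity by comparing the weight-$n\gamma$ components of $(exp\,e_i)(exp\,x)(exp-e_i)=exp(\sum_m (ad\,e_i)^m/m!\,(x))$ in $\widehat\shu^+_\C$ modulo filtration $<n$, which is exactly your Leibniz expansion of $ad(e_i^{(jn)})(x^{(n)})$ collapsed in the associated graded, and it runs the same induction on $n$ carried simultaneously over all the elements $(ad\,e_i)^j/j!\,(x)$. Your only departure is cosmetic: you derive the identity directly from the derivation property and make the resulting recursive definition of $y_j^{[n]}$ explicit, whereas the paper reads it off from the exponential conjugation.
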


\begin{proof} On raisonne par r\'ecurrence sur $n$; c'est vrai pour $n=0$ ou $1$. Si $\beta$ et $\alpha_i$ sont colin\'eaires, on a $\beta=\alpha_i$, donc $y=0$ pour $k\geq{}1$. On peut donc supposer $\beta$ et $\alpha_i$ ind\'ependants. On travaille dans l'alg\`ebre ${\widehat\shu}^+_\C=\prod_{\alpha\in Q^+}\,\shu^+_{\alpha\C}$
et pour $z\in \g n^+$ on note $exp\,z=\sum_{n\in\N}\,z^{(n)}$.

\par On a alors \qquad$(exp\,e_i).(exp\,x).(exp-e_i)=exp(\sum_{m\in\N}\;(ad(e_i)^m/m!)(x))$

\noindent et aussi \qquad
$(exp\,e_i).z.(exp-e_i)=\sum_{p,q}\,(-1)^qe_i^{(p)}ze_i^{(q)}=\sum_{n\in\N}\,((ad\,e_i)^n/n!)(z)$ pour $z\in\shu^+_\C$,

\noindent voir \cite[II {\S{}} 6 ex.1 p90 et VIII {\S{}} 12 lemme 2 p174]{B-Lie}.

\par Identifions les termes de poids $n\gamma$ dans les 2 membres de la premi\`ere relation ci-dessus, en calculant modulo filtration $<n$.

\par Dans le membre de gauche on trouve $\sum_{p+q=nk}\,(-1)^q e_i^{(p)}x^{(n)}e_i^{(q)}$ qui n'est diff\'erent de $\sum_{p+q=nk}\,(-1)^q e_i^{(p)}x^{[n]}e_i^{(q)}\in\shu_{n\gamma}$ que par le terme de poids $n\gamma$ de $(exp\,e_i).z.(exp-e_i)$ (pour $z=x^{[n]}-x^{(n)}\in\shu_{n\gamma\C}$ de filtration $<n$) c'est \`a dire par $\sum_{p+q=nk}\,(-1)^q e_i^{(p)}ze_i^{(q)}$. D'apr\`es la seconde relation ci-dessus ce dernier terme est de filtration $<n$. Ainsi le terme de poids $n\gamma$ du membre de gauche est dans $\shu_{n\gamma}$ modulo filtration $<n$.

\par Dans le membre de droite les termes de poids dans $n\beta-\N\alpha_i$ sont ceux de la somme $(\sum_{m\in\N}\,((ad\,e_i)^m/m!)(x)\,)^n/n!$, \'egale \`a $\sum_{\sum p_j=n}\;\prod_{j\in\N}\,(((ad\,e_i)^j/j!)(x))^{(p_j)}$ modulo des termes de filtration $<n$ et de m\^emes poids. Pour s\'electionner les termes de poids $n\gamma$ dans cette derni\`ere somme, on a 2 conditions sur les $p_j$: $n=\sum\,p_j$ et $nk=\sum\,jp_j$.  Ainsi $p_j=0$ pour $j>nk$, $p_j\leq{}n$ et $jp_j\leq{}nk$; donc si $p_j=n$ on a $j=k$. Ainsi la partie de poids $n\gamma$ du membre de droite est, modulo filtration $<n$, la somme de $y^{(n)}$ et de produits $\prod_{j=o}^{j=kn}\,(((ad\,e_i)^j/j!)(x))^{(p_j)}$ avec des $p_j<n$. Par r\'ecurrence le $j$-i\`eme terme de l'un de ces produits est dans $\shu$ modulo filtration $<p_j$, donc chacun de ces produits est dans $\shu_{n\gamma}$ modulo filtration $<n$.

\par Par comparaison des membres de gauche et de droite, on obtient que $y^{(n)}$ est dans $\shu_{n\gamma}$ modulo filtration $<n$.
\end{proof}

\subsection{Poincar\'e-Birkhoff-Witt pour $\shu$}\label{2.6}

\par Pour $\alpha\in\Delta\cup\{0\}$, soit $\shb_\alpha$ une base de $\g g_{\alpha\Z}$ sur $\Z$. Pour $\alpha=\alpha_i$ racine simple, on choisit $\shb_\alpha=\{e_i\}$ et $\shb_{-\alpha}=\{f_i\}$. Par conjugaison par $W^v$, pour $\alpha\in\Phi$, on peut choisir $\shb_\alpha=\{e_\alpha\}$ et $\shb_{-\alpha}=\{f_\alpha\}$ de fa\c{c}on que $[e_\alpha,f_\alpha]=-\alpha^\vee$. On ordonne la base $\shb=\cup\,\shb_\alpha$ de $\g g_{\Z}$ de mani\`ere quelconque.

\par Pour $x\in\shb$ on choisit des \'el\'ements $x^{[n]}$ comme en \ref{2.4}. Pour $N=(N_x)_{x\in\shb}\in\N^{(\shb)}$, on d\'efinit $[N]=\prod_{x\in\shb}\,x^{[N_x]}$ (ce produit est pris dans l'ordre choisi et est en fait fini car presque tous les $N_x$ sont nuls). Le poids de $[N]$ (et, par d\'efinition, de $N$) est la somme pond\'er\'ee $pds([N])=\sum\,N_xpds(x)\in Q$ des poids des $x\in\shb$.

\begin{prop*} Ces \'el\'ements $[N]$ forment une base de $\shu$ sur $\Z$.
\end{prop*}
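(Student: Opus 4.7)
My plan is to split the proof into linear independence (easy) and spanning (the substantial part). For linear independence: by Proposition \ref{2.4}, $x^{[n]} = x^{(n)}$ modulo terms of filtration $<n$ and same weight, where $x^{(n)} := x^n/n! \in \shu_\C(\g g_\shs)$. Expanding the ordered product yields $[N] = (N) + (\text{lower filtration, same weight})$, where $(N) := \prod_{x \in \shb} x^{(N_x)}$. The classical PBW theorem asserts that $\{(N)\}$ is a $\C$-basis of $\shu_\C(\g g_\shs)$; indexing by filtration, the change of basis from $(N)$ to $[N]$ is upper triangular with $1$'s on the diagonal, so $\{[N]\}$ is also a $\Q$-basis of $\shu_\Q(\g g_\shs)$, a fortiori $\Z$-linearly independent.

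For spanning, let $M := \sum_N \Z [N] \subseteq \shu$; the aim is to show $M = \shu$, which I do by passing to the associated graded for the standard filtration. Let $\Gamma(\g g_\Z) \subseteq S(\g g_\Q)$ denote the $\Z$-subalgebra generated by the divided powers $x^{\{n\}} := x^n/n!$ for $x \in \shb$, $n \in \N$; as a $\Z$-module it has basis the ordered monomials $\prod_x x^{\{N_x\}}$. The image of $[N]$ in $gr^{|N|}\shu_\C(\g g_\shs) = S^{|N|}(\g g_\C)$ is $\prod_x x^{\{N_x\}}$, so $gr\,M = \Gamma(\g g_\Z)$. On the other hand $gr\,\shu$ is a $\Z$-subalgebra of $S(\g g_\Q)$ generated by the images of the generators of $\shu$, namely $e_i^{\{n\}}, f_i^{\{n\}}, h^{\{n\}}$ for $i \in I$, $h \in Y$, $n \in \N$ (using that $\binom{h}{n}$ has leading term $h^{\{n\}}$), all of which lie in $\Gamma(\g g_\Z)$; hence $gr\,\shu \subseteq \Gamma(\g g_\Z)$. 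Conversely, for every $x \in \shb$, Proposition \ref{2.4} provides $x^{[n]} \in \shu$ with image $x^{\{n\}}$ in $gr^n$, so $gr\,\shu$ contains all the generators of $\Gamma(\g g_\Z)$. Hence $gr\,\shu = gr\,M = \Gamma(\g g_\Z)$, and a routine filtered-induction argument (given $u \in \shu_n$, its image in $gr^n\,\shu = gr^n\,M$ lifts to some $m \in M_n$; then $u - m \in \shu_{n-1} = M_{n-1}$ by the induction hypothesis) yields $M = \shu$.

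The main obstacle is the inclusion $gr\,\shu \supseteq \Gamma(\g g_\Z)$: for the basis vectors $e_i$, $f_i$ attached to real roots $\alpha_i$ and for $h \in Y$ this is the classical Chevalley--Kostant integral divided-power structure, but for the basis vectors of imaginary root spaces it relies precisely on the existence of the twisted divided powers $x^{[n]}$ furnished by Proposition \ref{2.4} and Lemma \ref{2.5}.
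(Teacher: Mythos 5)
Your linear-independence argument and the reduction of spanning to an equality of associated graded algebras are fine (and $gr\,M=\Gamma(\g g_\Z)$ is correctly established), but the inclusion $gr\,\shu\subseteq\Gamma(\g g_\Z)$ is exactly where the proof breaks down. The principle you invoke --- that the associated graded of a subalgebra generated by a set is generated by the symbols of that set --- is false for filtered algebras: sums and products whose leading terms cancel produce elements of $\shu$ of lower filtration whose symbols need not lie in the algebra generated by the symbols of the generators. In the present situation your intermediate claim is literally false already in degree one: $gr^1\shu=\g g_\Z=\g g\cap\shu$ contains all the root spaces $\g g_{\alpha\Z}$ (for instance the symbol of $[e_i,e_j]\in\shu$ when $a_{i,j}\neq0$, and the imaginary root spaces), whereas the degree-one component of the $\Z$-subalgebra of $S(\g g_\Q)$ generated by $e_i^{\{n\}},f_i^{\{n\}},h^{\{n\}}$ is only $\sum_i\Z e_i+\sum_i\Z f_i+Y$. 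So the sentence ``hence $gr\,\shu\subseteq\Gamma(\g g_\Z)$'' has no justification.

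Note moreover that, since $\Gamma(\g g_\Z)=gr\,M\subseteq gr\,\shu$ is immediate from Proposition \ref{2.4}, the missing inclusion is equivalent to the proposition you are trying to prove: it says that the symbol of every element of $\shu$ is an integral combination of divided monomials, i.e. the Kostant-type integrality obtained by straightening products of the generators $e_i^{(n)}$, $f_i^{(n)}$, $\left(\begin{array}{c}h \\ n\end{array}\right)$. You in fact have the difficulty backwards: your last paragraph calls $gr\,\shu\supseteq\Gamma(\g g_\Z)$ ``the main obstacle'', but that direction follows at once from the existence of the $x^{[n]}$, while $gr\,\shu\subseteq\Gamma(\g g_\Z)$ is the substantial one. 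This is precisely the point the paper delegates, together with \ref{2.4}, to \cite[VIII {\S{}} 12 n\degres{}3 prop. 1]{B-Lie} (extended without difficulty to infinite dimension); your filtered/graded framework is compatible with such an argument, but the crucial containment must be supplied by a straightening or coalgebra argument as in that reference, not deduced from the list of generators.
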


\begin{remas*} 1) L'alg\`ebre gradu\'ee $gr(\shu)$ d\'efinie par la filtration de $\shu$ est donc une alg\`ebre de polyn\^omes divis\'es \`a coefficients dans $\Z$ et dont les ind\'etermin\'ees sont les \'el\'ements de $\shb$.

\par 2) Bien s\^ur ces r\'esultats sont encore valables sur un anneau $R$ quelconque. Si $R$ contient $\Q$, on peut remplacer $x^{[n]}$ par $x^{(n)}$ pour tout $x\in\shb$.

\end{remas*}

\begin{proof} C'est une cons\'equence imm\'ediate de \ref{2.4} d'apr\`es \cite[VIII {\S{}} 12 n\degres{}3 prop. 1]{B-Lie} qui se g\'en\'eralise sans probl\`eme \`a la dimension infinie.
\end{proof}

\begin{prop}\label{2.7}  Pour $x\in\g g_{\alpha\Z}$, $\alpha\in\Delta\cup\{0\}$, on peut modifier les $x^{[n]}$ de la proposition \ref{2.4} de fa\c{c}on \`a satisfaire aux relations suppl\'ementaires suivantes:

\par\qquad $\nabla(x^{[n]})=\sum_{p+q=n}\,x^{[p]}\otimes x^{[q]}$\quad et \quad $\epsilon(x^{[n]})=0$ pour $n>0$.

\end{prop}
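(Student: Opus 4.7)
The plan is induction on $n$, using the elements of Proposition \ref{2.4} as a starting point and correcting them by elements of strictly lower filtration. The cases $n=0,1$ are trivial: with $x^{[0]}=1$ and $x^{[1]}=x$ primitive, the two formulas are automatic. When $\qa=0$ (so $x\in Y$) one takes $x^{[n]}=\left(\begin{array}{c}x\\n\end{array}\right)$ and \ref{2.1}.3 gives exactly the required relations. So assume $\qa\neq 0$ and $n\geq 2$, with suitable $x^{[0]},\dots,x^{[n-1]}$ already constructed satisfying both \ref{2.4} and the present statement. Let $y\in\shu_{n\qa}$ be any candidate for $x^{[n]}$ furnished by \ref{2.4}. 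Since $n\qa\neq 0$, $y\in\ker\epsilon$, so the counit condition is automatic. The task is to find $w\in\shu_{n\qa}$ of filtration $<n$ with $\epsilon(w)=0$ such that $\nabla(y-w)=\sum_{p+q=n}x^{[p]}_{\mathrm{new}}\otimes x^{[q]}_{\mathrm{new}}$ with $x^{[n]}_{\mathrm{new}}:=y-w$; this unwinds to the equation $\bar\nabla(w)=\delta$ for the reduced coproduct $\bar\nabla(u):=\nabla(u)-u\otimes 1-1\otimes u$, where
\[
\delta\ :=\ \nabla(y)-\sum_{p+q=n}x^{[p]}\otimes x^{[q]}\ \in\ \bar\shu\otimes\bar\shu
\]
(the terms $y\otimes 1+1\otimes y$ from the sum cancel against the $1$-tensor parts of $\nabla(y)$).

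The first key observation is that $\delta$ has total filtration $<n$ in $\shu\otimes\shu$. Indeed, by Remark 1 after \ref{2.6}, $\mathrm{gr}(\shu)$ is the divided-power polynomial algebra on the PBW basis $\shb$, with its standard divided-power coproduct $\nabla(\bar x^{[n]})=\sum\bar x^{[p]}\otimes\bar x^{[q]}$. Since $y\equiv x^{(n)}$ modulo filtration $<n$, the image of $y$ in $\mathrm{gr}_n(\shu)$ agrees with $\bar x^{[n]}$ (and similarly for the $x^{[p]}$, $p<n$), so the top-filtration part of $\delta$ vanishes. Moreover, coassociativity, cocommutativity and counitality of $\nabla$ applied to $y$, combined with the inductive formulas for $x^{[p]}$ with $p<n$, show that $\delta$ is a ``cocycle'': $(\nabla\otimes\mathrm{id})(\delta)=(\mathrm{id}\otimes\nabla)(\delta)$, $\sigma(\delta)=\delta$, and $(\epsilon\otimes\mathrm{id})(\delta)=(\mathrm{id}\otimes\epsilon)(\delta)=0$.

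The main step is then to produce an \emph{integral} solution $w\in\shu^{<n}_{n\qa}$ of $\bar\nabla(w)=\delta$. Over $\C$ this is free: the primitivity of $x$ in $\shu_\C$ gives $\bar\nabla(x^{(n)})=\sum_{0<p<n}x^{(p)}\otimes x^{(q)}$; writing $x^{[p]}=x^{(p)}+z_p$ (with $z_p\in\shu_\C$ of filtration $<p$) and rearranging produces an explicit rational solution $w_\C=y-x^{(n)}+(\text{combinations of the }z_p)\in\shu_\C^{<n}$ with $\bar\nabla(w_\C)=\delta$. The technical heart of the proof is to exhibit $w_\C$ as an integer combination of PBW basis elements in $\shu^{<n}_{n\qa}$: one exploits the cocycle properties of $\delta$, the integrality of $\delta$ itself (since both $\nabla(y)$ and $\sum x^{[p]}\otimes x^{[q]}$ lie in $\shu\otimes\shu$), and crucially the freeness of the integer PBW basis together with its compatibility with the filtration (Proposition \ref{2.6}), which lets one read off the coefficients of $w$ from those of $\delta$ in the basis $\{[N]\otimes[N']\}$ and verify that they are integers. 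This passage from a rational solution to an integral lift is the chief obstacle: the rational statement is essentially formal from the primitivity of $x$, but the integer structure of $\shu$ (and not merely that of $\shu_\C$) is what makes the construction of divided-power-like elements work over $\Z$.
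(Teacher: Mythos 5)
Your skeleton (induction on $n$, reduction to solving $\bar\nabla(w)=\delta$ for a correction $w\in\shu_{n\alpha}$ of filtration $<n$, the cocycle/symmetry/counit properties of the defect $\delta$, and the role of the integral PBW basis of \ref{2.6}) is the same as the paper's, but the decisive step is exactly the one you leave as an assertion: you say one can ``read off the coefficients of $w$ from those of $\delta$'' in the basis $\{[P]\otimes[R]\}$ and check they are integers, without saying how. This cannot be a matter of reading off, because $\bar\nabla$ is far from injective on $\shu_{n\alpha}$ (its kernel contains every primitive element, e.g.\ all of $\g g_{n\alpha\Z}$, cf.\ \ref{2.9}.1), so $\delta$ does not determine $w$, and an arbitrary integral $w$ has $\bar\nabla(w)$ of a very particular shape. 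The missing argument, which is the heart of the paper's proof, is: expand $\delta=\sum a_{P,R}\,[P]\otimes[R]$ --- the $a_{P,R}$ are automatically integers because $\delta\in\shu\otimes\shu$ and the $[P]\otimes[R]$ form a $\Z$-basis --- then the counit gives $a_{P,R}=0$ if $P$ or $R=0$, the grading gives $a_{P,R}=0$ unless $pds(P+R)=n\alpha$, cocommutativity gives $a_{P,R}=a_{R,P}$, and coassociativity (identifying the coefficients of $[P]\otimes[Q]\otimes[R]$ with $P,Q,R\not=0$) gives $a_{P+Q,R}=a_{P,Q+R}$; together these force $a_{P,R}=b_{P+R}$ to depend only on $P+R$. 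Then $w=\sum_{S\not=0}b_S[S]$ is integral, of the right weight and filtration, and solves $\bar\nabla(w)=\delta$ precisely because, by the induction hypothesis, $\nabla([S])=\sum_{P+R=S}[P]\otimes[R]$ for $d^\circ(S)<n$. Without establishing that the $a_{P,R}$ depend only on $P+R$ you have neither a candidate $w$ nor its integrality.

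Two further problems with your route. First, the identity $\nabla([S])=\sum_{P+R=S}[P]\otimes[R]$ needs the corrected sequences $x'^{[m]}$, $m<n$, for \emph{all} $x'\in\shb$, not just for your fixed $x$; so the induction must be run simultaneously over the whole basis (the paper inducts on the total degree $d^\circ$), which your formulation hides --- with the uncorrected basis the PBW bookkeeping does not close. Second, the detour through an ``explicit rational solution $w_\C$ obtained by rearranging'' is both unsubstantiated (the cross terms $x^{(p)}\otimes z_q$ and $z_p\otimes z_q$ are not disposed of by any evident combination of the $z_p$; over $\Q$ the existence of a compatible $x^{[n]}$ is essentially the statement being proved) and unnecessary: the combinatorial argument above produces the integral correction directly, with no separate characteristic-zero step and no need to compare a given rational solution with an integral one (solutions differ by primitives anyway, cf.\ \ref{2.9}.1).
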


\begin{defi*} Pour $x\in\g g_{\alpha\Z}$, $\alpha\in\Delta\cup\{0\}$, on dit que $(x^{[n]})_{n\in\N}$ est une {\it suite exponentielle} associ\'ee \`a $x$ si elle satisfait aux conditions des propositions \ref{2.4} (y compris $x^{[n]}\in\shu_{n\alpha}$) et \ref{2.7}.

\par Plus g\'en\'eralement, si $x\in\g g_k$ pour $k$ un anneau, il existe des {\it suites exponentielles inhomog\`enes} associ\'ees \`a $x$; on peut prendre $(\lambda y+\mu z)^{[n]}=\sum_{p+q=n}\,\lambda^p\mu^q y^{[p]}z^{[q]}$.

\end{defi*}

\begin{rema*} $\shu $ (resp. $gr(\shu)$) est donc isomorphe comme cog\`ebre gradu\'ee (resp. big\`ebre gradu\'ee) \`a une alg\`ebre de polyn\^omes divis\'es, plus pr\'ecis\'ement \`a l'alg\`ebre sym\'etrique \`a puissances divis\'ees $S^{pd}_\Z(\g g_\Z)$.

\end{rema*}

\begin{proof} Quitte \`a soustraire $\epsilon(x^{[n]})$, on peut supposer $\epsilon(x^{[n]})=0$, pour $n>0$.  On raisonne par r\'ecurrence sur $n$; c'est clair pour $n=0,1$. Supposons le r\'esultat vrai pour $m<n$, alors pour $N\in\N^{(\shb)}$ et $d^\circ(N):=\sum_x\,N_x<n$ on a $\nabla({[N]})=\sum_{P+Q=N}\,{[P]}\otimes {[Q]}$ (avec des $P,Q\in\N^{(\shb)}$). La propri\'et\'e des $x^{(n)}$ vis \`a vis de $\nabla$ et la relation entre $x^{(n)}$ et $x^{[n]}$ montre qu'il existe des $a^n_{P,R}\in\C$ tels que:
\par
$\nabla(x^{[n]})=\sum_{p+q=n}\,x^{[p]}\otimes x^{[q]}+\sum_{d^\circ(P+R)<n}\,a^n_{P,R}[P]\otimes[R]$.

\par D'apr\`es la propri\'et\'e de la co-unit\'e $\epsilon$,  $a^n_{P,R}=0$ si $P$ ou $R=0$. Par co-commutativit\'e,  $a^n_{P,R}=a^n_{R,P}$. Enfin $\nabla$ \'etant compatible \`a la graduation,  $a^n_{P,R}=0$ si le poids de $P+R$ (\ie de $[P].[R]$) est diff\'erent de $n\alpha$. D'apr\`es la co-associativit\'e et la formule ci-dessus pour $\nabla({[N]})$ quand $d^\circ(N)<n$ on a:
\medskip

\par\noindent$\nabla(x^{[n]})\otimes1+\sum_{p+q+r=n}^{r>0}\,x^{[p]}\otimes x^{[q]}\otimes x^{[r]}+\sum_{d^\circ(P+Q+R)<n}\,a^n_{P+Q,R}[P]\otimes[Q]\otimes[R]$
\par\noindent$=1\otimes\nabla(x^{[n]})+\sum_{p+q+r=n}^{p>0}\,x^{[p]}\otimes x^{[q]}\otimes x^{[r]}+\sum_{d^\circ(P+Q+R)<n}\,a^n_{P,Q+R}[P]\otimes[Q]\otimes[R]$
\medskip

\par Apr\`es remplacement de $\nabla(x^{[n]})$ par son expression et simplification, on obtient:

\par\noindent$\sum_{d^\circ(P+Q)<n}\,a^n_{P,Q}[P]\otimes[Q]\otimes1+\sum_{d^\circ(P+Q+R)<n}\,a^n_{P+Q,R}[P]\otimes[Q]\otimes[R]$
\par\noindent$=\sum_{d^\circ(Q+R)<n}\,a^n_{Q,R}1\otimes[Q]\otimes[R]+\sum_{d^\circ(P+Q+R)<n}\,a^n_{P,Q+R}[P]\otimes[Q]\otimes[R]$
\medskip
\par On identifie alors les coefficients de $[P]\otimes[Q]\otimes[R]$ dans les 2 membres. Pour $P=0$, $Q=0$ ou $R=0$, on retrouve des relations connues. Pour $P,Q,R\not=0$, on trouve que $a^n_{P+Q,R}=a^n_{P,Q+R}$. Sachant de plus que $a^n_{P,R}=a^n_{R,P}$, on en d\'eduit que $a^n_{P,R}$ (pour $P,R\not=0$) ne d\'epend que de $P+R$, on le note donc $b^n_{P+R}$. (On laisse au lecteur cet exercice \'el\'ementaire, mais n\'ecessitant l'examen de plusieurs cas.)

\par L'\'el\'ement $x^{\{n\}}=x^{[n]}-\sum_{d^\circ(S)<n}^{S\not=0}\,b^n_S[S]$ est bien \'egal \`a $x^{(n)}$ modulo filtration $<n$ et, comme $b^n_S=0$ pour $pds(S)\not=n\alpha$, son poids est bien $n\alpha$. On a:

\par $\nabla x^{\{n\}}=\sum_{p+r=n}\,x^{[p]}\otimes x^{[r]}+\sum_{d^\circ(P+R)<n}\,a^n_{P,R}[P]\otimes[R]$
\par\qquad\qquad$-\sum_{d^\circ(S)<n}^{S\not=0}\,b^n_{S}([S]\otimes1+1\otimes[S])-\sum\,b^n_{S}[P]\otimes[R]$

\par\noindent o\`u la derni\`ere somme porte sur les $P,R$ tels que $P+R=S$, $d^\circ(S)<n$ et $P,R\not=0$. Elle est donc \'egale \`a la seconde somme puisque $a^n_{P,R}=b^n_{P+R}$ ou $0$ selon que $P$ et $R\not=0$ ou non. Ainsi :

\par\qquad $\nabla x^{\{n\}}=x^{\{n\}}\otimes1+1\otimes x^{\{n\}}+\sum_{p+r=n}^{p,r\not=0}\,x^{[p]}\otimes x^{[r]}$
\end{proof}

\subsection{Exponentielles tordues}\label{2.8}

\par On raisonne dans ${\widehat\shu}^+_R$ pour une racine $\alpha\in\Delta^+$ (ou ${\widehat\shu}^-_R$ pour $\alpha\in\Delta^-$) et un anneau $R$.

\par Si $x\in\g g_{\alpha\Z}$, $\alpha\in\Delta^+$ et $\lambda\in R$, {\it l'exponentielle tordue} $[exp]\lambda x= \sum_{n\geq{}0}\,\lambda^nx^{[n]}\in{\widehat\shu}^+_R$ n'est d\'efinie que modulo le choix de la suite exponentielle $x^{[n]}$.

\par Ses propri\'et\'es multiplicatives sont mauvaises: $[exp](\lambda+\mu)x= \sum_{n\geq{}0}\,(\lambda+\mu)^nx^{[n]}$ est en g\'en\'eral diff\'erent de $[exp]\lambda x.[exp]\mu x$. Pour $\lambda\in\Z$ il faudrait poser $[exp]\lambda x=([exp]x)^\lambda$ (voir ci-dessous pour $\lambda<0$), mais l'extension pour  $\lambda\in R$ pose probl\`eme. Pour tous $x,y\in\g g_\Z$, $\lambda,\mu\in k$, $[exp](\lambda x).[exp](\mu y)$ est un choix possible d'exponentielle tordue (inhomog\`ene) pour $\lambda x+\mu y$.

\par Par contre $\epsilon([exp]\lambda x)=1$ et pour le coproduit $\nabla [exp]\lambda x=[exp]\lambda x\widehat\otimes[exp]\lambda x$ : l'exponentielle tordue est un \'el\'ement de type groupe. La co-inversion $\tau$ v\'erifie $prod\,\circ(Id\widehat\otimes\tau)\circ\nabla=\epsilon=prod\,\circ(\tau\widehat\otimes Id)\circ\nabla$. Donc $([exp]\lambda x).(\tau[exp]\lambda x)=1=(\tau[exp]\lambda x).([exp]\lambda x)$: l'exponentielle tordue est inversible dans ${\widehat\shu}^+_R$ et son inverse vaut $\tau[exp]\lambda x=\sum_{n\geq{}0}\,\lambda^n\tau x^{[n]}$. Bien s\^ur $\tau x^{[n]}$ est l'une des suites exponentielles associ\'ees \`a $\tau x=-x$, comme la suite $(-1)^n x^{[n]}$, mais elle n'est pas clairement li\'ee (en g\'en\'eral) \`a cette derni\`ere.

\par Pour $R$ contenant $\Q$, ou $\alpha\in\Phi$, on peut consid\'erer les exponentielles classiques $exp(x)\in{\widehat\shu}^\pm_R$ qui ont d'excellentes propri\'et\'es.

\subsection{Unicit\'e des exponentielles tordues}\label{2.9}

\par 1) Pour $x\in\g g_{\alpha\Z}$, cherchons s'il existe une suite exponentielle $x^{\{n\}}$ autre que $x^{[n]}$. D'apr\`es la d\'emonstration de \ref{2.7}, il suffit de se poser la question quand on change $x^{[n+1]}$ sans changer $x^{[m]}$ pour $m\leq{}n$. On a alors $x^{\{n+1\}}=x^{[n+1]}+y$ avec $y\in\shu_{(n+1)\alpha\Z}$ de filtration $\leq{}n$. Sachant que $\nabla x^{[n+1]}=\sum_{p+q=n+1}\,x^{[p]}\otimes x^{[q]}$, on a $\nabla x^{\{n+1\}}=\sum_{p+q=n+1}\,x^{\{p\}}\otimes x^{\{q\}}$ si et seulement si $\nabla y=y\otimes 1+1\otimes y$, autrement dit si $y$ est primitif dans la cog\`ebre $\shu$, c'est \`a dire si $y\in\g g_{(n+1)\alpha\Z}$ \cite[II {\S{}} 1 n\degres{} 5 cor p13]{B-Lie}.

\par Ainsi la suite $x^{[n]}$ n'est unique qu'\`a des \'el\'ements $x_2\in\g g_{2\alpha\Z},\cdots,x_m\in\g g_{m\alpha\Z},\cdots$ pr\`es. Autrement dit un autre choix pour $[exp]x$ peut s'\'ecrire $[exp]'x=[exp]x.[exp]x_2.[exp]x_3.\cdots.[exp]x_m.\cdots$.

\par 2) Pour $\alpha$ racine r\'eelle, il y a unicit\'e puisque $\g g_{m\alpha\Z}=0$ pour $m\geq{}2$, on a donc toujours $x^{[n]}=x^{(n)}$ et $[exp]x=exp\,x$. Par contre il n'y a pas unicit\'e des $x^{[n]}$ pour $\alpha=0$ ou $\alpha$ racine imaginaire. On verra en \ref {2.12} dans des cas affines les bons choix propos\'es par D. Mitzman.

\par 3) Pour $x\in\g g_{\alpha\Z}$ mais en raisonnant dans ${\widehat\shu}^+_\Q$, on obtient qu'il existe
 $y_2\in\g g_{2\alpha\Q},\cdots,y_m\in\g g_{m\alpha\Q},\cdots$ tels que $[exp]x=exp(x).exp(y_2).\cdots.exp(y_m).\cdots$. En particulier $([exp]x)^{-1}=\tau[exp]x=\cdots.exp(-y_m).\cdots.exp(-y_2).exp(-x)$. Si les espaces $\g g_{m\alpha}$ commutent, les exponentielles ci-dessus commutent.

 \par Mais l'on a naturellement $[exp]\lambda x=exp(\lambda x).exp(\lambda^2y_2).\cdots.exp(\lambda^my_m).\cdots$ et non pas $[exp]\lambda x=exp(\lambda x).exp(\lambda y_2).\cdots.exp(\lambda y_m).\cdots$ (qui d'ailleurs n'est en g\'en\'eral pas dans ${\widehat\shu}^+_\Q$). Il para\^{\i}t donc difficile de faire de $[exp]$ un sous-groupe \`a un param\`etre pour $\alpha\in\Delta^{im}$.

 \par 4) {\bf Remarque}: l'expression ci-dessus de $[exp]x$ (pour $x\in\g g_{\alpha\Z}$, $\alpha\in\Delta$) en terme d'exponentielles classiques, montre que, $\forall n,m\in\N$, $x^{[n]}\in\shu_\Q(\oplus_{r\geq{}1}\,\g g_{r\alpha\Q})\cap\shu_{n\qa}$ et $x^{[n]}.x^{[m]}=\left(\begin{array}{c}n+m \\ n\end{array}\right)x^{[n+m]}$ modulo $\shu_\Q(\oplus_{r\geq{}2}\,\g g_{r\alpha\Q})$. Ce n'\'etait pas clair auparavant pour $\alpha$ imaginaire.

 \par Ainsi la base de $\shu$ sur $\Z$ construite en \ref{2.6}, qui est compatible avec la graduation par les poids dans $Q$, est aussi compatible avec la d\'ecomposition de $\shu_\C$ ou $\shu_\Q$ en produit tensoriel correspondant \`a la d\'ecomposition en somme directe $\g g=\g h\oplus(\oplus_{\qa\in\QD}\,\g g_\qa)$, \`a condition de regrouper une racine imaginaire et ses multiples positifs. Pour $\alpha$ imaginaire, cette base est compatible \`a la filtration par les $(\oplus_{n\geq{}N}\,\g g_{n\qa})$.

 \subsection{Exponentielles tordues en poids nul}\label{2.10}

 \par On peut essayer de g\'en\'eraliser \ref{2.8} et \ref{2.9} pour $\alpha=0$ \ie $x=h\in Y=\g h_\Z$. On travaille donc dans $\shu^0$ et, pour simplifier, on se place en rang $1$: $Y$ est de base $h$ et $\shu^o=\sum_{i\geq{}0}\,\Z\left(\begin{array}{c}h \\ i\end{array}\right)$. On raisonne dans ${\widehat\shu}^0_R=\prod_{i\geq{}0}\,R\left(\begin{array}{c}h \\ i\end{array}\right)$ pour un anneau $R$; ce n'est pas le compl\'et\'e de $\shu^0_R$ pour la graduation par $Q$; la multiplication de $\shu^0_R$ se prolonge car
 $\left(\begin{array}{c}h \\ p\end{array}\right)\left(\begin{array}{c}h \\ q\end{array}\right)\in\sum_{i=sup(p,q)}^{i=p+q}\;\Z\left(\begin{array}{c}h \\ i\end{array}\right)$. Ainsi ${\widehat\shu}^0_R$ est une alg\`ebre commutative avec une comultiplication $\nabla: {\widehat\shu}^0_R\rightarrow{\widehat\shu}^0_R{\widehat\otimes}{\widehat\shu}^0_R$, mais malheureusement sans co-inversion et d\'ependant du choix de $h$ (au lieu de $-h$), voir ci-dessous.

 \par On sait que $\shu^0$ est la big\`ebre des distributions \`a l'origine du groupe multiplicatif $\g{Mult}$. La big\`ebre affine de ce groupe est $\Z[\g{Mult}]=\Z[T,T^{-1}]$ et la dualit\'e entre ces 2 big\`ebres est donn\'ee par $\left(\begin{array}{c}h \\ i\end{array}\right)T^m=\left(\begin{array}{c}m \\ i\end{array}\right)$ (en fait $\left(\begin{array}{c}h \\ i\end{array}\right)$ est ${\frac{\partial}{\partial T^i}}={\frac{1}{i!}}({\frac{\partial}{\partial T}})^i$ calcul\'e en l'\'el\'ement neutre) \cf \cite[II {\S{}} 4 5.10]{DG-70}.

 \par Pour $x\in R$, on peut poser $[x]=\sum_{i\geq{}0}\,(x-1)^i\left(\begin{array}{c}h \\ i\end{array}\right)\in{\widehat\shu}^0_R$ (c'est moralement $(1+(x-1))^h$). On a bien $[x]T^n=x^n$ pour $n\geq{}0$; par contre pour $n<0$, ceci n'a un sens que si $x-1$ est nilpotent (donc $x\in R^*$).

 \par On a $\nabla[x]=[x]\otimes[x]\in{\widehat\shu}^0_R{\widehat\otimes}{\widehat\shu}^0_R$: l'\'el\'ement $[x]$ est de type groupe.

 \par Le prolongement naturel de $\tau$ devrait donner:

 \par$\tau[x]=\sum_{i\geq{}0}\,(x-1)^i\left(\begin{array}{c}-h \\ i\end{array}\right)=\sum_{p,q}\,(1-x)^{p+q}\left(\begin{array}{c}p+q-1 \\ p\end{array}\right)\left(\begin{array}{c}h \\ q\end{array}\right)$

 \qquad$=\sum_{q\geq{}0}\,(1-x)^{q}(\sum_{p\geq{}0}\,(1-x)^{p}\left(\begin{array}{c}p+q-1 \\ p\end{array}\right))\left(\begin{array}{c}h \\ q\end{array}\right)$.

 \par Mais ce calcul n'a un sens dans ${\widehat\shu}^0_R$ que si $1-x$ est nilpotent, auquel cas $\tau[x]$ est bien \'egal \`a $[x^{-1}]=\sum_{q\geq{}0}\,(1-x)^qx^{-q}\left(\begin{array}{c}h \\ q\end{array}\right)$, vu le calcul ci-dessus de $x^{-q}=[x]T^{-q}$.

 \par Les exponentielles tordues en poids nul semblent donc trop imparfaites.

 \subsection{Polyn\^omes de Mitzman}\label{2.11} \cite[p114-116]{Mn-85}

 \par On consid\`ere des ind\'etermin\'ees ${\underline Z}=(Z_s)_{s\geq{}1}$ et $\zeta$. Le polyn\^ome $\Lambda_s({\underline Z})$ est le coefficient de $\zeta^s$ dans le d\'eveloppement de l'expression formelle $exp(\sum_{j\geq{}1}\,Z_j{\frac{\zeta^j}{j}})$. C'est un polyn\^ome en les $Z_j$ \`a coefficients positifs dans $\Q$. Son poids total est $s$ si $Z_j$ est de poids $j$.

 \par On a: $\Lambda_0=1$, $\Lambda_1=Z_1$, $\Lambda_2=Z_1^{(2)}+(1/2)Z_2$, $\Lambda_3=Z_1^{(3)}+(1/2)Z_1Z_2+(1/3)Z_3$, $\cdots$

 \begin{enonce*}{Cas particuliers} Pour certaines sp\'ecialisations des ind\'etermin\'ees, on a:

 \par si $Z_i=0$ pour $i\geq{}2$, alors $\Lambda_n=Z_1^{(n)}$,
 \par si $Z_i=Z^i$ pour $i\geq{}1$, alors $\Lambda_n=Z^n$,
 \par si $Z_i=t^iZ$ pour $i\geq{}1$ et $t\in R$, alors $\Lambda_n=t^n\left(\begin{array}{c}Z+n-1 \\ n\end{array}\right)=(-t)^n\left(\begin{array}{c}-Z \\ n\end{array}\right)$.
\end{enonce*}

 \begin{lemm*} a) $n\Lambda_n=\sum_{p+q=n}^{p\geq{}1}\,Z_p\Lambda_q$, si $n\geq{}1$.

 \par b) $\Lambda_n(\underline Z+\underline Z')=\sum_{p+q=n}\,\Lambda_p(\underline Z).\Lambda_q(\underline Z')$.

 \par c) $\Lambda_n({\underline Z})$ est congru \`a $Z_1^{(n)}$ modulo des polyn\^omes de degr\'e total $\leq{}n-1$. Pour $n\geq{}1$ $\Lambda_s({\underline Z})$ n'a pas de terme constant.

\end{lemm*}

\begin{proof} La premi\`ere assertion r\'esulte de la troisi\`eme ligne de la d\'emonstration de \cite[4.1.14]{Mn-85} p.115 puisque $exp(\sum_{j\geq{}1}\,Z_j\zeta^j/j)=\sum_{n\geq{}0}\,\Lambda_n({\underline Z})\zeta^n$. Pour la seconde assertion, on a $\sum_{n\geq{}0}\,\Lambda_n({\underline Z}+{\underline Z'})\zeta^n=exp(\sum_{j\geq{}1}\,(Z_j+Z_j'){\frac{\zeta^j}{j}})=exp(\sum_{j\geq{}1}\,Z_j{\frac{\zeta^j}{j}}).exp(\sum_{j\geq{}1}\,Z_j'{\frac{\zeta^j}{j}})=(\sum_{p\geq{}0}\,\Lambda_p({\underline Z})\zeta^p)$\goodbreak\noindent.$(\sum_{q\geq{}0}\,\Lambda_q({\underline Z})\zeta^q)$, d'o\`u le r\'esultat. Enfin la derni\`ere assertion est claire.
\end{proof}

 \subsection{Exponentielles tordues dans les alg\`ebres affines}\label{2.12}

 \par Dans une $\Q-$alg\`ebre de Lie gradu\'ee consid\'erons une suite $\underline x=(x_i)_{i\geq{}1}$ d'\'el\'ements commutant deux \`a deux et de poids $pds(x_i)=i.pds(x_1)$. Dans l'alg\`ebre enveloppante on peut donc calculer $x^{\{n\}}=\Lambda_n(\underline x)$ qui est de poids $n.pds(x_1)$. D'apr\`es le lemme la suite des $x^{\{n\}}$ est exponentielle. On peut donc lui associer l'exponentielle tordue
 $\{exp\}x=\sum_n\,x^{\{n\}}=exp(\sum_j\,x_j/j)=\prod_j\,exp(x_j/j)$.

 \par Revenons \`a la situation de \ref{2.9}: $\alpha\in\Delta$ et $x\in\g g_{\alpha\Z}$. Supposons que les espaces $\g g_{\alpha}$, $\g g_{2\alpha}$,$\cdots$, $\g g_{i\alpha}$,$\cdots$ commutent deux \`a deux. C'est vrai dans le cas des alg\`ebres de Kac-Moody affines (ou semi-simples); mais malheureusement ce sont essentiellement les seuls cas o\`u cela soit v\'erifi\'e. Pour une suite exponentielle $x^{[n]}$, il existe des \'el\'ements $y_i\in \g g_{i\alpha\Q}$ pour $i\geq{}2$ tels que $[exp]x=exp(x).exp(y_2).\cdots.exp(y_m).\cdots=exp(x+y_2+\cdots+y_m+\cdots)$. Posons $x_1=x$, $x_2=2y_2$,$\cdots$, $x_m=my_m$,$\cdots$. On a alors $[exp]x=exp(\sum_{j\geq{}1}\,x_j/j)$. En identifiant les termes de poids $n\alpha$, on obtient $x^{[n]}=\Lambda_n(\underline x)\in\shu$. Gr\^ace \`a \ref{2.11}.a on en d\'eduit facilement par r\'ecurrence que $x_i\in\g g_\Z$, $\forall i$; mais les exemples ci-dessous montrent que $y_i=x_i/i$ n'est en g\'en\'eral pas dans $\g g_\Z$.

 \par David Mitzman a calcul\'e explicitement une base de l'alg\`ebre enveloppante enti\`ere $\shu_\shs$ dans le cas affine (et pour un certain SGR $\shs$) \cite[4.2.6 et 4.4.12]{Mn-85}. Elle s'exprime comme la base de \ref{2.6}; avec $h^{[n]}=\Lambda_n(h,h,\cdots)=(-1)^n\left(\begin{array}{c}-h \\ n\end{array}\right)$ pour $h\in\g h_\Z$ et $x^{[n]}=\Lambda_n(x,0,\cdots)=x^{(n)}$ pour $x$ base de $\g g_\alpha$, $\alpha\in\Phi$. Si $\alpha\in\Delta_{im}$ et $x\in\shb_\alpha$ le choix de $x^{[n]}$ est $x^{[n]}=\Lambda_n(x,x_2,\cdots,x_m,\cdots)$ avec des $x_i\in\g g_{i\alpha\Z}$; donc $[exp]x=exp(x).\prod_{j=2}^\infty\,exp(\frac{1}{ j}x_j)$, par d\'efinition des $\QL_n$. On va  d\'ecrire ces $x_j$ dans le cas le plus facile, \cf \lc 4.2.5: $\g g$ est  simplement lac\'ee (donc non tordue, de type $\widetilde A$, $\widetilde D$ ou $\widetilde E$). Voir aussi le corollaire \ref{2.3}.b.

 \par Une alg\`ebre de Lie $\g g$ de l'un de ces types peut s'\'ecrire comme extension centrale (avec centre de dimension $1$) d'un produit semi-direct (avec $\C$) d'une alg\`ebre de lacets $\g g'=\g g_1\otimes\C[t,t^{-1}]$ o\`u $\g g_1$ est une alg\`ebre de Lie simple complexe de type $A$, $D$ ou $E$. Les racines imaginaires sont les multiples entiers d'une racine $\delta$ et, pour $n\not=0$, on a $\g g_{n\delta}=\g h_1\otimes t^n$ o\`u $\g h_1$ est une sous-alg\`ebre de Cartan de $\g g_1$. Si $h_1,\cdots,h_\ell$ est une base de coracines dans $\g h_1$, l'espace $\g g_{n\delta\Z}$ admet pour base $\shb_{n\delta}=\{\,h_{i,n}=h_i\otimes t^n\mid i=1,\ell\,\}$. Le bon choix (de Mitzman) pour $(h_{i,n})^{[p]}$ est $\Lambda_p(h_{i,n},h_{i,2n},\cdots,h_{i,jn},\cdots)$, \cf \lc 3.9.1 et 4.2.3.

 \par On peut remarquer que $(h_{i,n})^{[p]}=\Lambda_p(h_{i}\otimes t^n,h_{i}\otimes t^{2n},\cdots,h_{i}\otimes t^{jn},\cdots)$ a pour image $(h_{i})^{[p]}\otimes t^{np}=\left(\begin{array}{c}h+p-1 \\ p\end{array}\right)\otimes t^{np}=\left(\begin{array}{c}-h \\ p\end{array}\right)\otimes (-t^n)^p$ dans le quotient $\shu_\C(\g g_1)\otimes\C[t,t^{-1}]$ de $\shu_\C(\g g')$ (troisi\`eme cas particulier de \ref{2.11}).

\medskip
 \par {\bf L'exemple de $\widetilde{SL}_2$}: Pour le type $\widetilde A_1$, on peut pr\'eciser encore plus. On a $\g g'=\g{sl}_2\otimes\C[t,t^{-1}]\subset End_{\C[t,t^{-1}]}(\C[t,t^{-1}]^2)$. Si $h=diag(1,-1)\in\g{sl}_2$, alors, pour $n\not=0$, $\g g_{n\delta\Z}$ a pour base $h_n=t^nh$. La repr\'esentation naturelle de $\g g'$ sur $\C[t,t^{-1}]^2$ induit un homomorphisme $\pi$ de $\shu_\C(\g g')$ dans $End_{\C[t,t^{-1}]}(\C[t,t^{-1}]^2)$ ou de $\widehat\shu^p_\C$ dans $End_{\C(\!(t)\!)}(\C(\!(t)\!)^2)$.
 On a $\pi(h_n^{[p]})=(-t^n)^p\left(\begin{array}{c}-h \\ p\end{array}\right)=t^{np}\left(\begin{array}{c}h+p-1 \\ p\end{array}\right)\in End_{\Z(\!(t)\!)}(\Z(\!(t)\!)^2)$. On en d\'eduit que $\pi(\shu)\subset End_{\Z[t,t^{-1}]}(\Z[t,t^{-1}]^2)$ et que, pour $\lambda$ dans un anneau $k$, $\pi([exp]\lambda h_n)=diag(u,v)\in End_{k(\!(t)\!)}(k(\!(t)\!)^2)$ avec $u=\sum^\infty_{p=0}\,(\lambda t^n)^p\left(\begin{array}{c}p \\ p\end{array}\right)=1+\lambda t^n+\lambda^2t^{2n}+\cdots$ et $v=\sum^\infty_{p=0}\,(-\lambda t^n)^p\left(\begin{array}{c}1 \\ p\end{array}\right)=1-\lambda t^n=u^{-1}$. En particulier  $\pi([exp]\lambda h_n)$ est bien inversible et m\^eme dans $SL_2(k(\!(t)\!))$.

 \subsection{Sous-alg\`ebres et compl\'etions}\label{2.13}
\medskip
\par 1) Pour $\alpha\in\Delta$,  $\shu^\alpha=\shu_\C(\oplus_{n\geq{}1}\,\g g_{n\alpha})\cap(\oplus_{n\geq{}0}\,\shu_{n\alpha})=\shu_\C(\oplus_{n\geq{}1}\,\g g_{n\alpha})\cap\shu$ est une sous$-\Z-$alg\`ebre de $\shu$.   Pour $\alpha\in\Phi$, $\shu^\alpha$ admet pour base sur $\Z$ les $e_\alpha^{(n)}$, $n\in\N$. Pour $\alpha\in\Delta_{im}$ et $n\in\N$, on choisit une base $\shb_{n\alpha}$ de $\g g_{n\alpha\Z}$ et des suites exponentielles $x^{[p]}$ pour $x\in\shb_{n\alpha}$. D'apr\`es \ref{2.9}.4 ci-dessus, ces $x^{[p]}$ sont dans $\shu^\alpha$ et, d'apr\`es \ref{2.6}, les $[N]=\prod_{x\in\shb_{(\alpha)}}\,x^{[N_x]}$ (pour $(\alpha)=\N^*\alpha\subset\Delta_{im}$, $\shb_{(\alpha)}=\cup_{\beta\in(\alpha)}\,\shb_\beta$, $N\in\N^{(\shb_{(\alpha)})}$) forment une base de $\shu^\alpha$. On en d\'eduit, en particulier, que $\shu^\alpha$ est une sous$-\Z-$big\`ebre co-inversible de $\shu$.

\par Si $\Psi\subset\Delta\cup\{0\}$ est un ensemble clos, $\g g_\Psi=\oplus_{\alpha\in\Psi}\,\g g_\alpha$ est une alg\`ebre de Lie; l'alg\`ebre $\shu(\Psi)$ engendr\'ee par les $\shu^\alpha$ pour $\alpha\in\Psi$ est une sous-big\`ebre co-inversible de $\shu$ de base les $[N]$ pour $N\in\N^{(\shb_\Psi)}$ avec $\shb_\Psi=\cup_{\alpha\in\Psi}\,\shb_\alpha$. L'alg\`ebre enveloppante de $\g g_\Psi$ est $\shu_\C(\Psi)=\shu(\Psi)\otimes\C$ et $\shu(\Psi)=\shu_\C(\Psi)\cap\shu$. Les alg\`ebres $\shu(\Psi)$ et $\g g_{\Psi\Z}=\g g_\Psi\cap\g g_\Z$ sont stables par l'action adjointe de   $\shu(\Psi)$. Pour $\Psi=\Delta^+$ (resp $\Psi=\Delta^+\cup\{0\}$) on a $\shu(\Psi)=\shu^+$ (resp. $\shu(\Psi)=\shu^0\otimes\shu^+$). On a $\shu^\alpha=\shu((\alpha))$.

\par Pour un anneau $k$, on note $\shu_k^\alpha=\shu^\alpha\otimes_\Z\,k$ et $\shu_k(\Psi)=\shu(\Psi)\otimes_\Z\,k$.
\medskip
\par 2) Si $\Psi\subset\Delta^+$ est clos, l'alg\`ebre $\shu(\Psi)$ est gradu\'ee par $Q^+$. On peut la graduer par le degr\'e total. Sa compl\'etion positive ${\widehat\shu}(\Psi)=\prod_{\alpha\in Q^+}\,\shu(\Psi)_\alpha$ est une sous$-\Z-$alg\`ebre  de $\widehat\shu^+$. On peut de m\^eme consid\'erer l'alg\`ebre ${\widehat\shu}_k(\Psi)=\prod_{\alpha\in Q^+}\,\shu(\Psi)_\alpha\otimes k$ (en g\'en\'eral diff\'erente de ${\widehat\shu}(\Psi)\otimes k$) pour tout anneau $k$.

\par Si $\Psi\subset\Delta\cup\{0\}$ est clos, on peut consid\'erer la compl\'etion positive ${\widehat\shu}^p(\Psi)$ de $\shu(\Psi)$ selon le degr\'e total (dans $\Z$); c'est une sous$-\Z-$alg\`ebre de ${\widehat\shu}^p={\widehat\shu}^p(\Delta\cup\{0\})$. On d\'efinit de m\^eme ${\widehat\shu}^p_k(\Psi)$. L'intersection de ${\widehat\shu}^p_k(\Psi)$ avec $\widehat{\g g}^p_k$ est ${\widehat{\g g}}^p_k(\Psi)=(\oplus_{\alpha\in\Psi}^{\alpha<0}\,\g g_{\alpha k})\oplus\g h_k\oplus(\prod_{\alpha\in\Psi}^{\alpha>0}\,\g g_{\alpha k})$. Bien s\^ur ${\widehat\shu}^p_k(\Psi)$ et ${\widehat{\g g}}^p_k(\Psi)$ sont des sous$-ad({\widehat\shu}^p_k(\Psi))-$modules de ${\widehat\shu}^p_k$

\par Si $\Psi'$ est un id\'eal de $\Psi$, ${\widehat\shu}^p_k(\Psi')$ (resp ${\shu}_k(\Psi')$) est un id\'eal de  ${\widehat\shu}^p_k(\Psi)$ (resp ${\shu}_k(\Psi)$).

\medskip
\par 3) Les r\'esultats pr\'ec\'edents et suivants sont bien s\^ur encore valables si on remplace $\Delta^+$ par $\Delta^-$ ou par un conjugu\'e de $\Delta^\pm$ par un \'el\'ement du groupe de Weyl $W^v$. On peut en particulier d\'efinir des compl\'etions n\'egatives ${\widehat\shu}^n(\Psi)$ (selon l'oppos\'e du degr\'e total).

 \subsection{Modules int\'egrables \`a plus haut poids}\label{2.14}

\par Soit $\lambda$ un poids dominant de $\g T_\shs$, \ie $\lambda\in X^+=\{\,\lambda\in X\mid \lambda(\alpha_i^\vee)\geq{}0\,,\,\forall i\in I\,\}$. Consid\'erons le $\g g_\shs-$module irr\'eductible $L(\lambda)$ de plus haut poids $\lambda$ \cite[chap. 9 et 10]{K-90}. Si $v_\lambda$ est un vecteur non nul de poids $\lambda$, $L(\lambda)$ est un quotient du module de Verma $V(\lambda)=\shu_\C(\g g_\shs)\otimes_{\shu_\C(\g b^+_\shs)}\,\C v_\lambda$ (dans le cas non sym\'etrisable, on prend plut\^ot pour $L(\lambda)$ le quotient int\'egrable maximal de $V(\lambda)$ comme en \cite[p28]{M-88a}). Le sous$-\shu_\shs-$module $L_\Z(\lambda)=\shu_\shs.v_\lambda=\shu^-_\shs.v_\lambda$ est une $\Z-$forme de $L(\lambda)$; on peut d\'efinir $L_k(\lambda)=L_\Z(\lambda)\otimes_\Z\,k$, pour tout anneau $k$. Ces modules sont int\'egrables au sens de \cite{T-81b} et gradu\'es par $Q^-$(\`a translation pr\`es) avec des espaces de poids de dimension finie.
En particulier on a une repr\'esentation diagonalisable de $\g T_\shs$ dans $L_\Z(\lambda)$.

\par Il est clair que, pour tout anneau $k$, $L_k(\lambda)$ est en fait un $\widehat\shu^p_{\shs,k}-$module.

\par On d\'efinit de m\^eme des modules int\'egrables \`a plus bas poids $L_k(\lambda)$ pour $\lambda\in -X^+$; ce sont des $\widehat\shu^n_{\shs,k}-$modules.


\section{Les groupes de Kac-Moody maximaux (\`a la Mathieu)}\label{s3}

\par On va \'etudier ci-dessous deux groupes de Kac-Moody "maximaux" dits {\it positif} et {\it n\'egatif} qui contiennent le groupe minimal \'etudi\'e dans la premi\`ere partie. Ils se d\'eduisent l'un de l'autre par l'\'echange de $\Delta^+$ et $\Delta^-$; on va donc essentiellement se concentrer sur le groupe positif. Ce groupe sera not\'e $\g G_\shs^{pma}$, les sous-groupes ou les alg\`ebres associ\'ees seront en g\'en\'eral not\'es avec un exposant $pma$ ou $ma+$. Les analogues pour $\Delta^-$ seront not\'es avec un exposant $nma$ ou $ma-$.

\subsection{Groupes (pro-)unipotents}\label{3.1}

\par Soit $\Psi\subset\Delta^+$ un ensemble clos de racines. On consid\`ere l'alg\`ebre $\shu(\Psi)$ de \ref{2.13}, qui ne d\'epend que de $A$ et $\Psi$ (et non de $\shs$). C'est une $\Z-$big\`ebre co-inversible, cocommutative, gradu\'ee par $Q^+$:  $\shu(\Psi)=\bigoplus_{\alpha\in\N\Psi}\,\shu(\Psi)_\alpha$; tous ses espaces de poids sont libres de dimension finie sur $\Z$. On consid\`ere son dual restreint $\Z[\g U^{ma}_\Psi]=\bigoplus_{\alpha\in\N\Psi}\,\shu(\Psi)_\alpha^*$. C'est une $\Z-$big\`ebre commutative et co-inversible (sa co-inversion est le dual de $\tau$ encore not\'e $\tau$, qui est un isomorphisme d'alg\`ebres); c'est \`a dire l'alg\`ebre d'un sch\'ema en groupe affine $\g U^{ma}_\Psi$ que l'on verra comme un foncteur en groupes: pour un anneau $R$, $\g U^{ma}_\Psi(R)={\mathrm Hom}_{\Z-alg}(\Z[\g U^{ma}_\Psi],R)$.  On note $\g U^{ma+}= \g U^{ma}_{\Delta^+}$. Pour $\alpha\in\Delta_{im}^+$ on note $\g U_{(\alpha)}=\g U^{ma}_{\{\,n\alpha\mid n\geq{}1\,\}}$, pour $\alpha\in\Phi^+$ on note $\g U_\alpha=\g U_{(\alpha)}=\g U^{ma}_{\{\alpha\}}$; $\g U^{ma}_\Psi$ et $\g U_{(\alpha)}$ sont des sous-sch\'emas en groupe de $\g U^{ma+}$.

\par La d\'efinition de $\Z[M]=\Z[\g U^{ma}_\Psi]$ dans \cite[p 19-21]{M-88a} est plus compliqu\'ee car elle ne se limite pas \`a $\Psi\subset\Delta^+$. Dans notre cas le lemme 3 de \lc dit bien que $\Q[M]$ est le $\Q-$dual restreint de $\shu_\Q(\Psi)$ et alors $\Z[M]$ (d\'efini page 21 de \lc) est bien le dual restreint de $\shu(\Psi)$.

\par La base de $\shu(\Psi)$ index\'ee par des $N\in\N^{(\shb_\Psi)}$ expliqu\'ee en \ref{2.13}.1 fournit par dualit\'e une base $(Z^N)_N$ de $\Z[\g U^{ma}_\Psi]$, index\'ee par les m\^emes $N$. Comme $\nabla[N]=\sum_{P+Q=N}\,[P]\otimes[Q]$, on a $Z^P.Z^Q=Z^{P+Q}$. Ainsi $\Z[\g U^{ma}_\Psi]$ est une alg\`ebre de polyn\^omes sur $\Z$ dont les ind\'etermin\'ees $Z_x$ sont index\'ees par les $x\in\shb_\Psi$.

\par On retrouve ainsi un r\'esultat de Olivier Mathieu \cite[lemme 2 p41]{M-89}. Inversement si on admet ce r\'esultat, $\Z[\g U^{ma}_\Psi]$ admet comme base des mon\^omes, son dual restreint $\shu(\Psi)$ admet une base gradu\'ee qui a de bonnes propri\'et\'es pour le coproduit $\nabla$. Un raisonnement analogue au raisonnement d'unicit\'e de \ref{2.9} permet alors de construire des suites exponentielles.

\begin{prop}\label{3.2} Pour $\Psi$ comme ci-dessus et $R$ un anneau, $\g U^{ma}_\Psi(R)$ s'identifie au sous-groupe multiplicatif de ${\widehat\shu}_R(\Psi)$ form\'e des produits $\prod_{x\in\shb_\Psi}\,[exp]\lambda_xx$, pour des $\lambda_x\in R$, le produit \'etant pris dans l'ordre (quelconque) choisi sur $\shb$. L'\'ecriture d'un \'el\'ement de $\g U^{ma}_\Psi(R)$ sous la forme d'un tel produit est unique.
\end{prop}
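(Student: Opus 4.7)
Le plan est d'identifier $\g U^{ma}_\Psi(R)={\mathrm{Hom}}_{\Z-alg}(\Z[\g U^{ma}_\Psi],R)$ \`a l'ensemble des \'el\'ements de type groupe (de co-unit\'e $1$) de $\widehat\shu_R(\Psi)$, via la dualit\'e entre $\Z[\g U^{ma}_\Psi]=\bigoplus_\alpha\,\shu(\Psi)_\alpha^*$ et $\shu(\Psi)$, puis de reconna\^{\i}tre ces \'el\'ements comme \'etant pr\'ecis\'ement les produits ordonn\'es $\prod_x\,[exp]\lambda_xx$ annonc\'es.

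Je d\'emarrerais en rappelant (\ref{2.6}, \ref{3.1}) qu'on dispose de la base PBW $\{[N]=\prod_x\,x^{[N_x]}\}_{N\in\N^{(\shb_\Psi)}}$ de $\shu(\Psi)$ (pour l'ordre fix\'e sur $\shb_\Psi$ et le choix de suites exponentielles $x^{[n]}$) et de la base duale $\{Z^N\}$ pr\'esentant $\Z[\g U^{ma}_\Psi]$ comme alg\`ebre de polyn\^omes $\Z[Z_x\mid x\in\shb_\Psi]$. Chaque $\shu(\Psi)_\alpha$ \'etant libre de rang fini sur $\Z$, l'accouplement induirait un isomorphisme de $\Z-$modules ${\mathrm{Hom}}_\Z(\Z[\g U^{ma}_\Psi],R)\to\widehat\shu_R(\Psi)$, $\phi\mapsto g_\phi=\sum_N\,\phi(Z^N)[N]$ (la somme est bien dans la compl\'etion puisque chaque espace de poids est de dimension finie). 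Je traduirais alors la multiplicativit\'e unitaire de $\phi$ en la condition d'\'el\'ement de type groupe $\nabla(g_\phi)=g_\phi\widehat\otimes g_\phi$ avec $\epsilon(g_\phi)=1$ (la multiplication sur $\Z[\g U^{ma}_\Psi]$ est duale de la comultiplication $\nabla$ sur $\shu(\Psi)$), et la loi de groupe sur $\g U^{ma}_\Psi(R)$ en la multiplication dans $\widehat\shu_R(\Psi)$ (les inverses venant de la co-inversion $\tau$, \cf \ref{2.8}); cela r\'ealiserait l'immersion comme sous-groupe multiplicatif.

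Pour le calcul central, j'exploiterais que $\phi$ est d\'etermin\'ee par les $\lambda_x=\phi(Z_x)\in R$ et que $\phi(Z^N)=\prod_x\,\lambda_x^{N_x}$ puisque $Z^N=\prod_x\,Z_x^{N_x}$; d'o\`u
$$g_\phi=\sum_N\,\Bigl(\prod_x\,\lambda_x^{N_x}\Bigr)[N]=\sum_{(n_x)}\,\prod_x\,\lambda_x^{n_x}x^{[n_x]}=\prod_x\,\Bigl(\sum_n\,\lambda_x^nx^{[n]}\Bigr)=\prod_x\,[exp]\lambda_xx,$$
le produit \'etant pris dans l'ordre fix\'e (le m\^eme que celui d\'efinissant les $[N]$). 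L'unicit\'e de l'\'ecriture est alors automatique, puisque les $\lambda_x=\phi(Z_x)$ se lisent sur $\phi$, donc sur $g_\phi$.

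Le point le plus d\'elicat sera la v\'erification minutieuse des correspondances par dualit\'e (morphisme d'alg\`ebre unitaire $\Leftrightarrow$ \'el\'ement de type groupe de co-unit\'e $1$, structures de Hopf duales entre $\Z[\g U^{ma}_\Psi]$ et $\shu(\Psi)$, et loi de groupe sch\'ematique $\Leftrightarrow$ multiplication dans la compl\'etion), ainsi que la coh\'erence des ordres dans les produits $\prod_x\,x^{[N_x]}=[N]$ pour valider le calcul combinatoire central; aucun outil nouveau n'est requis au-del\`a des constructions de \ref{2.13} et \ref{3.1}.
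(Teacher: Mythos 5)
Votre d\'emonstration est correcte et suit pour l'essentiel la m\^eme voie que celle de l'article~: identification par dualit\'e de $\g U^{ma}_\Psi(R)$ avec les \'el\'ements de type groupe de terme constant $1$ de $\widehat\shu_R(\Psi)$, puis exploitation de la structure d'alg\`ebre de polyn\^omes en les $Z_x$ (base $Z^N$ duale de la base PBW $[N]$ de \ref{2.6}). Votre r\'e\'ecriture $g_\phi=\sum_N\,\bigl(\prod_x\lambda_x^{N_x}\bigr)[N]=\prod_x\,[exp]\lambda_xx$ n'est que la forme explicite de l'argument du texte (qui note $Z_x(\prod_y\,[exp]\lambda_yy)=\lambda_x$ et en d\'eduit la bijection avec les applications $\shb_\Psi\to R$), existence et unicit\'e s'obtenant ainsi simultan\'ement.
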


\begin{remas*} 1) Si $\Psi$ est fini et donc form\'e de racines r\'eelles, on retrouve le groupe de sommes formelles de \cite[9.2]{Ry-02a}, c'est \`a dire l'unique sch\'ema en groupe lisse connexe unipotent d'alg\`ebre de Lie $\g g_{\Psi\Z}=\oplus_{\alpha\in\Psi}\,\g g_{\alpha\Z}$, \cf \cite[prop. 1 p547]{T-87b} et aussi \ref{3.4} ci-dessous. Dans ce cas les exponentielles sont classiques et les produits sont finis.

\par 2) Pour $\alpha\in\Phi^+$, le choix de l'\'el\'ement de base $e_\alpha$ de $\g g_{\alpha\Z}$ d\'etermine un isomorphisme $\g x_\alpha$ du groupe additif $\g{Add}$ sur $\g U_\alpha$ donn\'e par $\g x_\alpha(r)=exp(r.e_\alpha)$.

\par 3) Dans les produits infinis propos\'es et pour tout $n\in\N$, tous les facteurs sauf un nombre fini sont \'egaux \`a $1$ modulo des termes de degr\'e total $\geq{}n$. Ces produits infinis sont donc bien dans $\widehat\shu_R(\Psi)$.

\end{remas*}

\begin{proof} Le $R-$dual de $R[\g U^{ma}_\Psi]$ est $\widehat\shu_R(\Psi)$. Donc $\g U^{ma}_\Psi(R)$ s'identifie \`a l'ensemble des \'el\'ements $y\in\widehat\shu_R(\Psi)$ tels que $1=\epsilon(y)$ $(=y(1))$ et $y(\varphi.\varphi')=y(\varphi).y(\varphi')$ pour tous $\varphi,\varphi'\in R[\g U^{ma}_\Psi]$ \ie $y\,\circ\nabla^*=prod\,\circ(y\otimes y)$ ou encore $\nabla y=y\otimes y$. Ainsi $\g U^{ma}_\Psi(R)$  est l'ensemble des \'el\'ements de $\widehat\shu_R(\Psi)$ de terme constant $1$ et de type groupe. Parmi ces \'el\'ements il y a les produits infinis de l'\'enonc\'e. Mais $R[\g U^{ma}_\Psi]$ est une alg\`ebre de polyn\^omes d'ind\'etermin\'ees $Z_x$, $x\in\shb_\Psi$. Par d\'efinition $Z_x(\prod_y\,[exp]\lambda_yy)=\lambda_x$. Ainsi ces produits infinis fournissent une et une seule fois toutes les applications de $\shb_\Psi$ dans $R$, donc tous les homomorphismes de $R[\g U^{ma}_\Psi]$ dans $R$.
\end{proof}

\begin{lemm}\label{3.3} Soient $\Psi'\subset\Psi\subset\Delta^+$ des sous-ensembles clos de racines.

\par a) $\g U^{ma}_{\Psi'}$ est un sous-groupe ferm\'e de $\g U^{ma}_{\Psi}$ et $\Z[\g U^{ma}_{\Psi}/\g U^{ma}_{\Psi'}]$ est une alg\`ebre de polyn\^omes d'ind\'etermin\'ees index\'ees par $\shb_\Psi\setminus\shb_{\Psi'}$.

\par b) Si $\Psi\setminus\Psi'$ est \'egalement clos, alors on a une d\'ecomposition unique $\g U^{ma}_{\Psi}=\g U^{ma}_{\Psi'}.\g U^{ma}_{\Psi\setminus\Psi'}$.

\par c) Si $\Psi'$ est un id\'eal de $\Psi$, alors $\g U^{ma}_{\Psi'}$ est un sous-groupe distingu\'e de $\g U^{ma}_{\Psi}$ et on a un produit semi-direct si, de plus, $\Psi\setminus\Psi'$ est clos.

\par d) Si  $\Psi\setminus\Psi'$ est r\'eduit \`a une racine $\alpha$, alors le quotient $\g U^{ma}_{\Psi}/\g U^{ma}_{\Psi'}$ est isomorphe au groupe additif $\g{Add}$ pour $\alpha$ r\'eelle et au groupe unipotent commutatif $\g{Add}^{mult(\alpha)}$ pour $\alpha$ imaginaire.

\par e) Si $\qa,\qb\in\Psi$, $\qa+\qb\in\Psi$ implique $\qa+\qb\in\Psi'$, alors $\g U^{ma}_{\Psi}/\g U^{ma}_{\Psi'}$ est commutatif. Le groupe $\g U^{ma}_{\Psi}/\g U^{ma}_{\Psi'}(R)$ est isomorphe au groupe additif $\prod_{\qa\in\Psi\setminus\Psi'}\,\g g_{R\qa}$.

\par f) Si $\Psi''\subset\Psi'$ est un autre sous-ensemble clos et si $\qa\in\Psi'$, $\qb\in\Psi$, $\qa+\qb\in\QD$ implique $\qa+\qb\in\Psi''$, alors $\g U^{ma}_{\Psi''}$ contient le groupe de commutateurs $(\g U^{ma}_{\Psi},\g U^{ma}_{\Psi'})$
\end{lemm}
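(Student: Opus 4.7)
The whole lemma rests on two ingredients established earlier: first, the PBW-type basis of $\shu(\Psi)$ from \ref{2.13}.1 (together with its good behaviour with respect to the coproduct shown in \ref{2.7}), which dualises to the polynomial algebra structure on $\Z[\g U^{ma}_\Psi]$ with generators indexed by $\shb_\Psi$; second, the explicit description of $R$-points as unique ordered products of twisted exponentials from Proposition \ref{3.2}. The plan is to exploit, for each part, a strategic ordering of $\shb_\Psi$ making $\shb_{\Psi'}$ or $\shb_{\Psi\setminus\Psi'}$ sit in a convenient block, and to transfer the required statement back and forth between the algebra side ($\shu$, $\widehat\shu$) and the group side ($\g U^{ma}_\Psi$) via duality.

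For (a), the sub-bialgebra inclusion $\shu(\Psi')\subset\shu(\Psi)$ (noted in \ref{2.13}.1) dualises to a surjection of Hopf algebras $\Z[\g U^{ma}_\Psi]\twoheadrightarrow\Z[\g U^{ma}_{\Psi'}]$, which realises $\g U^{ma}_{\Psi'}$ as a closed subgroup; the PBW basis $([N])_N$ of $\shu(\Psi)$, indexed by $\N^{(\shb_\Psi)}$, shows at once that the kernel is generated by the $Z_x$ for $x\in\shb_\Psi\setminus\shb_{\Psi'}$, whence the polynomial structure of the quotient algebra. For (b), with $\Psi\setminus\Psi'$ also closed, order $\shb_\Psi$ so that $\shb_{\Psi'}$ precedes $\shb_{\Psi\setminus\Psi'}$; the uniqueness clause in Proposition \ref{3.2} then immediately yields the factorisation $\g U^{ma}_\Psi=\g U^{ma}_{\Psi'}\cdot\g U^{ma}_{\Psi\setminus\Psi'}$.

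For (c), when $\Psi'$ is an ideal of $\Psi$, one invokes \ref{2.13}.2: $\shu(\Psi')$ is a (two-sided) ideal of $\shu(\Psi)$, hence $\widehat\shu(\Psi')$ is an ideal of $\widehat\shu(\Psi)$. For $u\in\g U^{ma}_\Psi(R)\subset 1+\widehat\shu(\Psi)_+$ and $v\in\g U^{ma}_{\Psi'}(R)\subset 1+\widehat\shu(\Psi')_+$, conjugation $v\mapsto uvu^{-1}$ preserves both the coset $1+\widehat\shu(\Psi')_+$ (by the ideal property) and the group-like condition $\nabla(\cdot)=(\cdot)\otimes(\cdot)$ (by functoriality of $\nabla$), so it lands again in $\g U^{ma}_{\Psi'}(R)$; this gives normality. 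The semi-direct product when $\Psi\setminus\Psi'$ is additionally closed follows by combining with (b).

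For (d), $\shb_\alpha$ has $1$ element if $\alpha\in\Phi^+$ and $\mathrm{mult}(\alpha)$ elements if $\alpha\in\Delta_{im}^+$; by (a) the quotient algebra is polynomial on these generators, giving the underlying scheme $\g{Add}$ or $\g{Add}^{\mathrm{mult}(\alpha)}$. Its group law is read from products $[exp]\lambda x\cdot[exp]\mu y$ with $x,y\in\shb_\alpha$: the residual higher-order terms are supported on weights $n\alpha$ with $n\geq 2$ or sums involving other roots of $\Psi$, all of which by hypothesis lie in $\Psi'$ and hence vanish in the quotient; this forces additivity. For (e) and (f), one expands the commutator $([exp]\lambda x,[exp]\mu y)$ inside $\widehat\shu_R(\Psi)$: each nontrivial term sits in a weight space $\shu_{p\alpha+q\beta}$ with $p,q\geq 1$, and the closedness together with the hypothesis of (e) (resp.\ (f)) forces such weights to lie in $\Psi'$ (resp.\ $\Psi''$); the commutator is therefore in $1+\widehat\shu(\Psi')_+$ (resp.\ $1+\widehat\shu(\Psi'')_+$) and still group-like, hence in $\g U^{ma}_{\Psi'}(R)$ (resp.\ $\g U^{ma}_{\Psi''}(R)$). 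The description of $\g U^{ma}_\Psi/\g U^{ma}_{\Psi'}$ in (e) as $\prod_{\alpha\in\Psi\setminus\Psi'}\g g_{R\alpha}$ then combines (a) and (b) with the additivity argument of (d). The genuine obstacle throughout is the absence of a Baker--Campbell--Hausdorff identity for twisted exponentials at imaginary roots; this is circumvented by systematically working inside the associative algebra $\widehat\shu_R(\Psi)$ and letting the ideal structure of $\widehat\shu(\Psi')$ do the bookkeeping.
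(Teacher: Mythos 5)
Parts (a), (b), (d) of your proposal follow the paper's own route: dualising the inclusion $\shu(\Psi')\subset\shu(\Psi)$ and choosing an ordering of $\shb_\Psi$ adapted to $\Psi'$ in Proposition \ref{3.2} give (a) and (b); for (d) the essential ingredient, which you only gesture at, is Remark \ref{2.9}.4, namely that for $x,y\in\shb_\alpha$ the elements $x^{[n]},y^{[m]}$ lie in $\shu^\alpha$, commute and satisfy $x^{[n]}x^{[m]}=\binom{n+m}{n}x^{[n+m]}$ modulo $\shu(\Theta)$ with $\Theta=\{n\alpha\mid n\geq 2\}$ --- a fact obtained from the $\Q$-structure, not from weight bookkeeping.

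For (c), (e), (f) you diverge from the paper, and here lies the problem. Your central plank --- that $\widehat\shu_R(\Psi')$ is a two-sided multiplicative ideal of $\widehat\shu_R(\Psi)$, so that coset and commutator membership can be read off from weight support --- is false as stated: in type $A_2$ with $\Psi=\Delta^+$ and $\Psi'=\{\alpha_2,\alpha_1+\alpha_2\}$ (an ideal of $\Psi$) one has $e_1e_2\notin\shu(\Psi')$, since $\shu(\Psi')_{\alpha_1+\alpha_2}=\Z e_{12}$ while $\shu(\Psi)_{\alpha_1+\alpha_2}$ has rank $2$; the loose wording of \ref{2.13}.2 must be read as stability under the adjoint action, not under multiplication. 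Your (c) is repairable and then genuinely different from the paper: conjugation by a group-like $u$ is exactly the Hopf adjoint action, $\widehat\shu_R(\Psi')$ is $ad$-stable, so $uvu^{-1}$ is group-like in $\widehat\shu_R(\Psi')$ and \ref{3.2} gives normality over any $R$; the paper instead checks normality over $\C$ (ordinary exponentials, reference to \cite{Kr-02}) and descends to $\Z$, hence to all rings, because the normaliser is a closed subscheme and $\Z[\g U^{ma}_{\Psi}]$ is a polynomial, hence torsion-free, ring. For (e) and (f) the gap is genuine: knowing that $(u,v)-1$ is supported in weights $p\qa+q\qb$, $p,q\geq 1$, does not show that each component lies in the submodule $\shu(\Psi')_\mu$ (resp. $\shu(\Psi'')_\mu$), which is in general a proper direct factor of $\shu(\Psi)_\mu$ (same $A_2$ example); moreover in (f) the hypothesis only controls $\qa+\qb$, not the higher weights you invoke. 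This is exactly the difficulty the paper avoids by the same $\C$-then-Zariski-closure argument for (f) (the centraliser is closed and the coordinate ring polynomial), and by reducing (e), via \ref{3.2} and \ref{2.9}.4, to the one-root blocks of (d). To close (e) and (f) along your lines you would need either that descent argument or an induction on total degree, killing at each step the primitive lowest-degree term by an element of $\g U^{ma}_{\Psi'}(R)$.
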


\begin{proof} a) $\shu(\Psi')$ est une sous-big\`ebre co-inversible de $\shu(\Psi)$, donc $\g U^{ma}_{\Psi'}$ est un sous-groupe de $\g U^{ma}_{\Psi}$. Ce sous-groupe est ferm\'e, d\'efini par l'annulation des ind\'etermin\'ees $Z_x$ pour $x\in\shb_\Psi\setminus\shb_{\Psi'}$. Pour la derni\`ere assertion il suffit dans \ref{3.2} d'ordonner $\shb$ de fa\c{c}on que $\shb_{\Psi'}$ soit en dernier (\`a droite).

\par L'assertion b) r\'esulte aussit\^ot de la proposition \ref{3.2}

\par c) Pla\c{c}ons nous d'abord sur $\C$ (ou un anneau contenant $\Q$). On peut alors remplacer dans  \ref{3.2} les exponentielles tordues par les exponentielles habituelles. Des calculs classiques montrent alors que $\g U^{ma}_{\Psi'}(\C)$ est distingu\'e dans $\g U^{ma}_{\Psi}(\C)$. En fait il est clair que $\g U^{ma}_{\Psi}(\C)$ ou $\g U^{ma}_{\Psi'}(\C)$ est le groupe introduit dans \cite[6.1.1]{Kr-02} presque sous le m\^eme nom et on peut donc faire r\'ef\'erence \`a \cite[6.1.2]{Kr-02}. Le normalisateur de $\g U^{ma}_{\Psi'}$ dans $\g U^{ma}_{\Psi}$ est ferm\'e dans $\g U^{ma}_{\Psi}$ \cite[II {\S{}} 1 3.6b]{DG-70}. Comme $\Z[\g U^{ma}_\Psi]$ est une alg\`ebre de polyn\^omes et que l'on vient de voir que ce normalisateur est \'egal \`a $\g U^{ma}_\Psi$ sur $\C$, il en est de m\^eme sur $\Z$.

\par d) Ordonnons $\shb_\Psi$ de mani\`ere que $\shb_\alpha$ soit en premier. Alors, d'apr\`es \ref{2.6} et \ref{3.2}, un \'el\'ement de $\g U^{ma}_\Psi(R)$ s'\'ecrit de mani\`ere unique sous la forme
 $(\prod_{x\in\shb_\alpha}\,[exp]\lambda_xx)z$ avec des $\lambda_x\in R$ et $z\in\g U^{ma}_{\Psi'}(R)$. Soit $\Theta=\{\,n\alpha\in\Delta\mid n\geq{}2\,\}\subset\Psi'$ (vide si $\alpha$ est r\'eelle); il reste \`a montrer que $(\prod_x\,[exp]\lambda_xx).(\prod_x\,[exp]\lambda'_xx)=\prod_x\,[exp](\lambda_x+\lambda'_x)x$ (modulo $\g U^{ma}_\Theta(R)$). Mais on a vu dans la remarque \ref{2.9}.4 que, pour $x,y\in\shb_\alpha$, $x^{[n]},y^{[n]}\in\shu^\alpha$ (ces \'el\'ements commutent donc modulo $\shu(\Theta)$), et que $x^{[n]}.x^{[m]}=\left(\begin{array}{c}n+m \\ n\end{array}\right)x^{[n+m]}$ modulo $\shu(\Theta)$. Le r\'esultat est alors clair.

 \par e) Il est clair que les \'el\'ements de $\widehat\shu^p_R(\Psi)$ commutent modulo l'id\'eal $\widehat\shu^p_R(\Psi')$. Vus \ref{3.2} et les bonnes propri\'et\'es de la base des $[N]$ (\ref{2.9}.4), $\g U^{ma}_{\Psi}(R)/\g U^{ma}_{\Psi'}(R)$ est commutatif isomorphe au produit direct des groupes $\g U^{ma}_{(\qa)}(R)/\g U^{ma}_{(\qa)\setminus\{\qa\}}(R)$ pour $\qa\in\Psi\setminus\Psi'$.

 \par f) Comme en c) on a le r\'esultat sur $\C$. Le centralisateur de $\g U^{ma}_{\Psi'}/\g U^{ma}_{\Psi''}$ dans $\g U^{ma}_{\Psi}/\g U^{ma}_{\Psi''}$ est donc \'egal \`a $\g U^{ma}_{\Psi}/\g U^{ma}_{\Psi''}$ sur $\C$. Mais ce centralisateur est ferm\'e dans $\g U^{ma}_{\Psi}/\g U^{ma}_{\Psi''}$ \cite[II {\S{}} 1 3.6c]{DG-70}. Comme $\Z[\g U^{ma}_{\Psi}/\g U^{ma}_{\Psi''}]$ est un anneau de polyn\^omes, ce centralisateur est \'egal \`a $\g U^{ma}_{\Psi}/\g U^{ma}_{\Psi''}$.
\end{proof}

\subsection{Filtrations de $\g U^{ma}_\Psi$}\label{3.4}

\par Soient $\Psi$ un sous-ensemble clos de $\Delta^+$ et $n$ le plus petit des degr\'es totaux ($deg(\alpha)=\sum n_i$) des $\alpha=\sum\,n_i\alpha_i\in\Psi$. Choisissons $\alpha\in\Psi$ de degr\'e total $n$, alors $\Psi'=\Psi\setminus\{\alpha\}$ est un id\'eal de $\Psi$. Ainsi $\g U^{ma}_{\Psi'}$ est un sous-groupe distingu\'e de $\g U^{ma}_{\Psi}$ et le quotient $\g U^{ma}_{\Psi}/\g U^{ma}_{\Psi'}$ est isomorphe au groupe additif $\g{Add}$ ou \`a  $\g{Add}^{mult(\alpha)}$. Si $\alpha$ est r\'eelle on a un produit semi-direct.

\par Ce proc\'ed\'e fournit une num\'erotation de $\Psi$ par $\N$ (ou un intervalle $[0,N]$ de $\N$): $\Psi=\{\beta_i\}$. Pour  $n\in\N$, on note $\Psi_n=\{\,\beta_i\mid\,i\geq{}n\,\}$. D'apr\`es le lemme \ref{3.3} le groupe $\g U^{ma}_{\Psi_n}$ est distingu\'e dans $\g U^{ma}_{\Psi}$ et $\g U^{ma}_{\Psi_n}/\g U^{ma}_{\Psi_{n+1}}$ est isomorphe \`a $\g{Add}^{mult(\beta_n)}$. La proposition \ref{3.2} montre que $\g U^{ma}_{\Psi}$ est limite projective des $\g U^{ma}_{\Psi}/\g U^{ma}_{\Psi_n}$. Ainsi $\g U^{ma}_{\Psi}$ est un groupe pro-unipotent (ou m\^eme unipotent si $\Psi$ est fini).

\par Pour $d\in\N$, notons $\Psi(d)$ l'ensemble des racines de $\Psi$ de degr\'e total $\geq{}d$. Alors les $\g U^{ma}_{\Psi(d)}$ sont distingu\'es dans $\g U^{ma}_{\Psi}$ et $\g U^{ma}_{\Psi(d)}/\g U^{ma}_{\Psi(d+1)}(R)$ est isomorphe au groupe additif $\bigoplus_{\qa\in\Psi(d)\setminus\Psi(d+1)}\,\g g_{R\qa}$.
Le groupe $\g U^{ma}_{\Psi}(R)$ est limite projective des $\g U^{ma}_{\Psi}/\g U^{ma}_{\Psi(d)}(R)$.

\subsection{Groupes de Borel et paraboliques minimaux}\label{3.5}

\par Le {\it groupe de Borel} $\g B_\shs^{ma+}$ est par d\'efinition le produit semi-direct $\g T_\shs\ltimes\g U^{ma+}$ pour l'action suivante de $\g T_\shs$ sur $\g U^{ma+}$. Pour un anneau $R$, $\g T_\shs(R)$ agit sur $\shu_{\shs R}$ par des automorphismes de big\`ebre: sur $\shu_{\shs\alpha R}$, l'action $Ad(t)$ de $t\in\g T_\shs(R)$ est la multiplication par $\alpha(t)\in R^*$. On en d\'eduit aussit\^ot l'action sur $\g U^{ma+}(R)$: $Int(t).[exp]\lambda x=[exp]\alpha(t)\lambda x$ si $x\in \g g_{\alpha R}$. Il est clair que les groupes $\g U^{ma}_{\Psi}$ de \ref{3.3} sont stables par l'action de $\g T_\shs$.

\par Si $\alpha=\alpha_i$ est une racine simple on a $\g U^{ma+}_{}=\g U_{\alpha}\ltimes\g U^{ma}_{\Delta^+\setminus\{\alpha\}}$. On d\'efinit comme ci-dessus $\g U_{-\alpha}$ avec $\Z[\g U_{-\alpha}]$ dans le dual de $\shu_\C(\g g_{-\alpha})\cap\shu_\shs$ et isomorphe \`a $\g{Add}$ par $\g x_{-\alpha}:\g{Add}\rightarrow\g U_{-\alpha}$, $\g x_{-\alpha}(r)=exp(rf_\alpha)$. O. Mathieu d\'efinit un sch\'ema en groupe affine  {\it parabolique minimal} $\g P_i^{pma}=\g P_\alpha^{pma}$ (contenant $\g B_\shs^{ma+}$ et associ\'e \`a $\Delta^+\cup\{0,-\alpha\}$) et un sous-groupe ferm\'e $\g A_\alpha^Y=\g A_i^Y$ (associ\'e \`a $\{0,\pm\alpha\}$); il montre que $\g P_\alpha^{pma}=\g A_\alpha^Y\ltimes\g U^{ma}_{\Delta^+\setminus\{\alpha\}}$ \cite[lemme 8 p26]{M-88a}.

\par Le sch\'ema en groupe $\g A_\alpha^Y$ contient les sous-groupes ferm\'es $\g T_\shs$, $\g U_{\alpha}$ et $\g U_{-\alpha}$, plus pr\'ecis\'ement $\Z[\g A_\alpha^Y]$ est contenu dans le dual de $\shu_\shs\cap\shu_\C(\g g_\alpha\otimes\g h\otimes\g g_{-\alpha})$ et sa restriction \`a $\shu_\shs\cap\shu_\C(\g g_{\pm\alpha})$ (resp. $\shu_\shs\cap\shu_\C(\g h)$) est $\Z[\g U_{\pm\alpha}]$ (resp. $\Z[\g T_\shs]$). Il est clair (\cf \eg [\lc lemme 7.2]) que $\g A_\alpha^Y$ est le groupe r\'eductif de SGR $(\,(2),Y,\alpha,\alpha^\vee)$ il agit sur $\g g_\Z$ et est isog\`ene au produit de $SL_2$ par un tore d\'eploy\'e.

\par L'\'el\'ement $\widetilde s_i=\g x_{\alpha}(1).\g x_{-\alpha}(1).\g x_{\alpha}(1)$ de $\g A_\alpha^Y(\Z)$ normalise $\g T_\shs$, v\'erifie $\widetilde s_i^2=\alpha^\vee(-1)\in\g T_\shs(\Z)$ et \'echange $\g x_{\alpha}$, $\g x_{-\alpha}$; il agit sur $\g g_\Z$ comme $s_i^*$ (\cf \ref{1.4}). Enfin la double classe (grosse cellule) $\g C_i=\g B_\shs^{ma+}.\widetilde s_i.\g B^{ma+}_\shs$ de $\g P_i^{pma}$ est un ouvert dense et se d\'ecompose de mani\`ere unique sous la forme $\g C_i=\g U_{\alpha_i}.\widetilde s_i.\g B_\shs^{ma+}=\widetilde s_i.\g U_{-\alpha_i}.\g B_\shs^{ma+}$.

\begin{rema*}  Mathieu se place dans le cas d'un SGR $\shs$ libre, colibre, sans cotorsion et de dimension $2\vert I\vert-rang(A)$. Bien s\^ur ces hypoth\`eses sont inutiles pour la plupart de ses r\'esultats (c'est particuli\`erement clair pour la derni\`ere). On se placera cependant dans ce cadre (depuis l'alin\'ea pr\'ec\'edent) jusqu'\`a la fin de ce paragraphe o\`u on examinera la g\'en\'eralisation gr\^ace aux r\'esultats de \ref{1.3}.
\end{rema*}

\par On abandonne en g\'en\'eral dans la suite (et jusqu'en 3.17) l'indice $\shs$ et on abr\`ege parfois $\g B^{ma+}$ en $\g B$, $\g P_i^{pma}$ en $\g P_i$, $\g A_i^Y$ en $\g A_i$, etc.

\subsection{La construction de Mathieu}\label{3.6} \cf \cite[XVIII {\S{}} 2]{M-88a}, \cite{M-88b}, \cite[I et II]{M-89}

\par On consid\`ere des sch\'emas sur $\Z$, m\^eme si on n'aura essentiellement besoin de raisonner que sur des (sous-anneaux de) corps, \'eventuellement de caract\'eristique positive.

\par Soient $w\in W^v$ et $\widetilde w=s_{i_1}.\cdots.s_{i_n}$ une d\'ecomposition r\'eduite de $w$. On consid\`ere le sch\'ema $\g E(\widetilde w)=\g P_{i_1}\times^{\g B}\g P_{i_2}\times^{\g B}\cdots\times^{\g B}\g P_{i_n}$ et le sch\'ema de Demazure $\g D(\widetilde w)=\g E(\widetilde w)/\g B$. 

\par Le sch\'ema $\g B(w)$ est l'affinis\'e de $\g E(\widetilde w)$, c'est \`a dire $Spec(\Z[\g E(\widetilde w)])$, il ne d\'epend que de $w$  \cite[p 40, l -15 \`a -1]{M-89}. En particulier $\g B(s_i)=\g P_i$.

\par Si $w'\leq{}w$ (pour l'ordre de Bruhat-Chevalley), il existe une d\'ecomposition r\'eduite $\widetilde w'$ de $w'$ extraite de $\widetilde w$. Si dans l'\'ecriture de $\g E(\widetilde w)$, on remplace par $\g B$ les $\g P_{i_j}$ correspondant \`a des $s_{i_j}$ absents de $\widetilde w'$, on obtient un sous-sch\'ema ferm\'e de $\g E(\widetilde w)$ clairement isomorphe \`a $\g E(\widetilde w')$. L'immersion ferm\'ee $\g E(\widetilde w')\rightarrow \g E(\widetilde w)$ induit un morphisme $\g B(w')\rightarrow \g B(w)$ entre les affinis\'es; c'est aussi une immersion ferm\'ee, ind\'ependante des choix de $\widetilde w$ et $\widetilde w'$ \cite[bas de page 255]{M-88a}.

\par On a donc un syst\`eme inductif de sch\'emas $\g B(w)$ et on note $\g G^{pma}$ le ind-sch\'ema limite inductive de ce syst\`eme; c'est le {\it groupe de Kac-Moody positivement maximal} associ\'e \`a $\shs$. Chaque morphisme $\g B(w)\rightarrow\g G^{pma}$ est une immersion ferm\'ee. On verra essentiellement $\g G^{pma}$ comme un foncteur sur la cat\'egorie des anneaux:  $\g G^{pma}(R)= \underrightarrow{lim} \,  \g B(w)(R)$.

\par En fait $\g G^{pma}$ est un ind-sch\'ema en groupe: pour des d\'ecompositions r\'eduites $\widetilde w=s_{i_1}.\cdots.s_{i_n}$ et $\widetilde w'=s_{j_1}.\cdots.s_{j_m}$, il existe un morphisme naturel de $\g P_{i_1}\times\cdots\times\g P_{i_n}\times\g P_{j_1}\times\cdots\times\g P_{j_m}$ dans $\g B(\psi(w,w'))$ pour un certain $\psi(w,w')\leq{}w.w'$ \cite[p 41 et 45]{M-89}. Ce morphisme se factorise donc par $\g B(w)\times\g B(w')$. Ceci permet de d\'efinir la multiplication dans $\g G^{pma}$. La d\'efinition de l'inversion est claire. Pour cette structure de groupe le compos\'e $\g P_{i_1}\times\cdots\times\g P_{i_n}\rightarrow\g E(\widetilde w)\rightarrow\g B(w)\hookrightarrow\g G^{pma}$ est simplement la multiplication.

\subsection{Modules \`a plus haut poids et repr\'esentation adjointe}\label{3.7}

\par 1) Pour $\lambda\in X^+$, on a d\'efini en \ref{2.14} un $\shu_\shs-$module \`a plus haut poids $L_\Z(\lambda)$ qui est int\'egrable, donc un $\g T_\shs-$module. Pour un anneau $k$ le module $L_k(\lambda)$ est stable par  $\widehat\shu^{p}_{\shs k}$ donc par $\g U^{ma+}(k)$. On obtient ainsi une repr\'esentation de $\g B^{ma+}$ dans $L_\Z(\lambda)$. L'int\'egrabilit\'e de $L_\Z(\lambda)$ \cite{T-81b} montre en fait que $L_\Z(\lambda)$ est un $\g A_i-$module donc aussi un $\g P_i-$module ($\forall i\in I$). Ce module est "localement fini", plus pr\'ecis\'ement pour tout
sous$-\Z-$module $M$ de rang fini de $L_\Z(\lambda)$, il existe un sous$-\Z-$module facteur direct  de rang fini $M'$ de $L_\Z(\lambda)$ contenant $M$, tel que $M'\otimes k$ soit un $\g P_i(k)-$module pour tout anneau $k$. On en d\'eduit aussit\^ot un morphisme de $\g B(w)$ dans $\g{GL}(L_\Z(\lambda))$, $\forall w\in W$. On obtient ainsi une repr\'esentation de $\g G^{pma}$ dans $\g{GL}(L_\Z(\lambda))$.

\par 2) Soit $M=L_\Z(\lambda)$ le module \`a plus haut poids pr\'ec\'edent: $M=\oplus_{\nu\in Q^+}\, M_{\lambda-\nu}$. Pour un anneau $k$, on d\'efinit le compl\'et\'e n\'egatif $\widehat M^n_k$ de $M\otimes k$ : $\widehat M^n_k=\prod_{\nu\in Q^+}\, M_{\lambda-\nu}\otimes k$. Il est clair que $\widehat M^n_k$ est un $\widehat\shu^n_k-$module gradu\'e; en particulier c'est un $\g U^{ma-}(k)-$module.

\par 3) Pour un anneau $k$, l'action adjointe $ad$ de l'alg\`ebre  $\widehat\shu^p_k$ sur elle m\^eme (\ref{2.1}.5) induit une action, dite adjointe et not\'ee $Ad$, du groupe $\g U^{ma+}(k)\subset \widehat\shu^p_k$ sur $\widehat\shu^p_k$. L'action de $\g T_\shs(k)$ est d\'efinie par les poids, d'o\`u la repr\'esentation adjointe $Ad$ de $\g B^{ma+}(k)$ sur $\widehat\shu^p_k$. Cette action stabilise \'evidemment  $\widehat{\g g}^p_k$ et $\widehat\shu^+_k$.

\par Pour $\alpha\in\Phi$, $r\in k$ et $u\in\widehat\shu^p_k$, $Ad(\g x_\alpha(r))(u)= \sum_{n\geq{}0}\,(ad\,e_\alpha^{(n)}).r^nu= \sum_{p,q\geq{}0}\,e_\alpha^{(p)}.r^nu.e_\alpha^{(p)}$.

\subsection{Le cas classique}\label{3.8}

\par Supposons la matrice $A$ de Cartan \ie $\Delta=\Phi$ fini et $W^v$ fini. Soit $\g G$ le groupe r\'eductif d\'eploy\'e sur $\Z$ de SGR $\shs$; d'apr\`es l'hypoth\`ese de Mathieu il est semi-simple simplement connexe. Le groupe $\g B^{ma+}_\shs=\g T_\shs\ltimes \g U^{ma+}$ est le sous-groupe de Borel $\g B$ de $\g G$ (\cf \ref{3.2}.1) et $\g P_{\alpha_i}^{pma}=\g A_{\alpha_i}^Y\ltimes\g U^{ma}_{\Delta^+\setminus\{\alpha_i\}}$ est un sous-groupe parabolique minimal de $\g G$.

\par Pour une d\'ecomposition r\'eduite $\widetilde w=s_{i_1}.\cdots.s_{i_n}$ dans $W^v$, le produit d\'efinit un morphisme $\pi\,:\,\g E(\widetilde w)=\g P_{\alpha_{i_1}}\times^{\g B}\cdots\times^{\g B}\g P_{\alpha_{i_n}}\rightarrow \g G$. Si on restreint $\pi$ aux grosses cellules, on obtient un isomorphisme de $\Omega(\widetilde w)=\g C_{{i_1}}\times^{\g B}\cdots\times^{\g B}\g C _{{i_n}}$ sur son image $\g U_{\alpha_{i_1}}.\widetilde s_{{i_1}}.\cdots.\g U_{\alpha_{i_n}}.\widetilde s_{{i_n}}.\g B=\widetilde s_{{i_1}}.\cdots.\widetilde s_{{i_n}}.\g U_{-s_{{i_n}}.\cdots.s_{{i_2}}(\alpha_{i_1})}.\cdots.\g U_{-s_{{i_n}}(\alpha_{i_{n-1}})}.\g U_{-\alpha_{i_n}}.\g B$.

\par Si $w=w_0$ est l'\'el\'ement de plus grande longueur de $W^v$, cette image est un ouvert dense: la grosse cellule $\g C(\g G)$ de $\g G$. On a donc la suite de morphismes:
\xymatrix{ \Omega(\widetilde w_0)\ar@{^{(}->}[r]^i&\g E(\widetilde w_0)\ar[r]^\pi&\g G\\}
avec $i$ et $\pi\circ i$ des immersions ouvertes. De plus l'image $\g C(\g G)$ de $\pi\circ i$ rencontre toutes les fibres $\g G_{\F_p}$ de $\g G$ pour $p$ premier.

\begin{prop}\label{3.9} Dans ces conditions $\pi$ identifie  $\Z[\g G]$ \`a $\Z[ \g E(\widetilde w_0)]$. Ainsi $\g G$ s'identifie \`a $\g B(w_0)$ lui m\^eme \'egal \`a $\g G^{pma}$.
\end{prop}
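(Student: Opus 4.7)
Le plan est de montrer que $\overline\pi\colon \g B(w_0)\to\g G$ est un isomorphisme de $\Z$-sch\'emas affines, o\`u $\overline\pi$ est le morphisme unique factorisant $\pi$ \`a travers l'affinisation $\alpha\colon\g E(\widetilde w_0)\to\g B(w_0)=\mathrm{Spec}(\Z[\g E(\widetilde w_0)])$; cette factorisation existe car $\g G$ est affine. Une fois $\overline\pi$ d\'emontr\'e isomorphisme, l'identification $\g G^{pma}=\g B(w_0)$ est imm\'ediate: tout $w\in W^v$ v\'erifie $w\leq w_0$, donc les immersions ferm\'ees $\g B(w)\hookrightarrow\g B(w_0)$ de \ref{3.6} \'epuisent le syst\`eme inductif d\'efinissant $\g G^{pma}$.

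Le point cl\'e est que $\pi$ est propre et v\'erifie $\pi_*\sho_{\g E(\widetilde w_0)}=\sho_\g G$. Pour la propret\'e, on observe que le carr\'e form\'e des fl\`eches horizontales $\pi\colon\g E(\widetilde w_0)\to\g G$ et $\bar\pi\colon\g D(\widetilde w_0)\to\g G/\g B$ (le morphisme de Bott--Samelson), avec pour fl\`eches verticales les quotients \`a droite par $\g B$ ($\g E(\widetilde w_0)\to\g D(\widetilde w_0)$ et $\g G\to\g G/\g B$), est cart\'esien. Donc $\pi$ est le changement de base de $\bar\pi$, qui est propre puisque $\g D(\widetilde w_0)$ est projectif sur $\Z$. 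L'annulation $\bar\pi_*\sho_{\g D(\widetilde w_0)}=\sho_{\g G/\g B}$ est l'\'enonc\'e classique (valable sur $\Z$) disant que la r\'esolution de Bott--Samelson a fibres connexes r\'eduites; combin\'ee au changement de base plat le long du $\g B$-fibr\'e $\g G\to\g G/\g B$, elle donne $\pi_*\sho_{\g E(\widetilde w_0)}=\sho_\g G$. En prenant les sections globales, $\Z[\g E(\widetilde w_0)]=\Gamma(\g G,\pi_*\sho_{\g E(\widetilde w_0)})=\Gamma(\g G,\sho_\g G)=\Z[\g G]$, et $\overline\pi^*\colon\Z[\g G]\to\Z[\g B(w_0)]$ est l'isomorphisme voulu.

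Le principal obstacle est l'\'etablissement de $\bar\pi_*\sho_{\g D(\widetilde w_0)}=\sho_{\g G/\g B}$ sur $\Z$, un \'enonc\'e de type Kempf--Ramanathan pour les sch\'emas de Bott--Samelson en type fini. Une alternative purement repr\'esentation-th\'eorique, qui contourne cet obstacle, utilise que $\Z[\g G]$ est engendr\'e par les coefficients matriciels des modules $L_\Z(\lambda)$ de rang fini (Peter--Weyl alg\'ebrique pour le $\Z$-groupe semi-simple simplement connexe $\g G$) et que chacun de ces modules est simultan\'ement un $\g B(w_0)$-module via \ref{3.7}; les deux alg\`ebres de coordonn\'ees sont ainsi engendr\'ees par les m\^emes coefficients matriciels.
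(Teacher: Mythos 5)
Votre stratégie (propreté de $\pi$ par changement de base du morphisme de Bott--Samelson, puis $\pi_*\sho_{\g E(\widetilde w_0)}=\sho_{\g G}$ et passage aux sections globales) est une route réellement différente de celle du texte, et elle peut être menée à bien; mais telle quelle elle comporte deux lacunes. D'abord, l'étape clé $\bar\pi_*\sho_{\g D(\widetilde w_0)}=\sho_{\g G/\g B}$ sur $\Z$ est invoquée comme un énoncé «classique» de type Kempf--Ramanathan, que vous identifiez vous-même comme l'obstacle principal sans le démontrer: en l'état ce n'est pas une preuve. Notez qu'ici (cas du mot le plus long, cible la variété de drapeaux entière) on peut combler ce trou sans machinerie de scindage de Frobenius: $\g D(\widetilde w_0)$ et $\g G/\g B$ sont intègres et lisses sur $\Z$, $\bar\pi$ est propre et birationnel (c'est exactement ce que fournit \ref{3.8} via les grosses cellules), donc le théorème principal de Zariski/la factorisation de Stein donne $\bar\pi_*\sho=\sho$ puisque la cible est normale; il faudrait écrire explicitement cet argument (intégrité des schémas sur $\Z$, birationalité sur $\Z$ et non seulement sur $\C$) pour que la première moitié tienne. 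Ensuite, l'«alternative» par Peter--Weyl est circulaire: savoir que chaque $L_\Z(\lambda)$ est un $\g B(w_0)$-module (\ref{3.7}) ne dit rien sur le fait que $\Z[\g B(w_0)]=\Z[\g E(\widetilde w_0)]$ soit engendrée par les coefficients matriciels de ces modules -- c'est précisément le contenu non trivial de l'énoncé à prouver, et cette moitié de l'argument n'est pas établie.

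Pour comparaison, la preuve du texte évite toute cohomologie: sur $\C$ elle montre que $\widetilde\pi:\g B(\widetilde w_0)_\C\to\g G_\C$ est injectif par un argument de groupes (le noyau rencontre le translaté d'un ouvert dense image de la grosse cellule, sur laquelle $\widetilde\pi\circ Aff\circ i$ est injectif), d'où $\g G^{pma}_\C=\g G_\C$; puis elle descend l'égalité des algèbres de fonctions de $\Q$ à $\Z$ par un argument arithmétique élémentaire sur les dénominateurs, en utilisant de façon cruciale que $\Omega(\widetilde w_0)_{\F_p}$ est un ouvert dense de $\g G_{\F_p}$ pour tout $p$ (une fonction $\varphi$ avec $p\varphi\in\Z[\g G]$ mais $\varphi\notin\Z[\g G]$ donnerait une fonction non nulle sur $\g G_{\F_p}$ s'annulant sur cet ouvert dense). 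Votre approche, une fois complétée comme indiqué, donne le même résultat avec des outils plus géométriques; celle du texte n'utilise que la densité des grosses cellules fibre à fibre, déjà acquise en \ref{3.8}.
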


\par{\bf N.B.} Si $w\not=w_0$, on trouve de la m\^eme mani\`ere que $\g B(w)$ est la sous-vari\'et\'e de Schubert de $\g G$ associ\'ee \`a $w$ (au moins sur $\C$).

\begin{proof} Regardons d'abord sur $\C$. Comme $\g G$ est un groupe affine, $\pi$ se factorise par $\g B(\widetilde w_0)$. On a donc la suite de morphismes:

\qquad \xymatrix{ \Omega(\widetilde w_0)_\C\ar@{^{(}->}[r]^i&\g E(\widetilde w_0)_\C\ar@/_1pc/[rr]_\pi\ar[r]^(.4){Aff}&\g B(\widetilde w_0)_\C=\g G_\C^{pma}\ar[r]^(.7){\tilde\pi}&\g G_\C\\}

\par\noindent Ce compos\'e est une immersion ouverte d'image dense. Pour les alg\`ebres de fonctions, comme $i$ est d'image dense $i^*$ est injectif et $Aff^*$ est l'identit\'e par d\'efinition. Donc $(Aff\circ i)^*$ est injectif et $Aff\circ i$ est dominant. Ainsi l'image de $Aff\circ i$ contient un ouvert dense $\Omega'$ de $\g B(\widetilde w_0)(\C)$. Soit $\gamma\in{\mathrm Ker}\widetilde \pi$, $\gamma\Omega'\cap\Omega'\not=\emptyset$, il existe donc $g',g''\in\Omega'$ tels que $\gamma g'=g''$, ainsi $\widetilde \pi(g')=\widetilde \pi(g'')$. Mais $\widetilde\pi\circ{Aff}\circ i$ est injectif, on en d\'eduit que $g'=g''$, $\gamma=1$ \ie $\widetilde \pi$ est injectif. Ainsi $\g G^{pma}$ est un sous-groupe de $\g G$ contenant un ouvert dense: on a $\g G^{pma}=\g G$.

\par On a le diagramme:\quad
\xymatrix{
\Z[ \Omega(\widetilde w_0)]\ar@{^{(}->}[d] &\; \Z[ \g E(\widetilde w_0)] \ar@{^{(}->}[d] \ar@{_{(}->}[l]_{i^*} & \;\Z[\g G]\ar@{^{(}->}[d] \ar@{_{(}->}[l]_(.4){\pi^*} \cr
\C[ \Omega(\widetilde w_0)] &\; \C[ \g E(\widetilde w_0)] \ar@{_{(}->}[l]_{i^*}& \; \C[\g G]\ar@{_{(}->}[l]_(.4){=} \cr
}


\par Les fl\`eches horizontales sont injectives  car $i^*$ et $(\pi\circ i)^*$ le sont. Le sch\'ema $ \Omega(\widetilde w_0)$ est produit de groupes additifs et multiplicatifs, la fl\`eche verticale de gauche est donc injective. Ainsi toutes les fl\`eches sont injectives.

\par Pour montrer que $\Z[ \g E(\widetilde w_0)] =\pi^* \Z[\g G]$ il suffit donc de montrer que $\Z[ \Omega(\widetilde w_0)] \cap \C[\g G]=\Z[\g G]$. Ceci est clairement vrai si on remplace $\Z$ par $\Q$. Donc si ce n'est pas vrai pour $\Z$, il existe $n\geq{}2$ et $\varphi\in\Z[ \Omega(\widetilde w_0)] \cap \Q[\g G]$ tels que $\varphi\notin\Z[\g G]$ mais $n\varphi\in\Z[\g G]$. Soit $p$ un diviseur premier de $n$; quitte \`a modifier $\varphi$ on peut supposer $n=p$. Mais alors $\overline{p\varphi}$ est une fonction non identiquement nulle sur $\g G_{\F_p}$ alors qu'elle est nulle sur $ \Omega(\widetilde w_0)_{\F_p}$. C'est absurde car $ \Omega(\widetilde w_0)_{\F_p}$ est un ouvert dense de $\g G_{\F_p}$.
\end{proof}

\subsection{Application au cas non classique}\label{3.10}

\par Soit $J\subset I$. On note $\Delta(J)=\Delta\cap(\oplus_{i\in J}\Z\alpha_i)$, $\Delta^\pm(J)=\Delta(J)\cap\Delta^\pm$, $\Delta^\pm_J=\Delta^\pm\setminus\Delta^\pm(J)$, $\g U^{ma+}(J)=\g U^{ma}_{\Delta^+(J)}$ et $\g U^{ma+}_J=\g U^{ma}_{\Delta^+_J}$. D'apr\`es \ref{3.3} on a $\g U^+=\g U^{ma+}(J)\ltimes\g U^{ma+}_J$.

\par Supposons $J$ de type fini et notons $\g G(J)$ le groupe r\'eductif d\'eploy\'e sur $\Z$ de SGR $\shs(J)=(A\vert_J,Y,(\alpha_i)_{i\in J},(\alpha^\vee_i)_{i\in J})$. Il est clair que son groupe unipotent maximal positif est $\g U^{ma+}(J)=:\g U^{+}(J)$ et son sous-groupe de Borel positif $ \g B^{+}(J)=\g T_Y\ltimes \g U^{+}(J)$. Enfin, pour $j\in J$, son parabolique minimal associ\'e \`a $j$ est $\g P_j(J)=\g A_j\ltimes\g U^{ma}_{\Delta^+(J)\setminus\{j\}}$, il v\'erifie $\g P_j=\g P_j(J)\ltimes\g U^{ma+}_J$.

\par Le groupe de Weyl (fini) $W^v(J)=\langle s_i\mid i\in J\rangle\subset W^v$ admet un  \'el\'ement de plus grande longueur $w_0^J$.

\begin{coro*} $\g B(w_0^J)$ est isomorphe au sous-groupe parabolique $\g P(J)=\g G(J)\ltimes\g U^{ma+}_J$ de $\g G^{pma}$.
\end{coro*}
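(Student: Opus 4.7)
The strategy is to reduce to Proposition~\ref{3.9} applied to the reductive group $\g G(J)$. Fix a reduced decomposition $\widetilde w_0^J=s_{i_1}\cdots s_{i_n}$ with all $i_k\in J$, and denote by $\g E_J(\widetilde w_0^J)=\g P_{i_1}(J)\times^{\g B^+(J)}\cdots\times^{\g B^+(J)}\g P_{i_n}(J)$ the analogous Bott--Samelson scheme built inside $\g G(J)$, together with its affinisation $\g B_J(w_0^J)$.

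From~\ref{3.10} one has the semidirect product decomposition $\g P_{i_k}=\g P_{i_k}(J)\ltimes\g U^{ma+}_J$, and similarly $\g B^{ma+}=\g B^+(J)\ltimes\g U^{ma+}_J$. Since $\g U^{ma+}_J$ is normal in each $\g P_{i_k}$ and in $\g B^{ma+}$, the defining equivalence relation of the twisted product in $\g E(\widetilde w_0^J)$ descends to the projections $\g P_{i_k}\to\g P_{i_k}(J)$, and sliding the $\g U^{ma+}_J$-factors to the right yields a scheme isomorphism
\[
\g E(\widetilde w_0^J)\;\cong\;\g E_J(\widetilde w_0^J)\times\g U^{ma+}_J,
\]
in which the projection to the first factor sends $[g_1,\dots,g_n]$ to the class of its reductions modulo $\g U^{ma+}_J$, and the section comes from the inclusions $\g P_{i_k}(J)\hookrightarrow\g P_{i_k}$.

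Because $\g U^{ma+}_J$ is already affine (its coordinate algebra is a polynomial ring over $\Z$ by~\ref{3.1} and~\ref{3.3}), passage to affinisations commutes with this product, giving $\g B(w_0^J)\cong\g B_J(w_0^J)\times\g U^{ma+}_J$ as affine $\Z$-schemes. Now $\g G(J)$ is reductive and $w_0^J$ is the longest element of $W^v(J)$, so Proposition~\ref{3.9}---whose proof only needs reductivity, the hypotheses of Mathieu being unnecessary here as pointed out in the remark of~\ref{3.5}---identifies $\g B_J(w_0^J)$ with $\g G(J)$. Hence one obtains a scheme isomorphism $\g B(w_0^J)\cong\g G(J)\times\g U^{ma+}_J$.

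To upgrade this into an isomorphism of subgroups of $\g G^{pma}$, recall from~\ref{3.6} that the composite $\g P_{i_1}\times\cdots\times\g P_{i_n}\to\g E(\widetilde w_0^J)\to\g B(w_0^J)\hookrightarrow\g G^{pma}$ coincides with the multiplication in $\g G^{pma}$. Its image therefore contains both $\g G(J)$ (by Proposition~\ref{3.9} applied within $\g G(J)$) and $\g U^{ma+}_J$, and since $\g G(J)$ normalises $\g U^{ma+}_J$ in $\g G^{pma}$ (the set $\Delta^+_J$ being $W^v(J)$-stable), this image is exactly the semidirect product $\g P(J)=\g G(J)\ltimes\g U^{ma+}_J$. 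The main obstacle is to verify that the scheme-level product decomposition of $\g E(\widetilde w_0^J)$ survives affinisation and matches the group law inherited from $\g G^{pma}$: this is precisely where Proposition~\ref{3.9} intervenes to control the nontrivial factor $\g G(J)$, while $\g U^{ma+}_J$ causes no trouble since it is already affine.
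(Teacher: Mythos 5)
Votre démonstration est correcte et suit essentiellement la même voie que celle du texte : décomposition de $\g E(\widetilde w_0^J)$ via les produits semi-directs $\g P_{i_k}=\g P_{i_k}(J)\ltimes\g U^{ma+}_J$ et $\g B^{ma+}=\g B^+(J)\ltimes\g U^{ma+}_J$, commutation de l'affinisation avec ce produit, puis identification du facteur en $J$ avec $\g G(J)$ par la proposition \ref{3.9} appliquée au groupe réductif $\g G(J)$. Vous explicitez seulement davantage (la structure produit au niveau des schémas, la compatibilité avec la loi de groupe de $\g G^{pma}$) ce que le texte résume par « on en déduit facilement » et « l'affinisé d'un produit est le produit des affinisés ».
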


\begin{rema*} Si $J$ n'est pas forc\'ement de type fini, la d\'emonstration ci-dessous prouve que le ind-sch\'ema en groupe limite inductive des $\g B(w)$ pour $w\in W^v(J)$ est le sous-groupe parabolique $\g P(J)=\g G^{pma}(J)\ltimes\g U^{ma+}_J$ produit semi-direct par $\g U^{ma+}_J$ du groupe de Kac-Moody $\g G^{pma}(J)=\g G^{pma}_{\shs(J)}$.
\end{rema*}

\begin{proof} On a $\g E(\widetilde w_0)=\g P_{j_1}\times^{\g B^{ma+}}\g P_{j_2}\times^{\g B^{ma+}}\cdots\times^{\g B^{ma+}}\g P_{j_n}$, $\g P_{j_k}=\g P_{j_k}(J)\ltimes\g U^{ma+}_J$ et $\g B^{ma+}= \g B^{+}(J)\ltimes \g U^{ma+}_J$. On en d\'eduit facilement que $\g E(\widetilde w_0)=\g P_{j_1}(J)\times^{\g B^{+}(J)}\g P_{j_2}(J)\times^{\g B^{+}(J)}\cdots\times^{\g B^{+}(J)}\g P_{j_n}(J)\ltimes \g U^{ma+}_J$, et, comme l'affinis\'e d'un produit est le produit des affinis\'es, $\g B(w_0^J)=\g G(J)\ltimes\g U^{ma+}_J$.
\end{proof}

\subsection{Sous-groupes radiciels n\'egatifs}\label{3.11}

\par On a d\'efini $\g U_\alpha$ et $\g x_{\alpha}:\g{Add}\rightarrow\g U_\alpha$ pour $\alpha\in\Phi^+$ (en \ref{3.2}.2) ou pour $-\alpha=\alpha_i$ simple (en \ref{3.5}).

\par Pour $i\in I$, $\widetilde s_i=\g x_{\alpha_i}(1).\g x_{-\alpha_i}(1).\g x_{\alpha_i}(1)\in\g A_{\alpha_i}^Y(\Z)\subset\g P_{\alpha_i}^{pma}(\Z)\subset\g G^{pma}(\Z)$ est  un \'el\'ement du normalisateur dans $\g G^{pma}$ de $\g T_Y$; il induit sur $\g T_Y$ la r\'eflexion $s_i$. De plus $\widetilde s_i$ agit sur $\g g_\Z$ comme $s_i^*$ (\cf \ref{1.4} et \ref{3.7}.3).

\par Si $\alpha\in\Phi$ et $e_\alpha$ est une base de $\g g_{\alpha\Z}$, il existe $w\in W^v$ tel que $w^{-1}\alpha=\alpha_j$ est une racine simple. On consid\`ere une d\'ecomposition  $w=s_{i_1}.\cdots.s_{i_n}$, $\overline w=\widetilde s_{i_1}.\cdots.\widetilde s_{i_n}$ et on a $e_\alpha=\overline w(\epsilon e_j)$ avec $\epsilon=\pm1$. On pose alors $\g U_\alpha=\overline w.\g U_{\alpha_j}.\overline w^{-1}$ et $\g x_\alpha(r)=\overline w.\g x_{\alpha_j}(\epsilon r).\overline w^{-1}$, pour $r$ dans un anneau $R$.

\begin{lemm*}Ces d\'efinitions de $\g U_\alpha$ et $\g x_\alpha$ ne d\'ependent que de $\alpha$ et $e_\alpha$ (et non des autres choix effectu\'es). Pour $\alpha\in\Phi^+$ (ou $-\alpha$ simple) elles co\"{\i}ncident avec celles de \ref{3.1},  \ref{3.2} ou  \ref{3.5}.
\end{lemm*}

\begin{proof} Supposons que $e_\alpha=\widetilde s_{i_1}.\cdots.\widetilde s_{i_n}(\epsilon e_{\alpha_j})=\widetilde s_{i'_1}.\cdots.\widetilde s_{i'_m}(\epsilon' e_{\alpha_{j'}})$, donc $e_{\alpha_{j'}}=\overline w(\epsilon\epsilon'e_{\alpha_{j}})$ pour $\overline w=\widetilde s_{i'_m}^{-1}.\cdots.\widetilde s_{i'_1}^{-1}.\widetilde s_{i_1}.\cdots.\widetilde s_{i_n}$. On doit montrer que $\g x_{\alpha_j}(r)$ est conjugu\'e de $\g x_{\alpha_{j'}}(\epsilon\epsilon'r)$ par $\overline w$. Mais on sait que les $s^*_i=\widetilde s_i$ v\'erifient les relations de tresse (\cite[(d) p551]{T-87b} ou calcul classique dans le groupe r\'eductif $\g G(\{i,j\})$ quand $s_is_j$ est d'ordre fini), que $\widetilde s^2_i=\alpha_i^\vee(-1)\in\g T_Y(\Z)$ et on conna\^{\i}t les relations de commutation entre les $\widetilde s_i$ et les \'el\'ements du tore. Ainsi, pour toute expression r\'eduite $w=s_{j_1}.\cdots.s_{j_p}$ de $ w= s_{i'_m}.\cdots. s_{i'_1}. s_{i_1}.\cdots. s_{i_n}$, on peut r\'eduire l'expression ci-dessus de $\overline w$ sous la forme $\overline w=\widetilde s_{j_1}.\cdots.\widetilde s_{j_p}.t$ avec $t\in\g T_Y(\Z)$.

\par Il suffit donc de montrer que si $w(\alpha_j)=\alpha_{j'}$, il existe une d\'ecomposition r\'eduite $w=s_{j_1}.\cdots.s_{j_p}$ de $w$  pour laquelle un $\overline w$ comme ci-dessus v\'erifie  la relation $e_{\alpha_{j'}}=\overline w\epsilon''e_{\alpha_j}$ et $\g x_{\alpha_{j'}}(r)=\overline w.\g x_{\alpha_j}(\epsilon''r).\overline w^{-1}$. D'apr\`es \cite[3.3.1]{T-87b} ou, pour un \'enonc\'e plus pr\'ecis \cite[2.17 p85]{He-91}, on est ramen\'e \`a l'un des deux cas suivants: $j_1,\cdots,j_p\in\{j,j'\}=J$ de type fini ou $j=j'$ et $j_1,\cdots,j_p\in\{j,j''\}=J$ de type fini. Tout se passe alors dans le groupe r\'eductif $\g G(J)$ et le r\'esultat est connu.

\par Si $-\alpha$ est simple, la co\"{\i}ncidence des deux d\'efinitions de $\g x_\alpha$ et $\g U_\alpha$ est claire. Si $\alpha\in\Phi^+$, il existe une suite $i_1,\cdots,i_n\in I$ telle que $s_{i_n}.\cdots.s_{i_1}\alpha$ est une racine simple et $\forall j$ $s_{i_j}.\cdots.s_{i_1}\alpha\in\Phi^+$. Pour v\'erifier la co\"{\i}ncidence des deux d\'efinitions de $\g x_\alpha$ et $\g U_\alpha$, on doit v\'erifier que, si $i\in I$, $\alpha\in\Phi^+$, $\beta=s_i\alpha\in\Phi^+$ et $e_\beta=\widetilde s_i\epsilon e_\alpha$, alors $exp(re_\beta)=\widetilde s_i.exp(\epsilon re_\alpha).\widetilde s_i^{-1}$; c'est la d\'efinition pr\'ecise du produit semi-direct $\g P_i^{pma}=\g A_i^Y\ltimes\g U^{ma}_{\Delta^+\setminus\{\alpha_i\}}$ de \ref{3.5}.
\end{proof}

\subsection{Comparaison avec le groupe \`a la Tits}\label{3.12}

\par On a d\'efini en \ref{1.6} le groupe de Kac-Moody minimal $\g G=\g G_\shs$. On choisit toujours $\shs$ comme \`a la remarque \ref{3.5} et on prend les $e_\alpha$, $f_\alpha$ de \ref{1.4} dans la base $\shb$ de \ref{2.6}.

\par Pour un anneau $k$, les g\'en\'erateurs de $\g G(k)$ sont les $\g x_\alpha(r)$, $\alpha\in\Phi$, $r\in k$ et $t\in\g T_Y(k)$, on les envoie sur les \'el\'ements de m\^eme nom dans $\g G^{pma}(k)$.

\begin{prop*} On obtient ainsi un homomorphisme de foncteurs en groupes $i:\g G\rightarrow\g G^{pma}$.
\end{prop*}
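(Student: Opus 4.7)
The plan is to verify, for every ring $k$, that the prescribed map on generators respects the defining relations of $\g G_\shs(k)$. By Proposition \ref{1.8}, these are: the additivity of $r\mapsto\g x_\alpha(r)$, the Steinberg commutation relations (KMT3) of \ref{1.5}, and the relations (KMT4), (KMT5), (KMT6), (KMT7) of \ref{1.6}. Once all are checked, the presentation of $\g G_\shs(k)$ yields a group homomorphism $\g G_\shs(k)\to\g G^{pma}(k)$, and since every check is scheme-theoretic this homomorphism is functorial in $k$.

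The additivity $\g x_\alpha(r+r')=\g x_\alpha(r).\g x_\alpha(r')$ is immediate from the fact that $\g x_\alpha:\g{Add}\to\g U_\alpha$ is a group-scheme isomorphism (Remark \ref{3.2}.2, with its counterparts in \ref{3.5} and \ref{3.11} for negative or non-simple $\alpha$). Relation (KMT4) at a simple root $\alpha=\alpha_i$ is the defining $\g T_Y$-action on $\g U_{\alpha_i}$ from the semidirect product $\g B^{ma+}=\g T_Y\ltimes\g U^{ma+}$ of \ref{3.5}; the non-simple case then follows, by the remark after (KMT7) in \ref{1.6}, once (KMT5) and (KMT7) are known. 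Relations (KMT5) and (KMT6) are classical $SL_2$-level identities inside the rank-one reductive subgroup $\g A_{\alpha_i}^Y\subset\g P_i^{pma}$, which by \ref{3.5} is the reductive group attached to the SGR $((2),Y,\alpha_i,\alpha_i^\vee)$.

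For (KMT3) on a prenilpotent pair $\{\alpha,\beta\}\subset\Phi$, after conjugating by an appropriate element of $W^*$ I may assume $\{\alpha,\beta\}\subset\Phi^+$, so that both sides of the commutator identity lie in $\g U^{ma+}\subset\widehat\shu^+_k$. The classical Chevalley commutator formula recalled in \ref{1.4} is an identity of adjoint operators on $\g g_A$; interpreted as an identity of group-like exponentials in $\widehat\shu^+_\Q$, and then extended integrally via the $\Z$-structure of \ref{2.6}, it descends to the same relation in $\g U^{ma+}$ (using the characterization of this group in Proposition \ref{3.2}), with exactly the same structure constants $C^{\alpha,\beta}_{p,q}$ that enter the Steinberg functor \ref{1.5}. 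Relation (KMT7), for $\alpha=\alpha_i$ simple and $\beta\in\Phi$, is essentially tautological from the definition of $\g x_\beta$ and $\g x_\gamma$ (with $\gamma=s_i(\beta)$) by conjugation by lifts of Weyl-group elements, as set up in Lemma \ref{3.11}.

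The main technical obstacle is sign bookkeeping in (KMT7): the $\epsilon=\pm 1$ produced by $s_i^*(e_\beta)=\epsilon e_\gamma$ in $\g g_A$ must coincide with the sign obtained through $\widetilde s_i$-conjugation inside $\g G^{pma}$. This is guaranteed by the construction of $\widetilde s_i\in\g A_{\alpha_i}^Y(\Z)$ in \ref{3.5} and by the identification of its adjoint action on $\g g_\Z$ with $s_i^*$, recorded in \ref{3.7}.3; the very same identification was already used in Lemma \ref{3.11} to prove that $\g x_\alpha$ (for non-simple $\alpha\in\Phi$) is independent of all intermediate choices. Hence no further sign calculation is required, and Proposition \ref{1.8} delivers the homomorphism $i$ at the functorial level.
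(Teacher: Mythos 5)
Your proof is correct and follows essentially the same route as the paper: one verifies the defining relations, with (KMT4)--(KMT6) holding inside the rank-one reductive groups $\g A_{\alpha_i}^Y$, (KMT3) reduced via \ref{3.11} to a nilpotent set $\Psi\subset\Phi^+$ and verified inside $\g U^{ma}_\Psi$, and (KMT7) coming from \ref{3.11}. The only cosmetic differences are that the presentation you should invoke is the one of \ref{1.6} rather than Proposition \ref{1.8}, and that where the paper simply cites Remark \ref{3.2}.1 (identifying $\g U^{ma}_\Psi$, for $\Psi$ finite and real, with Tits' unipotent group scheme in which (KMT3) holds by construction), you re-derive the integral commutator identity directly in $\widehat\shu^+$.
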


\begin{proof} Il s'agit de v\'erifier les relations (KMT3) \`a (KMT7) dans $\g G^{pma}(k)$. La relation (KMT3) esp\'er\'ee entre $\g x_\alpha(r)$ et $\g x_\beta(r')$ provient de la relation constat\'ee entre $exp(ad\,re\alpha)$ et $exp(ad\,r'e_\beta)$ dans Aut$(\g g_A)$. Comme $\{\alpha,\beta\}$ est une paire pr\'enilpotente, il existe $\Psi$ finie close dans $\Phi$ contenant $\alpha$ et $\beta$. D'apr\`es \ref{3.11} on peut supposer $\Psi\subset\Phi^+$. D'apr\`es la remarque \ref{3.2}.1 cette relation est en fait constat\'ee dans $\g U_\Psi^{ma}$, d'o\`u le r\'esultat.

\par Les relations (KMT4) \`a (KMT6) n'impliquent que des \'el\'ements du groupe r\'eductif de rang $1$ $\g A_\alpha^Y$, elles sont trivialement satisfaites.

\par Enfin (KMT7) est clair d'apr\`es \ref{3.11}.
\end{proof}

\begin{remas*}
\par 1) L'homomorphisme $i$ envoie aussi $\widetilde s_\alpha$ sur l'\'el\'ement de m\^eme nom de $\g G^{pma}(\Z)$ (pour $\alpha$ racine simple). L'image $i(\g N)$ de $\g N$ est donc engendr\'ee par $\g T_\shs$ et les $\widetilde s_\alpha$ pour $\alpha$ racine simple. Par construction $i$ est un isomorphisme sur $\g T_\shs$ et $\widetilde s_{i_1}.\cdots.\widetilde s_{i_n}\not=1$ si la d\'ecomposition $ s_{i_1}.\cdots. s_{i_n}$ est r\'eduite (second alin\'ea de \ref{3.8}). Donc $i$ est un isomorphisme de $\g N$ sur $i(\g N)$ (que l'on note encore $\g N$).

\par 2) En conjuguant par un \'el\'ement $n$ de $\g N(\Z)$ tel que $^v\nu(n)=w\in W^v$, on trouve dans $\g G^{pma}$ un sous-groupe analogue \`a $\g U^{ma+}$ mais associ\'e \`a $w\Delta^+$. Ainsi $\g G^{pma}$ ne d\'epend pas du choix de $\Delta^+$ dans sa classe de $W^v-$conjugaison. Par contre l'alg\`ebre $\widehat\shu^p$ d\'epend a priori de ce choix et si on change $\Delta^+$ en $\Delta^-$, le groupe obtenu $\g G^{nma}$ est diff\'erent.

\par 3) L'homomorphisme $i$ induit un isomorphisme  du groupe $\g U_\alpha$ sur le sous-groupe de m\^eme nom de $\g G^{pma}$: c'est clair par d\'efinition pour $\alpha\in\Phi^+$, on s'y ram\`ene pour $\alpha\in\Phi^-$ gr\^ace \`a \ref{3.11}.
\end{remas*}

\begin{prop}\label{3.13} Si $k$ est un corps, alors $i_k$ est injectif.
\end{prop}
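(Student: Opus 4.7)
Plan de démonstration. Soit $g \in \g G_\shs(k)$ avec $i_k(g) = 1$. La stratégie est triple: (i) ramener à $g \in \g B^+_\shs(k)$ via les décompositions de Bruhat des deux groupes, (ii) ramener à $g \in \g U^+_\shs(k)$ via le produit semi-direct du Borel, (iii) établir l'injectivité de $i_k$ sur $\g U^+_\shs(k)$.

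Pour (i): la donnée radicielle jumelée de \ref{1.6}.5 fournit la décomposition disjointe de Bruhat $\g G_\shs(k) = \bigsqcup_{w \in W^v} \g B^+_\shs(k)\widetilde w \g B^+_\shs(k)$, tandis que la construction de Mathieu (\ref{3.6}) présente $\g G^{pma}$ comme ind-schéma limite des affinisés $\g B(w)$ des schémas de Bott-Samelson, chacun stratifié par les cellules de Schubert indexées par $w' \leq w$; on en déduit une décomposition disjointe analogue $\g G^{pma}(k) = \bigsqcup_{w \in W^v} \g B^{ma+}(k)\widetilde w \g B^{ma+}(k)$. Comme $i$ envoie $\widetilde s_\alpha$, $\g T_\shs$ et les $\g U_\alpha$ sur leurs homonymes (remarques 1 et 3 suivant \ref{3.12}), l'image par $i_k$ d'une cellule d'indice $w$ se trouve dans la cellule d'indice $w$ de $\g G^{pma}(k)$. Écrivant $g = b_1 \widetilde w b_2$ avec $b_j \in \g B^+_\shs(k)$, et sachant que $1$ est dans la cellule d'indice $e$, la disjonction impose $w = e$, soit $g \in \g B^+_\shs(k)$.

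Pour (ii): décomposant $g = tu$ selon $\g B^+_\shs(k) = \g T_\shs(k) \ltimes \g U^+_\shs(k)$ et utilisant la décomposition analogue $\g B^{ma+}(k) = \g T_\shs(k) \ltimes \g U^{ma+}(k)$ (\ref{3.5}), on obtient $i_k(t) = 1$ et $i_k(u) = 1$. L'isomorphisme induit par $i$ sur $\g T_\shs$ (remarque~1 suivant \ref{3.12}) donne alors $t = 1$.

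Pour (iii): il s'agit de voir que $\g U^+_\shs(k) \to \g U^{ma+}(k)$ est injectif. Tout élément $u \in \g U^+_\shs(k)$ étant un produit fini des $\g x_{\alpha_j}(r_j)$ pour $\alpha_j \in \Phi^+$, son image dans $\g U^{ma+}(k) \subset \widehat\shu^+_k$ est le produit fini d'exponentielles $\prod_j exp(r_j e_{\alpha_j})$. L'unicité de la représentation d'un élément de $\g U^{ma+}(k)$ comme produit sur la base $\shb$ (proposition \ref{3.2}), jointe à l'isomorphisme naturel entre le groupe $\g U_\Psi$ de \ref{1.5} et celui de \ref{3.1} (remarque~1 suivant \ref{3.2}) pour toute partie prénilpotente finie $\Psi$ — tous deux étant le groupe algébrique unipotent d'algèbre de Lie $\g g_{\Psi\Z}$ présenté par les générateurs $\g U_\alpha$ et les relations (KMT3) — permettent de reconstituer $u$ à partir de son image et de conclure $u = 1$. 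Le point technique principal est la disjonction des cellules de Bruhat dans $\g G^{pma}(k)$, qui requiert l'analyse fine de la stratification des affinisés $\g B(w)$ par cellules de Schubert, due à Mathieu.
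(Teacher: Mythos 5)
Your overall strategy differs from the paper's, but it has a genuine gap at step (iii), which is where the real content of the proposition lies. You assert that injectivity of $i_k$ on $\g U^+_\shs(k)$ follows from the uniqueness of the normal form in $\g U^{ma+}(k)$ (proposition \ref{3.2}) together with the identification of the groups $\g U_\Psi$ of \ref{1.5} and \ref{3.1} for $\Psi$ finite prenilpotent. But a general element of $\g U^+_\shs(k)$ is an arbitrary finite word in the root groups $U_\alpha(k)$, $\alpha\in\Phi^+$, and does not lie in any single $\g U_\Psi(k)$ with $\Psi$ nilpotent: in the affine example of \ref{4.12}.3, $U^+$ is a free product of two infinite-dimensional abelian groups. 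Knowing that every element of $\g U^{ma+}(k)$ is uniquely a product $\prod_{x\in\shb_{\Delta^+}}[exp]\lambda_x x$ tells you when two elements of $\g U^{ma+}(k)$ coincide, but it gives no procedure to ``reconstitute'' the abstract word $u$ from its image, and no reason why a nontrivial element of $\g U^+_\shs(k)$ (a subgroup of the abstract group $\g G_\shs(k)$, hence subject to all the relations (KMT1) to (KMT7)) should have a nontrivial normal form. Proving that $\g U^+_\shs(k)\to\g U^{ma+}(k)$ has trivial kernel is essentially equivalent, after your reductions (i)--(ii), to the proposition itself; as written, step (iii) begs the question.

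Step (i) also rests on more than is available at this point: the disjoint decomposition $\g G^{pma}(k)=\bigsqcup_w \g B^{ma+}(k)\widetilde w\,\g B^{ma+}(k)$ is obtained in the paper only in \ref{3.16}, after \ref{3.13} and using it (together with the nontrivial surjectivity result \ref{3.15}), so invoking it here risks circularity unless you redo Mathieu's stratification argument on $k$-points. Note, however, that for your purposes you only need the much weaker fact that $i(\widetilde w)\in\g B^{ma+}(k)$ forces $w=1$; this can be seen directly on a highest weight module $L(\lambda)$ with $\lambda$ regular dominant, since $\g B^{ma+}(k)$ preserves the line $kv_\lambda$ while $\widetilde w v_\lambda$ has weight $w\lambda\neq\lambda$. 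The paper's proof avoids all of this machinery: $\mathrm{Ker}(i_k)$ being normal in $G$, the subgroup $B^+\mathrm{Ker}(i_k)$ is a standard parabolic of the Tits system of $G$; it contains no $U_{-\alpha}$ for $\alpha$ simple, because $i$ is injective on $U_{-\alpha}\subset\g A_\alpha^Y(k)$ and $i(U_{-\alpha})\cap \g B^{ma+}(k)=\{1\}$; hence $\mathrm{Ker}(i_k)$ lies in every conjugate of $B^+$, whose intersection reduces to $T$ by \cite[1.5.4 et 1.2.3.v]{Ry-02a}, and $i\vert_T$ is injective. To salvage your plan you would have to replace step (iii) by an argument of that kind, drawing on the structure theory of the minimal group $G$; the normal form of $\g U^{ma+}$ alone cannot carry it.
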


\begin{NB} Si $k$ est un corps, on notera souvent avec la lettre romaine correspondante l'ensemble des points sur $k$ d'un sch\'ema. La proposition nous permet donc d'identifier $G=\g G(k)$ \`a un sous-groupe de $G^{pma}=\g G^{pma}(k)$.
\end{NB}

\begin{proof} D'apr\`es \cite[prop. 8.4.1]{Ry-02a} $G$ est muni d'une donn\'ee radicielle jumel\'ee enti\`ere et donc d'un syst\`eme de Tits $(G,B^+,N,S)$ avec $B^+\cap N=T$. Alors $B^+.{\mathrm Ker}(i)$ est un sous-groupe parabolique de $G$, mais par construction il ne peut contenir aucun $U_{-\alpha}$ pour $\alpha$ simple, donc $B^+.{\mathrm Ker}(i)=B^+$ et Ker$(i)$ est contenu dans $B^+$ et tous ses conjugu\'es. D'apr\`es \cite[1.5.4 et 1.2.3.v]{Ry-02a}, l'intersection de tous ces conjugu\'es est r\'eduite \`a $T$. Mais par construction $i\vert_T$ est injectif, d'o\`u le r\'esultat.
\end{proof}

\subsection{Comparaison des repr\'esentations}\label{3.14}

\par 1) On a vu en \ref{3.7}.3 que, pour un anneau $k$, $\widehat\shu^p_k$ est un $\g B^{ma+}(k)-$module. D'apr\`es \ref{2.1}.4 la restriction \`a $\g B^+(k)$ de cette repr\'esentation stabilise $\shu_k$ (et $\g g_k$) et y induit la m\^eme repr\'esentation que la repr\'esentation adjointe de $\g G(k)$.

\par 2) Pour $\lambda\in X^+$, on a construit en \ref{3.7}.1 le module \`a plus haut poids $M=L(\lambda)$ pour $\g G^{pma}$. Il est clair que la restriction \`a $\g G$ (via $i$) de cette repr\'esentation est la repr\'esentation classique \`a plus haut poids de $\g G$.

\par 3) En  \ref{3.7}.2 on a construit une repr\'esentation de $\g U^{ma-}$ sur le compl\'et\'e n\'egatif $\widehat M^n_k$ de $M$. Il est clair que la restriction \`a $\g U^-$ (via $i$) de cette repr\'esentation est le compl\'et\'e de la repr\'esentation de $\g G$ ci-dessus (restreinte \`a $\g U^-$).

\begin{prop}\label{3.15} Soit $\widetilde w=s_{i_1}.\cdots.s_{i_m}$ une d\'ecomposition r\'eduite de  $w\in W^v$ et $\mu : \g F(\widetilde w):=\g P_{\alpha_{i_1}}\times\cdots\times\g P_{\alpha_{i_m}}\rightarrow \g B(w)$ le morphisme de multiplication. Si $k$ est un corps, alors $\mu_k$ est surjectif.
\end{prop}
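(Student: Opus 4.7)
The plan is to reduce to a Bruhat-type decomposition of $\g B(w)(k)$ and then to exhibit, on each stratum, an explicit preimage under $\mu$.

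First I would establish, on the field $k$, the decomposition
\[
\g B(w)(k) \;=\; \bigcup_{w' \le w}\; \g B^{ma+}(k)\, \widetilde{w}'\, \g B^{ma+}(k),
\]
where $\widetilde{w}'$ denotes any lift of $w'$ to $\g N(k)$ built as a product of $\widetilde{s}_i$'s. The inclusion $\supseteq$ follows from the closed immersions $\g B(w') \hookrightarrow \g B(w)$ recalled in \S\ref{3.6} (obtained by replacing, in the expression for $\g E(\widetilde w)$, the factors $\g P_{\alpha_{i_l}}$ corresponding to simple reflections absent from a reduced subword $\widetilde{w}'$ of $\widetilde w$ by $\g B^{ma+}$), combined with the explicit decomposition $\g P_i(k) = \g B^{ma+}(k) \sqcup \g B^{ma+}(k)\,\widetilde{s}_i\,\g B^{ma+}(k)$ given by the grosse cellule of \S\ref{3.5}. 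The reverse inclusion rests on the Bruhat decomposition of $\g G^{pma}(k)$, inherited from the BN-pair on $\g G(k)$ via Proposition~\ref{3.13} and Remark~\ref{3.12}.1: any $g \in \g G^{pma}(k)$ lies in a unique Bruhat cell $\g B^{ma+}(k)\,\widetilde{w}''\,\g B^{ma+}(k)$, and induction on $\ell(w)$ using the natural projection $\g B(s_i w) \to \g P_i \times^{\g B^{ma+}} \g B(w)$ forces $w'' \le w$ whenever $g$ belongs to $\g B(w)$.

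Given such a $g \in \g B^{ma+}(k)\,\widetilde{w}'\,\g B^{ma+}(k)$ with $w' \le w$, I would then extract from $\widetilde w = s_{i_1} \cdots s_{i_m}$ a reduced subword $\widetilde{w}' = s_{j_1} \cdots s_{j_p}$ with $j_k = i_{l_k}$ and $l_1 < \cdots < l_p$, write $g = b_1\,\widetilde{s}_{j_1} \cdots \widetilde{s}_{j_p}\,b_2$ with $b_1, b_2 \in \g B^{ma+}(k)$, and set
\[
p_{l_1} = b_1 \widetilde{s}_{j_1}, \quad p_{l_k} = \widetilde{s}_{j_k}\ (2 \le k \le p-1), \quad p_{l_p} = \widetilde{s}_{j_p} b_2, \quad p_l = 1 \text{ otherwise,}
\]
with the obvious merging conventions when $p \le 1$ (namely $p_{l_1} = b_1 \widetilde{s}_{j_1} b_2$ if $p = 1$, and $p_1 = b_1 b_2$ with all other $p_l = 1$ if $p = 0$). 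Each factor lies in $\g P_{\alpha_{i_l}}(k)$, since $\widetilde{s}_{i_l} \in \g A_{i_l}^Y(k) \subset \g P_{\alpha_{i_l}}(k)$ and $\g B^{ma+}(k) \subset \g P_{\alpha_{i_l}}(k)$ for every $l$, while the product $p_1 \cdots p_m$ telescopes to $g$, producing the desired preimage in $\g F(\widetilde w)(k)$.

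The main obstacle is the first step, and more precisely the reverse inclusion in the Bruhat decomposition of $\g B(w)(k)$: establishing rigorously, over an arbitrary field, that the $k$-points of the closed subscheme $\g B(w) \subset \g G^{pma}$ are exhausted set-theoretically by the Bruhat cells indexed by $w' \le w$, i.e.\ that $\g B(w)$ behaves as a Schubert variety at the level of $k$-points. Over $\C$ this is the content of the note following Proposition~\ref{3.9}; in general, one reduces to the classical finite-type pieces $\g B(w_0^J) \cong \g G(J) \ltimes \g U^{ma+}_J$ of the corollary of \S\ref{3.10} (where the Bruhat decomposition is standard, the ambient $\g G(J)$ being a split reductive group) by choosing $J \subset I$ of finite type such that $w \in W^v(J)$.
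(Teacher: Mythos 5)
The second half of your argument (telescoping $g=b_1\widetilde s_{j_1}\cdots\widetilde s_{j_p}b_2$ into factors of the $\g P_{\alpha_{i_l}}$) is fine, but the whole weight of the proof rests on your first step, and the ways you propose to secure it do not work. First, the Bruhat decomposition of $\g G^{pma}(k)$ is not available to you here: in the paper it is Proposition \ref{3.16}, whose proof (axiom (RT1), generation of $G^{pma}$ by $N$ and $U^{ma+}$) invokes Proposition \ref{3.15} itself, and it certainly is not ``inherited'' from the BN-pair of $\g G(k)$, since $U^{ma+}(k)$ is strictly larger than $U^{+}(k)$; so your appeal to it is circular in the paper's logical order. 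Second, and more seriously, your fallback reduction ``choose $J\subset I$ of finite type with $w\in W^v(J)$'' is impossible in general: $W^v(J)$ is finite for $J$ of finite type, whereas a general element of an infinite Weyl group (e.g.\ any $w$ whose support is of indefinite or affine type) lies in no finite standard parabolic. The note after Proposition \ref{3.9} does not help either, as it concerns only the classical (finite-type) situation, so even over $\C$ the Schubert-type description of $\g B(w)(k)$ is not something you may simply quote. Finally, the ``natural projection $\g B(s_iw)\to\g P_i\times^{\g B^{ma+}}\g B(w)$'' you invoke in the induction does not exist: the morphism goes the other way (towards the affinization), and $\g P_i\times^{\g B^{ma+}}\g B(w)$ is not affine.

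What the paper does instead is precisely designed to bypass any Bruhat decomposition of $G^{pma}$: one takes a regular dominant weight $\lambda$, the integrable module $L(\lambda)$, the finite-dimensional spaces $E_w(\lambda)=\shu^+_k e_{w\lambda}$ and the affine varieties $\Sigma_w=\g B(w)(k).ke_\lambda\subset E_w(\lambda)$, and factors the ``naive'' map $\zeta:\g F(\widetilde w)\to\Sigma_w$ through $\pi:\g F(\widetilde w)\to\g E(\widetilde w)$, the affinization $\nu:\g E(\widetilde w)\to\g B(w)$ and $\xi:\g B(w)\to\Sigma_w$. The key point, which replaces your missing cell containment, is the identification $\g F^c(\widetilde w)=\mu^{-1}(\g C(w))$, $\g E^c(\widetilde w)=\nu^{-1}(\g C(w))$ for the open cell $\g C(w)=\g B(w)\setminus\bigcup_{v<w}\g B(v)$, together with a ring-theoretic argument (principal opens and the covering of $\g E(\widetilde w)$ by $2^m$ affine opens) showing that $\nu$ is an isomorphism over $\g C(w)$; surjectivity of $\mu_k$ then follows by induction on the Bruhat order via the closed immersions $\g F(\widetilde v)\hookrightarrow\g F(\widetilde w)$. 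Unless you can prove, over an arbitrary field and for arbitrary $w$, that $\g B(w)(k)\subset\bigcup_{w'\le w}\g B^{ma+}(k)\widetilde w'\g B^{ma+}(k)$ by some independent route, your proposal has a genuine gap exactly at the point you yourself flagged.
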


\begin{proof} Il faut revenir un peu sur les d\'efinitions de \ref{3.6}. Le groupe $(\g B^{ma+})^{m-1}$ agit sur $\g F(\widetilde w)$ par $(b_1,\cdots,b_{m-1}).(p_1,\cdots,p_m)=(p_1b_1^{-1},b_1p_2b_2^{-1},\cdots,b_{m-1}p_m)$ et $\g E(\widetilde w)$ est le quotient sch\'ematique; on note $\pi$ le morphisme de passage au quotient. Le sch\'ema $\g B(w)$ est l'affinis\'e de $\g E(\widetilde w)$, on note $\nu :\g E(\widetilde w)\rightarrow\g B(w)$ le morphisme naturel. On a donc $\mu=\nu\circ\pi
$. On se place maintenant sur le corps $k$ (sans forc\'ement le noter dans les noms).

\par Soit $\lambda\in X^+$ un poids dominant  entier r\'egulier et $L(\lambda)$  la repr\'esentation int\'egrable de plus haut poids $\lambda$ (\cf \cite[p246]{M-88a} ou \ref{3.7} ci-dessus); les groupes $\g B^{ma+}$ et $\g P_i$ agissent dessus. On choisit un vecteur de plus haut poids $e_\lambda$; pour $w\in W^v$, $e_{w\lambda}=we_\lambda\in L(\lambda)_{w\lambda}$ est d\'efini au signe pr\`es et le sous-espace vectoriel $E_w(\lambda)=\shu^+_k.e_{w\lambda}$ est de dimension finie. De plus $E_v(\lambda)\subset E_w(\lambda)$ si $v\leq{}w$ pour l'ordre de Bruhat-Chevalley, mais si $v<w$, $E_v(\lambda)$ a une composante nulle sur $ke_{w\lambda}$.
On note $\Sigma_w$ l'adh\'erence de $\g B^{ma+}(k).ke_{w\lambda}$ dans $E_w(\lambda)$; c'est une sous-vari\'et\'e affine ferm\'ee de $E_w(\lambda)$. On a $\Sigma_w=\g B(w)(k).ke_\lambda$
il contient donc $\Sigma_v$ si $v\leq{}w$ (\cf \cite[p64]{M-88a} formul\'e en caract\'eristique $0$, mais qui se g\'en\'eralise). Ainsi $\Sigma^c_w:= \Sigma_w\setminus \bigcup_{v<w}\Sigma_v$,
contient l'ouvert de $\Sigma_w$ d\'efini par la non nullit\'e de la coordonn\'ee sur $e_{w\lambda}$.

\par On a un morphisme $\zeta$ de $\g F(\widetilde w)$ dans $\Sigma_w$ donn\'e par la "formule na\"{\i}ve" $\zeta(p_1,\cdots,p_m)=p_1.p_2.\cdots.p_m.e_\lambda$, il se factorise en un morphisme $\eta$ de $\g E(\widetilde w)$ dans $\Sigma_w$ (\cf \cite[p65, 66]{M-88a} en caract\'eristique $0$) puis, comme $\Sigma_w$ est affine et $\nu$ une affinisation, en un morphisme $\xi:\g B(w)\rightarrow\Sigma_w$, simplement donn\'e par l'action de $\g B(w)\subset\g G^{pma}$ sur $e_\lambda$ \cf \ref{3.7}.1. On a donc le diagramme:

\qquad\qquad \xymatrix{\zeta\,:\; \g F(\widetilde w)\ar[r]^(.6)\pi\ar@/_1pc/[rr]_\mu&\g E(\widetilde w)\ar[r]^(.5){\nu}\ar@/^1pc/[rr]^\eta&\g B( w)\ar[r]_(.5){\xi}&\Sigma_w\\}

\par Si $v\leq{}w$, il existe une d\'ecomposition r\'eduite $\widetilde v$ de $v$ "extraite" de $\widetilde w$. Le sous-sch\'ema ferm\'e de $\g F(\widetilde w)$ form\'e des \'el\'ements dont le j-\`eme facteur est $1$ si $s_{i_j}$ est omis dans $\widetilde v$ est isomorphe \`a $\g F(\widetilde v)$. On obtient ainsi un diagramme dont toutes les fl\`eches verticales sont injectives:

\qquad\qquad\xymatrix{\g F(\widetilde w)\ar[r]^(.5)\pi\ar@/^1pc/[rr]^\mu&\g E(\widetilde w)\ar[r]^(.5){\nu}&\g B( w)\ar[r]_(.5){\xi}&\Sigma_w\\
\g F(\widetilde v)\ar@{^{(}->}[u]\ar[r]^(.5)\pi&\g E(\widetilde v)\ar@{^{(}->}[u]\ar[r]^(.5){\nu}&\g B( v)\ar@{^{(}->}[u]\ar[r]_(.5){\xi}&\Sigma_v\ar@{^{(}->}[u]\\}

\par On note $\g C(w)=\g B(w)\setminus\bigcup_{v<w}\g B(v)$ qui contient $\xi^{-1}(\Sigma^c_w)$. Par r\'ecurrence il suffit de montrer que $\mu(\g F(\widetilde w))\supset\g C(w)$.

\par On note $\g F^c(\widetilde w)=\g C_{i_1}\times\cdots\times\g C_{i_m}$ ouvert de $\g F(\widetilde w)$ satur\'e pour le quotient par $(\g B^{ma+})^{m-1}$. Mais $\g C_{i_j}=\g B^{ma+}.s_{i_j}.\g B^{ma+}=\g U_{i_j}.s_{i_j}.\g B^{ma+}$ et cette derni\`ere d\'ecomposition est unique. Donc $\g U_{i_1}.s_{i_1}\times\cdots\times\g U_{i_m}.s_{i_m}.\g B^{ma+}$ est un syst\`eme de repr\'esentants de $\g E^c(\widetilde w):=\g F^c(\widetilde w)/(\g B^{ma+})^{m-1}=\pi(\g F^c(\widetilde w))$ et $(\g B^{ma+})^{m-1}$ agit librement sur $\g F^c(\widetilde w)$. Ainsi $\g E^c(\widetilde w)\simeq\g U_{i_1}\times\cdots\times\g U_{i_m}\times\g B^{ma+}$ est un ouvert affine de $\g E(\widetilde w)$.

\par D'autre part $\zeta(u_{i_1}.s_{i_1},\cdots,u_{i_m}.s_{i_m}.b)=s_{i_1}.\cdots.s_{i_m}.u'_1.\cdots.u'_mbe_\lambda$ avec $u'_j\in\g U_{\beta_j}$ pour $\beta_j=s_{i_1}.\cdots.s_{i_j}(\alpha_{i_j})\in\Phi$. En particulier la coordonn\'ee de cet \'el\'ement sur $e_{w\lambda}$ est non nulle; il appartient donc \`a $\Sigma_w^c$: on a $\zeta(\g F^c(\widetilde w))=\eta(\g E^c(\widetilde w))\subset\Sigma^c_w$; donc $\mu(\g F^c(\widetilde w))\subset\xi^{-1}(\Sigma^c_w)\subset\g C(w)$.

\par Le compl\'ementaire de $\g F^c(\widetilde w)$ est r\'eunion de sous-sch\'emas ferm\'es obtenus en rempla\c{c}ant dans $\g F(\widetilde w)$ l'un des facteurs $\g P_i$ par $\g B^{ma+}$, ce qui, dans l'image par $\pi$ revient \`a supprimer ce facteur. Alors le processus de multiplication d\'ecrit dans \cite[p45]{M-89} montre que l'image par $\mu$ de ce sous-sch\'ema ferm\'e est dans un $\g B(v)$ pour $v<w$. Ainsi $\g F^c(\widetilde w)\supset\mu^{-1}(\g C(w))$ et finalement $\g F^c(\widetilde w)=\mu^{-1}(\g C(w))$. De plus
$\pi(\g F^c(\widetilde w))=\g E^c(\widetilde w)$. Donc $\g E^c(\widetilde w)=\nu^{-1}(\g C(w))$.

\par Soit $\g B(w)_f$ un ouvert principal de $\g B(w)$ contenu dans $\g C(w)$. Alors $\nu^{-1}(\g B(w)_f)=\g E(\widetilde w)_{f\circ \nu}=\g E^c(\widetilde w)_{f\circ \nu}$, ouvert principal donc affine de $\g E^c(\widetilde w)$. On a donc $k[\g B(w)_f]=k[\g B(w)][f^{-1}]$ $=k[\g E(\widetilde w)][(f\circ\nu)^{-1}]\subset k[\g E^c(\widetilde w)][(f\circ\nu)^{-1}]=k[\g E^c(\widetilde w)_{f\circ\nu}]=k[\g E(\widetilde w)_{f\circ\nu}]$. Mais $\g E(\widetilde w)$ est r\'eunion de $2^m$ ouverts affines (d\'emonstration par r\'ecurrence selon l'esquisse des pages 54, 55 de \cite{M-88a}); donc $k[\g E(\widetilde w)_{f\circ\nu}]\subset k[\g E(\widetilde w)][(f\circ\nu)^{-1}]$. Ainsi $k[\g B(w)_f]=k[\g E(\widetilde w)_{f\circ\nu}]$. Cela prouve que $\nu$ induit un isomorphisme de $\nu^{-1}(\g B(w)_f)$ sur $\g B(w)_f$ et donc aussi de $\g E^c(\widetilde w)=\nu^{-1}(\g C(w))$ sur $\g C(w)$. On sait de plus que $\pi$ induit un isomorphisme du sous-sch\'ema ferm\'e $\g U_{i_1}.s_{i_1}\times\cdots\times\g U_{i_m}.s_{i_m}.\g B$ de $\g F(\widetilde w)$ sur $\g E^c(\widetilde w)$. Donc $\pi_k$ est surjectif.
\end{proof}

\subsection{Structure de $BN-$paire raffin\'ee sur $\g G^{pma}$}\label{3.16}

\par On note $\g U^+$ (resp. $\g U^-$) le sous-foncteur en groupe de $\g G^{pma}$ engendr\'e par les sous-groupes $\g U_\alpha$ pour $\alpha\in\Phi^+$ (resp. $\alpha\in\Phi^-$); il s'identifie via $i$ (au moins sur un corps) avec le sous-groupe de m\^eme nom de $\g G$. On a $\g U^+\subset\g U^{ma+}$. On rappelle que $S=\{s_i\mid i\in I\}$ est le syst\`eme de g\'en\'erateurs de $W^v$ et que $\g N$ est l'image par $i$ du groupe de m\^eme nom de $\g G$ (remarque \ref{3.12}.1).

\begin{prop*} Si $k$ est un corps, le sextuplet $(G^{pma},N,U^{ma+},U^-,T,S)$ est une $BN-$paire raffin\'ee (refined Tits system).
\end{prop*}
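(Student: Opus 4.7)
The strategy is to reduce the axioms of a refined Tits system to two ingredients: a Bruhat-type decomposition of $G^{pma}$ indexed by $W^v$, obtained via Proposition~\ref{3.15}, combined with the twin $BN$-pair structure already known on $G$ (Propri\'et\'es~\ref{1.6}.5) transported through the injection $i: G \hookrightarrow G^{pma}$ of Proposition~\ref{3.13}.

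First I would establish the decomposition $G^{pma}(k) = \bigsqcup_{w \in W^v} U^{ma+}(k) \cdot n_w \cdot B^{ma+}(k)$, where $n_w \in N(k)$ is any lift of $w$. Since $G^{pma}(k) = \bigcup_w \g B(w)(k)$ and $\mu_k$ is surjective by Proposition~\ref{3.15}, every element of $G^{pma}(k)$ can be written as $p_1 \cdots p_m$ with $p_j \in \g P_{\alpha_{i_j}}(k)$ for some reduced word $s_{i_1}\cdots s_{i_m}$. Using the grosse cellule decomposition $\g C_i = U_{\alpha_i}\widetilde{s}_i B^{ma+} = B^{ma+}\widetilde{s}_i B^{ma+}$ from \ref{3.5} and iterating the relation $\g P_i = B^{ma+} \sqcup B^{ma+}\widetilde{s}_i B^{ma+}$, this product collapses to a union of cells $B^{ma+}\cdot n_v \cdot B^{ma+}$ with $v \le w$; the refined factorization $U^{ma+}\cdot n_w\cdot B^{ma+}$ follows from the unique decomposition of the grosse cellule. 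Disjointness of the cells is inherited from $G$ via $i$, since every $n_w$ lies in the image of $i$ and $N \cap B^{ma+} = T$ (established next).

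Next I would verify the Tits system axioms for $(G^{pma}, B^{ma+}, N, S)$. Generation of $G^{pma}$ by $B^{ma+}$ and $N$ follows from the Bruhat decomposition. The equality $B^{ma+} \cap N = T$, and more generally the faithfulness of the $W^v$-action on $N/T$, follows from the integrable representation on $L(\lambda)$ for a regular dominant $\lambda$ (\ref{3.7}.1): an element of $U^{ma+}$ fixes $e_\lambda$ modulo strictly lower weight spaces, while a non-torus element of $N$ moves $e_\lambda$ to a different weight space. The exchange condition (T3) is just the multiplication law $B^{ma+}\widetilde s_i B^{ma+}\widetilde s_j B^{ma+} \subset B^{ma+}\widetilde s_i\widetilde s_j B^{ma+} \cup B^{ma+}\widetilde s_j B^{ma+}$ inside $\g P_i \cdot \g P_j$, and (T4) holds because $\widetilde s_i$ conjugates $U_{\alpha_i}$ into $U_{-\alpha_i} \not\subset B^{ma+}$.

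For the refined axioms involving $U^-$, I would use that $B^- = T \ltimes U^-$ lives inside $G$, hence inside $G^{pma}$ through $i$, so all identities between $T$, $N$, $U^-$ and $U_{\alpha}$ for $\alpha \in \Phi$ (including the rank-one relations, the Bruhat decomposition with $B^-$, and $B^+ \cap B^- = T$) transfer directly from the twin $BN$-pair structure on $G$. The only genuinely new facts to check are $U^{ma+}(k) \cap U^-(k) = \{1\}$ and $U^{ma+}(k) \cap N(k) = \{1\}$. This is the main obstacle, because $U^{ma+}$ is strictly larger than $U^+$ and the triviality of these intersections does not follow from the analogous statements in $G$. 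I would prove them using the negatively completed module $\widehat{L(\lambda)}^n$ of \ref{3.7}.2 for a regular dominant $\lambda$: an element of $U^{ma+}$ sends $e_\lambda$ to $e_\lambda$ plus a (possibly infinite) sum of vectors of weight $\lambda - \nu$ with $\nu \in Q^+ \setminus \{0\}$, whereas a nontrivial element of $U^-$ sends $e_\lambda$ to a nonzero sum of vectors of weights $\lambda - \nu$ with some $\nu \in -Q^+ \setminus Q^+$, and a nontrivial element of $N$ moves $e_\lambda$ off the line $k\cdot e_\lambda$ entirely. Comparing the weight expansions in $\widehat{L(\lambda)}^n$ forces both intersections to be trivial, concluding the verification.
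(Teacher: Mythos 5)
Your overall strategy---first prove a Bruhat decomposition for $G^{pma}$, then transfer everything involving $U^-$ from $G$---is not the paper's route, and two of its key steps have genuine gaps. First, disjointness of the cells $B^{ma+}n_wB^{ma+}$ is not ``inherited from $G$ via $i$'': these cells are built from $B^{ma+}\supsetneq B^+$, so an equality $u_1n_wb_1=u_2n_vb_2$ lives in $G^{pma}$ and cannot be pushed into $G$ unless you already know, for instance, $G\cap B^{ma+}=B^+$ --- but that is exactly Corollary \ref{3.17}, which the paper deduces \emph{after} the proposition by comparing the two Bruhat decompositions; as written your argument is circular. (Disjointness can in fact be extracted from the proof of \ref{3.15}, via the cells $\g C(w)$ and the isomorphism of $\g E^c(\widetilde w)$ with $U_{i_1}\times\cdots\times U_{i_m}\times\g B^{ma+}$, but that is not what you invoke.) Second, the claim that ``the only genuinely new facts'' are $U^{ma+}\cap U^-=\{1\}$ and $U^{ma+}\cap N=\{1\}$ underestimates the refined axioms of \cite{KP-85}: they involve $U^{ma+}$ itself and its interaction with $U^-$, notably the identification $U_s:=U^{ma+}\cap \widetilde s_iU^-\widetilde s_i=U_{\alpha_i}$ and the decomposition of $U^{ma+}$ as $U_{\alpha_i}$ times $U^{ma+}\cap \widetilde s_iU^{ma+}\widetilde s_i^{-1}$, none of which transfers formally from the twin $BN$-pair of $G$. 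The paper's proof consists precisely of these points: (RT1) uses \ref{3.15} for generation; (RT2) observes that $U^{ma+}\cap\widetilde s_iU^-\widetilde s_i$ lies in $G$ (since $\widetilde s_iU^-\widetilde s_i\subset G$), so the twin root datum of $G$ \cite[1.5.4]{Ry-02a} identifies it with $U_{\alpha_i}$, and then uses the semidirect decomposition $U^{ma+}=U_{\alpha_i}\ltimes U^{ma}_{\Delta^+\setminus\{\alpha_i\}}$ of Lemma \ref{3.3}; you address none of this.

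Your weight argument for the trivial intersections is also garbled: $U^{ma+}$ raises weights, so it fixes $e_\lambda$ exactly (it does not add lower-weight terms), while a nontrivial element of $U^-$ sends $e_\lambda$ to $e_\lambda$ plus components of weights $\lambda-\nu$ with $\nu\in Q^+\setminus\{0\}$, not weights with $\nu\in -Q^+$. Even after correcting the directions, comparing expansions only shows that an element of $U^{ma+}\cap U^-$ fixes $e_\lambda$; to conclude it is trivial you still need that the stabilizer in $G$ of the highest-weight line is $TU^+$ (or some form of the Birkhoff decomposition in $G$), which you do not establish. The paper sidesteps all of this: for (RT3), from $u^-nu^+=1$ it notes $u^+=n^{-1}(u^-)^{-1}\in G$ and then applies the refined axiom already valid in $G$. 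In short, the economical route is the paper's: verify (RT1)--(RT3) directly, using \ref{3.15} only for generation, Lemma \ref{3.3} for the structure of $U^{ma+}$, and the inclusion into $G$ for everything touching $U^-$; the Bruhat and Birkhoff decompositions you take as a starting point are consequences of the proposition, not inputs to it.
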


\begin{rema*} On a donc (\cf \eg \cite[1.2]{Ry-02a}) un syst\`eme de Tits $(G^{pma},B^{ma+}=TU^{ma+},N,S)$, une d\'ecomposition de Bruhat $G^{pma}=\coprod_{n\in N}U^{+}nU^{ma+}=\coprod_{n\in N}U^{ma+}nU^{ma+}$, une d\'ecomposition de Birkhoff $G^{pma}=\coprod_{n\in N}U^{-}nU^{ma+}$ et $U^-\cap NU^{ma+}=N\cap U^{ma+}=\{1\}$.
\end{rema*}

\begin{proof} V\'erifions les axiomes de \cite{KP-85}, voir aussi \cite{Ry-02a}.

\par (RT1): le groupe $G^{pma}$ admet bien $N$, $U^{ma+}$, $U^-$ et $T$ comme sous-groupes. Le groupe $T$ normalise les $U_\alpha$ ($\alpha\in\Phi$) donc aussi $U^-$ et $U^{ma+}$, il est distingu\'e dans $N$ et $N/T=W^v$ est engendr\'e par les \'el\'ements de $S$ qui sont d'ordre $2$. Par construction $G^{pma}$ est r\'eunion des $B(w)$ qui sont engendr\'es par des sous-groupes $P_i$ (d'apr\`es \ref{3.15}). On a vu que $P_i$ est engendr\'e par $U_{-\alpha_i}=\widetilde s_i.U_{\alpha_i}.\widetilde s_i^{-1}$ et $B^{ma+}=TU^{ma+}$. Donc $G^{pma}$ est engendr\'e par $N$ et $U^{ma+}$.

\par (RT2) Pour $s=s_i\in S$, $U_s=U^{ma+}\cap sU^-s$ est dans $G$ qui est muni d'une donn\'ee radicielle jumel\'ee $(G,(U_\alpha)_{\alpha\in\Phi},T)$ \cite [prop 8.4.1]{Ry-02a} et d'apr\`es le th\'eor\`eme 1.5.4 de  \lc on a $U_{s_i}=U_{\alpha_i}$. Un r\'esultat classique dans le groupe $A_i$ donne donc (RT2a); tandis que (RT2b) est clair. Enfin (RT2c) r\'esulte de \ref{3.3} (voir aussi \ref{3.5}).

\par (RT3) Soient $u^-\in U^-$, $u^+\in U^{ma+}$ et $n\in N$ tels que $u^-nu^+=1$. Alors $n$ et $u^-$sont dans $G$ ainsi donc que $u^+$. On d\'eduit de l'axiome (RT3) dans $G$ \cite[1.5.4]{Ry-02a} que $u^-=u^+=n=1$.
\end{proof}

\begin{coro}\label{3.17} Si $k$ est un corps, $U^+=\g U^+(k)=\g G(k)\cap\g U^{ma+}(k)$ et $B^+=\g B^+(k)=\g G(k)\cap\g B^{ma+}(k)$.
\end{coro}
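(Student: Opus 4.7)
Le plan est d'exploiter la structure de $BN$-paire raffin\'ee sur $G^{pma}$ \'etablie dans la proposition \ref{3.16}, et en particulier la propri\'et\'e $N\cap U^{ma+}=\{1\}$ mentionn\'ee dans la remarque qui la suit. Ces deux \'egalit\'es sont une cons\'equence purement formelle des d\'ecompositions de Bruhat respectives de $G$ et $G^{pma}$; aucune difficult\'e substantielle n'est \`a attendre.

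Je commencerais par l'\'egalit\'e $B^+=G\cap B^{ma+}$. L'inclusion $B^+\subset G\cap B^{ma+}$ est \'evidente puisque $B^+=T.U^+\subset T.U^{ma+}=B^{ma+}$ et $B^+\subset G$. Pour l'inclusion r\'eciproque, j'observe d'abord que $N\cap B^{ma+}=T$: en effet, si $n\in N\cap T.U^{ma+}$, il existe $t\in T$ tel que $t^{-1}n\in N\cap U^{ma+}=\{1\}$, donc $n=t\in T$. Soit alors $g\in G\cap B^{ma+}$. Comme $(G,B^+,N,S)$ est un syst\`eme de Tits (\cf \ref{1.6}.5), la d\'ecomposition de Bruhat dans $G$ fournit $g=b_1.n.b_2$ avec $b_1,b_2\in B^+$ et $n\in N$ repr\'esentant un certain $w\in W^v$. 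Puisque $B^+\subset B^{ma+}$, on a $n=b_1^{-1}.g.b_2^{-1}\in B^{ma+}$, donc $n\in N\cap B^{ma+}=T$, d'o\`u $w=1$ et $g\in B^+.T.B^+=B^+$.

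Pour la premi\`ere \'egalit\'e, soit $g\in G\cap U^{ma+}$. Comme $U^{ma+}\subset B^{ma+}$, on a $g\in G\cap B^{ma+}=B^+=T.U^+$ par ce qui pr\'ec\`ede. \'Ecrivons $g=t.u$ avec $t\in T$ et $u\in U^+$. Puisque $u\in U^+\subset U^{ma+}$ et $g\in U^{ma+}$, on en d\'eduit $t=g.u^{-1}\in U^{ma+}$; mais $T\subset N$, donc $t\in N\cap U^{ma+}=\{1\}$, et finalement $g=u\in U^+$. Le point crucial est donc uniquement la propri\'et\'e $N\cap U^{ma+}=\{1\}$, qui a d\'ej\`a \'et\'e \'etablie lors de la v\'erification de l'axiome (RT3) dans la proposition pr\'ec\'edente.
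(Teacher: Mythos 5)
Your proof is correct and takes essentially the same route as the paper: a purely formal argument from Proposition \ref{3.16} and the Bruhat decomposition, with the only (harmless) variation being in the intermediate step. Where the paper compares the two Bruhat decompositions of $G$ and $G^{pma}$ over the same $W^v$ to get $G\cap B^{ma+}=B^+$ and then uses only the semidirect products $B^{ma+}=T\ltimes U^{ma+}$, $B^+=T\ltimes U^+$, you instead deduce $N\cap B^{ma+}=T$ from $N\cap U^{ma+}=\{1\}$ (consequence of (RT3)) and invoke the Bruhat decomposition of $G$ alone — both versions rest on exactly the same ingredients from \ref{3.16} and \ref{1.6}.5.
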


\begin{rema*} Le sextuplet $(G,N,U^{+},U^-,T,S)$ est aussi une $BN-$paire raffin\'ee (\ref{1.6}.5 et \cite[1.5.4]{Ry-02a}).
\end{rema*}

\begin{proof} On a deux syst\`emes de Tits $(G^{pma},B^{ma+},N,S)$ et $(G,B^{+},N,S)$, \cf \ref{1.6}.5. Donc $G^{pma}=\coprod_{w\in W^v}B^{ma+}wB^{ma+}$ et $G=\coprod_{w\in W^v}B^+wB^{+}$. Comme $B^+\subset B^{ma+}$, on a $B^{ma+}\cap G=B^+$. Mais $B^{ma+}=T\ltimes U^{ma+}$ et $B^{+}=T\ltimes U^{+}$ donc $B^+\cap U^{ma+}=U^{+}$.
\end{proof}

\begin{coro}\label{3.17b} Si $k$ est un corps, l'injection $i$ induit un isomorphisme simplicial $\g G(k)-$\'equi\-variant de l'immeuble $\shi^v_+$ associ\'e \`a $\g G$ sur $k$ sur l'immeuble $\widehat\shi^v_+$ associ\'e au syst\`eme de Tits $(G^{pma},B^{ma+},N,S)$. Plus pr\'ecis\'ement ces deux immeubles ont les m\^emes facettes et les appartements de $\shi^v_+$ sont des appartements de $\widehat\shi^v_+$, mais, en g\'en\'eral, $\widehat\shi^v_+$ a plus d'appartements.
\end{coro}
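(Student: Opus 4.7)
L'id\'ee est de comparer directement les deux syst\`emes de Tits $(G,B^+,N,S)$ et $(G^{pma},B^{ma+},N,S)$, qui partagent le m\^eme groupe de Weyl $W^v$ et le m\^eme groupe $N$ d'apr\`es la remarque \ref{3.12}.1. Les chambres des deux immeubles s'identifient respectivement aux cosets $G/B^+$ et $G^{pma}/B^{ma+}$, et l'injection $i$ induit une application naturelle $\varphi:G/B^+\to G^{pma}/B^{ma+}$, manifestement $G-$\'equivariante puisque $i$ est un morphisme de groupes. L'injectivit\'e de $\varphi$ est imm\'ediate : si $i(g)B^{ma+}=i(g')B^{ma+}$, alors $g^{-1}g'\in G\cap B^{ma+}=B^+$ gr\^ace au corollaire \ref{3.17}.

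Le point principal --- et probablement l'\'etape la plus d\'elicate --- est la surjectivit\'e de $\varphi$, c'est-\`a-dire la d\'ecomposition $G^{pma}=G\cdot B^{ma+}$. Je l'obtiendrai en appliquant la d\'ecomposition de Bruhat de $G^{pma}$ issue de la structure de $BN-$paire raffin\'ee de la proposition \ref{3.16} : tout $g^{ma}\in G^{pma}$ appartient \`a une cellule $B^{ma+}nB^{ma+}$ avec $n\in N\subset G$ relevant un $w\in W^v$. L'ingr\'edient cl\'e sera alors la d\'ecomposition $B^{ma+}nB^{ma+}=U^+_w\cdot n\cdot B^{ma+}$ o\`u $U^+_w=U^{ma+}\cap n\,U^{ma-}n^{-1}$ : puisque $W^v$ stabilise globalement $\QD^+_{im}$, l'ensemble $\Phi^+\cap w\Phi^-$ est fini et form\'e uniquement de racines r\'eelles; ainsi $U^+_w$ est un produit fini de sous-groupes radiciels r\'eels $U_\qa$, chacun contenu dans $G$ d'apr\`es la remarque \ref{3.12}.3. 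On a donc $u^+_w\,n\in G$, le reste \'etant absorb\'e dans $B^{ma+}$.

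Pour les facettes de type $J\subsetneq I$, le m\^eme sch\'ema appliqu\'e aux sous-groupes paraboliques standards $P^+_J=B^+W^v_JB^+$ et $P^{ma+}_J=B^{ma+}W^v_JB^{ma+}$ fournira $P^{ma+}_J\cap G=P^+_J$ et une bijection $G-$\'equivariante $G/P^+_J\xrightarrow{\sim}G^{pma}/P^{ma+}_J$, d'o\`u l'isomorphisme simplicial $G-$\'equivariant annonc\'e entre $\shi^v_+$ et $\widehat\shi^v_+$. Enfin, pour comparer les syst\`emes d'appartements, je remarquerai que l'appartement t\'emoin $A_0$ --- complexe de Coxeter de $(W^v,S)$ dont les chambres correspondent aux $wB^\bullet$ pour $w\in W^v$ --- est le m\^eme pour les deux immeubles, puisque $N$ est commun aux deux syst\`emes de Tits. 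Les appartements de $\shi^v_+$ (resp. $\widehat\shi^v_+$) \'etant les $g.A_0$ pour $g\in G$ (resp. les $g^{ma}.A_0$ pour $g^{ma}\in G^{pma}$), tout appartement du premier est automatiquement appartement du second; mais l'inclusion $G\subsetneq G^{pma}$ est en g\'en\'eral stricte (par exemple sur un corps infini d\`es qu'il existe des racines imaginaires, gr\^ace aux exponentielles tordues non classiques de la proposition \ref{3.2}), ce qui fournira typiquement des appartements suppl\'ementaires dans $\widehat\shi^v_+$.
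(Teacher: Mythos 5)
Votre plan est correct, mais il suit pour l'\'etape cl\'e (la surjectivit\'e) une route diff\'erente de celle du texte. Le texte proc\`ede localement : une fois l'injection $\shi^v_+\hookrightarrow\widehat\shi^v_+$ obtenue via \ref{3.17}, il observe que pour $\alpha$ simple c'est le m\^eme groupe $U_\alpha\subset G$ qui est transitif sur les chambres $s_\alpha$-adjacentes \`a la chambre fondamentale, puis conclut par r\'ecurrence sur la longueur des galeries, l'immeuble $\widehat\shi^v_+$ \'etant connexe par galeries; les appartements sont trait\'es en une ligne (m\^eme groupe $N$). Vous, au contraire, utilisez la d\'ecomposition globale en cellules de Bruhat de $G^{pma}$ pour obtenir $G^{pma}=G\cdot B^{ma+}$, d'o\`u la surjectivit\'e sur les chambres, puis vous traitez les facettes par les paraboliques standards ($G\cap P^{ma+}_J=P^+_J$ via la disjonction des cellules). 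Les deux arguments sont moralement \'equivalents (la r\'ecurrence sur les galeries est une preuve implicite de la d\'ecomposition cellulaire), mais le v\^otre a l'avantage de s'appuyer sur un \'enonc\'e d\'ej\`a disponible : la remarque suivant \ref{3.16} donne explicitement $G^{pma}=\coprod_{n\in N}U^{+}nU^{ma+}$, ce qui fournit $G^{pma}=G\cdot B^{ma+}$ sans aucun calcul suppl\'ementaire. Petite r\'eserve sur votre formulation : l'identification $B^{ma+}nB^{ma+}=(U^{ma+}\cap n\,U^{ma-}n^{-1})\,n\,B^{ma+}$ avec le groupe \emph{compl\'et\'e} $U^{ma-}$ demande un petit argument d'intersection (du type \ref{3.3} appliqu\'e \`a $w\Delta^-$, en utilisant que $\Delta^+\cap w\Delta^-$ est fini et r\'eel car $W^v$ stabilise $\Delta^+_{im}$); il est plus direct d'invoquer soit la d\'ecomposition cit\'ee ci-dessus, soit la version \og syst\`eme de Tits raffin\'e \fg{} avec le $U^-$ minimal, $U^{ma+}\cap nU^-n^{-1}$, qui est automatiquement dans $G$ et suffit. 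Enfin, votre description des syst\`emes d'appartements et la raison heuristique pour laquelle $\widehat\shi^v_+$ en a en g\'en\'eral davantage co\"{\i}ncident avec le texte, qui ne d\'emontre pas non plus ce dernier point.
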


\begin{proof} Les appartements sont isomorphes car associ\'es au m\^eme groupe $N$. D'apr\`es \ref{3.17} $G/B^+$ s'injecte dans $G^{pma}/B^{ma+}$, d'o\`u une injection $\shi^v_+\hookrightarrow\widehat\shi^v_+$. Mais, par construction, si $\alpha$ est simple, c'est le m\^eme groupe $U_\alpha$ qui est transitif sur les chambres (de $\shi^v_+$ ou $\widehat\shi^v_+$) $s_\alpha-$adjacentes \`a la chambre fondamentale $C^v_f$ (associ\'ee \`a $B^+$), ce sont donc les m\^emes chambres. Par r\'ecurrence sur la longueur de galeries, on en d\'eduit que  $\shi^v_+=\widehat\shi^v_+$.
\end{proof}

\subsection{Changement de SGR}\label{3.18}

\par 1) On a d\'ecrit ci-dessus le groupe \`a la Mathieu $\g G_{\shs}^{pma}$ associ\'e \`a un SGR $\shs$ libre, colibre, sans cotorsion et v\'erifiant de plus une certaine condition de dimension (\cf \ref{3.5}). ll est clair que cette condition n'intervient pas dans les constructions de Mathieu, on peut donc s'en affranchir. On a d'ailleurs vu en \ref{1.3} que si $\shs$ est libre, colibre, sans cotorsion il est extension semi-directe d'un SGR $\shs^{mat}$ v\'erifiant de plus cette condition de dimension. On en a d\'eduit en \ref{1.11} que $\g G_{\shs}$ est produit semi-direct de $\g G_{\shs^{mat}}$ par un tore $\g T_1$. De m\^eme $\g G_{\shs}^{pma}$ est produit semi-direct de $\g G_{\shs^{mat}}^{pma}$ par ce tore $\g T_1$. Ces d\'ecompositions en produit semi-direct proviennent de la m\^eme d\'ecomposition $\g T_\shs=\g T_{\shs^{mat}}\times\g T_1$ et sont donc compatibles avec les morphismes de foncteurs $i$ de \ref{3.12}.

\par 2) Soit $\shs$ un SGR quelconque. D'apr\`es \ref{1.3} il existe une extension centrale torique $\shs^{sc}$ de $\shs$ et une extension semi-directe $\shs^{sc\ell}$ de $\shs^{sc}$ qui est libre, colibre et sans cotorsion. D'apr\`es les corollaires \ref{1.11} et \ref{1.12} $\g G_{\shs}$ est quotient de $\g G_{\shs^{sc}}$ par un sous-tore central $\g T_2$ de $\g T_{\shs^{sc}}$ et $\g G_{\shs^{sc\ell}}$ est produit semi-direct de $\g G_{\shs^{sc}}$ par un sous-tore $\g T_3$ de $\g T_{\shs^{sc\ell}}$. Inversement $\g G_{\shs^{sc}}$ est le noyau d'un homomorphisme $\theta$ de $\g G_{\shs^{sc\ell}}$ sur $\g T_3$.

\par L'homomorphisme $\theta$ s'\'etend au groupe $\g G_{\shs^{sc\ell}}^{pma}$ (construit ci-dessus): on prend $\theta$ trivial sur $\g U^{ma+}_{\shs^{sc\ell}}$, on conna\^{\i}t $\theta$ sur $\g T_{\shs^{sc\ell}}$ et les groupes $\g A_i$ de \ref{3.5}, donc $\theta$ est d\'efini sur $\g B^{ma+}_{\shs^{sc\ell}}$ et les paraboliques $\g P_i$; son extension \`a $\g G_{\shs^{sc\ell}}^{pma}$ d\'ecoule alors de la construction de \ref{3.6}. On d\'efinit donc $\g G_{\shs^{sc}}^{pma}$ comme le noyau de $\theta$. Enfin $\g G_{\shs}^{pma}$ est le quotient (comme foncteur en groupe) de $\g G_{\shs^{sc}}^{pma}$ par le tore $\g T_2$.

\par On a ainsi d\'efini $\g G_{\shs}^{pma}$ et un morphisme de foncteurs $i_\shs:\g G_\shs\rightarrow\g G_{\shs}^{pma}$ pour tout SGR $\shs$.

\par Si $\shs$ est libre, colibre et sans cotorsion on a ainsi deux d\'efinitions de $\g G_{\shs}^{pma}$. Mais alors, d'apr\`es \ref{1.3}, \ref{1.11} et \ref{1.12}, $\g G_{\shs^{sc}}$ est produit direct de $\g G_{\shs}$ par $\g T_2$ et $\g G_{\shs^{sc\ell}}$ produit semi-direct de $\g G_{\shs^{sc}}$ par $\g T_3$. Il est clair que $\g G_{\shs^{sc}}^{pma}$ et $\g G_{\shs^{sc\ell}}^{pma}$ comme d\'efinis en 1) s'\'ecrivent de la m\^eme mani\`ere comme produit semi-direct. D'o\`u l'identification avec les groupes d\'efinis comme ci-dessus.

\par 3) {\bf Fonctorialit\'e.} La construction de Mathieu est clairement fonctorielle en $\shs$ pour les SGR libres, colibres, sans cotorsion et les extensions commutatives de SGR. On a remarqu\'e en \ref{1.3}.1 que les constructions $\shs\mapsto\shs^{sc}$ et $\shs^{sc}\mapsto\shs^{sc\ell}$ sont fonctorielles. Une extension commutative $\varphi:\shs\rightarrow\shs_*$ d\'efinit donc un morphisme de foncteurs en groupes $\g G_{\shs^{sc\ell}}^{pma}\rightarrow\g G_{\shs_*^{sc\ell}}^{pma}$. Ce morphisme est compatible avec les homomorphismes $\theta$ sur les tores $\g T_3$ correspondants, il induit donc un morphisme $\g G_{\shs^{sc}}^{pma}\rightarrow\g G_{\shs_*^{sc}}^{pma}$. Ce dernier morphisme \'echange les tores $\g T_2$ correspondants, ce qui donne le morphisme $\g G_{\varphi}^{pma}:\g G_{\shs}^{pma}\rightarrow\g G_{\shs_*}^{pma}$ cherch\'e.

\par Le groupe $\g G_{\shs}^{pma}$ est donc fonctoriel en $\shs$ pour les extensions commutatives de SGR (de mani\`ere compatible avec la fonctorialit\'e des groupes $\g G_\shs$ et les morphismes $i_\shs$). Par construction, si $\varphi:\shs\rightarrow\shs'$ est un tel morphisme, $\g G_{\varphi}^{pma}$ induit un isomorphisme de $\g U_{\shs}^{ma+}$ sur $\g U_{\shs'}^{ma+}$ et le morphisme $\g T_\varphi$ de $\g T_\shs$ dans $\g T_{\shs'}$. Si $k$ est un corps, $\g G_{\varphi k}^{pma}$ induit l'isomorphisme $\g G_{\varphi k}$ de  $\g U_{\shs}^{\pm{}}(k)$ sur $\g U_{\shs'}^{\pm{}}(k)$ \cf \ref{1.10} et \ref{3.12}.

\subsection{Comparaison avec les groupes  de Kac-Moody \`a la Kumar}\label{3.19}

\par Dans \cite[6.1.6 et 1.1.2]{Kr-02} S. Kumar consid\`ere un SGR v\'erifiant les m\^emes conditions que celles demand\'ees par O. Mathieu (\cf \ref{3.5}). On reste donc dans ce cadre et
bien s\^ur on se place, comme Kumar, sur le corps $\C$.

\par Pour $\Theta$ clos dans $\Delta^+$, les groupes $U_\Theta$, $T$ et $N$ d\'efinis dans \lc 6.1 s'identifient clairement \`a $U_\Theta^{pma}$, $T$ et $N$ d\'efinis ci-dessus. De m\^eme pour une partie de type fini $J$ de $I$, le groupe parabolique $P_J$ de Kumar s'identifie au groupe $P(J)=G(J)\ltimes U^{ma+}_J$ de \ref{3.10}, en particulier les deux d\'efinitions des paraboliques minimaux $P_i$ co\"{\i}ncident.

\par On a vu que $(G^{pma},B^{ma+}=TU^{ma+},N,S)$ est un syst\`eme de Tits. D'apr\`es [\lc 5.1.7] $G^{pma}$ est le produit amalgam\'e des sous-groupes $N$ et $P_i$. Mais c'est exactement la d\'efinition de \lc 6.1.16 du groupe de Kac-Moody \`a la Kumar. Donc sur $\C$ les groupes \`a la Kumar et \`a la Mathieu co\"{\i}ncident.

\par L'injection $i_\C$ de \ref{3.12} permet d'identifier $\g G(\C)$ et le groupe minimal \`a la Kumar \cite[7.4.1]{Kr-02}.


\section{Appartement affine et sous-groupes parahoriques}\label{s4}

\par On consid\`ere dor\'enavant un SGR fixe  $\shs$ qui est libre et le groupe de Kac-Moody $\g G_\shs$ associ\'e sur un corps $K$. On a vu en \ref{1.3} et \ref{1.11} que si $\shs$ est un SGR non libre, $\g G_{\shs}$ est un sous-groupe d'un groupe $\g G_{\shs^{\ell}}$ avec $\shs^\ell$ libre. Toute action de $\g G_{\shs^\ell}(K)$ sur une masure induira donc une action de $\g G_{\shs}(K)$ sur celle-ci. On peut noter que l'essentialis\'e de la masure de $\g G_{\shs^\ell}(K)$ ne d\'epend pas du choix de $\shs^\ell$: les appartements sont des espaces affines sous Hom$_\Z(Q,\R)$. C'est le choix fait par B. R\'emy pour ses r\'ealisations coniques d'immeubles jumel\'es.

\par On abr\'egera fr\'equemment $\g G_{\shs}(K)$ en $G_\shs$ ou m\^eme $G$. On notera avec la lettre romaine correspondante les points sur $K$ d'un foncteur en groupe not\'e avec une lettre gothique; de m\^eme pour les morphismes de foncteurs.

\par \`A partir de \ref{4.2}, le corps $K$ sera suppos\'e muni d'une valuation r\'eelle $\omega$.

\subsection{Comparaison avec \cite{Ru-06} et \cite{Ru-10}}\label{4.1}

\par Le quintuplet $(V,W^v,(\alpha_i)_{i\in I},(\alpha_i^\vee)_{i\in I},\Delta_{im})$ de \ref{1.1} satisfait aux conditions de \cite[1.1 et 1.2.1]{Ru-10}, on est dans le cas Kac-Moody (mod\'er\'ement imaginaire). On peut en particulier d\'efinir dans $V$ des facettes vectorielles et des c\^ones de Tits positif ou n\'egatif $\pm\sht$  \cite[1.3]{Ru-10}. Ces notions sont \'egalement introduites dans \cite[1.1 \`a 1.3]{Ru-06} o\`u $\sht$ est not\'e $\A^v$. On note $C^v_f=\{\,v\in V\mid\alpha(v)>0,\forall\alpha\in\Phi^+\}$ la chambre vectorielle fondamentale, donc $\sht=W^v.\overline C^v_f$. Une facette vectorielle est dite sph\'erique si son fixateur (dans $W^v$) est fini \ie si elle est contenue dans $\pm\sht^\circ$ (int\'erieur du c\^one de Tits).

\par D'apr\`es \ref{1.6} le groupe de Kac-Moody $G$ contient un sous-groupe $T$ et des sous-groupes radiciels $U_\alpha$ pour $\alpha\in\Phi$, chacun isomorphe au groupe additif $(K,+)$ par un isomorphisme $x_\alpha$. Le triplet $(G,(U_\alpha)_{\alpha\in\Phi},T)$ est une donn\'ee radicielle de type $\Phi$ au sens de \cite[1.4]{Ru-06}. Le groupe $N$ d\'efini dans \cite[1.5.3]{Ru-06} co\"{\i}ncide avec celui de \ref{1.6}.4, il est muni d'une application surjective $\nu^v:N\rightarrow W^v$ de noyau $T$.

\par On reprend, sauf exception signal\'ee, les notations de \cite[{\S{}} 1]{Ru-06}. On a en particulier un immeuble $\shi^v_+$ dont les appartements sont isomorphes \`a $\A^v=\sht$, mais aussi un immeuble $\shi^v_-$ dont les appartements sont isomorphes \`a $-\sht$. Ces deux immeubles sont jumel\'es \cite{Ry-02a}; comme dans \cite{Ru-06} on ne garde que les facettes sph\'eriques dans $\shi^v_\pm$. On a d\'ej\`a vu ces immeubles de mani\`ere combinatoire en \ref{1.6}.5, mais on les verra d\'esormais sous la forme de ces r\'ealisations coniques. En \ref{3.17b} on a vu que $G^{pma}$ (resp $G^{nma}$) agit sur $\shi^v_+$ (resp. $\shi^v_-$).

\subsection{L'appartement affine t\'emoin $\A$}\label{4.2}

\par 1) On suppose dor\'enavant le corps $K$ muni d'une valuation r\'eelle non triviale $\omega$, et ainsi $\Lambda=\omega(K^*)$ est un sous-groupe non trivial de $\R$. On note $\sho$ l'anneau des entiers de $(K,\omega)$ et $ \g m$ son id\'eal maximal.

\par On note $\A$ l'espace vectoriel $V$ consid\'er\'e comme espace affine; on note cependant toujours $0$ l'\'el\'ement neutre de $V$.

\par Le groupe $T=\g T(K)$ agit sur $\A$ par translations: si $t\in T$, $\nu(t)$ est l'\'el\'ement de $V$ tel que $\chi(\nu(t))=-\omega(\chi(t))$, $\forall\chi\in X$. Cette action est $W^v-$\'equivariante.
D'apr\`es le raisonnement de \cite[2.9.2]{Ru-06} on a:

\begin{enonce*}{\quad2) Lemme}
  Il existe une action affine $\nu$ de $N$ sur $\A$ qui induit la pr\'ec\'edente sur $T$ et telle que, pour $n\in N$, $\nu(n)$ est une application affine d'application lin\'eaire associ\'ee $\nu^v(n)$.
\end{enonce*}

\par 3) L'image de $N$ est $\nu(N)=W_{Y\Lambda}=W^v\ltimes(Y\otimes\Lambda)$. Son noyau est $H=$Ker$(\nu)=\sho^*\otimes Y=\g T(\sho)$.

\medskip
\par 4) Par construction le point $0$ est fix\'e par les \'el\'ements $m(x_{\alpha_i}(\pm{}1))=\widetilde s_{-\alpha_i}^{\pm1}$ et donc (par conjugaison: (KMT7) ) par tous les $\widetilde s_{\alpha}$, pour $\alpha\in\Phi$. D'apr\`es (KMT6) pour $u\not=0$, $m(x_{\alpha}(u))=\widetilde s_{-\alpha}(u^{-1})=\widetilde s_{-\alpha}.(-\alpha)^\vee(u)=\alpha^\vee(u).\widetilde s_{-\alpha}$ et $\alpha(\nu(\alpha^\vee(u)))=-\omega(\alpha(\alpha^\vee(u)))=-\omega(u^2)=-2\omega(u)$. Donc $\nu(m(x_{\alpha}(u)))$ est la r\'eflexion $r_{\alpha,\omega(u)}$ par rapport \`a l'hyperplan $M(\alpha,\omega(u))=\{y\in V\mid \alpha(y)+\omega(u)=0\}$, c'est \`a dire l'application affine d'application lin\'eaire associ\'ee $s_\alpha$ et d'ensemble de points fixes $M(\alpha,\omega(u))$.

\medskip
\par 5) {\bf D\'efinitions.} L'{\it appartement affine t\'emoin} $\A$ est l'espace affine $\A$ (sous l'espace vectoriel $V$ muni de $W^v$, $\Phi$ et $\Delta$) muni de l'ensemble $\shm$ des {\it murs (r\'eels)} $M(\alpha,k)=\{y\in V\mid\alpha(y)+k=0\}$ pour $\alpha\in\Phi$ et $k\in\Lambda$ et de l'ensemble $\shm^i$ des {\it murs imaginaires} $M(\alpha,k)$ pour $\alpha\in\Delta_{im}$ et $k\in\Lambda$. L'espace $D(\alpha,k)=\{y\in V\mid\alpha(y)+k\geq{}0\}$, d\'efini pour $\alpha\in X$ et $k\in\R$ est un {\it demi-appartement} si $\alpha\in\Phi$ et $k\in\Lambda$. On note $D^\circ(\alpha,k)=\{y\in V\mid\alpha(y)+k>0\}$.

\par Ainsi $\A$ satisfait aux conditions de \cite[1.4]{Ru-10} (\`a une variante pr\`es pour $\shm^i$, \cf \lc 1.6.2); il est semi-discret si et seulement si la valuation $\omega$ est discr\`ete. On choisit $x_0=0$ comme point sp\'ecial privil\'egi\'e. On reprend les notations de \cite{Ru-10} (sauf pour $P^\vee$, $Q^\vee$ et $cl$, voir ci-dessous). On notera les ressemblances et diff\'erences avec \cite[{\S{}} 2]{Ru-06}. Le cas $\Lambda=\Z$ est trait\'e dans \cite[{\S{}} 2 et 3.1]{GR-08}.

\par On note $Q^\vee=\sum_{i\in I}\Z\alpha_i^\vee$ et $P^\vee=\{y\in V\mid \alpha_i(y)\in\Z,\forall i\}$. Donc $Q^\vee\subset Y\subset P^\vee\subset V$.

\par Pour un filtre $F$ de parties de $\A$, on consid\`ere comme dans \cite{Ru-10} ou \cite{GR-08} plusieurs variantes d'enclos: si $\shp\subset X\setminus\{0\}$, $cl_\shp(F)$ est le filtre form\'e des sous-ensembles de $\A$ contenant un \'el\'ement de $F$ de la forme $\cap_{\alpha\in\shp}\,D(\alpha,k_\alpha)$ avec pour chaque $\alpha$, $k_\alpha\in\Lambda\cup\{+\infty\}$ (en particulier chaque $D(\alpha,k_\alpha)$ contient $F$). On note $cl(F)=cl_\Delta(F)$, $cl^{si}(F)=cl_\Phi(F)$. On d\'efinit aussi  (selon Charignon \cite{Cn-10}) $cl^{\#}(F)$ comme le filtre form\'e des sous-ensembles de $\A$ contenant un \'el\'ement de $F$ de la forme $\bigcap_{i=1}^nD(\alpha_i,k_i)$ o\`u $\alpha_1,\cdots,\alpha_n\in\Phi$ et $k_i\in\Lambda\cup\{\infty\}$. On a $cl^{\#}(F)\supset cl^{si}(F)\supset cl(F)\supset \overline{conv}(F)$ (enveloppe convexe ferm\'ee de $F$); $cl(F)$ et $cl^{si}(F)$ co\"{\i}ncident avec ceux d\'efinis dans \cite{Ru-10}. Un filtre $F$ est dit {\it clos} si $F=cl(F)$ et, quand on ne pr\'ecise pas, l'enclos de $F$ d\'esigne $cl(F)$.

\par Une {\it facette-locale} de $\A$ est associ\'ee \`a un point $x$ de $\A$ et une facette vectorielle $F^v$ dans $V$; c'est le filtre $F^\ell(x,F^v)= germ_x(x+F^v)$. La {\it facette} associ\'ee \`a $F^\ell(x,F^v)$ est le filtre $F(x,F^v)$ form\'e des ensembles contenant une intersection de demi-espaces $D(\alpha,k_\alpha)$ ou $D^\circ(\alpha,k_\alpha)$ (un seul $k_\alpha\in\Lambda\cup\{+\infty\}$ pour chaque $\alpha\in\Delta$), cette intersection devant contenir $F^\ell(x,F^v)$ \ie un voisinage de $x$ dans $x+F^v$. La facette ferm\'ee $\overline F(x,F^v)$ est l'adh\'erence de $F(x,F^v)$ et aussi $cl(F^\ell(x,F^v))=cl(F(x,F^v))$. Les facettes $F^\ell(x,F^v)$, $F(x,F^v)$ et $\overline F(x,F^v)$ sont dites {\it sph\'eriques} si $F^v$ l'est.

\par S'il existe $\alpha\in\Delta_{im}$ avec $\alpha(x)\notin\Lambda$, il se pourrait que $F(x,F^v)$ soit l\'eg\`erement plus gros que la facette d\'efinie dans \cite{Ru-10}, \cf \lc 1.6.2.

\medskip
\par 6) {\bf Le groupe de Weyl.} Pour $\alpha\in\Phi$, il est clair que le groupe de Weyl (affine) $W$, engendr\'e par les r\'eflexions par rapports aux murs,  contient les translations de vecteur dans $\Lambda\alpha^\vee$. Ainsi $Q^\vee\otimes\Lambda$ (not\'e $Q^\vee$ dans \cite{Ru-10}) est un groupe de translations de $V$, contenu dans $W$ et invariant par $W^v$. On a $W=W^v\ltimes(Q^\vee\otimes\Lambda)$.

\par Le groupe $P^\vee_\Lambda=\Lambda.P^\vee=\{y\in V\mid \alpha_i(y)\in\Lambda,\forall i\}$ est not\'e $P^\vee$ dans  \cite{Ru-10}. Ainsi $W_P=W^v\ltimes P_\Lambda^\vee$ est le plus grand sous-groupe de $W^e_\R=W^v\ltimes V$ stabilisant $\shm$ (et $\shm^i$).

\par Comme $N$ est engendr\'e par $T$ et les $\widetilde s_\alpha$, il est clair que $\nu(N)=W_Y=W^v\ltimes(Y\otimes\Lambda)$. On a : $W\subset W_Y\subset W_P \subset W_\R^e$.

\medskip
\par 7) {\bf Valuation des groupes radiciels.} Pour $\alpha\in\Phi$ et $u\in U_\alpha$, on pose $\varphi_\alpha(u)=\omega(r)$ si $u=x_\alpha(r)$ avec $r\in K$. Les $\varphi_\alpha$ forment une valuation de la donn\'ee radicielle $(G,(U_\alpha),T)$ \cite[2.2]{Ru-06}. Voir aussi \cite{Cn-10}

\par On consid\`ere le mono\"{\i}de ordonn\'e $\widetilde\R$ de \cite[6.4.1]{BtT-72} form\'e d'\'el\'ements $r$, $r^+$ (pour $r\in\R$) et $\infty$ v\'erifiant $r<r^+<s<s^+<\infty$ si $r<s$. On note $\widetilde\Lambda$ l'ensemble des \'el\'ements de $\widetilde\R$ qui sont borne inf\'erieure d'une partie minor\'ee de $\Lambda$. On a $\widetilde\Lambda=\Lambda\cup\{\infty\}$ si $\Lambda$ est discret et $\widetilde\Lambda=\Lambda\cup\{\infty\}\cup\{r^+\mid r\in\R\}$ sinon.
 Les id\'eaux fractionnaires de $\sho$ sont index\'es par $\widetilde\Lambda$:  \`a $\lambda\in\widetilde\Lambda$ on associe $K_\lambda=\{r\in K\mid\omega(r)\geq{}\lambda\}$.

 \par Pour $\lambda\in\widetilde\Lambda$ et $\alpha\in\Phi$, $U_{\alpha,\lambda}=\{u\in U_\alpha\mid\varphi_\alpha(u)\geq\lambda\}$ est un sous-groupe de $U_\alpha$; on a $U_{\alpha,\infty}=\{1\}$.

 \subsection{Premiers groupes associ\'es \`a un filtre $\Omega$ de parties de $\A$}\label{4.3}

 \par Soit $\Omega$ un tel filtre.

 \par 1) Pour $\alpha\in X$, soit $f_\Omega^\Lambda(\alpha)=f_\Omega(\alpha)=
 inf\{\lambda\in\Lambda\mid\Omega\subset D(\alpha,\lambda)\}=
inf\{\lambda\in\Lambda\mid\alpha(\Omega)+\lambda\subset[0,+\infty[\}\in\widetilde\Lambda$. Pour $\alpha\in\Delta$ (resp. $\Phi$, $X$) $f_\Omega(\alpha)$ ne d\'epend que de $cl(\Omega)$ (resp. $cl^{si}(\Omega)$ ou ${cl^\#(\Omega)}$, $\overline{conv}(X)\supset\overline\Omega$).
Si $\Omega$ est un ensemble, $f_\Omega(\alpha)\not=\lambda^+$, $\forall\alpha\in X$, $\forall\lambda\in\Lambda$. Cette fonction $f_\Omega$  est {\it concave} \cite{BtT-72}: $\forall\alpha,\beta\in X$, $f_\Omega(\alpha+\beta)\leq f_\Omega(\alpha)+f_\Omega(\beta)$ et  $f_\Omega(0)=0$.

\par On dit que $\Omega$ est {\it presque-ouvert} si $\forall\alpha\in\Phi$, $f_\Omega(\alpha)+f_\Omega(-\alpha)>0$. On dit que $\Omega$ est {\it \'etroit} si aucun mur ne s\'epare $\Omega$ en deux, autrement dit si $\forall\alpha\in\Phi$, $f_\Omega(\alpha)+f_\Omega(-\alpha)=0$ ou $0^+$ ou (dans le cas discret) $inf\{\lambda\in\Lambda\mid\lambda>0\}$ et s'il n'existe pas de $\lambda\in\Lambda$ avec $f_\Omega(\alpha)=\lambda^+$ et $f_\Omega(-\alpha)=(-\lambda)^+$.

\par 2) Pour $\alpha\in\Phi$, on note $U_{\alpha,\Omega}=U_{\alpha,f_\Omega(\alpha)}$; $U_\Omega^{(\alpha)}$ est le groupe engendr\'e par $U_{\alpha,\Omega}$ et $U_{-\alpha,\Omega}$
 et on note $N_\Omega^{(\alpha)}=N\cap U_\Omega^{(\alpha)}$.

\par On d\'efinit $U_\Omega$ comme le groupe engendr\'e par les  $U_{\alpha,\Omega}$ (pour $\alpha\in\Phi$) et $U^\pm_\Omega=U_\Omega\cap U^\pm$; on verra (\ref{4.12}) que $U^{+}_\Omega$ (resp. $U^{-}_\Omega$)n'est pas forc\'ement \'egal au groupe $U^{++}_\Omega$ (resp. $U^{--}_\Omega$) engendr\'e par les  $U_{\alpha,\Omega}$ pour $\alpha\in\Phi^+$ (resp. $\alpha\in\Phi^-$).

\par Le groupe $N^u_\Omega$ ($\subset N\cap U_\Omega$) est engendr\'e par tous les $N_\Omega^{(\alpha)}$ pour $\alpha\in\Phi$.

\par Tous ces groupes sont normalis\'es par $H$ et on peut donc d\'efinir $N_\Omega^{min}=HN_\Omega^{u}$ et $P_\Omega^{min}=HU_\Omega$. Ces groupes ne d\'ependent que de l'enclos $cl(\Omega)$ de $\Omega$ et m\^eme que de $cl^\#(\Omega)$. Si $\Omega\subset\Omega'$, on a $U_{\Omega}\supset U_{\Omega'}$, etc.

 \begin{enonce*}{\quad3) Lemme}
\par Soient $\alpha\in\Phi$ et $\Omega$ un filtre comme ci-dessus.

\par a)$U_\Omega^{(\alpha)}=U_{\alpha,\Omega}.U_{-\alpha,\Omega}.N_\Omega^{(\alpha)}=U_{-\alpha,\Omega}.U_{\alpha,\Omega}.N_\Omega^{(\alpha)}$.

\par b) Si $f_\Omega(\alpha)+f_\Omega(-\alpha)>0$, alors $N_\Omega^{(\alpha)}\subset H$. Si $f_\Omega(\alpha)=-f_\Omega(-\alpha)=k\in\Lambda$, alors $\nu(N_\Omega^{(\alpha)})=\{r_{\alpha,k},1\}$.

\par c) $N_\Omega^{(\alpha)}$ fixe $\Omega$, \ie $\forall n\in N_\Omega^{(\alpha)}$, il existe $S\in\Omega$ fixe point par point par $\nu(n)$.
\end{enonce*}

\begin{proof}
\cf \cite[lemma 3.3]{GR-08}
\end{proof}

\par 4) Le groupe $W_\Omega^{min}=\nu(N_\Omega^{min})=N_\Omega^{min}/H$ est engendr\'e par les r\'eflexions $r_{\alpha,k}$ pour $\alpha\in\Phi$ et $k\in\Lambda$ tels que $\Omega\subset M(\alpha,k)$. Il est contenu dans le fixateur $W_\Omega$ de $\Omega$ dans $W$.
 Il y a bien s\^ur \'egalit\'e si $\Omega$ est r\'eduit \`a un point sp\'ecial, mais aussi si $\Omega$ est une facette sph\'erique et si $\Lambda$ est discret. En effet dans ce dernier cas l'image de $W_\Omega$ dans $W^v$ fixe une facette vectorielle sph\'erique et on est ramen\'e au cas classique o\`u $W$ est produit semi-direct d'un groupe de Weyl fini par un groupe discret de translations; ce cas est trait\'e dans \cite[7.1.10]{BtT-72} et ne se g\'en\'eralise pas si $\QL$ est dense.
  D'apr\`es \ref{4.12}.4 ci-dessous il ne se g\'en\'eralise pas non plus au cas non classique (\ie non sph\'erique).

 \subsection{Alg\`ebres et modules associ\'es au filtre $\Omega$ }\label{4.4}

\par 1) Pour $\alpha\in\Delta$ et $\lambda\in\widetilde \Lambda$, on pose $\g g_{\alpha,\lambda}=\g g_{\alpha\Z}\otimes K_\lambda$ et $\g g_{\alpha,\Omega}=\g g_{\alpha,f_\Omega(\alpha)}$. De m\^eme $\g h_\sho=\g h_\Z\otimes\sho$. Par concavit\'e de $f_\Omega$, il est clair que $\g g_\Omega=\g h_\sho\oplus(\oplus_{\alpha\in\Delta}\,\g g_{\alpha,\Omega})$ est une sous$-\sho-$alg\`ebre de Lie de $\g g_K=\g g_\Z\otimes K$.

\par On peut de m\^eme consid\'erer les compl\'etions positive et n\'egative: $\widehat{\g g}_\Omega^p=(\oplus_{\alpha<0}\,\g g_{\alpha,\Omega})\oplus\g h_\sho\oplus(\prod_{\alpha>0}\,\g g_{\alpha,\Omega})$ et $\widehat{\g g}_\Omega^n=(\oplus_{\alpha>0}\,\g g_{\alpha,\Omega})\oplus\g h_\sho\oplus(\prod_{\alpha<0}\,\g g_{\alpha,\Omega})$ si $\Omega$ est un ensemble, sinon $\widehat{\g g}_{\Omega}^{p}=\cup_{\Omega'\in\Omega}\,\widehat{\g g}_{\Omega'}^p$ et $\widehat{\g g}_{\Omega}^n=\cup_{\Omega'\in\Omega}\,\widehat{\g g}_{\Omega'}^n$.

\par Ces alg\`ebres ne d\'ependent que de $cl(\Omega)$. Par contre $cl^{si}(\Omega)$ ou $cl^\#(\Omega)$ pourraient \^etre insuffisants pour les d\'eterminer.

\medskip
\par 2) L'alg\`ebre enveloppante enti\`ere $\shu_\shs$ est gradu\'ee par $Q\subset X$. On peut donc d\'efinir $\shu_\Omega=\bigoplus_{\alpha\in Q}\,\shu_{\alpha,\Omega}$ avec $\shu_{\alpha,\Omega}=\shu_{\shs,\alpha}\otimes_\Z\,K_{f_\Omega(\alpha)}$. C'est une sous$-\sho-$big\`ebre gradu\'ee de $\shu_{\shs,K}=\shu_\shs\otimes_\Z\,K$. Le terme de niveau $1$ de la filtration induite par celle de $\shu_{\shs,K}$ est $\sho\oplus\g g_\Omega$. On peut avoir $\shu_\Omega\not=\shu_{cl(\Omega)}$.

\par On peut consid\'erer la compl\'etion positive $\widehat\shu^p_\Omega$ (resp. n\'egative $\widehat\shu^n_\Omega$) de $\shu_\Omega$ pour le degr\'e total (resp. son oppos\'e); c'est une sous-alg\`ebre de $\widehat\shu^p_K$ (resp.  $\widehat \shu^n_K$).

\medskip
\par 3) Soit $\lambda$ un poids dominant de $\g T_\shs$ (\ie $\lambda\in X^+$). On a construit en \ref{2.14} une forme enti\`ere $L_\Z(\lambda)$ du module int\'egrable $L(\lambda)$ de plus haut poids $\lambda$. Tout poids de $M=L(\lambda)$ est de la forme $\mu=\lambda-\nu$ avec $\nu\in Q^+$. Pour $\mu\in X$ et $r\in\widetilde\Lambda$, on pose $M_{\mu,r}=L_\Z(\lambda)_\mu\otimes_\Z\,K_{r}$ puis $M_{\mu,\Omega}=M_{\mu,f_\Omega(\mu)}$ et $M_\Omega=\bigoplus_{\mu\in X}\,M_{\mu,\Omega}$;  en particulier $M_{\lambda,\Omega}=K_{f_\Omega(\lambda)}.v_\lambda$.
Par concavit\'e de $f_\Omega$, il est clair que $M_\Omega$ est un sous$-\widehat\shu^p_\Omega-$module gradu\'e de $M\otimes K$. On peut avoir $M_\Omega\not=M_{cl(\Omega)}$, mais $M_\Omega$ ne d\'epend que de $cl_{\shp(M)}(\Omega)$ o\`u $\shp(M)$ est l'ensemble des poids de $M$.

\par On peut consid\'erer la compl\'etion n\'egative de $M_\Omega$: $\widehat M^n_\Omega=\prod_{\nu\in Q^+}\,M_{\lambda-\nu,\Omega}$ si $\Omega$ est un ensemble et $\widehat M^n_\Omega=\cup_{\Omega'\in\Omega}\,\widehat M^n_{\Omega'}$ sinon. Il est clair que $\widehat M^n_\Omega$ est un sous$-\widehat\shu^p_\Omega-$module gradu\'e de $\widehat{M\otimes K}$.

\par Bien s\^ur on construit aussi $M_\Omega$ et sa compl\'etion positive $\widehat M^p_\Omega$ pour un module int\'egrable $M$ \`a plus bas poids $\lambda\in-X^+$.

 \subsection{Groupes (pro-)unipotents associ\'es au filtre $\Omega$ }\label{4.5}

 \par 1) Soit $\Psi\subset\Delta^+$ un ensemble clos de racines. On lui a associ\'e une $\Z-$big\`ebre $\shu(\Psi)$ (\ref{2.13}.2 et \ref{3.1}). On peut donc d\'efinir $\shu(\Psi)_\Omega=\bigoplus _{\alpha\in Q^+}\,\shu(\Psi)_{\alpha,\Omega}$ (avec $\shu(\Psi)_{\alpha,\Omega}=\shu(\Psi)_\alpha\otimes K_{f_\Omega(\alpha)}$); c'est une sous$-\sho-$big\`ebre de $\shu_\Omega$ et de $\shu(\Psi)\otimes K$ (plus pr\'ecis\'ement leur intersection).

 \medskip
 \par 2) Le plus simple pour associer \`a $\shu(\Psi)_\Omega$ un sous-groupe de $\g U^{ma}_\Psi(K)$ semble \^etre de proc\'eder comme suit:

 \par Si $\Omega$ est un ensemble on consid\`ere la compl\'etion $\widehat\shu(\Psi)_\Omega=\prod_{\alpha\in Q^+}\,\shu(\Psi)_{\alpha,\Omega}$; c'est une sous-alg\`ebre de $\widehat\shu^p_\Omega$ et de $\widehat\shu_K(\Psi)$ et elle contient $[exp]\lambda x$ pour $x\in\g g_{\alpha\Z}$ et $\lambda\in K_{f_\Omega(\alpha)}$. Le mono\"{\i}de $U_\Omega^{ma}(\Psi)$ est l'intersection de $\widehat\shu(\Psi)_\Omega$ ou $\widehat\shu^p_\Omega$ et de $\g U^{ma}_\Psi(K)$ (qui est un sous-groupe des \'el\'ements inversibles de $\widehat\shu_K(\Psi)$). On sait que l'inverse d'un \'el\'ement de $\g U^{ma}_\Psi(K)$ est son image par la co-inversion $\tau$ et il est clair que $\widehat\shu(\Psi)_\Omega$ est stable par $\tau$ (car $\tau$ est gradu\'ee et stabilise $\shu(\Psi)$). Ainsi $U_\Omega^{ma}(\Psi)$ est un sous-groupe de $\g U^{ma}_\Psi(K)$. D'apr\`es \ref{3.2} il est clair que $U_\Omega^{ma}(\Psi)$ est form\'e des produits $\prod_{x\in\shb_\Psi}\,[exp]\lambda_x.x$ pour $\lambda_x\in K_{f_\Omega(pds(x))}$ (il suffit de raisonner par r\'ecurrence sur le degr\'e total).

 \par Si $\Omega$ est un filtre, le groupe $U_\Omega^{ma}(\Psi)$ est la r\'eunion des $U_{\Omega'}^{ma}(\Psi)$ pour $\Omega'\in\Omega$, donc encore l'intersection de  $\g U^{ma}_\Psi(K)$ et de $\widehat\shu(\Psi)_\Omega=\cup_{\Omega'\in\Omega}\,\widehat\shu(\Psi)_{\Omega'}$.

\medskip
\par 3) On sait que $G$ et $\g U^{ma}_\Psi(K)$ s'injectent dans $\g G^{pma}(K) $ (\ref{3.3}a, \ref{3.6} et \ref{3.13}). On note donc $U_\Omega^{pm}(\Psi)=G\cap U_\Omega^{ma}(\Psi)$. On a aussi $U_\Omega^{pm}(\Psi)=U^+\cap U_\Omega^{ma}(\Psi)$ puisque $U^+=G\cap U^{ma+}$ (\ref{3.17}). Pour un filtre on a $U_\Omega^{pm}(\Psi)=\cup_{\Omega'\in\Omega}\,U_{\Omega'}^{pm}(\Psi)$. Bien s\^ur $U_\Omega^{pm+}:=U_\Omega^{pm}(\Delta^+)$ contient $U_\Omega^{+}$ (et $U_\Omega^{nm-}:=U_\Omega^{nm}(\Delta^-)\supset U_\Omega^{-}$).

\medskip
\par 4) {\bf Remarques}: a) Le sous-groupe $H=\g T(\sho)$ de $T=\g T(K)$ stabilise les alg\`ebres et modules construits en \ref{4.4} ou ci-dessus. Ainsi $H$ normalise $U^{ma}_\Omega(\Psi)$ et $U^{pm}_\Omega(\Psi)$.

\par b) Comme $\widehat\shu(\Psi)_\Omega$ est une sous-alg\`ebre de $\widehat\shu^p_\Omega$ et de $\widehat\shu_K(\Psi)$, il est clair que les groupes $U_\Omega^{ma}(\Psi)$ et $U_\Omega^{pm}(\Psi)$ stabilisent $\widehat{\shu}^p_\Omega$ et $\widehat{\g g}^p_\Omega$ pour l'action adjointe de $U^{ma}_\Psi(K)$ sur $\widehat\shu^p_K$, \cf \ref{3.7}.3.

\par c) Pour $\alpha\in\Phi$ et $\Psi=\{\alpha\}$, $U_\Omega^{ma}(\Psi)$ est le sous-groupe $U_{\alpha,\Omega}$ de $U_\alpha\subset G$ isomorphe \`a $K_{f_\Omega(\alpha)}$ par $x_\alpha:r\mapsto exp(re_\alpha)$ (d\'efini en \ref{4.2}.7).

\par d) Si $\Omega\subset\Omega'$, on a $U^{ma}_{\Omega}(\Psi)\supset U^{ma}_{\Omega'}(\Psi)$ et $U^{pm}_{\Omega}(\Psi)\supset U^{pm}_{\Omega'}(\Psi)$. Si $\Omega$ est la r\'eunion d'une famille $(\Omega_i)_{i\in N}$ de filtres, alors $f_{\Omega}$ est le sup (pris dans $\widetilde\Lambda$) des $f_{\Omega_i}$ et $\shu(\Psi)_{\Omega}=\cap_{i\in N}\,\shu(\Psi)_{\Omega_i}$. Ainsi $U^{ma}_{\Omega}(\Psi)=\cap_{i\in N}\,U^{ma}_{\Omega_i}(\Psi)$ et $U^{pm}_{\Omega}(\Psi)=\cap_{i\in N}\,U^{pm}_{\Omega_i}(\Psi)$.

\par e) Pour $\Omega=\{0\}\subset\A$, on a $\shu_\Omega=\shu_\shs\otimes_\Z\sho$, $\shu(\Psi)_\Omega=\shu_\shs(\Psi)\otimes_\Z\sho$, $U_\Omega^{ma}(\Psi)=\g U^{ma}_\Psi(\sho)$.

\par f)  $U_\Omega^{ma}(\Psi)$ et $U_\Omega^{pm}(\Psi)$ ne d\'ependent que de $cl(\Omega)$ (cela pourrait \^etre faux pour $cl^{si}(\Omega)$ ou $cl^\#(\Omega)$).

 \begin{enonce*}{\quad5) Lemme}
Soient $\Psi'\subset\Psi\subset\Delta^+$ des sous-ensembles clos de racines.

\par a) $ U^{ma}_\Omega({\Psi'})$ (resp. $ U^{pm}_\Omega({\Psi'})$) est un sous-groupe  de $U^{ma}_\Omega({\Psi})$ (resp. $ U^{pm}_\Omega({\Psi})$).

\par b) Si $\Psi\setminus\Psi'$ est \'egalement clos, alors on a des d\'ecompositions uniques $ U^{ma}_\Omega({\Psi})= U^{ma}_\Omega({\Psi'}). U^{ma}_\Omega({\Psi\setminus\Psi'})$ et $ U^{pm}_\Omega({\Psi})= U^{pm}_\Omega({\Psi'}). U^{pm}_\Omega({\Psi\setminus\Psi'})$.

\par c) Si $\Psi'$ est un id\'eal de $\Psi$, alors $ U^{ma}_\Omega({\Psi'})$ (resp. $ U^{pm}_\Omega({\Psi'})$)  est un sous-groupe distingu\'e de  $ U^{ma}_\Omega({\Psi})$ (resp. $ U^{pm}_\Omega({\Psi})$) et on a un produit semi-direct si, de plus, $\Psi\setminus\Psi'$ est clos.

\par d) Si  $\alpha\in\Delta^+$ est une racine simple, on a $ U^{ma+}_\Omega=U_{\alpha,\Omega}\ltimes U^{ma}_\Omega(\Delta^+\setminus\{\alpha\})$ et $ U^{pm+}_\Omega=U_{\alpha,\Omega}\ltimes  U^{pm}_\Omega(\Delta^+\setminus\{\alpha\})$. Les groupes $ U^{ma}_\Omega(\Delta^+\setminus\{\alpha\})$ et  $ U^{pm}_\Omega(\Delta^+\setminus\{\alpha\})$ sont normalis\'es par $HU_\Omega^{(\alpha)}$.
\end{enonce*}

\begin{proof} Le a) est clair. Pour le b) et le c) on a des d\'ecompositions analogues dans $U^{ma}_\Psi(K)$ (\ref{3.3}) et, par exemple, $U^{ma}_\Omega(\Psi')=U^{ma}_{\Psi'}(K)\cap\widehat\shu^p_\Omega$, d'o\`u les r\'esultats par intersection (puisque $\shu^p_\Omega$ est gradu\'ee). La premi\`ere partie de d) r\'esulte de c). On sait donc que $ \g U^{ma}_\Omega(\Delta^+\setminus\{\alpha\})$ et $ \g U^{pm}_\Omega(\Delta^+\setminus\{\alpha\})$ sont normalis\'es par $H$ et $U_{\alpha,\Omega}$. Le m\^eme raisonnement avec $\Delta^+$ remplac\'e par $s_\alpha(\Delta^+)=\Delta^+\cup\{-\alpha\}\setminus\{\alpha\}$ montre qu'ils sont aussi normalis\'es par $U_{-\alpha,\Omega}$, d'o\`u la conclusion.
\end{proof}

\par 6) On a vu en \ref{3.5} que $\g U^{ma}_{\Delta^+\setminus\{\alpha\}}(K)$ est normalis\'e par $G^{(\alpha)}=\g A_\alpha^Y(K)=\langle T,U_\alpha,U_{-\alpha}\rangle$. Cela signifie en particulier que $\g U_{-\alpha}\ltimes\g U^{ma}_{\Delta^+\setminus\{\alpha\}}=s_\alpha(\g U^{ma}_{\Delta^+})$. On peut donc ainsi d\'efinir dans $\g G^{pma}$ un groupe $\g U^{ma}(w\Delta^+)$ pour tout $w\in W^v$. 

\medskip
\par 7) Dans toutes les notations pr\'ec\'edentes un $+$ (resp. un $-$) peut remplacer $\Psi$ quand $\Psi=\Delta^+$ (resp. $\Psi=\Delta^-$).

\par On consid\`ere aussi le groupe de Kac-Moody n\'egativement maximal $\g G^{nma}$  construit comme $\g G^{pma}$ en \'echangeant $\Delta^+$ et $\Delta^-$. Plus g\'en\'eralement on peut changer $p$ en $n$ et $\pm$ en $\mp$ dans ce qui pr\'ec\`ede pour obtenir des groupes similaires (avec des propri\'et\'es similaires) dans le cas n\'egatif.

\begin{prop}\label{4.6} Soit $\Omega$ un filtre de parties de $\A$. Il y a trois sous-groupes de $G$ associ\'es \`a $\Omega$ et ind\'ependants du choix de $\Delta^+$ dans sa classe de $W^v-$conjugaison.

\par 1) Le groupe $U_\Omega$ (engendr\'e par tous les $U_{\alpha,\Omega}$) est \'egal \`a $U_\Omega^{}=U_\Omega^{-}.U_\Omega^{+}.N_\Omega^{u}=U_\Omega^{+}.U_\Omega^{-}.N_\Omega^{u}$.

\par 2) Le groupe $U_\Omega^{pm}$ (engendr\'e par  les groupes $U_\Omega^{pm+}$ et $U_\Omega$) est \'egal \`a $U_\Omega^{pm}=U_\Omega^{pm+}.U_\Omega^{-}.N_\Omega^{u}$.

\par 3) Sym\'etriquement le groupe $U_\Omega^{nm}$ (engendr\'e par  les groupes $U_\Omega^{nm-}$ et $U_\Omega$) est \'egal \`a $U_\Omega^{nm}=U_\Omega^{nm-}.U_\Omega^{+}.N_\Omega^{u}$.

\par 4) On a:
$$
\begin{array}{ll}
i) U_\Omega\cap N=N_\Omega^u & ii) U_\Omega^{pm}\cap N=N_\Omega^u\\
iii) U_\Omega\cap (N.U^\pm)=N_\Omega^u.U^\pm _\Omega & iv) U_\Omega^{pm}\cap (N.U^+)=N_\Omega^u.U_\Omega^{pm+}\\
v) U_\Omega\cap U^\pm =U^\pm _\Omega & vi) U_\Omega^{pm}\cap U^+=U_\Omega^{pm+}
\end{array}
$$ et sym\'etriquement pour $U_\Omega^{nm}$.

\end{prop}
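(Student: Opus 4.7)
The plan is to follow the pattern of Proposition 3.4 of \cite{GR-08}, while drawing on the structural results for $G^{pma}$ and $G^{nma}$ established in Section \ref{s3}. For each of the asserted decompositions the right-hand side is manifestly contained in the left-hand side and contains every generator, so it suffices to prove it is closed under multiplication. Independence of the three groups $U_\Omega$, $U_\Omega^{pm}$, $U_\Omega^{nm}$ on the choice of $\Delta^+$ in its $W^v$-orbit will follow from the symmetry of the generating sets together with the fact (\ref{4.3}) that $N_\Omega^u$ fixes $\Omega$, so conjugation by an element of $N_\Omega^u$ lifting $w\in W^v$ sends $U_{\alpha,\Omega}$ to $U_{w\alpha,\Omega}$. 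The intersection formulas in (4) will then be extracted, once the decompositions are in hand, from the uniqueness in the Bruhat and Birkhoff decompositions of the refined Tits systems $(G,N,U^+,U^-,T,S)$ on $G$ and $(G^{pma},N,U^{ma+},U^-,T,S)$ on $G^{pma}$ (Proposition \ref{3.16}).

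For (1) I would set $V_\Omega^\varepsilon := U_\Omega^{-\varepsilon} U_\Omega^{\varepsilon} N_\Omega^u$ for $\varepsilon=\pm$ and show each is a subgroup. Stability under left multiplication by a generator $U_{\alpha,\Omega}$ is the key step: if $\alpha\in\Phi^\varepsilon$, one must push it past the opposite factor $U_\Omega^{-\varepsilon}$. For $\beta\in\Phi^{-\varepsilon}$ with $\beta\neq -\alpha$, the pair $\{\alpha,\beta\}$ is prenilpotent and the commutator formula (KMT3) expresses $(\g x_\alpha(r),\g x_\beta(s))$ as a product of $\g x_\gamma$'s with $\gamma\in\,]\alpha,\beta[\,$; concavity of $f_\Omega$ together with the integrality of the $C_{p,q}^{\alpha,\beta}$ puts each factor in the appropriate $U_{\gamma,\Omega}$, which then splits between $U_\Omega^+$ and $U_\Omega^-$ according to the sign of $\gamma$. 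The singular case $\beta=-\alpha$ is handled exactly by the rank-one decomposition $U_\Omega^{(\alpha)}=U_{\alpha,\Omega}\,U_{-\alpha,\Omega}\,N_\Omega^{(\alpha)}$ of \ref{4.3} (with $N_\Omega^{(\alpha)}\subset N_\Omega^u$). Once both equalities $U_\Omega = V_\Omega^\pm$ are in hand, the intersection formulas (4.i), (4.iii), (4.v) follow: writing an element of $U_\Omega\cap N$ as $u^- u^+ n'$ with $u^\pm\in U_\Omega^\pm$ and $n'\in N_\Omega^u$, and applying the Birkhoff decomposition of $G$ to $u^- u^+\in N$, forces $u^\pm=1$; (4.iii) and (4.v) are obtained in the same way.

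For (2) I would set $V_\Omega^{pm}:=U_\Omega^{pm+}\,U_\Omega^-\,N_\Omega^u$. Since $U_\Omega^+\subset U_\Omega^{pm+}$, part (1) gives $U_\Omega\subset V_\Omega^{pm}$, so $V_\Omega^{pm}$ already contains both generating families of $U_\Omega^{pm}$, and again it suffices to prove $V_\Omega^{pm}$ is closed under multiplication. The delicate step is commuting a generator $U_{\alpha,\Omega}$ with $\alpha\in\Phi^-$ past $U_\Omega^{pm+}$. Here one works inside $G^{pma}$, using the identification $U_\Omega^{pm+}=G\cap U_\Omega^{ma+}$ (Remark \ref{4.5}.4 and Corollary \ref{3.17}), the product structure of $U_\Omega^{ma+}$ supplied by Proposition \ref{3.2}, and the normalization statement of Lemma \ref{4.5}.5: commutators with root groups for $\beta\in\Delta^+$, $\beta\neq-\alpha$, stay in $U_\Omega^{pm+}$, while the singular component corresponding to $\beta=-\alpha$ is absorbed by \ref{4.3}.3 into $U_{\alpha,\Omega}\cdot N_\Omega^{(\alpha)}$. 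Part (3) is formally symmetric, carried out in $G^{nma}$. The intersection formulas (4.ii), (4.iv), (4.vi) are then extracted exactly as in the previous paragraph, this time using the Birkhoff decomposition $G^{pma}=\coprod_n U^- n\, U^{ma+}$ attached to the refined Tits system of Proposition \ref{3.16}, and the injectivity of $U_\Omega^{pm+}$, $U_\Omega^-$, $N_\Omega^u$ into their $G^{pma}$-analogues. The main obstacle throughout is this last commutation step: because $\nu(N_\Omega^u)$ need not preserve $\Phi^+$, pushing a $U_{\alpha,\Omega}$ with $\alpha\in\Phi^-$ past the pro-unipotent group $U_\Omega^{pm+}$ forces a careful combinatorial accounting of which root groups appear after reordering, and this is precisely where the explicit exponential basis of $\widehat{\shu}(\Psi)_\Omega$ of \ref{4.5}.2, combined with the graded Bruhat-type filtration of \ref{3.4}, does the real work.
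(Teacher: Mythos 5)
Your overall strategy coincides with the paper's: its proof of \ref{4.6} is precisely ``redo the proof of \cite[prop. 3.4]{GR-08}'' using the rank-one lemma \ref{4.3}.3, the decomposition/normalization lemma \ref{4.5}.5 and the refined Tits systems of \ref{3.16}, and your treatment of (2), (3) and of the intersection formulas (4) via Birkhoff/Bruhat uniqueness is exactly that. The weak point is the mechanism you propose for (1), which contains a genuine gap. First, it is not true that every pair $\{\alpha,\beta\}$ with $\alpha\in\Phi^{\varepsilon}$, $\beta\in\Phi^{-\varepsilon}$ and $\beta\neq-\alpha$ is prenilpotent: already in the affine example of \ref{4.12}.3 ($\widetilde{SL}_2$, $\delta=\alpha_0+\alpha_1$) the pair $\{\alpha_1,-(\alpha_1+\delta)\}$ is not prenilpotent, since $w(\alpha_1+\delta)=w\alpha_1+\delta$ for every $w\in W^v$, so no $w$ can make $\alpha_1$ positive and $\alpha_1+\delta$ negative. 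For such pairs (KMT3) imposes no relation whatsoever between $U_{\alpha}$ and $U_{\beta}$ (in general they generate a free product), so ``pushing past'' by the commutator formula is simply unavailable; the classical argument you are imitating works in the reductive case only because the Weyl group is finite. Second, even for prenilpotent pairs your elementwise argument cannot reach the factor you are pushing through: by definition $U_\Omega^{\pm}=U_\Omega\cap U^{\pm}$, and the paper insists (\ref{4.3}.2 and the explicit example \ref{4.12}.3a, where $U_0^{++}\subsetneqq U_0^{+}$) that this group is in general strictly larger than the group generated by the $U_{\beta,\Omega}$, so an arbitrary element of $U_\Omega^{-\varepsilon}$ has no factorization into root-group elements of controlled valuation to which (KMT3) could be applied.

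The way out is the one you already use for (2) and (3), and it is the route of \cite{GR-08}: all the commutation is done inside the completions, where \ref{4.5}.5 b), c), d) applies to the whole groups $U^{ma}_\Omega(\Psi)$, $U^{pm}_\Omega(\Psi)$ (not merely to root-group generators), first for $\pm\alpha$ simple via $U^{pm+}_\Omega=U_{\alpha,\Omega}\ltimes U^{pm}_\Omega(\Delta^+\setminus\{\alpha\})$ together with \ref{4.3}.3, then for arbitrary real roots by transporting the construction to the systems $w\Delta^+$ by conjugation under $N$ (\ref{4.5}.6, \ref{4.10}) -- this is also what yields the independence of the choice of $\Delta^+$ and the product set of remark \ref{4.6}c. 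The statements involving $U_\Omega^{\pm}=U_\Omega\cap U^{\pm}$, in particular (1) and (4), are then recovered a posteriori by intersecting with $U^{\pm}$ and $N$, using the uniqueness in the Bruhat and Birkhoff decompositions of the refined Tits systems of \ref{3.16} and \ref{1.6}.5, rather than by a direct sorting inside $G$; as written, your direct argument for (1) would not go through.
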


\begin{proof} On refait la preuve de \cite[prop. 3.4]{GR-08} en utilisant \ref{4.5}.5, \ref{4.3}.3 et la remarque \ref{3.16}.
\end{proof}

\begin{remas*} a) Le groupe $H$ normalise $U_\Omega$, $U_\Omega^+$, $U_\Omega^-$, $N_\Omega^u$, $U_\Omega^{pm+}$, $U_\Omega^{nm-}$, $U_\Omega^{pm}$ et $U_\Omega^{nm}$. Le groupe $N_\Omega^{min}=HN_\Omega^{u}$ est contenu dans le fixateur $\widehat{N}_\Omega$ de $\Omega$. On note $P_\Omega^{min}=HU_\Omega^{}$, $P_\Omega^{pm}=HU_\Omega^{pm}$ et $P_\Omega^{nm}=HU_\Omega^{nm}$. Ces trois groupes v\'erifient les m\^emes d\'ecompositions que 1), 2) et 3) ci-dessus en rempla\c{c}ant $N^u_\Omega$ par $N_\Omega^{min}$.  On verra en \ref{5.11}.2 que $P_\Omega^{pm}=P_\Omega^{nm}$ quand $\Omega$ est un point sp\'ecial ou quand $\Omega$ est une facette sph\'erique et la valuation est discr\`ete.

\par b)  L'\'egalit\'e $U^{+}_\Omega=U^{pm+}_\Omega$ est \'equivalente \`a $U^{pm}_\Omega=U^{}_\Omega$; on verra en \ref{5.7}.3 qu'elle n'est pas toujours satisfaite.

\par c) On montre \'egalement au cours de la preuve de \ref{4.6} que $U_\Omega^{pm+}.U_\Omega^{nm-}.N_\Omega^{u}$ ou $U_\Omega^{nm-}.U_\Omega^{pm+}.N_\Omega^{u}$ ne d\'epend pas du choix de $\Delta^+$ dans sa classe de $W^v-$conjugaison.

\par d) Dans le cas classique des groupes r\'eductifs, on a $G= G^{pma}=G^{nma}$, $U^{++}_\Omega=U^{+}_\Omega=U^{ma+}_\Omega=U^{pm+}_\Omega$ et $U^{--}_\Omega=U^{-}_\Omega=U^{ma-}_\Omega=U^{nm-}_\Omega$. De plus $U^{}_\Omega$ ($=U^{pm}_\Omega=U^{nm}_\Omega$) est le m\^eme que le groupe d\'efini en \cite[6.4.2 et 6.4.9]{BtT-72}. Le groupe $P^{min}_\Omega$ est not\'e $P_\Omega$ par Bruhat et Tits.
\end{remas*}

\begin{prop}\label{4.7} D\'ecomposition d'Iwasawa

\par Supposons $\Omega$ \'etroit, alors $G=U^+NU_\Omega$.

\par Supposons de plus $\Omega$ presque-ouvert, alors l'application naturelle de $\nu(N)=W_{Y\Lambda}=N/H$ sur $U^+\backslash G/HU_\Omega$ est bijective
\end{prop}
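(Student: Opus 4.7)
The plan is to follow closely the scheme of \cite[Prop.~3.7]{GR-08} and \cite[7.3.1]{BtT-72}, exploiting the refined Tits system structure on $G$ established in \ref{3.16} together with the valuation~$\varphi_\alpha$ on each $U_\alpha$ from \ref{4.2}.7.

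The key technical input is the following sub-lemma: for every $\alpha\in\Phi$, one has $U_\alpha\subseteq U^+NU_\Omega$. When $\alpha\in\Phi^+$ this is trivial. When $\alpha\in\Phi^-$, I would take $u\in U_\alpha$: either $u\in U_{\alpha,\Omega}\subseteq U_\Omega$ and we are done, or $k:=\varphi_\alpha(u)<f_\Omega(\alpha)$. In the latter case I would use the standard identity $m(u)=u_1uu_2$ with $u_1,u_2\in U_{-\alpha}$ both of valuation $-k$ (cf.~\ref{1.6}.5). The narrowness hypothesis ensures $f_\Omega(\alpha)+f_\Omega(-\alpha)\leq 0^+$ (with the boundary case excluded by the definition of \emph{\'etroit}); combined with $-k>-f_\Omega(\alpha)$ this forces $-k\geq f_\Omega(-\alpha)$, so $u_1,u_2\in U_{-\alpha,\Omega}\subseteq U^+\cap U_\Omega$. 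The identity $u=u_1^{-1}m(u)u_2^{-1}$ then displays $u\in U^+\cdot N\cdot U_\Omega$.

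For assertion (1), I would invoke the Birkhoff decomposition $G=\bigsqcup_{w\in W^v}U^+n_wU^-$ available from the refined Tits system of \ref{3.16}, reducing to showing $U^-\subseteq U^+NU_\Omega$. Writing any $u^-\in U^-$ as a finite product of elements $u_i\in U_{-\beta_i}$ ($\beta_i\in\Phi^+$), I would induct on the number of factors, applying the sub-lemma to each factor and using the Kac--Moody commutation relations of \ref{1.4} together with \ref{4.3}.3 and the product decompositions in \ref{4.6}.1 to consolidate the results into the desired $U^+\cdot N\cdot U_\Omega$ form. This bookkeeping---keeping track of how the positive part, the $N$-part and the $U_\Omega$-part accumulate as successive factors are absorbed---is the main technical obstacle and is the direct analog of the argument in \cite[Prop.~3.7]{GR-08}.

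For assertion (2), surjectivity is immediate from (1). For injectivity I would first use the almost-open hypothesis, which via Lemma~\ref{4.3}.3(b) yields $N^{(\alpha)}_\Omega\subseteq H$ for every $\alpha\in\Phi$ and thus $N^u_\Omega\subseteq H$, whence $N\cap HU_\Omega=H$ by \ref{4.6}.4(i) (since $U_\Omega\cap N=N^u_\Omega$). Now, given $n,n'\in N$ with $U^+nHU_\Omega=U^+n'HU_\Omega$, write $n'=u^+nhu_\Omega$ and set $m:=n^{-1}n'=(n^{-1}u^+n)hu_\Omega\in N$. Decomposing $n^{-1}U^+n=(n^{-1}U^+n\cap U^+)\cdot(n^{-1}U^+n\cap U^-)$ through the $W^v$-action on $\Phi$, and using $u_\Omega=u^+_\Omega u^-_\Omega n^u_\Omega$ from \ref{4.6}.1 with $n^u_\Omega\in H$, I would rewrite $m$ modulo $H$ as an element of $U^+U^-U^+_\Omega U^-_\Omega$. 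Applying the Birkhoff decomposition to the intermediate $U^-U^+$-factor and then Birkhoff uniqueness to $m\in N$ forces the unipotent parts to cancel and yields $\nu(m)=1$ in $W^v$, i.e.\ $nH=n'H$.
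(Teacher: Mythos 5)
Your rank-one sub-lemma is correct and is indeed the heart of the matter: it is exactly the valuation computation behind the classical Iwasawa argument (the paper's own proof is simply a reference to \cite[prop.\ 3.6]{GR-08}, which follows \cite[7.3.1]{BtT-72}). The genuine gap is in how you globalize it, since $U^+NU_\Omega$ is not a group and knowing $U_\alpha\subset U^+NU_\Omega$ for every $\alpha\in\Phi$ does not propagate by itself. (i) The Birkhoff reduction does not reduce the problem to $U^-\subset U^+NU_\Omega$: writing $g=u^+nu^-$ and then $u^-=a\,m\,q$ with $a\in U^+$, $q\in U_\Omega$, the factors $a$ and $q$ still have to cross the middle $n$, and $nan^{-1}\subset nU^+n^{-1}$ contains negative root groups while $nqn^{-1}\in U_{\nu(n)\Omega}\neq U_\Omega$; nothing in your scheme makes this recursion terminate. (ii) The ``consolidation by the commutation relations of \ref{1.4}'' is not available: (KMT3) exists only for prenilpotent pairs, and the configurations produced by your induction (a negative root group pushed past an arbitrary element of $U^+$) involve non-prenilpotent pairs for which there is no relation at all --- already in the example of \ref{4.12}.3 the two simple positive root groups of $\widetilde{SL}_2$ generate a free product, and a positive and a negative real root need not form a prenilpotent pair either. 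The proof the paper appeals to avoids both problems: one shows that the set $U^+NU_\Omega$ is stable under left multiplication by $T$ and by $U_{\pm\alpha_i}$ for \emph{simple} $\alpha_i$ only, using the decomposition $U^+=U_{\alpha_i}\ltimes(U^+\cap s_iU^+s_i^{-1})$, the fact that the second factor is normalized by the rank-one group, and precisely your rank-one identity; no general commutation is ever invoked.

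For the second assertion, your use of the almost-open hypothesis ($N^{(\alpha)}_\Omega\subset H$ for all $\alpha$, hence $N^u_\Omega\subset H$ and $N\cap HU_\Omega=H$) is correct, but the endgame fails on two counts. First, concluding ``$\nu(m)=1$ in $W^v$'' is not what is required: the kernel of $\nu^v$ is $T$, not $H$, and the target is $N/H$; for $n'=nt$ with $t\in T\setminus H$ one has $\nu^v(n^{-1}n')=1$ although $nH\neq n'H$, so you must prove $m=n^{-1}n'\in H$, torus part included. Second, ``Birkhoff uniqueness'' cannot be applied to an element merely written in $U^+U^-U^+U^-$; putting it into a single coset $U^-\tilde nU^+$ recreates the crossing problem of part (1). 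A correct route from where you stand: the hypothesis gives $m\in(n^{-1}U^+n)\,H\,U_\Omega$, hence after absorbing $H$ an element of $U_\Omega\cap\bigl(N\cdot(n^{-1}U^+n)\bigr)$; apply \ref{4.6}.4 to the positive system $\nu^v(n)^{-1}\Phi^+$ (Proposition \ref{4.6} is explicitly independent of the choice of $\Delta^+$ in its $W^v$-conjugacy class) to rewrite this as $N^u_\Omega\cdot\bigl(U_\Omega\cap n^{-1}U^+n\bigr)$, and then conclude $m\in H$ using $N^u_\Omega\subset H$ and $N\cap n^{-1}U^+n=\{1\}$ (remark \ref{3.16}).
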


\begin{proof} Voir la preuve de \cite[prop. 3.6]{GR-08}
\end{proof}

\begin{remas*}

\par 1) On a aussi $G=U^-NU_\Omega$ et, de la m\^eme fa\c{c}on avec les groupes maximaux $G^{pma}=U^{ma+}NU_\Omega$ et $G^{nma}=U^{ma-}NU_\Omega$.

\par 2) Ainsi quand $\Omega$ est \'etroit, tout sous-groupe $P$ de $G$ contenant $U_\Omega$ peut s'\'ecrire $P=(P\cap U^+N).U_\Omega$. Si de plus $P\cap U^+N=U^+_P.N_P$ avec $U^+_P=P\cap U^+$ et $N_P=P\cap N$ on a $P=U^+_P.N_P.U_\Omega$ et, si $N_P\subset\widehat N_\Omega$,  $P=U^+_P.U^-_\Omega.N_P$  \cf \ref{4.10}.

\end{remas*}

 \subsection{Repr\'esentations des groupes associ\'es \`a $\Omega$ }\label{4.8}

\par 1) Supposons que $\Omega$ est un ensemble. D'apr\`es \ref{4.5}.4b
le groupe $U_\Omega^{ma+}$ stabilise $\widehat\shu^p_\Omega$ et  $\widehat{\g g}^p_\Omega$ pour l'action de $U^{ma+}$ sur $\widehat\shu^p_K$. Le groupe $U^+$ stabilise $\shu_K$ et $\g g_K$ pour l'action adjointe de $G$ et $U^+$ \cf \ref{2.1}.4. Les deux actions co\"{\i}ncident sur $U^+\subset G\cap U^{ma+}$. Donc $U_\Omega^{pm+}=G\cap U_\Omega^{ma+}=U^+\cap U_\Omega^{ma+}$ (\ref{4.5}.3) stabilise $\shu_\Omega=\widehat\shu^p_\Omega\cap\shu_K$ et $\g g_\Omega=\widehat{\g g}^p_\Omega\cap\g g_K$ pour l'action adjointe de $G$.

\par De m\^eme $U_\Omega^{nm-}$ stabilise $\shu_\Omega$ et $\g g_\Omega$. Ainsi $\shu_\Omega$ et $\g g_\Omega$ sont stables par $U_{\alpha,\Omega}$ pour tout $\alpha\in\Phi$ et donc par $U_\Omega$.

\par Finalement on sait que $\shu_\Omega$ et $\g g_\Omega$ sont stables par les groupes $U_\Omega$, $U_\Omega^{pm}$, $U_\Omega^{nm}$ et $H$ (qui normalise les trois autres) (\ref{4.5}.4a et \ref{4.6}a). Ce r\'esultat est encore vrai si $\Omega$ est un filtre, car alors tous ces objets sont r\'eunion filtrante des objets correspondant \`a $\Omega'\in\Omega$.

\par 2) Soit $M=L_\Z(\lambda)$ un $\shu-$module \`a plus haut poids comme en \ref{4.4}.3. Supposons dans un premier temps que $\Omega$ est un ensemble. Comme $M_\Omega$ est un $\widehat\shu^p_\Omega-$module, le groupe $U_\Omega^{ma+}\subset(\widehat\shu^p_\Omega)^*$ stabilise $M_\Omega$ pour l'action de $G^{pma}$ \cf \ref{3.7}. Ce module $M_\Omega$ est en particulier stable par $U_\Omega^{pm+}$ et les groupes $U_{\alpha,\Omega}$ pour $\alpha\in\Phi^+$.

\par Comme $\widehat{M}^n_\Omega$ est un $\widehat\shu^n_\Omega-$module, le groupe $U_\Omega^{ma-}$ stabilise $\widehat{M}^n_\Omega$, pour la repr\'esentation de $U^{ma-}$ sur $\widehat{M}^n_K$ (\cf \ref{3.7}) qui induit sur $U^-$ la m\^eme action que $G$ (\ref{3.14}.3). Comme $G$ stabilise $M_K$, le groupe $U_\Omega^{nm-}=G\cap U_\Omega^{ma-}=U^-\cap U_\Omega^{ma-}$ stabilise $M_\Omega=M_K\cap\widehat{M}^n_\Omega$.

\par Ainsi le sous-module $M_\Omega$ de $M_K$ est stable par l'action de $U_\Omega^{pm+}$ et $U_\Omega^{nm-}$. Ce r\'esultat est encore vrai quand $\Omega$ est un filtre.

\par Le groupe $U_\Omega$ est engendr\'e par les $U_{\alpha,\Omega}$ pour $\alpha\in\Phi$ qui sont dans $U_\Omega^{pm+}$ ou $U_\Omega^{nm-}$. Ainsi $M_\Omega$ est stable par $U_\Omega^{}$, $U_\Omega^{\pm}$, $U_\Omega^{pm}$ et $U_\Omega^{nm}$. On sait aussi qu'il est stable par $H$.

\par 3) Ces r\'esultats sont encore vrais, mutatis mutandis, pour des modules \`a plus bas poids.

\begin{defi}\label{4.9}  Si $\Omega$ est un ensemble, on consid\`ere le sous-groupe $\widetilde{P}_\Omega$ de $G$ form\'e des \'el\'ements stabilisant $\shu_\Omega$ et $M_\Omega$ pour tout module $M$ \`a plus haut ou plus bas poids (comme en \ref{4.4}.3). On note $\widetilde{N}_\Omega=N\cap\widetilde{P}_\Omega$.

\end{defi}

\par D'apr\`es \ref{4.8} le groupe $\widetilde{P}_\Omega$ contient $U_\Omega^{}$, $U_\Omega^{\pm}$, $U_\Omega^{pm}$,  $U_\Omega^{nm}$ et $H$. D'apr\`es la remarque \ref{4.7}.2 il est utile de d\'eterminer $\widetilde{P}_\Omega\cap U^\pm N$, voir \ref{4.11}.

\par On note $\shp$ la r\'eunion de $\Delta$ et de tous les ensembles de poids des modules $M$ ci-dessus. Le groupe $\widetilde{P}_\Omega$ ne d\'epend que de $cl_\shp(\Omega)$ (et non seulement de $cl(\Omega)$).

 \subsection{Conjugaison par $N$}\label{4.10}

 \par Soit $n\in N$, on peut \'ecrire $n=n_0t$ avec $n_0$ dans $N_0=\g N(\Z)$ (le groupe engendr\'e par les $m(x_\alpha(\pm1))$ qui fixe $0$) \cf \ref{4.2}.4, $\nu^v(n)=w\in W^v$ et $t\in T$.

 \par 1) Pour $M=L_K(\lambda)$ un module \`a plus haut ou plus bas poids ou $M=\g g_K$ ou $M=\shu_K$, $\mu\in X$ et $r\in \R$, on a $nM_{\mu,r}=M_{w\mu,r+\omega(\mu(t))}$. Consid\'erons \'egalement l'action sur $\A$: $nD(\mu,r)=n_0D(\mu,r+\omega(\mu(t)))=D(w\mu,r+\omega(\mu(t)))$. Donc $r\geq{}f_\Omega(\mu)\Leftrightarrow\Omega\subset D(\mu,r)\Leftrightarrow\nu(n).\Omega\subset D(w\mu,r+\omega(\mu(t)))\Leftrightarrow r+\omega(\mu(t))\geq{}f_{\nu(n).\Omega}(w\mu)$ et ainsi $f_{\nu(n).\Omega}(w\mu)=f_\Omega(\mu)+\omega(\mu(t))$. Finalement $nM_\Omega =M_{\nu(n)\Omega}$.

 \par Ainsi $\widetilde{P}_{\nu(n).\Omega}=n\widetilde{P}_\Omega n^{-1}$; en particulier le fixateur $\widehat{N}_\Omega$ de $\Omega$ dans $N$ (pour l'action $\nu$) normalise $\widetilde{P}_\Omega$. On a vu en \ref{4.3}.4 que $N_\Omega^{min}=HN_\Omega^{u}\subset\widehat N_\Omega$.

 \par 2) On a aussi $nU_{\alpha, r}n^{-1}=U_{w\alpha,r+\omega(\mu(t))}$, donc $nU_\Omega n^{-1}=U_{\nu(n).\Omega}$, $nN^u_\Omega n^{-1}=N^u_{\nu(n).\Omega}$ et $\widehat{N}_\Omega$ normalise les groupes $U_\Omega$, $P_\Omega$, $N^u_\Omega$ et $N_\Omega^{min}$.

 \par 3) D'apr\`es 1) on a $Ad(n)\shu_\Omega(\Psi)=\shu_{\nu(n).\Omega}(w\Psi)$ pour $\Psi\subset\Delta^+$. Pour $n=m(x_\alpha(\pm1))$ ($\alpha$ racine simple) et $\Psi_\alpha=\Delta^+\cap s_\alpha(\Delta^+)=\Delta^+\setminus\{\alpha\}$, on peut passer aux compl\'et\'es: $Ad(n).\widehat\shu^p_\Omega(\Psi_\alpha)=\widehat\shu^p_{\nu(n).\Omega}(\Psi_\alpha)$. Donc $nU_\Omega^{ma}(\Psi_\alpha)n^{-1}=U_{\nu(n).\Omega}^{ma}(\Psi_\alpha)$; plus pr\'ecis\'ement pour $\beta\in\Psi_\alpha$, $x\in\g g_{\beta\Z}$ et $\lambda\in K$, $n.[exp]\lambda x.n^{-1}$ est une exponentielle tordue associ\'ee \`a $\lambda'=\lambda\beta(t)\in K$ et $Ad(n_0).x\in\g g_{s_\alpha(\beta)\Z}$ et cela passe aux produits infinis.

 \par D'apr\`es 2) et \ref{4.5}.6 on a $nU_\Omega^{ma}(\Delta^+)n^{-1}=U_{\nu(n).\Omega}^{ma}(w\Delta^+)$ si $w=s_\alpha$. Ce r\'esultat s'\'etend alors au cas g\'en\'eral pour $w$. Par intersection avec $G$, on a $nU_\Omega^{pm}(\Delta^+)n^{-1}=U_{\nu(n).\Omega}^{pm}(w\Delta^+)$.

 \par D'apr\`es \ref{4.6}, on a $nU_\Omega^{pm}n^{-1}=U_{\nu(n).\Omega}^{pm}$ et $nU_\Omega^{nm}n^{-1}=U_{\nu(n).\Omega}^{nm}$. En particulier $\widehat{N}_\Omega$ normalise $U_\Omega^{pm}$ et $U_\Omega^{nm}$.

 \begin{lemm}\label{4.11} Si $\Omega$ est un ensemble, $\widetilde{P}_\Omega\cap U^+ N=U_\Omega^{pm+}.\widetilde{N}_\Omega$ et $\widetilde{P}_\Omega\cap U^- N=U_\Omega^{nm-}.\widetilde{N}_\Omega$. De plus $\widetilde{N}_\Omega$ est le stabilisateur (dans $N$ pour l'action $\nu$ sur $\A$) du $\shp-$enclos $cl_\shp(\Omega)$ de $\Omega$, il normalise $U_\Omega$, $U_\Omega^{pm+}$, $U_\Omega^{nm-}$,$\cdots$
\end{lemm}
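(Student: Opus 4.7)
Le plan est de d\'emontrer successivement la caract\'erisation de $\widetilde N_\Omega$ (avec les normalisations), puis les factorisations, l'obstacle principal r\'esidant dans l'identification $\widetilde P_\Omega\cap U^+=U_\Omega^{pm+}$.

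Je commencerais par la caract\'erisation: par d\'efinition $\widetilde N_\Omega=N\cap\widetilde P_\Omega$, et la formule $n\cdot M_\Omega=M_{\nu(n)\cdot\Omega}$ de \ref{4.10}.1 (valable pour $M=\shu_K$ et pour tout module \`a plus haut ou plus bas poids $L_K(\lambda)$) montre qu'un $n\in N$ appartient \`a $\widetilde N_\Omega$ si et seulement si $f_{\nu(n)\cdot\Omega}(\mu)=f_\Omega(\mu)$ pour tout $\mu\in\shp$, c'est-\`a-dire si $\nu(n)$ stabilise $cl_\shp(\Omega)$. Comme $\Delta\subset\shp$, un tel $n$ stabilise aussi $cl(\Omega)$, et les propri\'et\'es de normalisation annonc\'ees r\'esulteront alors des formules de conjugaison \ref{4.10}.2--3 combin\'ees au fait que $U_\Omega$, $U_\Omega^{pm+}$, $U_\Omega^{nm-}$ ne d\'ependent que de $cl(\Omega)$ (\ref{4.5}.4.f, \ref{4.6}).

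Pour la factorisation, il suffit de traiter le cas $U^+N$ (l'autre \'etant sym\'etrique). Soit $g\in\widetilde P_\Omega\cap U^+N$; par \ref{3.16} on \'ecrit $g=un$ de mani\`ere unique avec $u\in U^+$, $n\in N$. Pour montrer $n\in\widetilde N_\Omega$, je proc\'ederai par analyse en poids: pour $\mu\in\shp$ et $v_\mu$ un \'el\'ement primitif du $\Z$-r\'eseau sur la composante de poids $\mu$ d'un module $M$ bien choisi, la composante de poids $w\mu$ (o\`u $w=\nu^v(n)$) de $g\cdot v_\mu$ co\"{\i}ncide avec celle de $n\cdot v_\mu=\mu(t)\,n_0\cdot v_\mu$ (\'ecrivant $n=n_0t$), car $u\in U^+$ n'augmente les poids que par \'el\'ements de $Q^+$. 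Appliquant $g$ \`a un g\'en\'erateur de $M_{\mu,\Omega}$ et $g^{-1}$ \`a un g\'en\'erateur de $M_{w\mu,\Omega}$, la condition $g\cdot M_\Omega=M_\Omega$ force $f_\Omega(w\mu)=f_\Omega(\mu)+\omega(\mu(t))=f_{\nu(n)\cdot\Omega}(w\mu)$; en faisant varier $\mu$ dans $\shp$, on en d\'eduit $n\in\widetilde N_\Omega$, et donc $u=gn^{-1}\in\widetilde P_\Omega\cap U^+$. Il reste \`a identifier ce dernier avec $U_\Omega^{pm+}$: l'inclusion $\supset$ provient de \ref{4.8}; r\'eciproquement, pour $u\in U^+$ stabilisant $\shu_\Omega$ et tous les $M_\Omega$, on plonge $u$ dans $U^{ma+}$ via \ref{3.12} et on l'\'ecrit $u=\prod_{x\in\shb_{\Delta^+}}[exp]\lambda_x\,x$ (\ref{3.2}), la base \'etant ordonn\'ee par hauteur croissante, puis on \'etablit $\lambda_x\in K_{f_\Omega(pds(x))}$ par r\'ecurrence sur la hauteur: le produit partiel des facteurs de hauteur $<h$ est dans $U^{ma+}_\Omega\cap U^+=U^{pm+}_\Omega\subset\widetilde P_\Omega$ par hypoth\`ese de r\'ecurrence et peut \^etre divis\'e, puis l'action du facteur r\'esiduel sur un vecteur de plus bas poids $v_\mu$ d'un $L(\mu)$ convenable (avec $\mu\in -X^+$ assez r\'egulier pour que $\shu^+_{\alpha,\Z}\to L_\Z(\mu)_{\mu+\alpha}$ soit injective aux $\alpha$ de hauteur $h$) lit les contraintes requises.

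Le principal obstacle sera cette derni\`ere \'etape de r\'ecurrence: pour $\alpha\in\Delta^+_{im}$, $\g g_\alpha$ est multidimensionnel et les exponentielles tordues $[exp]$ ne sont pas multiplicatives en $\lambda$ (\ref{2.8}), de sorte qu'il faudra choisir les modules tests de fa\c con \`a ce que les contributions dominantes des diff\'erents $x\in\shb_\alpha$ soient lin\'eairement ind\'ependantes dans un espace de poids convenable. Une fois ce point ma\^{\i}tris\'e, la preuve se ram\`ene \`a un suivi soigneux de la filtration par hauteur.
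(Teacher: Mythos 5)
Your peeling-off of $n$ via weight components, your characterization of $\widetilde N_\Omega$ as the stabilizer of $cl_\shp(\Omega)$, and the normalization statements coincide with the paper's argument and are fine. The gap is in the crucial identification $\widetilde P_\Omega\cap U^+=U_\Omega^{pm+}$, precisely in the test you propose for the inductive step. Acting with the residual factor on the lowest weight vector $v_\mu$ of $L(\mu)$ only yields, for $x\in\shb_\alpha$ with $ht(\alpha)=h$, the estimate $\omega(\lambda_x)+f_\Omega(\mu)\geq f_\Omega(\mu+\alpha)$, i.e. $\omega(\lambda_x)\geq f_\Omega(\mu+\alpha)-f_\Omega(\mu)$; what is needed is $\omega(\lambda_x)\geq f_\Omega(\alpha)$, and by concavity of $f_\Omega$ one has $f_\Omega(\mu+\alpha)-f_\Omega(\mu)\leq f_\Omega(\alpha)$, with strict inequality as soon as the suprema defining $f_\Omega(\mu)$ and $f_\Omega(\alpha)$ over $\Omega$ are attained at different places (already for $\Omega$ a pair of points and any $\mu\in-X^+\setminus\{0\}$). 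No regularity assumption on $\mu$ can close this gap: only a weight-zero test vector gives equality, because $f_\Omega(0)=0$. This is exactly why the paper tests the adjoint action on the weight-zero part of $\shu_\Omega$: the elements $\binom{h}{n}$ carry the ideal $K_{f_\Omega(0)}=\sho$; choosing $h\in\shb_0$ with $\alpha(h)=m>0$ and taking $n=m$, $q=0$ makes the relevant binomial coefficient equal to $1$ (so nothing is lost in residue characteristic $p$); and the coefficient $t_{\alpha i}$ is read off against elements $e_{\alpha i}\binom{h}{q}$ which are part of a $\Z$-basis of $\shu_\Z$ by \ref{2.6} — a saturation point that your injectivity hypothesis on $\shu^+_{\alpha,\Z}\to L_\Z(\mu)_{\mu+\alpha}$ does not by itself settle.

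A secondary inaccuracy: the partial product of the factors of height $<h$ lies in $U^{ma+}_\Omega$ but in general not in $U^+$ (the individual twisted exponentials attached to imaginary roots are not in $G$), so the claimed membership in $U^{ma+}_\Omega\cap U^+=U^{pm+}_\Omega$ is unjustified. This is repairable by dividing it off inside $U^{ma+}$ and working with $\widehat\shu^p_\Omega$ (and the completed modules), which $U^{ma+}_\Omega$ does stabilize — this is what the paper does. But the concavity gap above is the real obstruction: as written, your induction proves at best a bound weaker than $\lambda_x\in K_{f_\Omega(pds(x))}$, whereas the lemma concerns arbitrary sets $\Omega$, for which $f_\Omega$ is genuinely non-additive.
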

\par {\bf N.B.} En particulier $\widetilde{N}_\Omega\supset\widehat{N}_\Omega\supset N^{min}_\Omega$.
\begin{proof} On refait la preuve de \cite[3.10]{GR-08} car il y a quelques changements substantiels.

\par a) Soient $n\in N$ et $u\in U^+$ tels que $un\in\widetilde{P}_\Omega$ et $w=\nu^v(n)$. Pour $\shm=M_\Omega$ ou $\g g_\Omega$ ou $\shu_\Omega$, $g\in \widetilde{P}_\Omega$ et $\mu,\mu'\in X$, on d\'efinit $_{\mu'}\vert g\vert_\mu$ comme la restriction de $g$ \`a $\shm_\mu$ suivie de la projection sur $\shm_{\mu'}$ (parall\`element aux autres espaces de poids). Pour tout $\mu\in X$, $_{w\mu}\vert un\vert_\mu=\,_{w\mu}\vert n\vert_\mu$ et $n=\oplus_\mu\,_{w\mu}\vert n\vert_\mu$ (en un sens \'evident), donc $n\in\widetilde{N}_\Omega$. Ainsi $\widetilde{P}_\Omega\cap U^+ N=(\widetilde{P}_\Omega\cap U^+).\widetilde{N}_\Omega$. Il reste \`a d\'eterminer $\widetilde{N}_\Omega$ et $\widetilde{P}_\Omega\cap U^+$ (ainsi que $\widetilde{P}_\Omega\cap U^-$).

\par b) On a vu en \ref{4.8} que $U_\Omega^{pm+}\subset\widetilde{P}_\Omega\cap U^+ $. Inversement soit $u\in U^{ma+}$ stabilisant $\shu_\Omega$ et donc $\widehat\shu^p_\Omega$. On peut \'ecrire $u=\prod_{\alpha\in\Delta^+}\,u_\alpha$ avec les conventions suivantes: l'ordre des facteurs $u_\alpha$ est tel que la hauteur cro\^{\i}t de droite \`a gauche et, pour $\alpha\in\Delta^+$, $u_\alpha=[exp]t_{\alpha1}e_{\alpha1}.\cdots.[exp]t_{\alpha s}e_{\alpha s}$ avec $e_{\alpha1},\cdots,e_{\alpha s}$ une base de $\g g_{\alpha\Z}$ et les $t_{\alpha i}$ dans $K$ (si $\alpha\in\Phi^+$ on a $s=1$ et $[exp]=exp$). Nous allons montrer que $u\in U_\Omega^{ma+}$ et pour cela que, $\forall\alpha\in\Delta^+$, $\omega(t_{\alpha i})\geq{}f_\Omega(\alpha)$. Par r\'ecurrence on peut supposer que c'est vrai pour $u_{\alpha'}$ \`a droite de $u_\alpha$, alors ces $u_{\alpha'}$ sont dans $U_\Omega^{ma+}$ et stabilisent $\widehat\shu^p_\Omega$, on peut donc les supposer  \'egaux \`a $1$. Donc $u=(\prod^{\beta\not=\alpha}_{ht(\beta)\geq{}ht(\alpha)}\,u_\beta).u_\alpha$ et ainsi $_\alpha\vert u\vert_0=\,_\alpha\vert u_\alpha\vert_0$.

\par Choisissons un \'el\'ement $h\in\shb_0$ (base de $\g h_\Z$) tel que $\alpha(h)=m\not=0$; quitte \`a changer $h$ en $-h$ et donc \`a utiliser une autre base de $\shu_\Z$, on peut supposer $m>0$. Par ailleurs $Ad([exp]t_{\alpha i}e_{\alpha i})=\sum^\infty_{p=0}\,t_{\alpha i}^p ad(e_{\alpha i}^{[p]})$ et  $Ad(u_\alpha)\left(\begin{array}{c}h \\ n\end{array}\right)$ a pour terme de poids $\alpha$ l'expression $\sum_{i=1}^s\,t_{\alpha i}ad(e_{\alpha i})\left(\begin{array}{c}h \\ n\end{array}\right)=\sum_{i=1}^s\,t_{\alpha i}(e_{\alpha i}\left(\begin{array}{c}h \\ n\end{array}\right)-\left(\begin{array}{c}h \\ n\end{array}\right)e_{\alpha i})=-\sum_{i=1}^s\,t_{\alpha i}e_{\alpha i}(\left(\begin{array}{c}h+\alpha(h) \\ n\end{array}\right)-\left(\begin{array}{c}h \\ n\end{array}\right))=-\sum_{i=1}^s\,t_{\alpha i}e_{\alpha i}(\sum_{q=0}^{n-1}\,\left(\begin{array}{c}h \\ q\end{array}\right)\left(\begin{array}{c}\alpha(h) \\ n-q\end{array}\right))$ et doit \^etre dans $\shu_{\alpha,\Omega}$. Comme les $e_{\alpha i}\left(\begin{array}{c}h \\ q\end{array}\right)$ pour $1\leq i\leq s$ et $q\in\N$ font partie d'une base de $\shu_\Z$, on doit avoir $\omega(t_{\alpha i}\left(\begin{array}{c}\alpha(h) \\ n-q\end{array}\right))\geq f_\Omega(\alpha)$ pour $1\leq i\leq s$ et $q\leq n-1$. Pour $n=m$ et $q=0$ on obtient le r\'esultat cherch\'e $\omega(t_{\alpha i})\geq f_\Omega(\alpha)$

\par c) Reprenons les notations de \ref{4.10}:  $nM_{\mu,r}=M_{w\mu,r+\omega(\mu(t))}$ et $nD(\mu,r)=D(w\mu,r+\omega(\mu(t)))$. Donc $n\in\widetilde{P}_\Omega$ si et seulement si, $\forall\mu\in\shp$, $f_\Omega(\mu)+\omega(\mu(t))=f_\Omega(w\mu)$ c'est \`a dire
$nD(\mu,f_\Omega(\mu))=D(w\mu,f_\Omega(w\mu))$; c'est \'equivalent au fait que $n$ stabilise l'ensemble $cl_\shp(\Omega)$.

\par Le groupe $\widetilde{N}_\Omega$  normalise $U_\Omega$, $U_\Omega^{pm+}$, $U_\Omega^{nm-}$,$\cdots$car ces groupes ne d\'ependent que de $cl_\shp(\Omega)$.
\end{proof}

\begin{exems}\label{4.12}  1) Soit $x$ un point sp\'ecial de $\A$ et $\Omega=\{x\}$, alors $cl_\shp(x)$ est une partie born\'ee de $\{y\in\A\mid\alpha(y)=\alpha(x),\forall\alpha\in\Phi\}=cl(x)$, car tout \'el\'ement de $X$ est combinaison lin\'eaire \`a coefficients dans $\Q^+$ d'\'el\'ements de $\shp$. Donc $\widetilde{N}_x=\widehat{N}_x=N_x^{min}$ (car $\nu^v(N_x^{min})=W^v$ \cf \ref{4.3}). D'apr\`es \ref{4.7} et \ref{4.11}, on a $\widetilde{P}_x=
U_x^{pm+}.N_x^{min}.U_x^{}=U_x^{pm+}.U_x.N_x^{min}=P_x^{pm}=
P_x^{nm}$.

\medskip
\par 2) Si de plus $x=0$ est l'origine de $\A$, on a $\g g_0=\g g_\Z\otimes\sho$ et $M_0=M_\Z\otimes\sho$ pour tout module \`a plus haut ou plus bas poids. Il est clair que ces modules sont stables par $\g G(\sho)$ (\ref{3.7} et \ref{3.14}). Donc $\widetilde{P}_0\supset\g G(\sho)$. On a $H,U_0\subset\g G(\sho)$ par construction et aussi $U_0^{++}=\g U^+(\sho)$. Comme $U_0^{ma+}= \g U^{ma+}(\sho)$ (\ref{3.2} et \ref{4.5}.2), on a $U_0^{pm+}=\g U^+(K)\cap\g U^{ma+}(\sho)$.

\par On se demande si cette derni\`ere intersection est \'egale \`a $\g U^+(\sho)\subset\g U^+_\#(\sho)$ (resp. $\g U^+_\#(\sho)$) (voir \ref{1.6}.5). Cela prouverait que $\widetilde{P}_0=P_0^{pm}=P_0^{nm}=\g G(\sho)$ (resp. peut-\^etre $\g G_\#(\sho)$). Un r\'esultat analogue a \'et\'e prouv\'e dans \cite[3.14]{GR-08} quand le corps r\'esiduel de $K$ contient $\C$. Mais il concerne le groupe minimal $\g G$ \`a la Kumar avec sa structure de ind-groupe (sur $\C$ mais \'etendue aux $\C-$alg\`ebres). Il pourrait donc ne pas avoir de rapport avec le r\'esultat cherch\'e ici, car \ref{3.19} ne donne une identification que pour les points rationnels sur $\C$ des groupes maximaux ou minimaux. Le foncteur en groupes minimal $\g G$ n'est pas forc\'ement le meilleur choix sur tous les anneaux, un foncteur $\g G_\#$, comme en \ref{1.6}.1, peut \^etre plus adapt\'e, voir 3) ci-dessous.

\par D'apr\`es \ref{3.17} et \ref{1.6}.1, \ref{1.6}.5 (KMG7), on a toujours $\g G(K)\cap\g U^{ma+}(K)=\g U^+(K)$ et $\g U^+_\#(\sho)=\g U^+_\#(K)\cap\g G_\#(\sho)$ mais cela ne permet pas de r\'epondre aux questions.

\par 3) Cas de $\widetilde{SL}_2$ : On consid\`ere le SGR $\shs=(\left(\begin{matrix}2&-2  \cr -2&2\cr \end{matrix}\right),Y=\Z h,\overline\alpha_0=-\overline\alpha_1,\alpha^\vee_0=-\alpha^\vee_1=-h)$. L'alg\`ebre de Kac-Moody correspondante est $\g g=\g{sl}_2(\C)\otimes_\C\C[t,t^{-1}]$, voir \ref{2.12}. Si on pose $\delta=\alpha_0+\alpha_1\in Q$, on a $\Delta^+_{im}=\N^*\delta$, $\Delta^+_{re}=\{\alpha_1+n\delta, \alpha_0+n\delta\mid n\in\N\}$ et les espaces propres sont, pour $n\in\Z$, $\g g_{n\delta}=\C\left(\begin{matrix}t^n&0  \cr 0&-t^n\cr \end{matrix}\right)$, $\g g_{\alpha_1+n\delta}=\C\left(\begin{matrix}0&t^n \cr 0&0\cr \end{matrix}\right)$ et $\g g_{\alpha_0+n\delta}=\C\left(\begin{matrix}0&0  \cr t^{n+1}&0\cr \end{matrix}\right)$.

\par Comme $\shs$ est non libre on consid\`ere la masure essentielle et l'appartement $\A$ correspondant du d\'ebut de cette section: $\qa_0$ et $\qa_1$ forment donc une base de l'espace vectoriel r\'eel $V^*$.

\par a) La repr\'esentation naturelle $\pi$ de $\g g_\Z$ sur $End_{\Z[t,t^{-1}]}({\Z[t,t^{-1}]}^2)$ (\ref{2.12}) induit une identification de $G=\g G(K)$ et $SL_2({K[t,t^{-1}]})$ qui envoie $U^+=\g U^+(K)$ sur le groupe des matrices de $SL_2(K[t])$ qui sont triangulaires sup\'erieures strictes modulo $t$. Le foncteur en groupe $\g G_\#$ de \ref{1.6}.1 peut \^etre choisi tel que $\g G_\#(k)=SL_2({k[t,t^{-1}]})$ pour tout anneau $k$.
 On sait que le groupe $U^+$ est produit libre des sous-groupes $u^s(K[t])=\left(\begin{matrix}1&K[t] \cr 0&1\cr \end{matrix}\right)=\langle U_{\alpha_1+n\delta}(K)\mid n\in\N\rangle$
 et $u^i(tK[t])=\left(\begin{matrix}1&0\cr tK[t]&0\cr \end{matrix}\right)=\langle U_{\alpha_0+n\delta}(K)\mid n\in\N\rangle$, \cf \cite[3.10d]{T-87b}; et on a :

 \par\noindent$\left(\begin{matrix}1&0  \cr \varpi&1\cr \end{matrix}\right)\left(\begin{matrix}1&t  \cr 0&1\cr \end{matrix}\right)\left(\begin{matrix}1&0  \cr -\varpi&1\cr \end{matrix}\right)=\left(\begin{matrix}1-\varpi t&t  \cr -\varpi^2t&1+\varpi t\cr \end{matrix}\right)=\left(\begin{matrix}1&\varpi^{-1}  \cr 0&1\cr \end{matrix}\right)\left(\begin{matrix}1&0  \cr -\varpi^2t&1\cr \end{matrix}\right)\left(\begin{matrix}1&-\varpi^{-1}  \cr 0&1\cr \end{matrix}\right)$

 \par\noindent o\`u on note $\varpi$ un \'el\'ement non nul de l'id\'eal maximal $\g m$ de $\sho$ (\eg une uniformisante dans le cas discret).

 \par Cet \'el\'ement $g\in G$ est dans $U_0$ (d'apr\`es l'expression de gauche) et dans $U^+$ (expression du milieu ou de droite) donc dans $U^+_0$. Cependant l'expression de droite montre que $g$ n'est pas dans $U_0^{++}$, puisqu'il y a unicit\'e des d\'ecompositions dans le produit  libre $U^+$. On a donc une inclusion stricte: $\g U^+(\sho)=U_0^{++}\subsetneqq U_0^{+}$.

 \par b) La repr\'esentation naturelle $\pi$ permet d'identifier $G^{pma}=\g G^{pma}(K)$ \`a $SL_2(K(\!(t)\!))$; voir \cite[13.2.8]{Kr-02} pour un r\'esultat voisin. Dans cette identification $\g U^{ma+}(K)$ (resp. $\g U^{ma+}(\sho)$) devient le groupe des matrices de $SL_2(K[[t]])$ (resp. $SL_2(\sho[[t]])$) qui sont triangulaires sup\'erieures strictes modulo $t$ (l'ingr\'edient essentiel de la d\'emonstration  a \'et\'e expliqu\'e en \ref{2.12}). On s'est demand\'e en 2) si $U_0^{pm+}=\g U^{ma+}(\sho)\cap\g U^+(K)=\g U^{ma+}(\sho)\cap SL_2({K[t,t^{-1}]})$ est \'egal \`a  $\g U^+(\sho)$ (c'est impossible d'apr\`es a)\,) ou \'eventuellement \`a  $\g U^+_\#(\sho)=\g U^+(K)\cap\g G_\#(\sho)$. Ce dernier groupe est form\'e des matrices de $SL_2(\sho[t])$ qui sont triangulaires sup\'erieures strictes modulo $t$, il est donc bien  \'egal \`a $U_0^{pm+}=\g U^{ma+}(\sho)\cap\g G(K)$.

 \par On a donc:
 \par\noindent $\g U^+(\sho)=U_0^{++}\subsetneqq U_0^{+}=U_0\cap U^+\subset\g G(\sho)\cap U^+\subset \g G_\#(\sho)\cap U^+= \g U^+_\#(\sho)=U_0^{pm+}$.

\par En particulier $\g U^+\not=\g U^+_\#$. Mais en fait $U_0= \g G_\#(\sho)$ et  $U_0^{pm+}=U_0^+=\g U^+_\#(\sho)$, car $U_0$ est le groupe engendr\'e par les matrices \'el\'ementaires de $SL_2({\sho[t,t^{-1}]})$ et on sait que celui-ci est \'egal \`a $SL_2({\sho[t,t^{-1}]})= \g G_\#(\sho)$ \cite[th. 3.1]{Cu-84} \footnote{Merci \`a Leonid Vaserstein pour cette r\'ef\'erence.}.

\par c) On consid\`ere $\QO=\{0,z\}\subset\A$ avec $z$ d\'etermin\'e par $\qd(z)=0$ et $\qa_1(z)=p\in\N$.
 Alors $U_\QO^{ma+}$ est topologiquement engendr\'e par $u^s(\sho[[t]])$ et $u^i(\varpi^pt\sho[[t]])$ dans $SL_2(K[[t]])$. On en d\'eduit assez facilement que $U_\QO^{ma+}$ (resp. $U_\QO^{pm+}$) est contenu dans  (en fait \'egal \`a) l'ensemble des matrices
$ \left(\begin{matrix}a&b  \cr c&d\cr \end{matrix}\right)\in SL_2(\sho[[t]])$ (resp. $SL_2(\sho[t])$) telles que $a\equiv1$, $d\equiv1$ et $c\equiv0$ modulo $\varpi^pt$.
 De m\^eme $U_\QO^{ma-}$ est topologiquement engendr\'e par $u^s(t^{-1}\sho[[t^{-1}]])$ et $u^i(\varpi^p\sho[[t^{-1}]])$ dans $SL_2(K[[t^{-1}]])$; donc $U_\QO^{ma-}$ (resp. $U_\QO^{nm-}$) est contenu dans  (en fait \'egal \`a) l'ensemble des matrices
$ \left(\begin{matrix}a&b  \cr c&d\cr \end{matrix}\right)\in SL_2(\sho[[t^{-1}]])$ (resp. $SL_2(\sho[t])$) telles que $a,d\equiv1$ modulo $\varpi^pt^{-1}$, $b\equiv0$ modulo $t^{-1}$ et $c\equiv0$ modulo $\varpi^p$.
 Ainsi $U_\QO^{pm+}$ et $U_\QO^{nm-}$ sont contenus dans le groupe $V_\QO$ des matrices
 $ \left(\begin{matrix}a&b  \cr c&d\cr \end{matrix}\right)\in SL_2(\sho[t,t^{-1}])$ telles que  $a,d\equiv1$ et  $c\equiv0$ modulo $\varpi^p$.
  D'autre part (si $p\geq{}1$) $\widehat N_\QO$ est form\'e des matrices diagonales avec $d^{-1}=a=u.t^n$ pour $u\in\sho^*$ et $n\in\Z$.
  On voit facilement que, si $p\geq{}2$, la matrice
  $ \left(\begin{matrix}1+\varpi^{p-1}t&1  \cr -\varpi^{2p-2}t^2&1-\varpi^{p-1}t\cr \end{matrix}\right)$
   est dans $G_\QO$ mais pas dans $V_\QO.\widehat N_\QO$.
   Donc $\QO$ ne satisfait \`a aucune des conditions (GF$\pm$) de \ref{5.3} ci-dessous (si $p\geq{}2$).

\par 4) On consid\`ere toujours $\widetilde{SL}_2$ avec son appartement $\A$ d'espace vectoriel associ\'e $V$ et $\qa_1$, $\qa_0=\qd-\qa_1$ base du dual $V^*$. On suppose $\QL=\Z$.
 Les murs ont alors pour \'equations $(\pm{}\qa_1+n\qd)(x)+m=0$ avec $n,m\in\Z$ et les coracines v\'erifient $\qa_0^\vee=-\qa_1^\vee$.
  Le groupe de Weyl (affine) associ\'e est $W=W^v\ltimes\Z.\qa_1^\vee$. Le groupe de Weyl vectoriel $W^v$ contient la transvection $\qs$ donn\'ee par $\qs(x)=x-\qd(x).\qa_1^\vee$.
  Si l'on consid\`ere la translation $\qt_{\qa_1^\vee}$ de vecteur $\qa_1^\vee$, l'\'el\'ement $\qt_{\qa_1^\vee}\circ\qs$ de $W$ fixe tous les points de l'hyperplan affine d'\'equation $\qd(x)=1$.
  Cependant le point $y$ tel que $\qd(y)=1$ et $\qa_1(y)=\frac{1}{2}$ n'est dans aucun mur. Ainsi $W_y^{min}=\{1\}$ alors que $W_y$ est infini.
  Ce ph\'enom\`ene dispara\^{\i}t si on consid\`ere un espace vectoriel $V$ o\`u $\qa_0^\vee$ et $\qa_1^\vee$ sont ind\'ependants.

\end{exems}

\begin{defi}\label{4.13} Soit $\Omega$ un sous-ensemble de $\A$. On d\'efinit le groupe $\widehat{P}_\Omega$ comme l'intersection des groupes $\widetilde{P}^L_{\Omega'}$ o\`u $\Omega'$ est une partie non vide de $\overline\Omega$, $L/K$ une extension valu\'ee et  $\widetilde{P}^L_{\Omega'}$ est le groupe construit de mani\`ere analogue \`a $\widetilde{P}_{\Omega'}$ dans $\g G(L)$. (On sait que $\g G(K)$ s'injecte fonctoriellement dans $\g G(L)$ \cf \ref{1.6}.)

\par Si $\Omega$ est un filtre de parties de $\A$, on d\'efinit $\widehat{P}_\Omega=\cup_{\Omega'\in\Omega}\,\widehat{P}_{\Omega'}$.

\par Si $\Omega$ est un point ou une facette de $\A$, on dira que $\widehat{P}_\Omega$ est le "fixateur" de $\Omega$  voir ci-dessous \ref{5.1} et \ref{5.7}.2.
\end{defi}

\begin{enonce*}[definition]{Propri\'et\'es}
\par 1) On a une fonctorialit\'e \'evidente en le corps des diff\'erents groupes ou alg\`ebres  d\'efinis jusqu'en \ref{4.6} (\ie sauf $\widetilde{P}_\Omega$ et $\widehat{P}_\Omega$). En particulier le sous-groupe $\widehat{P}_\Omega$ de $\widetilde{P}^L_{\Omega}$ contient les groupes $P_\Omega^{pm}$, $P_\Omega^{nm}$, $P_\Omega$ et $\widehat{N}_\Omega$. D'apr\`es \ref{4.10}, pour $n\in N$ on a $\widehat{P}_{\nu(n)\Omega}=n\widehat{P}_{\Omega}n^{-1}$. Bien s\^ur si $\Omega\subset\Omega'$, on a $\widehat{P}_{\Omega}\supset \widehat{P}_{\Omega'}$.

\par 2) Soient $\Omega$ un sous-ensemble de $\A$ et $x$ un point de $\overline\Omega$. Il existe une extension valu\'ee $L/K$ telle que $x$ soit un point sp\'ecial de $\A^L$ (\eg si $\omega(L^*)=\R$).
D'apr\`es \ref{4.12}.1 on a $N(L)\cap\widetilde{P}^L_{x}=\widehat{\g N(L)}_x$, donc $N\cap\widehat{P}_\Omega=\widehat{N}_{\overline\Omega}=\widehat{N}_{\Omega}$. Cette relation est encore valable si $\Omega$ est un filtre. Par contre, m\^eme pour un ensemble, on ne sait pas si l'inclusion $\cap_{x\in\Omega}\,\widehat{P}_x\subset \widehat{P}_\Omega$ est toujours une \'egalit\'e.

\par 3) Si $\Omega$ est un ensemble, $U_\Omega^{pm+}.\widehat{N}_\Omega\subset \widehat{P}_\Omega\cap U^+N\subset \cap_{\Omega',L}\, \widetilde{P}^L_{\Omega'}\cap\g U^+(L)\g N(L)= \cap_{\Omega',L}\, \g U_{\Omega'}^{pm+}(L)\widetilde{N}^L_{\Omega'}=  (\cap_{\Omega',L}\, \g U_{\Omega'}^{pm+}(L)).(\cap_{\Omega',L}\, \widetilde{N}^L_{\Omega'})=U_\Omega^{pm+}.\widehat{N}_\Omega$, d'apr\`es 1), 2) ci-dessus, \ref{4.11} et l'unicit\'e dans les  d\'ecompositions de \ref{3.16}. Donc $\widehat{P}_\Omega\cap U^+N=U_\Omega^{pm+}.\widehat{N}_\Omega$, $\widehat{P}_\Omega\cap U^+=U_\Omega^{pm+}$ et ceci est encore valable pour un filtre.

\par 4) Si $\Omega$ est \'etroit on a, d'apr\`es \ref{4.7}, $\widehat{P}_\Omega=U_\Omega^{pm+}.\widehat{N}_\Omega.U_\Omega=U_\Omega^{pm+}.U_\Omega.\widehat{N}_\Omega=U_\Omega^{pm+}.U_\Omega^-.\widehat{N}_\Omega=U_\Omega^{nm-}.U_\Omega^+.\widehat{N}_\Omega$. Ainsi $\widehat{P}_\Omega=P_\Omega^{pm}.\widehat{N}_\Omega=P_\Omega^{nm}.\widehat{N}_\Omega$.

\par 5) On ne va utiliser $\widehat{P}_\Omega$ que si $\Omega$ est un point ou une facette et, dans ce cas, on ne d\'efinira que plus tard (\ref{5.11}) le sous-groupe parahorique $P_\Omega$ (avec $P_\Omega^{pm}=P_\Omega^{nm}=P_\Omega\subset \widehat{P}_\Omega$).

\par Les propri\'et\'es des groupes $\widehat{P}_x$ sont r\'esum\'ees dans la proposition suivante. Il n'est pas exclus que, pour tout $x$ dans $\A$, $\widehat{P}_x=U_x.\widehat{N}_x$ \ie $U_x^{pm+}=U_x^{+}$ et $U_x^{nm-}=U_x^{-}$ \cf \ref{4.12}.3b. En \ref{5.7}.3 on verra que ceci ne peut se g\'en\'eraliser \`a tout ensemble $\QO$ \`a la place de $x$.
\end{enonce*}

\begin{prop}\label{4.14} Les groupes $\widehat{P}_x$ associ\'es aux points de $\A$ satisfont aux propri\'et\'es suivantes:

\par (P1)  $\widehat{P}_x\cap N=\widehat{N}_x$ (le fixateur de $x$ dans $N$).

\par (P2) $n\widehat{P}_xn^{-1}=\widehat{P}_{\nu(n)x}$.

\par (P3) $\widehat{P}_x=U_x^{pm+}.U_x^{nm-}.\widehat{N}_x=U_x^{nm-}.U_x^{pm+}.\widehat{N}_x$ avec $U_x^{pm+}=\widehat{P}_x\cap U^+$ et $U_x^{nm-}=\widehat{P}_x\cap U^-$.
\end{prop}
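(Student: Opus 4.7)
My plan is to deduce all three properties directly from the general properties of $\widehat{P}_\Omega$ listed after Definition~\ref{4.13}, specialized to the singleton filter $\Omega=\{x\}$. The proposition is essentially a repackaging of those general facts in the particularly favourable case of a point.

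The first step I would take is to verify that $\{x\}$ is narrow in the sense of \ref{4.3}.1, so that property 4 of \ref{4.13} can be applied. For a single point, the definition of $f_{\{x\}}$ gives
$$f_{\{x\}}(\alpha)=\inf\{\lambda\in\Lambda\mid\alpha(x)+\lambda\geq 0\}\in\widetilde\Lambda,$$
and a direct inspection of cases (according as $-\alpha(x)$ belongs to $\Lambda$ or not) shows that $f_{\{x\}}(\alpha)+f_{\{x\}}(-\alpha)$ is either $0$ or $0^+$ for every $\alpha\in\Phi$. This is the only small computation needed and is essentially automatic.

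Once narrowness is established, (P1) follows from property 2 of \ref{4.13} since $\overline{\{x\}}=\{x\}$, and (P2) is just property 1 of \ref{4.13}. For (P3) I would invoke property 4 of \ref{4.13}, which yields the two decompositions
$$\widehat{P}_x=U_x^{pm+}\cdot U_x^-\cdot\widehat{N}_x=U_x^{nm-}\cdot U_x^+\cdot\widehat{N}_x.$$
Using the trivial inclusions $U_x^-\subset U_x^{nm-}$ and $U_x^+\subset U_x^{pm+}$, these immediately give
$$\widehat{P}_x\subset U_x^{pm+}\cdot U_x^{nm-}\cdot\widehat{N}_x\quad\text{and}\quad\widehat{P}_x\subset U_x^{nm-}\cdot U_x^{pm+}\cdot\widehat{N}_x.$$
The reverse inclusions come from property 1 of \ref{4.13}, which says that $U_x^{pm+}$, $U_x^{nm-}$ and $\widehat{N}_x$ are each contained in $\widehat{P}_x$. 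The identifications $U_x^{pm+}=\widehat{P}_x\cap U^+$ and $U_x^{nm-}=\widehat{P}_x\cap U^-$ then follow from property 3 of \ref{4.13}.

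No serious obstacle is anticipated here: the only conceptual ingredient is the narrowness of a point, after which everything is a reorganisation of 4.13. The one subtlety worth flagging is that one must work with $\widehat{P}_x$ rather than with the possibly smaller groups $P^{pm}_x$ or $P^{nm}_x$, since it is only for $\widehat{P}_x$ (defined by intersecting over all field extensions and all subsets of $\overline\Omega$) that the clean fixator property $\widehat{P}_x\cap N=\widehat{N}_x$ is available without assuming $x$ is special or $\Lambda$ discrete, the identification $\widehat{P}_x\cap N=\widehat{N}_x$ being obtained in \ref{4.13}.2 by passing to an extension $L/K$ where $x$ becomes a special point.
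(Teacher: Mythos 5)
Votre démonstration est correcte et suit essentiellement la même voie que le texte : la proposition \ref{4.14} y est explicitement présentée comme un résumé des propriétés 1) à 4) de \ref{4.13} appliquées au filtre $\{x\}$, et vous en tirez (P1), (P2), (P3) exactement comme le fait l'article (P3 se déduisant de (P3') par les inclusions $U_x^{\pm}\subset U_x^{pm+},U_x^{nm-}\subset\widehat{P}_x$). Votre seul ajout est la vérification, laissée implicite dans le texte, qu'un point est toujours étroit, ce qui est bien le petit calcul nécessaire pour invoquer \ref{4.13}.4 via la décomposition d'Iwasawa \ref{4.7}.
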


\par On a en fait (P3') $\widehat{P}_x=U_x^{pm+}.U_x^{-}.\widehat{N}_x=U_x^{nm-}.U_x^{+}.\widehat{N}_x$

\subsection{Comparaison avec les raisonnements de \cite{GR-08}}\label{4.15}

\par La g\'en\'eralisation des r\'esultats de \cite{GR-08} (en particulier le lemme \ref{4.11}) \`a la caract\'eristique r\'esiduelle positive a n\'ecessit\'e, sans surprise, le remplacement de l'alg\`ebre de Lie $\g g_\Omega$ par l'alg\`ebre "enveloppante" $\shu_\Omega$.

\par Les raisonnements de  \cite[sect. 3.7]{GR-08} sont compliqu\'es, entach\'es d'une ou deux erreurs (dont je suis responsable) et utilisent la sym\'etrisabilit\'e. C'\'etait sans doute trop ambitieux de d\'efinir \`a ce stade les groupes $P_\Omega$ au lieu des groupes $\widehat{P}_\Omega$.  On a pu simplifier et g\'en\'eraliser en restreignant l'ambition et en utilisant des extensions de corps (comme sugg\'er\'e par les travaux de Cyril Charignon \cite{Cn-10} ou \cite{Cn-10b}).


\section{La masure affine ordonn\'ee}\label{s5}

\par Dans cette section on ne va utiliser que les propri\'et\'es (P1) \`a (P3) des groupes $\widehat{P}_x$ indiqu\'ees dans la proposition \ref{4.14}. La mention qui sera faite des groupes $\widehat{P}_\Omega$ plus g\'en\'eraux n'est pas indispensable au raisonnement.

\begin{defi}\label{5.1} La {\it masure} $\shi=\shi(\g G,K)$ de $\g G$ sur $K$ est le quotient de l'ensemble $G\times\A$ par la relation:

\par $(g,x)\sim(h,y)\Leftrightarrow$ il existe $n\in N$ tel que $y=\nu(n).x$ et $g^{-1}hn\in\widehat{P}_x$.

\par Il est clair que $\sim$ est une relation d'\'equivalence \cite[7.4.1]{BtT-72}.
\end{defi}

\par L'application $\A\rightarrow\shi$, $x\mapsto cl(1,x)$ est injective d'apr\`es (P1). Elle identifie $\A$ \`a son image $A(T)=A(T,K)$, l'{\it appartement} de $T$ dans $\shi$.

\par L'action \`a gauche de $G$ sur $G\times\A$ induit une action de $G$ sur $\shi$. Les appartements de $\shi$ sont les $g.A(T)$ pour $g\in G$. L'action de $N$ sur $A(T)$ se fait via $\nu$; en particulier $H$ fixe (point par point) $A(T)$. Par construction le fixateur de $x\in\A$ est $G_x=\widehat{P}_x$ et, pour $g\in G$ on a $gx\in\A\Leftrightarrow g\in N\widehat{P}_x$.

\par Comme $\widehat{P}_x$ contient $U_x$, il est clair que, $\forall\alpha\in\Phi$, $\forall r\in K$, $x_\alpha(r)$ fixe $D(\alpha,\omega(r))$. Donc, pour $k\in\R$, le groupe $HU_{\alpha,k}$ fixe $D(\alpha,k)$.

\subsection{Les groupes $G_\Omega$}\label{5.2}

\par Pour $\Omega$ une partie de $\A$, on note $G_\Omega=\cap_{x\in\Omega}\,\widehat{P}_x$ et pour $\Omega$ un filtre, $G_\Omega=\cup_{\Omega'\in\Omega}\,G_{\Omega'}$. Le groupe $G_\Omega$ est le fixateur de $\Omega$ (pour l'action de $G$ sur $\shi$ qui contient $\A$).

\par Par construction de  $\widehat{P}_\Omega$, on a  $G_\Omega\supset\widehat{P}_\Omega$ (donc $G_\Omega\supset U_\Omega^{pm+},U_\Omega^{nm-}$) et $G_\Omega\cap N= \widehat{N}_\Omega=\widehat{P}_\Omega\cap N$.

\par D'apr\`es \ref{4.5}.4d et (P3) on a $G_\Omega\cap U^+= U_\Omega^{pm+}$ et $G_\Omega\cap U^-= U_\Omega^{nm-}$.

\begin{lemm*} Le sous-ensemble $G(\Omega\subset\A)$ de $G$ consistant en les $g\in G$ tels que $g\Omega\subset\A$ est $G(\Omega\subset\A)=\cup_{\Omega'\in\Omega}(\cap_{x\in\Omega'}\,N\widehat{P}_x)$.
\end{lemm*}

\begin{proof}
$g\Omega\subset\A\Leftrightarrow\exists\Omega'\in\Omega, g\Omega'\subset\A \Leftrightarrow\exists\Omega'\in\Omega, \forall x\in\Omega', gx\in\A\Leftrightarrow\exists\Omega'\in\Omega, \forall x\in\Omega', g\in N\widehat{P}_x$.
\end{proof}

\begin{defi}\label{5.3} On consid\`ere les conditions suivantes:

\par\qquad (GF+) $G_\Omega=U_\Omega^{pm+}.U_\Omega^{nm-}.\widehat{N}_\Omega$.

\par\qquad (GF$-$) $G_\Omega=U_\Omega^{nm-}.U_\Omega^{pm+}.\widehat{N}_\Omega$.

\par\qquad\; (TF) \;$G(\Omega\subset\A)=NG_\Omega$.

\par On dit que $\Omega$ a un {\it fixateur transitif} s'il satisfait (TF).

\par On dit que $\Omega$ a un {\it assez bon fixateur} s'il satisfait (TF) et (GF+) ou (GF$-$).

\par On dit que $\Omega$ a un {\it  bon fixateur} s'il satisfait (TF), (GF+) et (GF$-$).

\end{defi}

\par D'apr\`es la remarque \ref{4.6}c, cette d\'efinition ne d\'epend pas du choix de $\Delta^+$ dans sa classe de $W^v-$conjugaison. La condition (GF+) ou (GF$-$) implique que $G_\Omega=\widehat{P}_\Omega$ (\ref{4.13}.1); le groupe $N$ permute les filtres satisfaisant (GF+), (GF$-$) ou (TF) et les fixateurs correspondants  (\cf \ref{4.13}.1).

\par D'apr\`es (P3) et \ref{5.1} un point a toujours un bon fixateur. Mais il existe des ensembles ne v\'erifiant ni (GF+) ni (GF$-$), \cf  \ref{4.12}.3c.

\begin{lemm}\label{5.4} Soit $\Omega$ un filtre de parties de $\A$. Si $\Omega$ a un fixateur transitif, alors $G_\Omega$ est transitif sur les appartements contenant $\Omega$.
\end{lemm}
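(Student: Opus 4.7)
The plan is to unwind definitions and invoke condition (TF) directly; the lemma is essentially a formal consequence of the way the masure $\shi$ is constructed in \ref{5.1}.

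First I would fix an apartment $A'$ of $\shi$ containing $\Omega$. By construction of $\shi$, every apartment is a $G$-translate of $A(T) = \A$, so $A' = g \cdot A(T)$ for some $g \in G$. The containment $\Omega \subset A'$ rewrites as $g^{-1}\Omega \subset \A$, i.e. $g^{-1} \in G(\Omega \subset \A)$. Here I use only the definition of the action of $G$ on the quotient $G \times \A/{\sim}$.

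Next, I would apply hypothesis (TF): $G(\Omega \subset \A) = N \cdot G_\Omega$. This lets me write $g^{-1} = n\,k$ with $n \in N$ and $k \in G_\Omega$, hence $g = k^{-1} n^{-1}$. Setting $h := k^{-1}$ gives an element of $G_\Omega$, and since $N$ stabilises the appartement $A(T)$ setwise (its action on $A(T)$ is given by $\nu$, see \ref{4.2}.2), we have
\[
h \cdot A(T) \;=\; k^{-1}\cdot A(T) \;=\; k^{-1} n^{-1}\cdot A(T) \;=\; g\cdot A(T) \;=\; A'.
\]
Moreover, $\Omega = h \cdot \Omega \subset h \cdot A(T) = A'$ is automatic since $h \in G_\Omega$ fixes $\Omega$. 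Thus $h \in G_\Omega$ sends $A(T)$ to $A'$, proving transitivity.

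There is no real obstacle in this argument: the only non-tautological input is that $N$ stabilises $A(T)$ (so that different choices of $g$ differing by an element of $N$ yield the same apartment), which is built into the very definition of $A(T)$ in \ref{5.1}. The work required to establish (TF) itself — which will be the hard part elsewhere in the paper — is entirely absorbed into the hypothesis.
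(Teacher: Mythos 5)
Your argument is correct and is precisely the classical one the paper invokes (the proof in the text simply says "c'est classique" and cites \cite[Rem. 4.2]{GR-08}): write an apartment containing $\Omega$ as $g\cdot\A$, use (TF) to factor $g^{-1}=nk$ with $n\in N$, $k\in G_\Omega$, and absorb $n$ using $N\cdot\A=\A$. Nothing further is needed.
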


\begin{proof} C'est classique et dans \cite[Rem. 4.2]{GR-08}.
\end{proof}

\begin{enonce*}[definition]{Cons\'equence}
 Alors $G_\Omega$ et tous ses sous-groupes normaux ne d\'ependent pas du choix de l'appartement contenant $\Omega$.
\end{enonce*}

\begin{prop} \label{5.5}

\par 1)  Supposons $\Omega\subset\Omega'\subset
cl(\Omega)$. Si $\Omega$ dans $\mathbb A$ a un bon (ou assez bon) fixateur, alors c'est \'egalement vrai pour $\Omega'$  et $G_{\Omega}=\widehat{N}_{\Omega}.G_{\Omega'}$,
$N.G_{\Omega}=N.G_{\Omega'}$. En particulier tout appartement contenant
$\Omega$ contient aussi son enclos $cl(\Omega)$.

\par Inversement si $\mathbb A=supp(\Omega)$, le plus petit espace affine contenant $\Omega$, (ou si $supp(\Omega')=supp(\Omega)$, donc $\widehat{N}_{\Omega'}=\widehat{N}_{\Omega}$), $\Omega$ a un assez bon fixateur et  $\Omega'$ a un bon fixateur, alors $\Omega$ a un bon fixateur.

\par 2) Si un filtre $\Omega$ dans $\mathbb A$ est engendr\'e par une famille $\mathcal F$ de filtres (\ie $S\in\Omega\Leftrightarrow\exists F\in\shf,S\in F$)
avec des bons (ou assez bons) fixateurs, alors  $\Omega$ a un bon (ou assez bon) fixateur $G_{\Omega}=\bigcup_{F\in\mathcal F}\;G_{F}$.

\par 3) Supposons que le filtre $\Omega$ dans $\mathbb A$ est la r\'eunion d'une suite croissante $(F_i)_{i\in\mathbb N}$  (\ie $S\in\Omega\Leftrightarrow S\in F_i,\forall i$) de filtres avec de bons (ou assez bons) fixateurs et que, pour un certain $i$, l'espace ${\mathrm supp}(F_i)$ a un fixateur fini  $W_0$ dans $\nu(N)=W_{Y\Lambda}$, alors
$\Omega$ a un bon (ou assez bon) fixateur $G_{\Omega}=\bigcap_{i\in\mathbb N}\;G_{F_i}$.

\par 4)Soient $\Omega$ et $\Omega'$ deux filtres dans $\mathbb A$. Supposons que $\Omega'$ satisfait \`a
(GF+) (resp. (GF+) et (TF)) et qu'il existe un nombre fini de chambres vectorielles ferm\'ees positives $\overline {C^v_1},\cdots$, $\overline {C^v_n}$ telles que:
$\Omega\subset\cup_{i=1,n}\;\Omega'+\overline {C^v_i}$. Alors $\Omega\cup\Omega'$ satisfait \`a
(GF+) (resp. (GF+) et (TF)) et $G_{\Omega\cup\Omega'}=G_{\Omega}\cap G_{\Omega'}$.

\end{prop}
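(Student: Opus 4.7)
The common thread in all four parts is that, by Remark \ref{4.5}.4f, the unipotent groups $U^{pm\pm}_\Omega$ depend only on $cl(\Omega)$, and each generator $U_{\alpha,\Omega}=U_{\alpha,f_\Omega(\alpha)}$ fixes pointwise the half-apartment $D(\alpha,f_\Omega(\alpha))$. Consequently $U^{pm+}_\Omega$ and $U^{nm-}_\Omega$ fix $cl(\Omega)$ pointwise, while $\widehat{N}_\Omega$ only stabilises $cl(\Omega)$ setwise (it permutes the walls through $\Omega$). I will run the argument for (GF+); (GF$-$) follows by the symmetry $\Delta^+\leftrightarrow\Delta^-$.

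\emph{Part 1.} The inclusion $\Omega\subset\Omega'\subset cl(\Omega)$ forces $cl(\Omega')=cl(\Omega)$, so $U^{pm\pm}_\Omega=U^{pm\pm}_{\Omega'}$. For $g\in G_{\Omega'}\subset G_\Omega$, the (GF+) decomposition $g=u_+u_-n$ coming from $\Omega$ has $u_+u_-$ fixing $cl(\Omega)\supset\Omega'$ pointwise, so $g$ fixes $\Omega'$ iff $n\in\widehat{N}_{\Omega'}$; this gives (GF+) for $\Omega'$ together with $G_\Omega=\widehat{N}_\Omega\cdot G_{\Omega'}$, using that $\widehat{N}_\Omega$ normalises the unipotent factors since it stabilises $cl(\Omega)$. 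For (TF) applied to $\Omega'$: any $g$ with $g\Omega'\subset\mathbb{A}$ satisfies $g\Omega\subset\mathbb{A}$ since $\Omega\subset\Omega'$, hence $g\in NG_\Omega=NG_{\Omega'}$. The parenthetical consequence that every apartment containing $\Omega$ contains $cl(\Omega)$ then follows from Lemma \ref{5.4}: such an apartment equals $g_0\mathbb{A}$ for some $g_0\in G_\Omega$, and $g_0$ stabilises $cl(\Omega)\subset\mathbb{A}$, so $g_0\mathbb{A}\supset g_0(cl(\Omega))=cl(\Omega)$. The converse assertion uses that the support hypothesis forces $\widehat{N}_\Omega=\widehat{N}_{\Omega'}$ (as every $\nu(n)$ acting affinely and fixing a spanning set fixes $\mathrm{supp}(\Omega)$), after which the missing (GF$-$) for $\Omega$ is transported from the known (GF$-$) for $\Omega'$ via $U^{nm-}_{\Omega'}=U^{nm-}_\Omega$ and $G_\Omega=\widehat{N}_\Omega G_{\Omega'}$.

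\emph{Parts 2 and 3.} For (2), when $\Omega$ is generated by $\mathcal{F}$, one has $f_\Omega(\alpha)=\inf_F f_F(\alpha)$, whence $U^{pm+}_\Omega=\bigcup_F U^{pm+}_F$ and analogously for $U^{nm-}_\Omega$ and $\widehat{N}_\Omega$. Since $G_\Omega=\bigcup_{S\in\Omega}G_S=\bigcup_F G_F$ by definition, and each $G_F=U^{pm+}_F U^{nm-}_F\widehat{N}_F$, assembling the factors yields the (GF+)-decomposition for $G_\Omega$; the same union argument preserves (TF), with $G(\Omega\subset\mathbb{A})=\bigcup_F G(F\subset\mathbb{A})$. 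Part 3 is dual in spirit but much more delicate: one has $G_\Omega=\bigcap_i G_{F_i}$ and must find a \emph{coherent} (GF+)-decomposition for an arbitrary $g\in G_\Omega$. The main obstacle is that decomposing $g$ via (GF+) for each $F_i$ gives triples $(u_i^+,u_i^-,n_i)$ that a priori vary with $i$, and uniqueness of the (GF+) decomposition is not at hand a priori. The finiteness of the stabiliser $W_0$ of $\mathrm{supp}(F_i)$ in $\nu(N)$ is precisely what one uses to pass to a subsequence on which $\nu(n_i)\in W_0$ is constant; the unipotent parts depend only on $cl(F_i)$, which increases to $cl(\Omega)$, so they stabilise in the intersection and force the decomposition to descend to $\Omega$. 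The extraction of a limit in the (possibly non-discrete, non-Hausdorff) group-theoretic setup is the real technical point here.

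\emph{Part 4.} The key geometric input is that $U^{pm+}_{\Omega'}$ fixes pointwise each translate $\Omega'+\overline{C^v_i}$ when $\overline{C^v_i}\subset\overline{C^v_f}$: for $\alpha\in\Phi^+$, $x\in\Omega'$, $c\in\overline{C^v_i}$, one has $\alpha(c)\geq 0$, hence $\alpha(x+c)+f_{\Omega'}(\alpha)\geq\alpha(x)+f_{\Omega'}(\alpha)\geq 0$, so the generator $U_{\alpha,f_{\Omega'}(\alpha)}$ fixes $x+c$. Thus $U^{pm+}_{\Omega'}$ fixes $\Omega$, and $U^{pm+}_{\Omega'}\subset G_\Omega\cap G_{\Omega'}=G_{\Omega\cup\Omega'}$. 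For an arbitrary $g\in G_{\Omega\cup\Omega'}\subset G_{\Omega'}$, write $g=u_+u_-n$ by (GF+) for $\Omega'$: the factor $u_+\in U^{pm+}_{\Omega'}$ is already in $G_\Omega$, so $u_-n\in G_\Omega\cap(U^{nm-}_{\Omega'}\widehat{N}_{\Omega'})$; identifying its $U^{nm-}$ and $\widehat{N}$ components in $U^{nm-}_{\Omega\cup\Omega'}=U^{nm-}_\Omega\cap U^{nm-}_{\Omega'}$ and $\widehat{N}_{\Omega\cup\Omega'}=\widehat{N}_\Omega\cap\widehat{N}_{\Omega'}$ yields (GF+) for $\Omega\cup\Omega'$. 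For (TF), the transitivity hypothesis for $\Omega'$ and Lemma \ref{5.4} imply that any apartment containing $\Omega'$ is of the form $g_0\mathbb{A}$ with $g_0\in G_{\Omega'}$; since $G_{\Omega'}\supset U^{pm+}_{\Omega'}$ stabilises the forward cones $\Omega'+\overline{C^v_i}$, this apartment contains them all, hence contains $\Omega$, so $G(\Omega\cup\Omega'\subset\mathbb{A})=G(\Omega'\subset\mathbb{A})=NG_{\Omega'}=NG_{\Omega\cup\Omega'}$.
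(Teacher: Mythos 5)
Your parts 1 and 2 are essentially sound and follow the expected route (the paper itself only refers to \cite[prop.~4.3]{GR-08} here), but parts 3 and 4 contain genuine gaps. In part 3 you stop exactly where the proof begins: after extracting a subsequence on which $\nu(n_i)\in W_0$ is constant, you assert that the unipotent parts ``stabilise and force the decomposition to descend'' and admit that ``the extraction of a limit \dots is the real technical point''. No limit is needed: once $\nu(n_i)=\nu(n_j)$, so that $n_in_j^{-1}\in H$, you compare the two decompositions $u_i^+u_i^-n_i=u_j^+u_j^-n_j$ and use the uniqueness furnished by the refined Tits system of \ref{3.16} (Birkhoff: disjointness of the $U^-nU^{ma+}$ over $n\in N$, $U^-\cap N=\{1\}$) to get $u_i^\pm=u_j^\pm$, $n_i=n_j$; then $u^\pm\in\bigcap_iU^{pm\pm}_{F_i}=U^{pm\pm}_\Omega$ by \ref{4.5}.4d and $n\in\widehat N_\Omega$ (the fixed-point set of $\nu(n)$ is affine, hence lies in every $F_i$), which is (GF$\pm$) and $G_\Omega=\bigcap_iG_{F_i}$. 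You also never treat (TF) for $\Omega$ in part 3. In part 4 your ``key geometric input'' is only valid when $\overline{C^v_i}\subset\overline{C^v_f}$; the hypothesis allows arbitrary positive chambers $w\overline{C^v_f}$, and for those $U^{pm+}_{\Omega'}$ does \emph{not} fix $\Omega'+\overline{C^v_i}$. The proof must instead use that (GF+) is independent of the choice of $\Delta^+$ in its $W^v$-conjugacy class (remark \ref{4.6}c, \ref{5.3}), decompose $g$ with respect to the system adapted to each $C^v_i$, and identify the remaining factor on each point of $\Omega$ via \ref{4.13}.3 and $U^-\cap N=\{1\}$. Your (TF) argument for part 4 is simply false: it is not true that every apartment containing $\Omega'$ contains $\Omega$ (already for two points $x\le y$ in the classical Bruhat--Tits case), so $G(\Omega\cup\Omega'\subset\A)\neq G(\Omega'\subset\A)$ in general; one must show directly that $g(\Omega\cup\Omega')\subset\A$ implies $g\in NG_{\Omega\cup\Omega'}$, again through the decomposition.

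A recurring imprecision underlies the part 4 error: you treat $U^{pm+}_\Omega$ as generated by the $U_{\alpha,\Omega}$, $\alpha\in\Phi^+$; that group is $U^{++}_\Omega$, and \ref{4.12}.3a shows $U^{++}_\Omega\subsetneq U^{+}_\Omega\subset U^{pm+}_\Omega$ in general. The pointwise fixing of $cl(\Omega)$, or of $\Omega'+\overline{C^v_f}$, by $U^{pm+}_\Omega$ is nevertheless true, but it has to be deduced from the definition $U^{pm+}_\Omega=G\cap U^{ma+}_\Omega$ and the inequalities $f_{\{y\}}(\alpha)\le f_\Omega(\alpha)$ for \emph{all} $\alpha\in\Delta^+$ (imaginary roots included), giving $U^{ma+}_\Omega\subset U^{ma+}_{\{y\}}$ and hence $U^{pm+}_\Omega\subset\widehat P_y$ by (P3). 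Similarly, in part 1 you invoke ``$\widehat N_\Omega$ normalises the unipotent factors'', which is not available ($n U^{pm+}_\Omega n^{-1}=U^{pm}_\Omega(w\Delta^+)$ for $w=\nu^v(n)$) and not needed: $G_\Omega=U^{pm+}_\Omega U^{nm-}_\Omega\widehat N_\Omega\subset G_{\Omega'}\widehat N_\Omega$ together with the group property of $G_\Omega$ already gives $G_\Omega=\widehat N_\Omega.G_{\Omega'}$, and $n\in N\cap G_{\Omega'}=\widehat N_{\Omega'}$ (from \ref{5.2}) yields (GF+) for $\Omega'$. With these corrections parts 1 and 2 stand; parts 3 and 4 need to be completed as indicated.
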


\begin{proof} Voir \cite[prop. 4.3]{GR-08}
\end{proof}

\begin{rema}\label{5.6}

\par Dans 4) ci-dessus, les m\^emes r\'esultats sont vrais si on change $+$ en $-$.

\par  Si $\Omega'$ a un bon fixateur, $\Omega\subset\cup_{i=1,n}\;\Omega'+\overline
{C^v_i}$ et $\Omega\subset\cup_{i=1,n}\;\Omega'-\overline {C^v_i}$, alors $\Omega\cup\Omega'$ a un bon fixateur.

 Si $\Omega$ satisfait \`a (GF$-$), $\Omega'$ satisfait \`a (GF+), $\Omega$ ou $\Omega'$ satisfait \`a (TF), $\Omega\subset\cup_{i=1,n}\;\Omega'+\overline {C^v_i}$ et
$\Omega'\subset\cup_{i=1,n}\;\Omega-\overline {C^v_i}$, alors $\Omega\cup\Omega'$ a un bon fixateur.

\end{rema}

\subsection{Exemples de filtres avec de bons fixateurs}\label{5.7}
Pour les preuves manquantes ci-dessous, voir \cite[sect. 4.2]{GR-08}.

\par 1) Si $x\leq{}y$ ou $y\leq{}x$ dans $\A$ \ie si $y-x\in\pm{}\sht$ (c\^one de Tits), alors $\{x,y\}$, $[x,y]$ et $cl(\{x,y\})$ ont de bons fixateurs ( \ref{5.6} et (P3)). De plus, si $x\not=y$, $]x,y]=[x,y]\setminus\{x\}$ a un bon fixateur et le germe de segment $[x,y)=germ_x([x,y])$ ou le germe d'intervalle $ ]x,y)=germ_x(]x,y])$ est dit {\it pr\'eordonn\'e} et a un bon fixateur (\ref{5.5}.2).

\par Si $x\,{\bc\le}\,y$ ou $y\,{\bc\le}\,x$ dans $\A$ \ie si $y-x\in\pm{}\sht^\circ$ (int\'erieur du c\^one de Tits), la demi-droite $\delta$ d'origine $x$ et contenant $y$ est dite {\it g\'en\'erique} et a un bon fixateur. En effet $\delta$ est r\'eunion croissante des segments $[x,x+n(y-x)]$ pour $n\in\N$ et, si $n>0$, ce segment a un fixateur fini dans $W$ (car $y-x\in\pm{}\sht^\circ$); on peut donc appliquer \ref{5.5}.3. De m\^eme la droite contenant $x$ et $y$ a aussi un bon fixateur.

\par 2) Une facette locale $F^\ell(x,F^v)$
, une facette $F(x,F^v)$ ou une facette ferm\'ee $\overline F(x,F^v)$
 a un bon fixateur. Ici et dans la suite $F^v$ d\'esigne une facette vectorielle et $C^v$ une chambre vectorielle de $V$.

\par 3) Un quartier $\g q=x+C^v$ a un bon fixateur.

\par On notera que le fixateur du quartier $\g q=\g q_{x,+\infty}=x+C^v_f$ est $G_\g q=HU^{pm}_{\g q}=HU^{pm+}_{\g q}$, alors que $HU^{nm}_{\g q}=HU^{+}_{\g q}=HU^{++}_{x}$, car $U^{nm-}_{\g q}=\{1\}$ et $\widehat N_\g q=H$. On a vu en \ref{4.12}.3 que $U^{++}_{x}$ peut \^etre plus petit que $U^{+}_{x}\subset U^{pm+}_{\g q}$. On peut donc avoir $\widehat P_\g q=G_\g q=HU^{pm}_{\g q}\not=HU^{nm}_{\g q}=HU_{\g q}=HU_{\g q}^+=U_{\g q}^+.\widehat N_\g q$.

\par 4) Un germe de quartier $\g Q=germ_\infty(x+C^v)$ a un bon fixateur (\ref{5.5}.2). Le fixateur de $\g Q_{\pm\infty}=germ_\infty(x\pm{}C^v_f)$ est $HU^\pm$ car tout \'el\'ement de $U^\pm$ est un produit fini d'\'el\'ements de groupes $U_\alpha$ pour $\alpha\in\Phi^\pm$. Par contre $U^{ma+}$ n'est pas la r\'eunion des $U^{ma+}_\Omega$ pour $\Omega\in\g Q_{\infty}$, on n'a sans doute pas d'action de $G^{pma}$ sur $\shi$.

\par 5) L'appartement $\A$ lui-m\^eme a un bon fixateur $G_\A=H$. Par d\'efinition le stabilisateur de $\A$ est donc $G(\A\subset\A)=N$. Comme le corps $K$ est infini, il r\'esulte alors de \ref{1.6}.4 que les appartements de $\shi$ sont en bijection avec les sous-tores d\'eploy\'es maximaux de $\g G$.

\par 6) Un mur $M(\alpha,k)$ a un bon fixateur: soit $x\in M(\alpha,k)$ et $\xi$ dans une cloison vectorielle de Ker$\alpha$, alors $M(\alpha,k)$ est r\'eunion croissante des $cl(\{x-n\xi,x+n\xi\})$ dont le support $M(\alpha,k)$ a un fixateur fini ($\{1,r_{\alpha,k}\}$) dans $W_{Y\Lambda}$, on conclut gr\^ace \`a 1) ci-dessus et \ref{5.5}.3. Ce r\'esultat s'\'etend au cas du support d'une facette sph\'erique. Le (bon) fixateur de $M(\alpha,k)$ est $U_{\alpha,k}.U_{-\alpha,k}.\{1,r_{\alpha,k}\}.H$.

\par 7) Un demi-appartement $D(\alpha,k)$ est l'enclos de deux quartiers dont des cloisons sont dans $M(\alpha,k)$ et oppos\'ees, d'apr\`es 3) ci-dessus et \ref{5.5}.1,  \ref{5.5}.4, il a un bon fixateur, qui est \'egal \`a $U_{\alpha,k}.H$.

\par 8) Si $x_1\,{\bc\le}\,x_2$ \ie si $x_2-x_1\in\sht^\circ$, alors $cl(F(x_1,F_1^v),F(x_2,F_2^v))$ a un bon fixateur pour toutes facettes vectorielles $F_1^v$ et $F_2^v$ de signes quelconques: d'apr\`es 2) ci-dessus on peut appliquer la remarque \ref{5.6} aux facettes locales $F^\ell(x_1,F_1^v)$ et $F^\ell(x_2,F_2^v)$, on conclut gr\^ace \`a \ref{5.5}.1.

\par 9) L'enclos $cl(F(x,F_1^v),F(x,F_2^v))$ a un bon fixateur  d\`es que les facettes  vectorielles $F_1^v$ et $F_2^v$ sont de signe oppos\'es ou si l'une d'elles est sph\'erique. En effet le premier cas est clair par 2) ci-dessus et la remarque \ref{5.6}. Si la facette vectorielle $F_1^v$ est sph\'erique, elle est contenue dans le c\^one de Tits ouvert $\sht^\circ$ (ou $-\sht^\circ$), il existe donc une chambre $C^v$ et un $\xi\in F_1^v$ tels que, $\forall t>0$, $\overline{C^v}$ contient $t\xi-\shv_t$ pour un voisinage $\shv_t$ de $0$ dans $F_2^v$. Alors $F^\ell(x,F_1^v)\subset x+\overline{C^v}$ et $F^\ell(x,F_2^v)\subset F^\ell(x,F_1^v)-\overline{C^v}$. D'apr\`es 2) ci-dessus et la remarque \ref{5.6} $F^\ell(x,F_2^v)\cup F^\ell(x,F_1^v)$ a un bon fixateur et on conclut par \ref{5.5}.1.

\subsection{Intersection d'appartements}\label{5.7b}

\par Soient $A=g.A(T)=g.\A$ un appartement de $\shi$ et $x,y\in A$, la relation $g^{-1}x\leq{}g^{-1}y$ (resp. $g^{-1}x\,{\bc\leq{}}\,g^{-1}y$) dans $\A$ ne d\'epend pas du choix de $g$ d'apr\`es \ref{5.7}.5, car $N$ stabilise le c\^one de Tits et son int\'erieur; on note cette relation $x\leq{}_Ay$ (resp. $x\,{\bc\leq{}}_A \,y$).

\par Soient $A_1$, $A_2$ deux appartements de $\shi$ et $x,y\in A_1\cap A_2$ tels que $x\leq{}_{A_1}y$. Alors $x\leq{}_{A_2}y$ (car $\{x,y\}$ a un bon fixateur). On d\'efinit donc ainsi une relation $\leq{}$ (et aussi $\bc\leq{}$) sur $\shi\times\shi$; on verra en \ref{5.14} que c'est un pr\'eordre.  Il r\'esulte de \ref{5.5}.1 que $A_1\cap A_2$ contient $cl(\{x,y\})$ (calcul\'e dans $A_1$ ou $A_2$) et en particulier le segment $[x,y]$ (qui est le m\^eme dans $A_1$ et $A_2$); on dit qu'une intersection d'appartements est {\it convexe pour le pr\'eordre} $\leq{}$.

\subsection{Extension centrale finie du groupe}\label{5.8}

\par Contrairement aux conventions depuis 4.0, on va modifier le SGR.

\par Soit $\varphi:\shs\rightarrow\shs'$ un morphisme de SGR libres qui est une extension centrale finie. On abr\'egera $\g G_\shs$ en $\g G$, $\g G_{\shs'}$ en $\g G'$, $\g G_\varphi$ en $\varphi$, etc.

\par Alors $Y$ est un sous-module de $Y'$ de m\^eme rang, donc $V=V'$ et, comme $I=I'$, on a $\g g=\g g'$, $\Phi=\Phi'$. Le morphisme $\varphi: \g G\rightarrow\g G'$ est d\'ecrit en \ref{1.10} et \ref{1.13}, en particulier $G'=T'.\varphi(G)$, $\varphi^{-1}(N')=N$ et le noyau de $\varphi$ est dans $T$ et m\^eme dans $H$ puisqu'il est fini.

\par Par construction (\ref{4.2}) les appartements affines $\A$ et $\A'$ sont identiques, les actions $\nu$ et $\nu'$ de $N$ et $N'$ sont compatibles et on a les m\^emes murs puisque pour $\alpha\in\Phi$, $\g G_\varphi\circ\g x_\alpha=\g x'_\alpha$ et $\g G_\varphi(\g N)\subset\g N'$. Les images $\nu(N)=W_{Y\Lambda}=W^v\ltimes(Y\otimes\Lambda)$ et $\nu'(N')=W_{Y'\Lambda}=W^v\ltimes(Y'\otimes\Lambda')$ sont en g\'en\'eral diff\'erentes (\ref{4.2}.6), mais on a $T=\varphi^{-1}(T')$ et $N'=\varphi(N)T'$.

\par Pour $\Omega\subset\A$, les alg\`ebres de Lie $\g g_{\Omega}$ et $\g g_{\Omega}'$ ont les m\^emes composantes de poids non nul, \cf \ref{2.2} (ce n'est pas vrai en poids nul: si $K$ est de caract\'eristique positive l'application de $\g h_K$ dans $\g h'_K$ peut m\^eme ne pas \^etre injective).
Ainsi l'isomorphisme $\varphi$ de $U^{ma+}$ sur $U'^{ma+}$ (\ref{3.18}.3) induit un isomorphisme  de $U^{ma+}_\Omega$ sur $U'^{ma+}_\Omega$ et aussi de $U^{pm+}_\Omega$ sur $U'^{pm+}_\Omega$, on identifie ces groupes.
On a les r\'esultats sym\'etriques pour $U^{nm-}_\Omega$ et $U'^{nm-}_\Omega$, etc. Comme les actions de $N$ et $N'$ sur $\A$ sont compatibles et Ker$\varphi\subset T$, on a $\widehat{N}_{\Omega}=\varphi^{-1}(\widehat{N}'_{\Omega})$.
Pour $x\in \A$, on a donc $\widehat{P}'_x=U_x^{pm+}.U_x^{nm-}.\widehat{N}'_x=\varphi(\widehat{P}_x).\widehat{N}'_x$ et $\varphi^{-1}(\widehat{P}'_x)=\widehat{P}_x$ car Ker$\varphi\subset H\subset \widehat{N}_x$.

\par Les relations ci-dessus permettent facilement de montrer que l'application $\varphi\times Id:G\times\A\rightarrow G'\times\A$ passe au quotient en une bijection de la masure $\shi$ sur la masure $\shi'$. Cette bijection est compatible avec $\varphi$ et les actions de $G$ et $G'$; elle \'echange les facettes. Comme $G'=\varphi(G)T'$ et $N=\varphi^{-1}(N')$ les appartements de $\shi$ et $\shi'$ sont les m\^emes. On identifie ces deux masures.

\begin{prop}\label{5.9} Soient $\varphi:\shs\rightarrow\shs'$  une extension centrale finie de SGR libres et $\Omega$ un filtre de parties de $\A$. Alors $\Omega$ a un bon (ou assez bon) fixateur $G_\Omega$ pour $G=G_\shs$ si et seulement si il en a un $G'_\Omega$ pour $G'=G_{\shs'}$. Dans ce cas  $G'_\Omega=\varphi(G_\Omega).\widehat{N}'_\Omega$.
\end{prop}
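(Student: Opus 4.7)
The idea is to exploit the setup of \ref{5.8}: the masures $\shi$ and $\shi'$ have already been identified as $G$-sets (with $G$ acting through $\varphi$), the apartments $\A = \A'$ coincide with the same walls, and $\varphi$ restricts to isomorphisms $U^{pm+}_\Omega \xrightarrow{\sim} U'^{pm+}_\Omega$ and $U^{nm-}_\Omega \xrightarrow{\sim} U'^{nm-}_\Omega$. Moreover, $\widehat N_\Omega = \varphi^{-1}(\widehat N'_\Omega)$ and $\ker\varphi \subset H \subset \widehat N_\Omega$, so that for every filter $\Omega$ of parts of $\A$ one has $G_\Omega = \varphi^{-1}(G'_\Omega)$, hence automatically $\varphi(G_\Omega) \subset G'_\Omega$; combined with $\widehat N'_\Omega \subset G'_\Omega$ this gives the easy inclusion $\varphi(G_\Omega) \cdot \widehat N'_\Omega \subset G'_\Omega$.

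The first step is to prove the direct implication: if $\Omega$ has a good (resp.\ almost good) fixator for $G$, then it has one for $G'$ and the stated formula holds. Assuming (GF+) on the $G$-side, apply $\varphi$ to the decomposition $G_\Omega = U^{pm+}_\Omega \cdot U^{nm-}_\Omega \cdot \widehat N_\Omega$ and use $\varphi(\widehat N_\Omega) \subset \widehat N'_\Omega$ to obtain
\[
\varphi(G_\Omega) \cdot \widehat N'_\Omega = U'^{pm+}_\Omega \cdot U'^{nm-}_\Omega \cdot \widehat N'_\Omega \subset G'_\Omega.
\]
The reverse inclusion (which is both (GF'+) and the formula) is obtained by taking $g' \in G'_\Omega$, writing $g' = \varphi(g) t'$ with $g \in G$, $t' \in T'$ thanks to $G' = \varphi(G) T'$ (\ref{1.13}), and absorbing the translation part $t'$ into the $N'$-factor: since $T' \subset N'$ normalises each $U'_\alpha = \varphi(U_\alpha)$ and $\varphi(H) \subset H' \subset \widehat N'_\Omega$, the element $t'$ may be adjusted modulo $\widehat N'_\Omega$ into a translation $\nu'(t'') \in Y \otimes \Lambda$, at which point the image $\nu(n_0) = \nu'(t'')$ for some $n_0 \in N$ reduces the question to the (GF+)/(TF) decomposition of $G_{\Omega + \nu(n_0)} = n_0 G_\Omega n_0^{-1}$ already known. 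The (TF) implication is handled similarly: if $g' \Omega \subset \A$, then after extracting $t' \in T' \subset N'$ one reduces to $g \cdot (\Omega + \nu'(t'')) \subset \A$ with $\nu'(t'') \in Y\otimes\Lambda$, hence to (TF) for the $N$-translate of $\Omega$. The arguments for (GF$-$) are symmetric.

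For the converse implication, one uses that $\varphi$ is a group epimorphism onto $\varphi(G) \triangleleft G'$ with $\ker\varphi$ finite central in $H$. If $\Omega$ has (GF'+) for $G'$, take $g \in G_\Omega$; then $\varphi(g) \in G'_\Omega = U'^{pm+}_\Omega U'^{nm-}_\Omega \widehat N'_\Omega$, and since each factor has an inverse image under $\varphi$ precisely given by $U^{pm+}_\Omega$, $U^{nm-}_\Omega$, $\widehat N_\Omega$ (using the isomorphisms and $\varphi^{-1}(\widehat N'_\Omega) = \widehat N_\Omega$), the decomposition lifts to $g \in U^{pm+}_\Omega U^{nm-}_\Omega \widehat N_\Omega \cdot \ker\varphi$, which is contained in $U^{pm+}_\Omega U^{nm-}_\Omega \widehat N_\Omega$ because $\ker\varphi \subset \widehat N_\Omega$. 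The (TF) statement for $G$ transports back similarly: if $g \in G$ satisfies $g\Omega \subset \A$, then $\varphi(g) \in G'(\Omega \subset \A) = N' G'_\Omega$, and pulling back the decomposition through the isomorphisms of radicals yields $g \in N G_\Omega$. The main obstacle is to keep track, in the argument lifting elements from $G'$ through $\varphi$, that the potentially nontrivial quotient $T'/\varphi(T)$ (which by the N.B.\ after \ref{1.13} may be genuinely nontrivial over non algebraically closed fields) is absorbed into $\widehat N'_\Omega$ rather than contributing extra elements to $G'_\Omega$ outside $\varphi(G_\Omega) \widehat N'_\Omega$; this is precisely controlled by the fact that $T' \subset N'$ and that the finite piece $T'/\varphi(T)$ acts by translations already covered by translating elements of $N'$.
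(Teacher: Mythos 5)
Your converse direction (passing from $G'$ to $G$) is essentially the paper's own argument: since the masures are identified and the actions are compatible, $G_\Omega=\varphi^{-1}(G'_\Omega)$, and pulling $G'_\Omega=U^{pm+}_\Omega U^{nm-}_\Omega\widehat N'_\Omega$ back through $\varphi^{-1}(\widehat N'_\Omega)=\widehat N_\Omega$ (with $\mathrm{Ker}\,\varphi\subset H$) gives (GF$\pm$) for $G$; for (TF) your "pulling back through the isomorphisms of radicals" needs one small normalisation (write $n'u_+u_-m'=(n'm')(m'^{-1}u_+u_-m')$, so that the $N'$-factor is seen to lie in $\varphi(G)\cap N'=\varphi(N)$), but the idea is sound.

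The genuine gap is in the direct direction. You write $g'=\varphi(g)t'$ with $t'\in T'$ and claim $t'$ "may be adjusted modulo $\widehat N'_\Omega$ into a translation $\nu'(t'')\in Y\otimes\Lambda$". This is impossible in general: if $t'=t''n'$ with $t''\in T'$ and $n'\in\widehat N'_\Omega$, then $\nu'(n')$ is a pure translation fixing $\Omega$, hence trivial, so $n'\in H'$ and $\nu'(t'')=\nu'(t')$; thus the class of $\nu'(t')$ in $(Y'\otimes\Lambda)/(Y\otimes\Lambda)$ cannot be changed, and this quotient is nontrivial precisely in the interesting cases (e.g.\ discrete valuation with $Y\subsetneq Y'$, cf.\ the N.B.\ after \ref{1.13}); elements $g'\in\widehat N'_\Omega$ whose vectorial part is nontrivial already give counterexamples to your adjustment. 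Even granting a reduction to a translate $\Omega+v$ with $v\in Y'\otimes\Lambda\setminus Y\otimes\Lambda$, the hypothesis on $\Omega$ only transports to $N$-translates: conjugation by $t'\in T'$ does not visibly preserve the groups $\widehat P_x$ of $G$, which are defined via the $G$-modules $L(\lambda)$, $\lambda\in X^+$, on which $T'$ need not act (the weights lie in $X$, not in $X'$). So the step "reduces the question to $G_{\Omega+\nu(n_0)}=n_0G_\Omega n_0^{-1}$" is not available, and your closing sentence claiming the finite piece $T'/\varphi(T)$ is "already covered by translating elements of $N'$" does not repair it. The paper's route avoids translates altogether: assuming the $G$-side hypotheses, take $g'\in G'(\Omega\subset\A)$, multiply on the \emph{left} by $t'\in T'$ to land in $\varphi(G)$ (this does not move $\Omega$, since $T'$ stabilises $\A$), then use $\widehat P'_x=\varphi(\widehat P_x)\widehat N'_x$ and $N'\cap\varphi(G)=\varphi(N)$ to get $N'\widehat P'_x\cap\varphi(G)=\varphi(N\widehat P_x)$, hence by (TF) for $G$: $G'(\Omega\subset\A)\subset T'\varphi(N)U^{nm-}_\Omega U^{pm+}_\Omega\subset N'G'_\Omega$; finally, for $g'\in G'_\Omega$, peel off the unipotent factors (they lie in $G'_\Omega$) and use $N'\cap G'_\Omega=\widehat N'_\Omega$ to obtain $G'_\Omega=\widehat N'_\Omega U^{nm-}_\Omega U^{pm+}_\Omega=\varphi(G_\Omega)\widehat N'_\Omega$. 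You should replace your absorption step by this argument.
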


\begin{proof} Comme les actions de $G$ et $G'$ sur $\shi$ sont compatibles, on a $\varphi^{-1}(G'_\Omega)=G_\Omega$.

\par Supposons que $G'_\Omega$ est un (assez) bon fixateur (pour le signe $+$). On a $G'_\Omega=U_\Omega^{pm+}.U_\Omega^{nm-}.\widehat{N}'_\Omega$, donc $G_\Omega=\varphi^{-1}(G'_\Omega)=U_\Omega^{pm+}.U_\Omega^{nm-}.\varphi^{-1}(\widehat{N}'_\Omega)=U_\Omega^{pm+}.U_\Omega^{nm-}.\widehat{N}_\Omega$, d'o\`u (GF+) pour $\Omega$ et $G$.
Soit $\Omega'\in\Omega$, $G(\Omega'\subset\A)=\cap_{x\in\Omega'}\,N\widehat{P}_x\subset\varphi^{-1}(\cap_{x\in\Omega'}\,N'\widehat{P}'_x)\subset\varphi^{-1}(N'G'_{\Omega})=\varphi^{-1}(N'.U_\Omega^{nm-}.U_\Omega^{pm+})=\varphi^{-1}(N').U_\Omega^{nm-}.U_\Omega^{pm+}=N.G_{\Omega}$. Donc (TF) est satisfait par le filtre $\Omega$ et le groupe $G$.

\par Supposons que $G_\Omega$ est un (assez) bon fixateur (pour le signe $+$). Soit $g'\in G'(\Omega\subset\A)$, quitte \`a multiplier \`a gauche $g'$ par un \'el\'ement de $T'$ on peut supposer $g'\in\varphi(G)$. Donc il existe
$\Omega'\in \Omega$ tel que $g'\in (\cap_{x\in\Omega'}\,N'\widehat{P}'_x)\cap\varphi(G)=\cap_{x\in\Omega'}\,(N'\widehat{P}'_x\cap\varphi(G))=\cap_{x\in\Omega'}\,(N'\cap\varphi(G)).\varphi(\widehat{P}_x)=\cap_{x\in\Omega'}\,\varphi(N).\varphi(\widehat{P}_x)=\varphi(\cap_{x\in\Omega'}\,N\widehat{P}_x)=\varphi(N.G_\Omega)=\varphi(N).U_\Omega^{nm-}.U_\Omega^{pm+}\subset N'G'_\Omega$. D'o\`u (TF) pour $\Omega$ et $G'$.

\par Pour $g'\in G'_\Omega$, ce calcul montre que $g'\in T'.\varphi(N).U_\Omega^{nm-}.U_\Omega^{pm+}=N'.U_\Omega^{nm-}.U_\Omega^{pm+}$. Mais $U_\Omega^{nm-}$, $U_\Omega^{pm+}$ sont dans $G'_\Omega$, donc $g'\in(N'\cap G'_\Omega).U_\Omega^{nm-}.U_\Omega^{pm+}=\widehat{N}'_\Omega.U_\Omega^{nm-}.U_\Omega^{pm+}=\widehat{N}'_\Omega.\varphi(G_\Omega)$. D'o\`u (GF+) pour $\Omega$ et $G'$, plus la derni\`ere assertion de l'\'enonc\'e.
\end{proof}

\subsection{Passage au simplement connexe}\label{5.10}

\par 1) D'apr\`es \ref{1.3} on a une suite d'extensions commutatives de SGR $\shs_A\rightarrow \shs^1\hookrightarrow \shs^s\rightarrow\shs$ qui sont successivement centrale torique, semi-directe et centrale finie. On note $G^A$, $G^1$, $G^s$, $G$ les groupes correspondants et $\psi$ le compos\'e des morphismes $G^A\rightarrow G^1\hookrightarrow G^s\rightarrow G$.

\par D'apr\`es \ref{5.8}, $G^s$ et $G$ ont la m\^eme masure $\shi$. Par contre $\shs_A$ et $\shs^1$ ne sont en g\'en\'eral pas libres, on consid\`ere les actions de $G^A$ et $G^1$ sur $\shi$ via leurs morphismes dans $G$.

\par 2) On a $G=\psi(G^A).T$ (\ref{1.8}.3); on en d\'eduit aussit\^ot que le groupe simplement connexe $G^A$ permute transitivement les appartements de $\shi$. Le stabilisateur de $\A$ dans $G^A$ est le groupe $\psi^{-1}(N)=N^A=N_{\shs_A}$ (\ref{1.10}) et, d'apr\`es la construction de \ref{4.2}, $\nu(N^A)=W=W^v\ltimes(Q^\vee\otimes\Lambda)$. Ainsi tout appartement de $\shi$ est muni d'une unique structure d'appartement de type $\A$ telle que $G^A$ induise des isomorphismes d'appartements (au sens de \cite[1.13]{Ru-10}).

\par 3) Dans \cite[6.1 et 6.2]{Ru-10} on se place dans le cas $\shs=\shs^s$ (libre) et le groupe $G_1$ qui y est d\'efini est \'egal \`a $G^1.H=\psi(G^A).H$.

\subsection{Paires conviviales}\label{5.7c}

\par 1) On dit qu'une paire $(F_1,F_2)$ form\'ee de deux filtres de parties de $\shi$ est {\it conviviale} s'il existe un appartement  contenant ces deux filtres et si deux appartements $A,A'$ contenant $F_1,F_2$ sont isomorphes par un isomorphisme fixant l'enclos de $F_1$ et $F_2$ (calcul\'e dans $A$ ou $A'$).

\par On ne consid\'erera ici que des paires $G-${\it conviviales}, \ie  telles que les isomorphismes d'appartements soient induits par des \'el\'ements de $G$.
Une paire $(F_1,F_2)$ est donc $G-$conviviale d\`es que $\QO=F_1\cup F_2$ est contenu dans un appartement et y a un assez bon fixateur (car $G_\QO=U_\QO^{pm+}.U_\QO^{nm-}.\widehat N_\QO$ et $U_\QO^{pm+},\,U_\QO^{nm-}\subset\psi(G^A)$ induisent des isomorphismes d'appartements).

\begin{enonce*}{\quad2) Lemme} Soient $F_1,F_2$ deux filtres de parties de $\A$.

\par Supposons $G=G_{F_1}.N.G_{F_2}$, alors pour tous $g_1,g_2\in G$, $g_1F_1$ et $g_2F_2$ sont contenus dans un m\^eme appartement.

\par Si $G=G_{F_1}.N.G_{F_2}$ ou si $F_1$ ou $F_2$ a un fixateur transitif, alors $F_1$ et $F_2$ sont conjugu\'es par $G$ si et seulement si ils le sont par $N$.
\end{enonce*}

\begin{proof} Les cons\'equences de $G=G_{F_1}.N.G_{F_2}$ sont classiques, \cf \eg \cite[3.6 et 3.7]{Ru-06}. La derni\`ere assertion r\'esulte de \ref{5.4} et \ref{5.7}.5.
\end{proof}

\par 3) Il r\'esulte donc de la d\'ecomposition d'Iwasawa \ref{4.7} et de \ref{5.7}.4 qu'un filtre \'etroit dans un appartement et un germe de quartier sont toujours dans un m\^eme appartement. D'apr\`es \ref{5.7} et \ref{5.5}.4 une facette (ou un germe de segment, un germe d'intervalle, une facette locale, une facette ferm\'ee) et un germe de quartier forment une paire $G-$conviviale.

\par 4) Par convexit\'e ordonn\'ee un appartement contenant un point $x$ et un germe de quartier $\g Q=germ_\infty(y+C^v)$, contient le quartier $\g q=x+C^v$ et son enclos donc son adh\'erence $x+\overline C^v$. On en d\'eduit qu'une facette $F(x,F^v)$ et un filtre \'etroit $F$ contenant $x$ sont toujours dans un m\^eme appartement. Ainsi, d'apr\`es \ref{5.7}.9, deux facettes $F(x,F_1^v)$ et $F(x,F_2^v)$ en le m\^eme point forment une paire $G-$conviviale d\`es que $F_1^v$ et $F_2^v$ sont de signes oppos\'es ou si l'une d'elles est sph\'erique.

\par 5) Soient $C=F(x,C^v)$ une chambre de $\A$ et $M(\alpha,k)$ l'un de ses murs (avec $C\subset D(\alpha,k)$). Alors $U_{\alpha,k}=U_{\alpha,C}$ agit transitivement sur les chambres $C'\not=C$ adjacentes \`a $C$ le long de $M(\alpha,k)$; en particulier n'importe laquelle de ces chambres $C'$ est dans un m\^eme appartement que le demi-appartement $D(\alpha,k)$, \cf \cite[4.3.4]{GR-08}. De plus $C'$ et $D(\alpha,k)$ forment une paire conviviale: si $C'\subset\A$, $C'\cup D(\alpha,k)$ a un bon fixateur $H.U_{\alpha,k^+}$ (\ref{5.7}.7).

\par 6) On a vu en \cite[6.10]{GR-08} que deux points de la masure $\shi$ ne sont pas toujours  contenus dans un m\^eme appartement. C'est pour cette raison que l'on a abandonn\'e le nom d'immeuble pour $\shi$; une d\'efinition abstraite des masures affines n'est pas possible dans des termes approchant ceux de \cite{Ru-08}, d'o\`u la d\'efinition de \cite{Ru-10} inspir\'ee de \cite{T-86a}. On reproduit ci-dessous cette d\'efinition dans une formulation utilisant la notion de paire conviviale.

Une paire de points ou une paire de facettes de $\shi$ n'est pas toujours conviviale. De plus dans l'exemple \ref{4.12}.3c de $\widetilde{SL}_2$, on a trouv\'e deux points de $\A$ dont le fixateur n'est pas assez bon. Par contre il est peut-\^etre possible que le fixateur de deux points de $\A$ soit toujours transitif (donc qu'une paire de points d'un m\^eme appartement soit toujours conviviale).

\begin{defi}\label{5.7d}
Une {\it masure affine} de type $\A$ est un ensemble $\mathcal I$ muni d'un recouvrement par un ensemble $\mathcal A$ de sous-ensembles appel\'es {\it appartements} tel que:

\par (MA1) Tout $A\in \mathcal A$ est muni d'une structure d'appartement de type $\A$ (au sens de \cite[1.13]{Ru-10} \ie est "isomorphe" \`a $\A$).

\par (MA2) Si $F$ est un point, un germe d'intervalle pr\'eordonn\'e, une demi-droite g\'en\'erique ou une chemin\'ee solide d'un appartement $A$, alors $(F,F)$ est une paire conviviale.

\par (MA3+4) Si $\mathfrak R$ est un germe de chemin\'ee \'evas\'ee, si $F$  est une facette  ou un germe de chemin\'ee solide, alors $(\mathfrak R,F)$ est une paire conviviale.

\qquad(La notion de chemin\'ee, utilis\'ee ci-dessus, ne sera d\'efinie qu'en \ref{5.12}.1.)

\noindent La masure affine $\mathcal I$ est dite {\it ordonn\'ee} si elle v\'erifie l'axiome suppl\'ementaire suivant:

\par (MAO) Soient $x,y$ deux points de $\mathcal I$ et $A,A'$ deux appartements les contenant; si $x\le  y$ (dans $A$), alors les segments $[x,y]_A$ et $[x,y]_{A'}$ d\'efinis par $x$ et $y$ dans $A$ et $A'$ sont \'egaux.

\par La masure affine $\mathcal I$ est dite {\it \'epaisse} si toute cloison de $\mathcal I$ est dans l'adh\'erence d'au moins trois chambres.

\end{defi}

\subsection{Sous-groupes parahoriques et types de facettes}\label{5.11}

\par 1) Si $\Omega\subset\A$ a un assez bon fixateur, on a $G_\Omega=(\psi(G^A)\cap G_\Omega)\widehat N_\Omega$, puisque $U_\Omega^{pm+}$, $U_\Omega^{nm-}$, ... ne d\'ependent pas du SGR \`a extension commutative pr\`es. En particulier le fixateur $G_\Omega^A$ de $\Omega$ dans $G^A$ agit transitivement sur les appartements de $\shi$ contenant $\Omega$. On note $\widehat P^{sc}_\Omega=\psi(G^A_\Omega)H$. Ainsi $G_\Omega=\widehat P_\Omega=\widehat P^{sc}_\Omega.\widehat N_\Omega$. Le groupe $\widehat N^{sc}_\Omega=N\cap \widehat P^{sc}_\Omega=\widehat N_\Omega\cap \widehat P^{sc}_\Omega$ a pour image via $\nu$ le fixateur $W_\Omega$ de $\Omega$ dans $W$. On a aussi $\widehat P^{sc}_\Omega=P_\Omega^{pm}.\widehat N^{sc}_\Omega=P_\Omega^{nm}.\widehat N^{sc}_\Omega$.

\par  Il est clair que les facettes $F=F(x,F^v)$ et $\overline F=\overline F(x,F^v)$ ont le m\^eme fixateur dans $W$, donc $\widehat P^{sc}_{F}=\widehat P^{sc}_{\overline F}$.

\par 2) Quand $W_\Omega=W_\Omega^{min}$, on note $P_\QO=\widehat P^{sc}_\Omega$ (qui est alors aussi \'egal \`a $P_\Omega^{pm}$ et $P_\Omega^{nm}$). Cela se produit si $\Omega$ est r\'eduit \`a un point sp\'ecial ou si $\Omega$ est une facette sph\'erique et si la valuation est discr\`ete (d'apr\`es  \ref{4.3}.4); c'est a priori rare en dehors de ces deux cas d'apr\`es \cite[7.1.10.2]{BtT-72} et \ref{4.12}.4.

\par Pour g\'en\'eraliser la d\'efinition de \cite[1.5.1]{BtT-72}, on d\'efinit un {\it sous-groupe parahorique} comme le groupe $P_F=P_{\overline F}$ associ\'e \`a une facette sph\'erique $F$ ou $\overline F$. On parle de {\it sous-groupe d'Iwahori} si $F$ ou $\overline F$ est une chambre.



\par 3) Le {\it type global affine} d'un filtre de $\shi$ qui a un assez bon fixateur (en particulier une facette) est son orbite sous $G^A$. Dans un appartement $A$ de $\shi$ deux filtres (assez bien fix\'es) ont m\^eme type si et seulement si ils sont conjugu\'es par le groupe de Weyl $W_A$ de $A$ (\cf le 1), \ref{5.7c}.2 et \ref{5.10}.2).
 Pour les facettes (ou les filtres \'etroits assez bien fix\'es) la relation "avoir le m\^eme type global affine" est la relation engendr\'ee par transitivit\'e \`a partir des relations pr\'ec\'edentes dans les appartements. En effet, \'etant donn\'ees deux facettes $F_1,F_2$ il existe toujours un quartier $\g q$ et des appartements $A_1,A_2$ contenant $\g q$ et respectivement $F_1,F_2$ (\cf \ref{5.7c}.3); de plus $\g q$ contient forc\'ement un $W_{A_i}-$conjugu\'e de $F_i$.

 \par Le type global affine d'une facette co\"{\i}ncide donc avec le type habituel dans le cas d'un immeuble affine discret. Mais en valuation dense et si on suppose $\A$ essentiel, le type global affine d'une facette locale $F^\ell(x,F^v)$ est la donn\'ee de l'orbite $W.x$ de $x$ et de l'orbite de $F^v$ sous $W_x$ (c'est \`a dire du type (vectoriel) de $F^v$ si $x$ est sp\'ecial).

\subsection{Chemin\'ees}\label{5.12}

\par 1) Une {\it chemin\'ee} dans $\A$ est associ\'ee \`a une facette $F=F(x,F^v_0)$ (sa base) et une facette vectorielle $F^v$ (sa direction), c'est le filtre $\g r(F,F^v)=cl(F+F^v)=cl(\overline F+F^v)=cl(F^\ell(x,F^v_0)+F^v)\supset \overline F+\overline F^v$.

\par La chemin\'ee $\g r(F,F^v)$ est dite {\it \'evas\'ee}  si $F^v$ est sph\'erique, son signe est alors celui de $F^v$. Cette chemin\'ee est dite {\it solide} (resp. {\it pleine}) si la direction de tout sous-espace affine la contenant a un fixateur fini dans $W^v$ (resp. est $V$). Une chemin\'ee \'evas\'ee est solide. L'enclos d'un quartier est une chemin\'ee pleine.

\par Si $F_0^v=F^v$ la chemin\'ee $\g r(F(x,F^v),F^v)$ est l'enclos de la face de quartier $x+F^v$; cette face est dite sph\'erique si $F^v$ est sph\'erique..

\par Un raccourci de la chemin\'ee $\g r(F,F^v)$ est d\'efini par un \'el\'ement $\xi\in\overline F^v$, c'est la chemin\'ee $cl(F+\xi+F^v)$. Le {\it germe} de la chemin\'ee $\g r(F,F^v)$ est le filtre $\g R(F,F^v)=germ_\infty(\g r(F,F^v))$ form\'e des parties de $\A$ contenant un de ses raccourcis.

\par Si $F^v$ est la facette vectorielle minimale, on a $\g r(F,F^v)=\g R(F,F^v)=F$.

\medskip
\par 2) \`A la facette vectorielle $F^v$ est associ\'e un sous-groupe parabolique $P(F^v)$ de $G$ avec une d\'ecomposition de Levi $P(F^v)=M(F^v)\ltimes U(F^v)$. Le groupe $M(F^v)$ est engendr\'e par $T$ et les $U_\alpha$ pour $\alpha\in\Phi$ et $\alpha(F^v)=0$, \cf \cite[6.4]{Ru-10} ou \cite[6.2]{Ry-02a}.

\par Si $F^v=F^v(J)=\{v\in \overline C^v_f\mid \alpha_j(v)=0,\forall j\in J\}$ pour $J\subset I$ (\cf \ref{4.1}), le groupe $M(F^v)$ est clairement un quotient du groupe de Kac-Moody minimal $\g G_{\shs(J)}(K)=G(J)$ (notations de \ref{3.10}). D'apr\`es la remarque \ref{3.10} et \ref{3.13} on a en fait \'egalit\'e: $M(F^v)=G(J)$.
Ce groupe $G(J)$ induit sur $\A$ une structure d'appartement $\A(J)$ dont les murs sont les $M(\alpha,k)$ pour $\alpha\in \Delta(J)$ et $k\in\Lambda$.

\par L'enclos correspondant $cl_J(F)$ de la facette $F$ est une facette ferm\'ee de $\A(J)$, on note $M(F,F^v)$ son fixateur dans $M(F^v)=G(J)$. Cette construction se r\'ealise aussi si $F^v$ est une autre facette vectorielle ou si $F$ est r\'eduit \`a un point $F=\{x\}$ (ou $F=F(x,F^v)$).

\par Le sous-groupe "parahorique" de $G$ associ\'e \`a $\g r$ est le produit semi-direct $P^\mu(\g r)=M(F,F^v)\ltimes U(F^v)$. Il ne d\'epend en fait que du germe $\g R(F,F^v)$, on le note donc aussi $P^\mu(\g R)$.

\par Les r\'esultats suivants se d\'emontrent comme dans \cite[{\S{}} 6]{Ru-10}.

\begin{enonce*}[plain]{\quad3) Proposition} Le groupe $P^\mu(\g R)$ fixe (point par point) le germe de chemin\'ee $\g R$.
\end{enonce*}

\begin{enonce*}[plain]{\quad4) Proposition} Soient $\g R_1$ et $\g R_2$ deux germes de chemin\'ees de $\A$ avec $\g R_1$ \'evas\'e, alors  $G=P^\mu(\g R_1).N.P^\mu(\g R_2)$.
\end{enonce*}

\begin{enonce*}[plain]{\quad5) Proposition} Une chemin\'ee solide et son germe ont de bons fixateurs. Il en est de m\^eme pour une face de quartier sph\'erique $x+F^v$ et son germe (\`a l'infini); le fixateur (point par point) de $germ_\infty(x+F^v)$ est $M(x,F^v).U(F^v)$.
\end{enonce*}

\begin{enonce*}[plain]{\quad6) Proposition} Soient $\g R_1$ un germe de chemin\'ee \'evas\'ee et $\g R_2$ un germe de chemin\'ee solide ou une facette dans $\A$, alors $\g R_1\cup\g R_2$ a un assez bon fixateur (et m\^eme un bon fixateur si $\g R_2$ est \'evas\'e).
\end{enonce*}

\begin{enonce*}[definition]{\quad7) Remarque} On obtient donc de nouvelles paires $G-$conviviales dans $\shi$: un germe de chemin\'ee \'evas\'ee et un germe de chemin\'ee solide ou un germe de chemin\'ee \'evas\'ee et une facette. Cela permet en particulier de d\'efinir des r\'etractions de $\shi$ sur un appartement $A$ avec pour centre un germe de chemin\'ee \'evas\'ee pleine de $A$ (par exemple un germe de quartier) \cite[2.6]{Ru-10}.
\end{enonce*}

\begin{theo}\label{5.13} La masure affine $\shi$ construite en \ref{5.1} est une masure affine ordonn\'ee \'epaisse au sens de \cite{Ru-10} ou \ref{5.7d}. Elle est semi-discr\`ete si la valuation $\omega$ de $K$ est discr\`ete.
\end{theo}

\begin{proof} L'axiome (MA1) r\'esulte de \ref{5.10}.2 et (MA2) r\'esulte de ce que l'on a vu en \ref{5.7} ou \ref{5.12}.5: les filtres impliqu\'es dans (MA2) ont de bons fixateurs. Enfin (MA3+4) r\'esulte   de \ref{5.12}.7. Les derni\`eres assertions se montrent comme dans \cite[6.11]{Ru-10}. On notera cependant que l'\'epaisseur d'une cloison est le cardinal du corps r\'esiduel de $K$ et donc peut \^etre finie dans notre cas, plus g\'en\'eral que celui de \cite{GR-08}.
\end{proof}

\begin{remas}\label{5.14}
 1) La masure $\shi$ a donc toutes les propri\'et\'es d\'emontr\'ees dans \cite{Ru-10}, un certain nombre d'entre elles ont d\'ej\`a \'et\'e prouv\'ees ci-dessus. On obtient cependant au moins deux propri\'et\'e int\'eressantes suppl\'ementaires: les relations $\leq{}$ et $\bc\le$ de \ref{5.7b} sont des pr\'eordres (elles sont transitives) et les r\'esidus en chaque point de $\shi$ ont une structure d'immeubles jumel\'es.

 \par 2) On notera les propri\'et\'es de l'action du groupe $G$: il est transitif sur les appartements et tous les isomorphismes entre appartements dont l'existence est exig\'ee par les axiomes de \cite{Ru-10} ou \ref{5.7d} sont induits par des \'el\'ements de $G$. On peut dire que l'action de $G$ est {\it fortement transitive}. Dans le cas classique d'une action de groupe sur un immeuble affine discret, cette notion de forte transitivit\'e entra\^{\i}ne clairement la notion classique (et lui est sans doute \'equivalente).
\end{remas}

\begin{prop}\label{5.15} Les immeubles jumel\'es \`a l'infini associ\'es \`a la masure $\shi$ en \cite[{\S{}} 3]{Ru-10} s'identifient (avec leurs appartements et leur action de $G$) aux immeubles jumel\'es de $G$ d\'efinis par J. Tits (\ref{1.6}.5).

\par L'immeuble microaffine positif (resp. n\'egatif) \`a l'infini associ\'e \`a la masure $\shi$ en \cite[{\S{}} 4]{Ru-10} s'identifie (avec ses appartements et son action de $G$) \`a l'immeuble microaffine de \cite{Ru-06} (dans sa r\'ealisation de Satake) associ\'e \`a $G$ et \`a la valuation $\omega$ de $K$ (resp. l'analogue obtenu en changeant le c\^one de Tits en son oppos\'e).

\end{prop}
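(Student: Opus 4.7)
The plan is to identify, in turn, the combinatorial data, the apartments, and the $G$-actions on both sides. For the twinned buildings at infinity à la \cite{Ru-10}, recall that a chamber of the positive building $\shi^v_+(\shi)$ is a class of sector-germs $\g Q = germ_\infty(x + C^v)$ with $C^v$ a vectorial chamber, two such germs being equivalent when they lie in a common apartment and have the same direction. By \ref{5.7}.4 the fixator in $G$ of $\g Q_{+\infty} = germ_\infty(x + C^v_f)$ is $HU^+ = B^+$, and by \ref{5.7c}.3 (or directly \ref{4.7}) any positive sector-germ is $G$-conjugate to $\g Q_{+\infty}$. Hence the positive chambers form a $G$-set isomorphic to $G/B^+$, matching Tits's combinatorial building. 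To identify the entire simplicial/chamber structure, I would proceed facet by facet: for a vectorial facet $F^v$, the fixator of $germ_\infty(x + F^v)$ is $P(F^v)$ (\ref{5.12}.5), which is precisely the parabolic subgroup giving the corresponding facet of Tits's building; adjacency and incidence are preserved because they are detected inside a single apartment, where both structures coincide with the Coxeter complex of $(W^v, S)$. Transport of apartments: the standard apartment of $\shi^v_+(\shi)$ coming from $\A$ is the $W^v$-apartment (germs of sectors based at $0$), which is exactly the fundamental apartment of $\shi^v_+$ in \ref{1.6}.5 and in \ref{3.17b}. Forte transitivity (\ref{5.14}.2) guarantees that the $G$-action transports apartments correctly, so the identification is $G$-equivariant.

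For the twinning relation, two chambers $\g Q_+ = germ_\infty(x + C^v)$ and $\g Q_- = germ_\infty(y + C'^v)$ are opposed in \cite[§3]{Ru-10} iff there is an apartment containing both with $C'^v = -C^v$; after transport by $G$ one may take $C'^v = -C^v_f$, $C^v = C^v_f$, and the common apartment reduces to $\A$. This matches exactly the opposition relation of Tits's twin building, whose chambers opposite to $B^+$ form the $B^+$-orbit $B^+ w_0 B^-/B^-$ in the rank-one direction, generalized by the Birkhoff decomposition $G = \coprod_{w \in W^v} U^- w B^+$ (a consequence of \ref{1.6}.5 and \ref{3.16}). Codistance is then determined by the $W^v$-distance inside a twin apartment, and both conventions agree.

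For the microaffine buildings, the positive microaffine building of \cite[§4]{Ru-10} is built from filter-germs associated to sector-faces $x + F^v$ with $F^v$ a spherical vectorial facet, with incidence coming from the bordification of the affine apartment. On the other hand, the Satake realization in \cite{Ru-06} identifies each point of the positive microaffine building with a couple (spherical vectorial facet, point in the affine apartment modulo the direction of $F^v$), with $G$-action given by the fixators of sector-face germs. The fixator (in $G$) of $germ_\infty(x + F^v)$ computed in \ref{5.12}.5 is $M(x, F^v) \ltimes U(F^v)$, which is precisely the group used in \cite[§2 and §4]{Ru-06} to define points of the microaffine building. Hence we may define a $G$-equivariant map from the positive microaffine building of $\shi$ to that of \cite{Ru-06} by sending the class of a sector-face germ to its fixator; using transitivity of $G$ on positive microaffine chambers in the hovel (via \ref{5.12}.4) and forte transitivity, this map is a well-defined bijection on chambers and extends to a simplicial isomorphism. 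The negative case is symmetric (replace $\Delta^+$ by $\Delta^-$), and the apartments match because both constructions use the Satake-type apartment $\A^v$ with its vectorial facet decomposition.

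The main obstacle I anticipate is the careful comparison in the microaffine case, where the two definitions package the same data in different ways: one via germs at infinity of sector-faces in the affine apartment, the other via filters of characters in a compactification. The subtle point is verifying that the topology/incidence relations at the ``boundary of the boundary'' (i.e.\ between microaffine chambers lying over different spherical vectorial facets) agree; but this can be reduced to checking in a single apartment, where both reduce to the facial stratification of $\overline{\sht}$, and then transported by $G$ via the strong transitivity established throughout Section 5.
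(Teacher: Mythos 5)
Your overall strategy --- identify the standard apartments and then match stabilizers of corresponding objects, transporting everything by strong transitivity --- has the right general shape, but several of the fixator identifications you rely on are incorrect, and they are precisely the crux of the proposition. First, $HU^+\neq B^+$: by \ref{5.7}.4 the group $HU^+$ (with $H=\g T(\sho)$) is the pointwise fixator in the masure of the germ $germ_\infty(x+C^v_f)$, whereas $B^+=TU^+$ is the stabilizer of the corresponding chamber at infinity, i.e.\ of the parallelism class; these two groups differ, so your identification of the chambers at infinity with $G/B^+$ is not established by that citation. The same confusion occurs for facets: \ref{5.12}.5 gives $M(x,F^v).U(F^v)$ as the fixator of $germ_\infty(x+F^v)$, not $P(F^v)$; the parabolic $P(F^v)$ only stabilizes the parallelism class of sector faces of direction $F^v$. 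Second, in the microaffine comparison your claimed exact match of groups fails: in the Satake realization of \cite{Ru-06} a point is $x^s=(F^v,\widetilde y)$ with $\widetilde y\in V/\langle F^v\rangle$, so its fixator contains, besides $M(x,F^v)$ and $U(F^v)$, the subgroup of $T$ inducing translations of $\A$ along $\langle F^v\rangle$; it is strictly larger than the pointwise fixator $M(x,F^v)\ltimes U(F^v)$ of the germ, so your map ``class of a sector-face germ $\mapsto$ its fixator'' does not directly identify the two buildings.

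What is missing is the mechanism that turns the (correct) inclusions of fixators into equalities. The paper only verifies that the fixator of a spherical facet $F^v$ in Tits's building (namely $P(F^v)$), resp.\ the fixator of $x^s$ in \cite{Ru-06}, stabilizes the corresponding parallelism class, resp.\ germ, of sector faces; this suffices to define surjective $G$-equivariant maps from Tits's twin buildings and from the microaffine building of \cite{Ru-06} onto the buildings at infinity of \cite{Ru-10}. Equality of stabilizers --- hence bijectivity and the announced identifications --- is then deduced from the fact that, in the source buildings, a point $x$ and any transform $gx$ always lie in a common apartment, while the maps are injective on apartments. You assert the equalities of stabilizers outright; without either this argument or a genuine computation of the stabilizers at infinity in the masure (which is not what \ref{5.7}.4 and \ref{5.12}.5 provide), the identification is not proved.
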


\begin{rema*} Cyril Charignon construit directement dans \cite{Cn-10} un objet immobilier contenant $\shi$, les immeubles microaffines ci-dessus et d'autres masures affines associ\'ees aux sous-groupes paraboliques non sph\'eriques de $G$. C'est la g\'en\'eralisation au cas Kac-Moody de la compactification de Satake (ou compactification poly\'edrique) des immeubles de Bruhat-Tits. Cette derni\`ere a \'et\'e construite abstraitement (sans l'aide d'un groupe) dans \cite{Cn-08}.
\end{rema*}

\begin{proof} On identifie facilement l'appartement canonique (et son action de $N$) d'un immeuble de \cite{Ru-10} avec celui qui doit lui correspondre, \cf \cite[3.3.3 et 4.4]{Ru-10}. Il reste donc \`a identifier les fixateurs de points de ces appartements.

\par Le fixateur de la facette vectorielle (sph\'erique) $F^v$ dans l'immeuble de \ref{1.6}.5 est le sous-groupe parabolique $P(F^v)$ engendr\'e par $T$ et les $U_\alpha$ pour $\alpha(F^v)\geq{}0$. Il est clair qu'un \'el\'ement de chacun de ces groupes transforme une face de quartier de $\A$ de direction $F^v$ en une face parall\`ele (dans $\shi$). Ainsi $P(F^v)$ fixe la classe de parall\'elisme des faces de quartier correspondant \`a $F^v$.

\par Un point de l'appartement $\A^s$ de l'immeuble de \cite{Ru-06} est de la forme $x^s=(F^v,\widetilde y)$ avec $F^v$ une facette vectorielle sph\'erique positive de $V$, $\langle F^v\rangle$ l'espace vectoriel engendr\'e et $\widetilde y\in V/\langle F^v\rangle$ [\lc, 4.2]. On lui associe le germe (\`a l'infini) de la face de quartier $y+F^v$ pour $y\in\widetilde y$.
Le fixateur de $x^s$ est engendr\'e par $U(F^v)$, le groupe $M(x,F^v)$ et le sous-groupe de $T$ induisant dans $\A$ des translations de vecteur dans $\langle F^v\rangle$ [\lc, 4.2.3]. Les deux premiers groupes engendrent le fixateur point par point du germe de la face de quartier $y+F^v$ (\ref{5.12}.5) et le troisi\`eme groupe stabilise \'evidemment ce germe.

\par Ces inclusions entre les fixateurs de points permettent de d\'efinir des applications surjectives et $G-$\'equivariantes des immeubles de \ref{1.6}.5 ou \cite{Ru-06} vers les immeubles correspondants de \cite{Ru-10}. Mais un point $x$ et son transform\'e $gx$ sont toujours dans un m\^eme appartement et les applications ci-dessus sont injectives sur les appartements. Les inclusions entre fixateurs sont donc des \'egalit\'es et on a bien les identifications d'immeubles annonc\'ees.
\end{proof}

\subsection{Immeubles et groupe de Kac-Moody maximal}\label{5.16}

\par Le groupe $G^{pma}$ n'agit pas sur la masure affine $\shi$. Par contre, d'apr\`es \ref{3.17b} il agit sur l'immeuble vectoriel $\shi^v_+$.

\par Le groupe $G$ agit sur l'immeuble microaffine positif $\shi_+^{\qm s}$ de \cite[{\S{}} 4]{Ru-06} (dans sa r\'ealisation de Satake) qui est r\'eunion disjointe d'immeubles index\'es par les facettes sph\'eriques positives.
 Plus pr\'ecis\'ement, \`a une telle facette $F^v$ on associe l'immeuble de Bruhat-Tits (non \'etendu) $\shi(F^v)$ du groupe r\'eductif $M(F^v)$ sur $K$. Le groupe $U(F^v)$ agit trivialement sur $\shi(F^v)$ et $G$ permute les $\shi(F^v)$ selon son action sur $\shi^v_+$:
 Un germe de chemin\'ee ou de face de quartier $\g R=\g R(F,F^v)$ correspond \`a une facette ou un point $\g R_\infty$ de $\shi(F^v)$. Le fixateur (point par point) de cette facette ou ce point $\g R_\infty$ pour l'action de $G$ sur $\shi_+^{\qm s}$ est $ZM(F^v).M(F,F^v).U(F^v)$ avec les notations de \ref{5.12}.2 et $ZM(F^v)$ le centre de $M(F^v)$.
 Ce fixateur de $\g R_\infty$ est donc plus grand que $P^\qm(\g R)$ qui est le fixateur (point par point) de $\g R$ (\ref{5.12}.3) et (par d\'efinition) le {\it fixateur strict} de $\g R_\infty$.

 \par Le fixateur de la facette sph\'erique positive  $F^v$ pour l'action de $G^{pma}$ sur $\shi^v_+$ est $P^{pma}(F^v)=M(F^v)\ltimes U^{ma+}(F^v)$ (\cf \ref{3.10} et \ref{3.17b}).
 On peut donc prolonger l'action de $G$ sur $\shi_+^{\qm s}$ en une action de $G^{pma}$:

 \par $P^{pma}(F^v)$ agit sur $\shi(F^v)$ via $M(F^v)$ \ie $U^{ma+}(F^v)$ agit trivialement.
 Le fixateur dans $G^{pma}$ de $\g R_\infty$ comme ci-dessus est $ZM(F^v).M(F,F^v).U^{ma+}(F^v)$; il contient le groupe $P^\qm_{pma}(\g R)$ $=M(F,F^v).U^{ma+}(F^v)$
que l'on appelle encore  fixateur strict de $\g R_\infty$ pour l'action de $G^{pma}$ (m\^eme si ce n'est pas justifi\'e par une action sur $\shi$ qui contient le filtre $\g R$).

 \par Pour deux germes de chemin\'ees \'evas\'ees positives $\g R_1$ et $\g R_2$, la d\'emonstration dans \cite[6.7]{Ru-10} de la proposition \ref{5.12}.4 se g\'en\'eralise: \qquad$G^{pma}=P^\qm_{pma}(\g R_1).N.P^\qm_{pma}(\g R_2)$.

 \begin{NB} Dans  \cite[4.2.1]{Ru-06} on peut modifier  le  quotient en envoyant  $F^v\times Y_\R$ sur $Y_\R$ par la seconde projection. 
  On obtient  une r\'ealisation g\'eom\'etrique $\shi_+^{\qm S}$, de Satake au sens fort, de l'immeuble microaffine, sur laquelle $G$ et $G^{pma}$ agissent.
  Alors le fixateur strict $P^\qm_{}(\g R)$ (resp. $P^\qm_{pma}(\g R)$) est le fixateur  de $\g R\subset Y_\R\subset \shi_+^{\qm S}$ pour l'action de $G$ (resp. $G^{pma}$).
 \end{NB}


\section{Appendice: Comparaison et simplicit\'e}\label{s6}

Pour la construction des masures nous avons plong\'e le groupe de Kac-Moody $\g G_\shs$ dans le groupe de Kac-Moody maximal \`a la Mathieu $\g G_\shs^{pma}$. Pla\c{c}ons nous sur un corps quelconque $k$ et notons $G=\g G_\shs(k)$, $G^{pma}=\g G_\shs^{pma}(k)$, etc. Alors $G^{pma}$ appara\^{\i}t comme le compl\'et\'e de $G$ pour une certaine filtration. D'autres groupes maximaux ont \'et\'e d\'efinis par R\'emy et Ronan \cite{RR-06} ou Carbone et Garland \cite{CG-03} \`a l'aide d'autres filtrations, on va essayer de les comparer en tirant parti des r\'esultats des sections \ref{s2} et \ref{s3}.

Par ailleurs  Moody, dans un preprint non publi\'e \cite{My-82}, a d\'emontr\'e la simplicit\'e d'un groupe analogue \`a $G^{pma}$ en caract\'eristique $0$. L\`a encore on va utiliser les sections \ref{s2} et \ref{s3} pour montrer dans certains cas la simplicit\'e d'un sous-quotient de $G^{pma}$  (th\'eor\`eme \ref{6.19}).

\subsection{Le groupe compl\'et\'e \`a la R\'emy-Ronan}\label{6.1}

\par 1) Ce groupe $G^{rr}$  est l'adh\'erence de l'image de $G$ dans le groupe des automorphismes de son immeuble positif $\shi^v_+$ \cite{RR-06}. Il contient donc le quotient de $G$ par le noyau $Z'(G)=\cap_{g\in G}\,gB^+g^{-1}$ de l'action de $G$ sur cet immeuble. Ce groupe $Z'(G)$ est
en fait le centre de $G$: $Z'(G)=Z(G)=\{t\in T\mid\qa_i(t)=1,\forall i\in I\,\}$ (\ref{1.6}.5 et \cite[lemma 1B1]{RR-06}).

\par On va s'int\'eresser plut\^ot \`a une variante de $G^{rr}$ d\'efinie par Caprace et R\'emy \cite[1.2]{CR-09}. Ce groupe $G^{crr}$ contient $G$ et le groupe $G^{rr}$ en est un quotient.

\par 2) Pour $r\in\N$, soit $D(r)$ (resp. $C(r)$) la r\'eunion des chambres ferm\'ees de l'immeuble $\shi^v_+$ (resp. de son appartement standard $\A^v$) qui sont \`a distance num\'erique $\leq{}r$ de la chambre fondamentale $C_0$.
Les fixateurs $U^+_{D(r)}$ et $U^+_{C(r)}$ de $D(r)$ et $C(r)$ dans $U^+=\g U^+(k)$ forment deux filtrations de $U^+$ qui sont exhaustives ($U^+_{D(0)}=U^+_{C(0)}=U^+$) et s\'epar\'ees (le syst\`eme de Tits $(G,B^+,N,S)$ est satur\'e \ie le fixateur de $\A^v$ dans $G$ est $T$ \cite[1.9]{Ru-06} et $T\cap U^+=\{1\}$:   \ref{1.6}.5). De plus $U^+_{D(r)}$ est le plus grand sous-groupe distingu\'e de $U^+$ contenu dans $U^+_{C(r)}$.

\par Les deux filtrations sont en fait tr\`es li\'ees car, d'apr\`es \cite[2.1]{CR-09}, il existe une fonction croissante $r:\N\to\N$ tendant vers l'infini telle que, pour $\qa\in\QF^+$, $U_\qa\subset U^+_{C(R)}\Rightarrow U_\qa\subset U^+_{D(r(R))}$.

\par 3) On d\'efinit de la m\^eme mani\`ere des fixateurs $U^{ma+}_{D(r)}$ et $U^{ma+}_{C(r)}$, pour l'action de $U^{ma+}$ sur $\shi^v_+$ (\ref{3.17b}). Les filtrations correspondantes de $U^{ma+}$ sont plus diff\'erentes: l'intersection des $U^{ma+}_{C(r)}$ contient les groupes $\g U^{ma}_{\N^*\qa}(k)$ pour $\qa\in\QD^+_{im}$ ( le syst\`eme de Tits $(G^{pma},B^{ma+},N,S)$ est rarement satur\'e) et celle des $U^{ma+}_{D(r)}$ est souvent triviale (\cf  \ref {6.5}).

\par 4) Le groupe $U^{rr+}$ est le compl\'et\'e de $U^+$ pour sa filtration par les $U^{+}_{D(r)}$. Plus g\'en\'eralement le groupe $G^{crr}$ est le compl\'et\'e de $G$ pour sa filtration (non exhaustive) par les $U^{+}_{D(r)}$; il est muni de la topologie correspondante. En particulier  $U^{rr+}$ est ouvert dans $G^{crr}$.

Le groupe $G^{rr}$ est le s\'epar\'e-compl\'et\'e de $G$ pour la filtration par les fixateurs $G_{D(r)}\subset B^+$. C'est un quotient de $G^{crr}$ par un sous-groupe $Z'(G^{crr})$ contenant $Z(G)$ (\cf 1) ); il  contient $U^{rr+}$. Si le corps $k$ est fini, $Z'(G^{crr})=Z(G)$ \cite[prop.1]{CR-09}.

\subsection{Le groupe compl\'et\'e \`a la Carbone-Garland}\label{6.2}

Soit $\ql\in X^+$ un poids dominant r\'egulier (\ie $\ql(\qa^\vee_i)>0$, $\forall i\in I$). On consid\`ere la repr\'esentation $\qp_\ql$ de $G$ ou $G^{pma}$ (de plus haut poids $\ql$) dans $V^\ql=L_\Z(\ql)\otimes k$ (\ref{3.7}.1). Le groupe $G^{cg\ql}$ d\'efini par Carbone et Garland \cite{CG-03} est le s\'epar\'e-compl\'et\'e de $G$ pour la filtration d\'efinie par les fixateurs de parties finies de $V^\ql$, \ie par les fixateurs de sous-espaces vectoriels de dimension finie de $V^\ql$.
Pour $n\in\N$ soit $V(n)$ la somme des sous-espaces de $V^\ql$ de poids $\ql-\qa$ avec $\qa\in Q^+$ et $deg(\qa)\leq{}n$. Ainsi $G^{cg\ql}$ est le s\'epar\'e-compl\'et\'e de $G$ pour la filtration d\'efinie par les fixateurs $G_{V(n)}$ de $V(n)$.

Cette filtration n'est pas s\'epar\'ee: $\cap_n\,G_{V(n)}=$ Ker$\qp_\ql$. Si $v_\ql$ est un vecteur de plus haut poids et si $g\in $ Ker$\qp_\ql$, il fixe $v_\ql$ et $\widetilde s_i(v_\ql)$ $\forall i\in I$, donc est de la forme $g=tu$ avec $t\in T$, $u\in U^+$ et $\ql(t)=\widetilde s_i(\ql)(t)=1$.
Mais $\widetilde s_i(\ql)=\ql-\ql(\qa^\vee_i)\qa_i$ et $\ql(\qa^\vee_i)\not=0$, donc $\ql(t)=\qa_i(t)=1$ $\forall i\in I$: $t\in Z(G)\cap$Ker$(\ql)$ (\ref{1.6}.5). Inversement $Z(G)\cap$Ker$(\ql)$ agit trivialement sur $V^\ql$, donc  Ker$\qp_\ql=(Z(G)\cap$Ker$(\ql)).(U^+\cap$ Ker$\qp_\ql)$.
On verra plus loin que $U^+\cap$ Ker$\qp_\ql=\{1\}$ (\ref{6.3}.4).

Si l'on veut une filtration s\'epar\'ee et donc plonger $G$ dans son compl\'et\'e, on modifie la construction en faisant agir $G$ sur la somme directe $V^{\ql_1}\oplus\cdots\oplus V^{\ql_r}$ o\`u $\ql_1,\cdots,\ql_r$ engendrent $X$; on note alors $V(n)$ la somme des $V(n)$ correspondant aux diff\'erents facteurs. Le compl\'et\'e de Carbone-Garland modifi\'e $G^{cgm}$ ainsi obtenu est donc d\'efini par une filtration contenue dans $U^+$, celle des $U^+_{V(n)}=U^+\cap G_{V(n)}$.

Pour comparer $G^{cgm}$ et $G^{crr}$ on va comparer cette filtration sur $U^+$ avec celle des $U^+_{D(r)}$.

 On note $U^{cg+}$ le s\'epar\'e compl\'et\'e de $U^+$ pour la filtration par les $U^+_{V(n)}$.

\subsection{Comparaison}\label{6.3}

1) D'apr\`es \ref{3.2},  \ref{3.3} et  \ref{3.4}, $U^{ma+}$ est complet pour la filtration par les sous-groupes distingu\'es $U^{ma+}_n=\g U^{ma}_{\Psi(n)}(k)$ o\`u $\Psi(n)=\{\qa\in\QD^+\mid deg(\qa)\geq{}n\}$. On note $\overline U^+$ l'adh\'erence de $U^+$ dans $U^{ma+}$ pour la topologie associ\'ee; c'est aussi le compl\'et\'e de $U^+$ pour la filtration par les $U^+_n=U^+\cap U^{ma+}_n$.

2) Pour $i\in I$ et $\qa\in\QD^+\setminus\{\qa_i\}$, on a $s_i(\qa)\in\QD^+$ et $deg(s_i(\qa))\leq{}(1+M)deg(\qa)$, o\`u $M$ est le maximum des valeurs absolues de coefficients non diagonaux de la matrice de Kac-Moody $A$. Ainsi $U^{ma+}_{n(1+M)}\subset \widetilde s_i(U^{ma+}_{n}) \subset U^{ma+}_{n/(1+M)}$ et $\widetilde s_i$ est un automorphisme du groupe topologique $\g U^{ma+}_{\QD^+\setminus\{\qa_i\}}(k)$.

3) D\`es que $deg(\qa)\geq{}(1+M)^d$, on a $deg(s_{i_1}.\cdots.s_{i_d}(\qa))\geq{}{\frac{deg(\qa)}{(1+M)^d}}$ pour toute suite $i_1,\cdots,i_d\in I$. On en d\'eduit que, pour $n\geq{}(1+M)^d$, $U^{ma+}_n$ est dans le conjugu\'e de $U^{ma+}$ par $s_{i_1}.\cdots.s_{i_d}$ et donc $U^{ma+}_n$ fixe $C(d)$ pour l'action de $G^{pma}$ sur $\shi^v_+$.
On a $U^{ma+}_n\subset U^{ma+}_{C(d)}$ et m\^eme $U^{ma+}_n\subset U^{ma+}_{D(d)}$, car $U^{ma+}_n$ est distingu\'e dans $U^{ma+}$. En particulier $U^+_n\subset U^+_{D(d)}\subset U^+_{C(d)}$.

4) Vue la forme de l'action de $U^{ma+}\subset\widehat\shu^+_k$ sur $V^\ql$, on a $U^{ma+}_{n+1}\subset U^{ma+}_{V(n)}$. Par ailleurs, pour $b\in B^{ma+}$ et $i_1,\cdots,i_d\in I$, le fixateur de $b.s_{i_1}.\cdots.s_{i_d}(v_\ql)$ dans $U^{ma+}$ est dans $U^{ma+}\cap\; ^{b.s_{i_1}.\cdots.s_{i_d}}B^{ma+}$, c'est \`a dire dans le fixateur (dans $U^{ma+}$) de $b.s_{i_1}.\cdots.s_{i_d}(C_0)$.
Comme en 2) ci-dessus on montre que le poids de $s_{i_1}.\cdots.s_{i_d}(v_\ql)$ est $\ql-\qa$ avec $deg(\qa)\leq{}{\frac{M'}{M}}((1+M)^d-1)$ si $M'=Max\{\ql(\qa^\vee_i)\mid i\in I\}$.
Donc, si $u\in U^{ma+}_{V(n)}$ avec $n\geq{}{\frac{M'}{M}}(1+M)^d$, $u$ fixe toutes les chambres de la forme $b.s_{i_1}.\cdots.s_{i_d}(C_0)$, c'est \`a dire les chambres de $D(d)$.

Ainsi $U^{ma+}_{n+1}\subset U^{ma+}_{V(n)}\subset U^{ma+}_{D(d)}$ et $U^{+}_{n+1}\subset U^{+}_{V(n)}\subset U^{+}_{D(d)}$ pour $n\geq{}{\frac{M'}{M}}(1+M)^d$.
Comme la filtration par les $U^{+}_{D(d)}$ est s\'epar\'ee, il en est de m\^eme de celle par les $U^{+}_{V(n)}$; en particulier $U^+\cap$ Ker$\qp_\ql=\{1\}$.

5) De ces comparaisons de filtrations on d\'eduit des homomorphismes continus:

\noindent$\phi$ :\quad \xymatrix{ \overline U^+\ar[r]^(.4){\qg}&U^{cg+}\ar[r]^(.4){\qr}&U^{rr+}}. Le probl\`eme de la comparaison des groupes $U^{ma+}$, $U^{cg+}$ et $U^{rr+}$ se traduit donc en deux questions:  \qquad A-t'on $U^{ma+}=\overline U^+$?\quad(voir \ref{6.10} et \ref{6.11})

\qquad$\phi$, $\qg$ et $\qr$ sont-ils des isomorphismes de groupes topologiques?\quad(voir \ref{6.7} \`a \ref{6.9})

Si $k$ est un corps fini, $U^{ma+}$ et donc $\overline U^+$ sont compacts. Comme $U^{cg+}$ et $U^{rr+}$ sont s\'epar\'es, les homomorphismes $\phi$, $\qg$ et $\qr$ sont surjectifs, ferm\'es et ouverts. Ce sont donc des isomorphismes de groupes  topologiques si et seulement si ils sont injectifs.

6) {\bf Remarques} a) La filtration $(U^{ma+}_{n})_{n\in\N}$ de $U^{ma+}$ permet de d\'efinir une m\'etrique invariante \`a gauche sur $G^{pma}$, pour laquelle $G^{pma}$ est complet et dont les boules ouvertes sont les $gU^{ma+}_{n}$. D'apr\`es la d\'ecomposition de Bruhat, le fait que
$U^{ma+}\vartriangleleft B^{ma+}$ et la relation de 2) impliquant les $\widetilde s_i$, cette m\'etrique est \'equivalente \`a celle, invariante \`a droite, dont les boules ouvertes sont les $U^{ma+}_{n}g$. Ainsi $G^{pma}$ est un groupe topologique, dans lequel $U^{ma+}$ est ouvert. L'adh\'erence $\overline G$ de $G$ dans $G^{pma}$ est le s\'epar\'e-compl\'et\'e de $G$ pour la filtration induite.

b) Comme $\overline G$, $G^{cgm}$ et $G^{crr}$ sont des s\'epar\'es-compl\'et\'es pour les filtrations dans $U^+$ de 4) ci-dessus, les homomorphismes de 5) ci-dessus se prolongent en
$\phi$ :\quad \xymatrix{ \overline G\ar[r]^(.4){\qg}&G^{cgm}\ar[r]^(.4){\qr}&G^{crr}}.
Par d\'efinition Ker$\phi= \overline U^+\cap Z'(G^{pma})$ o\`u $Z'(G^{pma})=\cap_{g\in G^{pma}}\,gB^{ma+}g^{-1}$ est le noyau de l'action de $G^{pma}$ sur l'immeuble $\shi^v_+$.

c) Le groupe $Z(G)$ s'envoie trivialement dans $G^{rr}$. D'apr\`es les calculs de \ref{6.2} on a donc un homomorphisme continu $G^{cg\ql}\to G^{rr}$. Celui-ci est surjectif, ferm\'e et ouvert si le corps est fini (r\'esultat de U. Baumgartner et B. R\'emy \cf \cite[Th. 2.6]{CER-08}).

\begin{prop}\label{6.4}  Les centres $Z(G)$, $Z(G^{pma})$  ainsi que $Z'(G^{pma})=\cap_{g\in G^{pma}}\,gB^{ma+}g^{-1}$ v\'erifient  $Z(G)\subset Z(G^{pma})\subset Z'(G^{pma})$ et $Z'(G^{pma})=Z(G).(Z'(G^{pma})\cap U^{ma+})$, $Z(G^{pma})=Z(G).(Z(G^{pma})\cap U^{ma+})$. De plus $Z'(G^{pma})\cap U^{ma+}$ est distingu\'e dans $G^{pma}$.
\end{prop}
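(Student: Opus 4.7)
Mon plan est de traiter successivement les cinq assertions. Les deux inclusions sont assez directes: pour $Z(G)\subset Z(G^{pma})$, on utilise que $Z(G)=\{t\in T\mid\alpha_i(t)=1,\forall i\in I\}$ (\ref{1.6}.5) agit trivialement sur chaque $\g g_\alpha$ (car $\alpha(t)=1$ pour tout $\alpha\in Q$), donc centralise $U^{ma+}$ via l'action adjointe (\ref{3.7}.3), et commute avec les $\widetilde s_i$ (puisque $s_j(t)=t$), ces g\'en\'erateurs engendrant $G^{pma}$ avec $T$. Pour $Z(G^{pma})\subset Z'(G^{pma})$, un \'el\'ement central normalise $B^{ma+}$, donc y est contenu par la propri\'et\'e classique des syst\`emes de Tits rappel\'ee en \ref{3.16}, puis appartient \`a tous les $gB^{ma+}g^{-1}$ par centralit\'e.

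L'\'etape cruciale est l'\'egalit\'e $Z'(G^{pma})=Z(G).(Z'(G^{pma})\cap U^{ma+})$, qui reposera sur un argument de rang un. Soit $z\in Z'(G^{pma})\subset B^{ma+}=T\ltimes U^{ma+}$; j'\'ecrirai $z=tu$ avec $t\in T$, $u\in U^{ma+}$, et pour chaque $i\in I$ je d\'ecomposerai $u=x_{\alpha_i}(s_i).u'$ avec $u'\in U^{ma}_{\Delta^+\setminus\{\alpha_i\}}$ (\ref{3.3}.d). Comme $U^{ma}_{\Delta^+\setminus\{\alpha_i\}}$ est distingu\'e dans $P_i$ et contenu dans $B^{ma+}$, il fixe le r\'esidu de $C_0$ de type $i$, lequel s'identifie \`a $P_i/B^{ma+}\simeq\mathbb{P}^1(k)$, et l'action de $z$ sur ce r\'esidu est celle de l'image de $tx_{\alpha_i}(s_i)$ dans le groupe r\'eductif de rang un $A_i$. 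Puisque $z$ fixe toutes les chambres de $\shi^v_+$, cette action est triviale sur $\mathbb{P}^1(k)$; or l'action de $tx_{\alpha_i}(s_i)$ s'\'ecrit (en coordonn\'ees affines sur $\mathbb{P}^1(k)\simeq k\cup\{\infty\}$) $x\mapsto\alpha_i(t)x+\alpha_i(t)s_i$, ce qui force $\alpha_i(t)=1$ et $s_i=0$. Ceci \'etant valable pour tout $i$, on aura $t\in Z(G)$, et alors $u=t^{-1}z\in Z'(G^{pma})\cap U^{ma+}$ puisque $Z(G)\subset Z'(G^{pma})$ par les \'etapes pr\'ec\'edentes.

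L'obstacle principal sera la normalit\'e de $U_0:=Z'(G^{pma})\cap U^{ma+}$ dans $G^{pma}$, la difficult\'e venant de ce que $U^{ma+}$ n'est pas lui-m\^eme distingu\'e. Mon approche consistera \`a v\'erifier la normalisation seulement sur les g\'en\'erateurs $U^{ma+}\cup T\cup\{\widetilde s_i\mid i\in I\}$ de $G^{pma}$. Pour $g\in U^{ma+}\cup T$, on a $gU^{ma+}g^{-1}=U^{ma+}$ et $Z'(G^{pma})$ est distingu\'e, donc $gU_0g^{-1}\subset U_0$. Pour $g=\widetilde s_i$, on a $\widetilde s_iU^{ma+}\widetilde s_i^{-1}=U_{-\alpha_i}.U^{ma}_{\Delta^+\setminus\{\alpha_i\}}$; pour $u\in U_0$, la conjugu\'ee $\widetilde s_iu\widetilde s_i^{-1}$ appartient \`a $Z'(G^{pma})\subset B^{ma+}$ et \`a $U_{-\alpha_i}.U^{ma}_{\Delta^+\setminus\{\alpha_i\}}$. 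Le m\^eme argument de rang un (par projection dans $A_i\simeq P_i/U^{ma}_{\Delta^+\setminus\{\alpha_i\}}$, o\`u les sous-groupes $T.U_{\alpha_i}$ et $U_{-\alpha_i}$ ont une intersection triviale par la d\'ecomposition de Bruhat) annule la composante dans $U_{-\alpha_i}$, d'o\`u $\widetilde s_iu\widetilde s_i^{-1}\in U^{ma}_{\Delta^+\setminus\{\alpha_i\}}\subset U_0$.

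Enfin $Z(G^{pma})=Z(G).(Z(G^{pma})\cap U^{ma+})$ sera un corollaire direct: pour $z\in Z(G^{pma})\subset Z'(G^{pma})$, la d\'ecomposition de l'\'etape cruciale donne $z=tu$ avec $t\in Z(G)\subset Z(G^{pma})$, donc $u=t^{-1}z\in Z(G^{pma})\cap U^{ma+}$.
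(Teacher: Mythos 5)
Votre preuve est correcte et suit pour l'essentiel la m\^eme strat\'egie que l'article : \'ecriture $z=tu$, d\'ecomposition $u=x_{\alpha_i}(s_i)u'$ avec $u'\in U^{ma}_{\Delta^+\setminus\{\alpha_i\}}$, puis r\'eduction \`a un calcul de rang un dans $A_i$ donnant $\alpha_i(t)=1$ et $s_i=0$ (l'article conjugue par $\exp(\lambda f_i)$ et conclut par un calcul dans $SL_2$, vous lisez la m\^eme chose sur le r\'esidu $P_i/B^{ma+}\simeq\mathbb{P}^1(k)$), et enfin la normalit\'e de $Z'(G^{pma})\cap U^{ma+}$ obtenue par un argument de rang un l\'eg\`erement reformul\'e (conjugaison par $\widetilde s_i$ au lieu de la normalisation de $U^{ma}_{\Delta^+\setminus\{\alpha_i\}}$ par $U_{-\alpha_i}$ utilis\'ee dans l'article). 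Signalez seulement la coquille finale : \og $\widetilde s_iu\widetilde s_i^{-1}\in U^{ma}_{\Delta^+\setminus\{\alpha_i\}}\subset U_0$ \fg{} doit se lire $\subset U^{ma+}$, l'appartenance \`a $U_0$ r\'esultant alors de celle, d\'ej\`a not\'ee, \`a $Z'(G^{pma})$.
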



\begin{proof} Comme $Z(G)$ centralise $U^{ma+}$ il est contenu dans $Z(G^{pma})$. Mais $B^{ma+}$ est \'egal \`a son normalisateur (car il fait partie d'un syst\`eme de Tits) on a donc $Z(G^{pma})\subset B^{ma+}$ et m\^eme $Z(G^{pma})\subset Z'(G^{pma})$.
Soit $h\in  Z'(G^{pma})$. On \'ecrit $h=tu$ avec $t\in T$ et $u\in U^{ma+}$. Soit $i\in I$, on peut alors \'ecrire $u=exp(ae_i).v$ avec $a\in k$ et $v\in\g U^{ma}_{\QD^+\setminus\{\qa_i\}}(k)$.
Soient $\ql\in k$ et $y_\ql=exp(\ql f_i)$, alors $y_\ql hy_\ql^{-1}=t.exp((\qa_i(t)-1)\ql f_i).y_\ql.exp(ae_i).y_\ql^{-1}.y_\ql.v.y_\ql^{-1}$ avec $y_\ql.v.y_\ql^{-1}\in\g U^{ma}_{\QD^+\setminus\{\qa_i\}}(k)$. Un calcul rapide dans $SL_2$ montre alors que $y_\ql uy_\ql^{-1}\in B^{ma+}$ $\forall\ql\in k$ si et seulement si $\qa_i(t)=1$ et $a=0$; donc $t\in Z(G)$ et $Z'(G^{pma})\subset Z(G).U^{ma+}_2$.

Cette derni\`ere assertion montre que le groupe $Z'(G^{pma})\cap U^{ma+}$ est normalis\'e par les $U_{-\qa_i}$ (pour $i\in I$); comme il est normalis\'e par $B^{ma+}$, il est distingu\'e dans $G^{pma}$.
\end{proof}

\subsection{GK-simplicit\'e}\label{6.5}

On dit que l'alg\`ebre de Lie $\g g_k=\g g_\shs\otimes_\Z k$ est simple au sens du th\'eor\`eme de Gabber-Kac (ou {\it GK-simple}) si tout sous$-\shu_k-$module gradu\'e non trivial de $\g g_k$ contenu dans $\g n^+_k$ est r\'eduit \`a $\{0\}$.

De m\^eme on dit que le groupe $G^{pma}$ est {\it GK-simple} si tout sous-groupe distingu\'e de $G^{pma}$ contenu dans $U^{ma+}$ est r\'eduit \`a $\{1\}$, \ie si $Z'(G^{pma})\cap U^{ma+}=\cap_{r\in\N}\,U^{ma+}_{D(r)}=\{1\}$ ou si $Z'(G^{pma})=Z(G)$. On esp\`ere que $G^{pma}$ est toujours GK-simple (voir  \ref{6.9}.1 ou \ref{6.8}). L'assertion correspondante pour $G$ est toujours satisfaite (\ref{6.1}.1).

\begin{rema*} En caract\'eristique $0$, $\g g_k$ est GK-simple dans le cas sym\'etrisable (et m\^eme conjecturalement dans tous les cas). Mais, comme me l'a indiqu\'e O. Mathieu, cela n'implique pas la GK-simplicit\'e en caract\'eristique $p$; voir ci-dessous l'exemple des lacets de $SL_n$ (\ref{6.8}).
On voit facilement que $\g g_k$ est GK-simple si et seulement si $\forall\qa\in\QD^+_{im}$ tout $X\in\g g_{k\qa}$ tel que, $\forall j\in I$, $\forall q\in\N^*$ $ad(f_j^{(q)})X=0$ est forc\'ement nul.
L'ensemble de ces $X\in\g g_{k\qa}$ est d\'etermin\'e par des \'equations lin\'eaires \`a coefficients entiers ind\'ependantes de $k$. Si $\g g_\C$ est GK-simple, un d\'eterminant d\'etermin\'e par ces \'equations est non nul. Ainsi la condition ci-dessus est v\'erifi\'ee pour $\g g_{k\qa}$ si la caract\'eristique $p$ de $k$ ne divise pas ce d\'eterminant.
Malheureusement $\QD^+_{im}$ est vide ou infini, on pourrait donc avoir $\g g_k$ non GK-simple quelque soit la caract\'eristique. Par contre dans le cas affine on n'a  qu'un nombre fini de d\'eterminants \`a consid\'erer car il y a p\'eriodicit\'e des espaces radiciels imaginaires.
En effet, pour le type affine $X_n^{(k)}$, $\g  g_A''=\g g_A/centre$ est r\'ealis\'e comme sous-alg\`ebre de Lie de $\g s\otimes\C[t,t^{-1}]$ o\`u $\g s$ est l'alg\`ebre simple de type $X_n$; d'apr\`es \cite{K-90} (resp. les calculs explicites de \cite{Mn-85}) $\g  g_A''$ (resp. $(\g  g_A'')_\Z$) est stable par multiplication par $t^{\pm{}k}$.

Si la matrice de Kac-Moody $A$ a tous ses facteurs de type affine (ou de type fini), alors $\g g_k$ est GK-simple si la caract\'eristique $p$ de $k$ est assez grande.
\end{rema*}

\begin{prop}\label {6.6} Supposons $\g g_k$  GK-simple et $k$ infini. Soit $u\in U^{ma+}_n\setminus U^{ma+}_{n+1}$ avec $n\geq{}1$. Alors il existe $j\in I$ et $y\in U_{-\qa_j}$ tels que $yuy^{-1}\notin B^{ma+}$ (si $n=1$) ou $yuy^{-1}\in U^{ma+}\setminus U^{ma+}_{n}$ (si $n\geq{}2$).
\end{prop}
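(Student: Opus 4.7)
The plan is to isolate the leading term of $u$ in the filtration $(U^{ma+}_m)_m$ and move it to a lower filtration level by conjugating by a generic one-parameter subgroup of $U_{-\alpha_j}$ for a well-chosen $j\in I$; the GK-simplicity hypothesis will guarantee such a $j$ exists. By the factorisation of \ref{3.2} with the base $\shb$ of $\g g_\Z$ ordered by increasing total degree, together with the congruence $x^{[m]}.x^{[r]}\equiv\binom{m+r}{m}x^{[m+r]}$ modulo higher $\g g_{s\alpha}$'s from remark \ref{2.9}.4, one obtains an isomorphism $U^{ma+}_n/U^{ma+}_{n+1}\simeq\bigoplus_{\deg\alpha=n}\g g_{\alpha,k}$ sending $u$ to its leading term $X=\sum_\alpha X_\alpha$; since $u\notin U^{ma+}_{n+1}$, $X\neq 0$, and I fix $\alpha$ with $X_\alpha\neq 0$. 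I then produce $j\in I$ and $q\geq 1$ with $(ad\,f_j^{(q)})(X_\alpha)\neq 0$: if $\alpha=\alpha_i$ is simple take $j=i$, $q=1$; if $\alpha$ is real of height $\geq 2$ choose $j$ with $\alpha(\alpha_j^\vee)>0$, so that $\alpha-\alpha_j\in\Delta^+$ and $[f_j,e_\alpha]\neq 0$ (again $q=1$); if $\alpha$ is imaginary, invoke GK-simplicity in the equivalent form of the first remark of \ref{6.5}. Since the weights $\alpha'-q\alpha_j$ for distinct $\alpha'\neq\alpha$ of degree $n$ are pairwise distinct, no cancellation occurs and in fact $(ad\,f_j^{(q)})(X)\neq 0$.

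In the case $n=1$, necessarily $\alpha=\alpha_j$ and $X_{\alpha_j}=c_je_j$ with $c_j\neq 0$. Using the decomposition $U^{ma+}=U_{\alpha_j}\ltimes\g U^{ma}_{\Delta^+\setminus\{\alpha_j\}}$ of \ref{3.5}, write $u=x_{\alpha_j}(c_j)v$ with $v\in\g U^{ma}_{\Delta^+\setminus\{\alpha_j\}}$. For $y=x_{-\alpha_j}(\lambda)$ with $\lambda\in k^*$ (which exists since $k$ is infinite), $yvy^{-1}$ stays in $\g U^{ma}_{\Delta^+\setminus\{\alpha_j\}}\subset B^{ma+}$ by the normalisation of \ref{3.5}, whereas a direct $SL_2$-matrix calculation in the rank-one group $\g A_{\alpha_j}^Y$ yields $yx_{\alpha_j}(c_j)y^{-1}=\bigl(\begin{smallmatrix}1-\lambda c_j&c_j\\-\lambda^2c_j&1+\lambda c_j\end{smallmatrix}\bigr)$, whose non-zero $U_{-\alpha_j}$-component in the Bruhat decomposition forces $yuy^{-1}\notin B^{ma+}$.

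For $n\geq 2$ we have $\alpha_j\notin\Psi(n)$, so $U^{ma+}_n\subset\g U^{ma}_{\Delta^+\setminus\{\alpha_j\}}$, which is normalised by $U_{-\alpha_j}$ according to \ref{3.5}; hence $yuy^{-1}\in U^{ma+}$ for every $\lambda$. Consider the morphism $\phi\colon \A^1_k\to U^{ma+}/U^{ma+}_n$, $\lambda\mapsto yuy^{-1}\bmod U^{ma+}_n$: its target is a finite-dimensional affine space (\ref{3.3}e and \ref{3.4}), and $\phi(0)=e$. Expanding $Ad(y)(u)=\sum_{Q\geq 0}\lambda^Q(ad\,f_j^{(Q)})(u)$ as in \ref{3.7}.3 and matching total degrees, the $\lambda^q$-coefficient of $\phi$ in the degree-$(n-q)$ graded piece equals $(ad\,f_j^{(q)})(X)$, because any $u_d$ with $d>n$ contributes to that graded piece only through $(ad\,f_j^{(d-n+q)})$ attached to the strictly higher power $\lambda^{d-n+q}$. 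Since this coefficient is non-zero, $\phi$ is a non-constant polynomial, and infinitude of $k$ yields $\lambda$ with $yuy^{-1}\notin U^{ma+}_n$.

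The hard part will be justifying the polynomiality of $\phi$ and the claimed value of its $\lambda^q$-coefficient: although $y=\exp(\lambda f_j)$ lies in the negative completion $\widehat\shu^n_k$ rather than in $\widehat\shu^p_k$, so that the formal expansion of $Ad(y)(u)$ does not literally converge in $\widehat\shu^p_k$, the conjugation map nevertheless induces a morphism of affine schemes $\A^1_k\to U^{ma+}/U^{ma+}_n$ through the pro-algebraic structure of $U^{ma+}$ combined with the group-theoretic normalisation used above; the coefficient identification can then be verified by an infinitesimal divided-power computation at $\lambda=0$ inside the rank-two subgroup generated by $U_\alpha$ and $U_{-\alpha_j}$.
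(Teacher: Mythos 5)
Your strategy is the same as the paper's (leading term in $U^{ma+}_n/U^{ma+}_{n+1}$, choice of $(j,q)$ with $ad(f_j^{(q)})X_\qa\neq 0$ via GK-simplicity, conjugation by $\exp(\ql f_j)$, then a polynomial-in-$\ql$ argument using that $k$ is infinite; your $n=1$ case is essentially identical to the paper's). But the step you yourself flag as ``the hard part'' is a genuine gap, and the route you sketch for it would not work as stated. Everything in the case $n\geq 2$ hinges on the identity $y_\ql u y_\ql^{-1}=\sum_{m\geq 0}\ql^m\,ad(f_j^{(m)})(u)$ \emph{together with} the fact that for a fixed weight only finitely many terms contribute, so that each weight component is an honest polynomial in $\ql$ whose $\ql^q$-coefficient can be read off. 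Your two fallback justifications do not supply this: the pro-algebraic ``morphism $\A^1_k\to U^{ma+}/U^{ma+}_n$'' argument is never carried out (and polynomiality alone is useless without identifying the lowest coefficient), while the ``infinitesimal divided-power computation inside the rank-two subgroup generated by $U_\qa$ and $U_{-\qa_j}$'' is unavailable precisely in the case where GK-simplicity is needed: for $\qa$ imaginary there is no root group $U_\qa$, and in any case identifying the coefficient of $\ql^q$ with $q\geq 2$ requires more than a first-order computation at $\ql=0$.

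The paper closes exactly this point by a change of completion rather than by scheme-theoretic arguments: since $n\geq 2$, $u\in\g U^{ma}_{\QD^+\setminus\{\qa_j\}}(k)$, and one computes in the completed algebra $\widehat\shu^p_k(s_j(\QD^+))$ attached to the positive system $s_j(\QD^+)=(\QD^+\setminus\{\qa_j\})\cup\{-\qa_j\}$ (\cf \ref{2.13}.3 and \ref{4.5}.6), which contains both $\exp(\ql f_j)$ and $u$. There the computation already made in the proof of \ref{2.5} gives literally $y_\ql u y_\ql^{-1}=\sum_m\ql^m\,ad(f_j^{(m)})(u)$, and because the weights of $u$, transported by $s_j$, have nonnegative total degree, the component of the single weight $\qa-q\qa_j$ is the \emph{finite} sum $\sum_{m\geq q}\ql^m\,ad(f_j^{(m)})(u_{\qa+(m-q)\qa_j})$ (the terms $m<q$ vanish because $u$ has no nonzero component in positive degree $<n$); this is a polynomial in $\ql$ with lowest coefficient $ad(f_j^{(q)})(u_\qa)\neq 0$, and one concludes for some $\ql\in k$. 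Note also that by extracting one weight component only, the paper avoids your ``no-cancellation'' discussion of the whole degree-$(n-q)$ graded piece. Until you justify the expansion and the coefficient extraction in some such completed algebra, your proof of the case $n\geq 2$ is incomplete.
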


\begin{proof} (\cf \cite[prop. 8]{My-82} en caract\'eristique $0$)

On calcule dans l'alg\`ebre $\widehat\shu^p_k(\QD^+)=\widehat\shu^+_k$ qui est gradu\'ee par les poids dans $Q^+$ et le degr\'e total dans $\N$ (\ref{2.13}.2) et aussi dans $\widehat\shu^p_k(s_j(\QD^+))$. Soit $u\in U^{ma+}_n\setminus U^{ma+}_{n+1}\subset \widehat\shu^p_k(\QD^+)$; sa composante $u_n$ de degr\'e $n$ est dans l'alg\`ebre de Lie $\g g_k$.
Soit $\qa\in\QD^+$ de degr\'e $n$ tel que la composante $u_\qa$ de $u_n$ sur $\g g_{k\qa}$ soit non nulle. Comme $\g g_k$  est GK-simple, il existe $j\in I$ et $q\geq{}1$ tels que $adf_j^{(q)}u_\qa\not=0$ (sinon $ad(\shu_k)u_\qa$ est de plus bas degr\'e $n$).

Si $n=1$, on a $\qa=\qa_j$, $u=exp(u_\qa)u'$ avec $u'\in \g U^{ma}_{\QD^+\setminus\{\qa_j\}}(k)$. Soit $y=expf_j$, alors $yuy^{-1}=y.expu_\qa.y^{-1}.y.u'.y^{-1}$ avec $y.u'.y^{-1}\in  \g U^{ma}_{\QD^+\setminus\{\qa_j\}}(k)$ et $y.expu_\qa.y^{-1}\notin B^{ma+}$. Donc $yuy^{-1}\notin B^{ma+}$.

Si $n\geq{}2$, $u \in  \g U^{ma}_{\QD^+\setminus\{\qa_j\}}(k)$ et ce groupe est normalis\'e par $U_{-\qa_j}$ (contenu dans $\g U^{ma}_{s_j(\QD^+)}(k)$) \cf \ref{3.3}. Soient $\ql\in k$ et $y_\ql=exp(\ql f_j)\in U_{-\qa_j}$, alors $y_\ql uy_\ql^{-1}\in U^{ma+}$ vaut $\sum_{m\in\N}\,\ql^madf_j^{(m)}u$ (voir la d\'emonstration de \ref{2.5}); sa composante de poids $\qa-q\qa_j$ est donc \'egale \`a la somme finie $\sum_{m\geq{}q}\,\ql^madf_j^{(m)}u_{\qa+(m-q)\qa_j}$, o\`u  $u_{\qa+(m-q)\qa_j}$ est la composante de poids $\qa+(m-q)\qa_j$ de $u$ dans $\widehat\shu^p_k(\QD^+)$.

Cette composante de $y_\ql uy_\ql^{-1}$ s'exprime comme un polyn\^ome en $\ql$ dont le terme de degr\'e $q$ est non nul. Comme $k$ est infini, il existe un $\ql\in k$ tel que ce polyn\^ome soit non nul et donc $y_\ql uy_\ql^{-1}\notin U^{ma+}_{n}$, puisque sa composante de poids $\qa-q\qa_j$ est non nulle.
\end{proof}

\begin{prop}\label {6.7} Supposons $\g g_k$  GK-simple et $k$ infini. Alors $\phi$, $\qg$ et $\qr$ sont des hom\'eomorphismes. Ainsi $\overline G$, $G^{cgm}$ et $G^{crr}$ sont des groupes topologiques isomorphes (et de m\^eme pour $\overline U^+$, $U^{cg+}$ et $U^{rr+}$).
\end{prop}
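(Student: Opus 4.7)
\par Le plan est d'\'etablir que les trois filtrations sur $U^+$ d\'efinissant les s\'epar\'es-compl\'et\'es $\overline U^+$, $U^{cg+}$ et $U^{rr+}$ --- \`a savoir $(U^+_n)$, $(U^+_{V(n)})$ et $(U^+_{D(r)})$ --- induisent une m\^eme topologie. Alors les trois compl\'et\'es seront canoniquement isomorphes comme groupes topologiques, les fl\`eches $\qg$, $\qr$ et leur compos\'e $\phi = \qr\circ\qg$ seront des hom\'eomorphismes, et le r\'esultat s'\'etendra aux compl\'et\'es globaux $\overline G \to G^{cgm} \to G^{crr}$ puisque les $U^{\bullet+}$ sont ouverts (\cf \ref{6.1}.4 et \ref{6.3}.6.a). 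Les inclusions \ref{6.3}.3-4 fournissent d\'ej\`a $U^+_{n+1} \subset U^+_{V(n)} \subset U^+_{D(d)}$ pour $n \geq \frac{M'}{M}(1+M)^d$; l'essentiel sera d'obtenir la comparaison en sens inverse.

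\par Je vais en fait \'etablir l'inclusion plus forte $U^{ma+}_{D(r)} \subset U^{ma+}_{r+1}$ dans $U^{ma+}$, dont celle sur $U^+$ d\'ecoulera par intersection. Soit $u \in U^{ma+}_{D(r)}$ non trivial: comme $U^{ma+}$ est complet s\'epar\'e pour la filtration $(U^{ma+}_n)$, il existe un unique $n \geq 1$ avec $u \in U^{ma+}_n \setminus U^{ma+}_{n+1}$, et il s'agira de prouver $n \geq r+1$. Pour cela j'appliquerai de mani\`ere it\'erative la proposition \ref{6.6}: partant d'un \'el\'ement d'indice $m \geq 2$ dans la filtration, celle-ci fournit $y \in U_{-\qa_j}$ (pour un certain $j \in I$) abaissant strictement l'indice par conjugaison; pour un \'el\'ement d'indice $1$, elle fournit $y$ tel que la conjugu\'ee tombe hors de $B^{ma+}$. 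On construira ainsi une suite $u_0 = u, u_1, \dots, u_m$ avec $u_i = y_i u_{i-1} y_i^{-1}$, chaque $y_i$ dans un $U_{-\qa_{j_i}}$, l'indice strictement d\'ecroissant jusqu'\`a valoir $1$ au plus tard \`a l'\'etape $n-1$, puis $u_m \notin B^{ma+}$ avec $m \leq n$.

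\par Posant $y = y_m \cdots y_1$, la condition $yuy^{-1} \notin B^{ma+}$ signifie que $u$ ne fixe pas la chambre $C = y^{-1} C_0$ de $\shi^v_+$. Or chaque facteur $y_i^{-1} \in U_{-\qa_{j_i}}$ fixe $C_0$ ou l'envoie sur une chambre $s_{j_i}$-adjacente; par r\'ecurrence sur $m$, la chambre $C$ est \`a distance combinatoire au plus $m \leq n$ de $C_0$, donc appartient \`a $D(n)$. L'hypoth\`ese $u \in U^{ma+}_{D(r)}$ force alors $n > r$, c'est-\`a-dire $u \in U^{ma+}_{r+1}$. L'obstacle principal sera le contr\^ole simultan\'e, au cours de l'it\'eration, de la descente stricte de l'indice et de la majoration de la distance combinatoire \`a $C_0$, reposant sur l'application pr\'ecise de \ref{6.6} (elle-m\^eme coeur technique de l'argument, utilisant la GK-simplicit\'e de $\g g_k$ et l'hypoth\`ese $k$ infini); une fois cette majoration acquise, l'\'equivalence des topologies et tous les isomorphismes annonc\'es en d\'ecoulent imm\'ediatement.
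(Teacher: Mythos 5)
Votre d\'emonstration est correcte et suit essentiellement la m\^eme voie que celle du texte : it\'eration de la proposition \ref{6.6} pour conjuguer $u\in U^{ma+}_n\setminus U^{ma+}_{n+1}$ par au plus $n$ \'el\'ements des $U_{-\qa_j}$ jusqu'\`a sortir de $B^{ma+}$, puis majoration de la distance combinatoire de la chambre $(u_m\cdots u_1)^{-1}C_0$ \`a $C_0$ par une galerie, d'o\`u $U^{ma+}_{D(r)}\subset U^{ma+}_{r+1}$ et l'\'equivalence des trois filtrations avec les inclusions de \ref{6.3}.4. Rien \`a redire.
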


\begin{rema*} On verra en \ref{6.9}.4 que $\qg:\overline G\to G^{cgm}$ est un isomorphisme sous la seule hypoth\`ese que $\g g_k$  est GK-simple.
\end{rema*}

\begin{proof} Soit $u\in U^{ma+}_n\setminus U^{ma+}_{n+1}$ avec $n\geq{}1$. D'apr\`es la proposition \ref{6.6} il existe $m\leq{}n$, $j_1,\cdots,j_m\in I$ et $u_i\in U_{-\qa_{j_i}}$ tels que
$u_m.\cdots.u_1.u.(u_m.\cdots.u_1)^{-1}\notin B^{ma+}$.
Mais $B^{ma+}$ est le fixateur de la chambre fondamentale $C_0$, donc $u$ ne fixe pas la chambre $(u_m.\cdots.u_1)^{-1}C_0$ qui est \`a distance $\leq{}m$ de $C_0$ (on a une galerie $C_0, (u_1)^{-1}C_0,(u_2.u_1)^{-1}C_0,\cdots$). On a donc montr\'e que $U^{ma+}_{D(r)}\subset U^{ma+}_{r+1}$ pour $r\in\N$. Avec les inclusions inverses prouv\'ees en \ref{6.3}.4, on voit que les trois filtrations sont \'equivalentes et donc $\phi$, $\qg$ et $\qr$ sont des hom\'eomorphismes.
\end{proof}

\begin{exem}\label {6.8}
On consid\`ere la matrice de Kac-Moody $A$ de type $\widetilde A_{m-1}$ avec $m\geq{}3$. Pour un bon choix du SGR $\shs$, on obtient des alg\`ebres et groupes de lacets: $\g g_k=\g{sl}_m(k)\otimes_k k[t,t^{-1}]$, $G=SL_m(k[t,t^{-1}])$, $\widehat{\g g}_k=\g{sl}_m(k)\otimes k(\!(t)\!)$ et $G^{pma}=SL_m(k(\!(t)\!))$.

Si la caract\'eristique $p$ de $k$ divise $m$, $\g g_k$ contient toutes les matrices scalaires. Celles-ci sont clairement annul\'ees par $ad(\shu_k)$ et sont contenues dans $\g n^+_k$ si le scalaire est dans $tk[t]$. Donc $\g g_k$ n'est pas GK-simple d\`es que $p$ divise $m$; il est facile de voir que la r\'eciproque est vraie.

Notons $V=k^m$ et $\qe_1,\cdots,\qe_m$ sa base canonique. Soit $R=k[[t]]$, on consid\`ere les r\'eseaux $V_0=R\qe_1\oplus\cdots\oplus R\qe_m$, $V_1=R\qe_1\oplus\cdots\oplus R\qe_{m-1}\oplus Rt\qe_m$,$\cdots$, $V_{m-1}=R\qe_1\oplus Rt\qe_{2}\oplus\cdots\oplus Rt\qe_m$, $V_m=tV_0$ et de mani\`ere g\'en\'erale $V_{am+b}=t^aV_b$ pour $a,b\in\Z$.
Alors $B^{ma+}$ est le stabilisateur dans $G^{pma}$ de tous ces r\'eseaux et il est assez facile de voir que $U^{ma+}_n=\{g\in G^{pma}\mid (g-1)V_i\subset V_{i+n}\,\forall i\in\Z\}$ pour $n\geq{}1$. \'Egalement $U^{ma+}_{mn}$ est form\'e des matrices \`a coefficients dans $R$ congrues \`a l'identit\'e  modulo $t^n$ et \`a une matrice triangulaire sup\'erieure modulo $t^{n+1}$.

Le groupe $U^{ma+}$ ne contient pas de matrice scalaire. Mais si $p$ divise $m$, on peut trouver dans $U^{ma+}_{mn}\setminus U^{ma+}_{mn+1}$ une matrice diagonale $u$ dont tous les coefficients sont \'egaux modulo $t^{pn}$. Si $y=exp(\ql f_j)$, alors $yuy^{-1}$ a les m\^emes coefficients que $u$ sauf un (colonne $j$, ligne $j+1$) qui est dans $t^{pn}R$, donc $yuy^{-1}\in U^{ma+}_{mn}$. La conclusion de la proposition \ref{6.6} n'est pas v\'erifi\'ee.

Les sommets de l'appartement standard $\A^v$ sont associ\'es aux r\'eseaux de la forme $Rt^{n_1}\qe_1\oplus\cdots\oplus Rt^{n_m}\qe_m$. Ainsi le fixateur d'une partie finie de $\A^v$ est form\'e des $g\in G^{pma}$ stabilisant un nombre fini de tels sous-modules. On en d\'eduit facilement que la filtration de $G^{pma}$ par les $G^{pma}_{C(r)}$ est \'equivalente \`a la filtration par les $B^{ma+}(n)=\{g=(g_{ij})\in SL_m(R)\mid g_{ij}\in t^nR,\forall i\not=j\}$.

Notons $U^{ma+}[n]=U^{ma+}\cap(\cap_{i=1}^{m-1}\,(exp\,f_i)B^{ma+}(n)(exp-f_i))$. On calcule facilement que $U^{ma+}[n]$ est form\'e des matrices $g=(g_{ij})\in SL_m(R)$ telles que $g_{ij}\in t^nR,\forall i\not=j$ et $g_{ii}-g_{i+1,i+1}\in t^nR, \forall i\leq{}m-1$.
Ainsi $g_{11}^m\in1+t^nR$ et $g_{11}\in 1+tR$. Si $p^e$ est la plus grande puissance de $p$ divisant $m$, on en d\'eduit que $g_{11}-1\in t^{n/p^e}R$; donc $U^{ma+}[n]\subset U^{ma+}_{mn/p^e}$.
Comme la filtration par les $U^{ma+}_{D(r)}$ est plus fine que celle par les  $U^{ma+}[n]$, on d\'eduit de l'inclusion pr\'ec\'edente et de celles prouv\'ees en \ref{6.3}.4 que toutes ces filtrations sont \'equivalentes. La conclusion de la proposition \ref{6.7} est v\'erifi\'ee m\^eme si $\g g_k$ n'est pas GK-simple. On a aussi $Z'(G^{pma})=Z(G)$: $G^{pma}$ est GK-simple.
\end{exem}

\begin{remas}\label{6.9} 1) On a vu au cours de la d\'emonstration de la proposition \ref{6.7} que, si $\g g_k$ est GK-simple et $k$ infini, les filtrations de  $U^{ma+}$ par les  $U^{ma+}_n$,  $U^{ma+}_{V(n)}$ ou $U^{ma+}_{D(d)}$ sont \'equivalentes. En particulier $G^{pma}$ est GK-simple: $Z'(G^{pma})=Z(G)$ (\cf \cite[prop. 8]{My-82} en caract\'eristique $0$).

2) Si $G^{pma}$ est GK-simple ou plus g\'en\'eralement si $Z'(G^{pma})\cap\overline G=Z(G)$, l'homomorphisme $\phi:\overline G\to G^{crr}$ est injectif (\ref{6.3}.6.b). Si de plus $k$ est fini, on en d\'eduit que $\phi:\overline U^+\to U^{rr+}$ et $\qg:\overline U^+\to U^{cg+}$ sont des isomorphismes de groupes topologiques (\ref{6.3}.5);
ainsi les groupes topologiques $\overline G$, $G^{cgm}$ et $G^{crr}$ sont isomorphes. Inversement pour $k$ fini, l'isomorphisme de $\overline G$ et $G^{crr}$ implique que $Z'(G^{pma})\cap\overline G=Z(G)$ (\ref{6.1}.4); si la caract\'eristique de $k$ est grande, cela implique que $G^{pma}$ est GK-simple (\ref{6.11}).

3) Si jamais $\g g_k$ n'est pas GK-simple en caract\'eristique $0$, on peut montrer que $G^{pma}$ n'est pas  GK-simple
; en particulier $G^{pma}$ et $G^{crr}$ ne sont pas isomorphes.

4) Supposons $\g g_k$ GK-simple. Soit $k'$ un corps contenant $k$. On consid\`ere l'immeuble $\shi^v_+(k')$ de $\g G$ sur $k'$, il contient $\shi^v_+$. On note $D'(r)$ la boule de $\shi^v_+(k')$ de centre $C_0$ et rayon $r$ et  $U^{ma+}_{D'(r)}$ son fixateur dans $U^{ma+}$. Les groupes $U^{ma+}_n$,  $U^{ma+}_{V(n)}$ et $U^{ma+}_{D'(r)}$ sont les intersections avec $U^{ma+}$ des groupes analogues \`a $U^{ma+}_n$,  $U^{ma+}_{V(n)}$ et $U^{ma+}_{D(r)}$ dans $\g U^{ma+}(k')$.
Si $k'$ est infini la d\'emonstration de \ref{6.7} prouve que $U^{ma+}_{D'(r)}\subset U^{ma+}_{r+1}\subset U^{ma+}_{V(r)}\subset U^{ma+}_{D'(d)}\subset U^{ma+}_{D(d)}$ si $r\geq{}\frac{M'}{M}(1+M)^d$.
Notons $U^{rri+}$ (resp. $G^{crri}$) le compl\'et\'e de $U^+$ (resp. $G$) pour la filtration par les $U^{+}_{D'(r)}=U^+\cap U^{ma+}_{D'(r)}$. Alors les groupes $\overline G$, $G^{cgm}$ et $G^{crri}$ sont des groupes topologiques isomorphes (et de m\^eme pour $\overline U^+$, $U^{cg+}$ et $U^{rri+}$).
En particulier $U^{rri+}$ et $G^{crri}$ ne d\'ependent pas du choix du corps infini $k'$ contenant $k$.

5) Supposons $\g g_k$ GK-simple. S'il y a injection continue du groupe  $U^{rr+}$ associ\'e \`a $k$ dans celui associ\'e \`a l'extension infinie $k'$ (hypoth\`ese raisonnable mais non \'evidemment v\'erifi\'ee) alors $\phi$ est un isomorphisme de groupes topologiques et $G^{crr}=G^{crri}$, $U^{rr+}=U^{rri+}$.
\end{remas}

\subsection{Comparaison de $U^{ma+}$ et $\overline U^+$}\label{6.10}

Il s'agit de savoir si $U^+$ est dense dans $U^{ma+}$, c'est \`a dire si $U^{ma+}$ est topologiquement engendr\'e par les sous-groupes radiciels $U_\qa$ pour $\qa\in\QF^+$. On va r\'epondre par un contre-exemple et une proposition assez g\'en\'erale.

{\bf Contre-exemple.} On consid\`ere la matrice de Kac-Moody $A=\left(\begin{matrix}2&-m  \cr -m&2\cr \end{matrix}\right)$ avec $m\geq{}3$ et $G=\g G_{\shs_A}(\F_2)$. On note $\qa,\qb$ les racines simples, $e$ (resp. $f$) une base de $\g g_\qa$ (resp. de $\g g_\qb$) sur $k=\F_2$ et $a=exp(e)$ (resp. $b=exp(f)$) l'\'el\'ement non trivial de $U_\qa$ (resp. $U_\qb$).
 D'apr\`es \cite[exer. 5.25]{K-90} les plus petites racines de $\QF^+$ sont $\qa,\qb,\qa+m\qb,\qb+m\qa$ tandis que $\qb+\qa,\qb+2\qa,\cdots,\qb+(m-1)\qa,\qa+2\qb,\cdots,\qa+(m-1)\qb$ sont des racines imaginaires. De plus $\qb+r\qa$ ou $\qa+r\qb$ n'est pas une racine pour $r>m$.

Soit $\Psi$ l'id\'eal de $\QD^+$ form\'e des racines positives de la forme $r\qb+q\qa$ avec $r\geq{}2$ ou $r+q\geq{}4$; il contient toutes les racines r\'eelles positives sauf $\qa$ et $\qb$. Notons $U^{ma}_\Psi=\g U^{ma}_\Psi(k)$, c'est un sous-groupe ouvert de $U^{ma+}$.
D'apr\`es \ref{3.3} $U^{ma+}/U^{ma}_\Psi$ est un groupe commutatif isomorphe \`a
$\g g_{\qa}\oplus\g g_{\qb}\oplus\g g_{\qb+\qa}\oplus\g g_{\qb+2\qa}$. Si $U^+$ est dense dans $U^{ma+}$ ce groupe doit \^etre engendr\'e par les images de $a$ et $b$.

On fait des calculs dans le quotient de $\widehat\shu^+_k$ par l'id\'eal $\widehat\shu^+_{\N^*\Psi}\otimes k$. On utilise que $U^{ma+}\subset\widehat\shu^+_k$ et $U^{ma}_\Psi\subset1+\widehat\shu^+_{\N^*\Psi}\otimes k$. On note $e^{(n)}*f=ad(e^{(n)})f\in\g g_k$. Ainsi
$\g g_{\qa}\oplus\g g_{\qb}\oplus\g g_{\qb+\qa}\oplus\g g_{\qb+2\qa}=\F_2e\oplus\F_2f\oplus\F_2(e*f)\oplus\F_2(e^{(2)}*f)$, $\widehat\shu^+_k$ admet $1,e,e^{(2)},e^{(3)},f,ef,e^{(2)}f,e*f,e(e*f),e^{(2)}*f$
comme base d'un suppl\'ementaire de $\widehat\shu^+_{\N^*\Psi}\otimes k$ et l'isomorphisme de $U^{ma}/U^{ma}_\Psi$ sur $\g g_{\qa}\oplus\g g_{\qb}\oplus\g g_{\qb+\qa}\oplus\g g_{\qb+2\qa}$ s'obtient par la projection \'evidente.

Dans cette alg\`ebre quotient $a=1+e+e^{(2)}+e^{(3)}$ et $b=1+f$. On peut calculer facilement les 17 mots en a et b de longueur $\leq{}8$. On constate que $(ab)^2=1+e*f+e^{(2)}*f=(ba)^2$ et $(ab)^4=(ba)^4=1$.
Ainsi ces 17 mots constituent toute l'image de $U^+$ dans cette alg\`ebre quotient et il y a au maximum 14 mots diff\'erents. Comme $U^{ma+}/U^{ma}_\Psi$ est de cardinal 16, il n'est pas \'egal au groupe engendr\'e par $a$ et $b$ (il contient 8 \'el\'ements et pas $[exp]e^{(2)}*f$).

On en d\'eduit que $U^+$ n'est pas dense dans $U^{ma+}$.

\begin{prop}\label{6.11} Supposons la caract\'eristique $p$ du corps $k$ nulle ou strictement plus grande que $M$ (\ref{6.3}.2). Alors le groupe $U^+$ (resp. $G$) est  dense dans $U^{ma+}$ (resp. $G^{pma}$).
\end{prop}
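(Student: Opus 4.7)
Le plan est de prouver par récurrence sur $n \geq 1$ l'égalité $U^+ \cdot U^{ma+}_n = U^{ma+}$, où $(U^{ma+}_n)_{n \geq 1}$ désigne la filtration séparée introduite en \ref{3.4} (avec $U^{ma+}_1 = U^{ma+}$); cela établit exactement la densité de $U^+$ dans $U^{ma+}$ pour la topologie associée. Le cas $n=1$ est trivial, et le passage de $n$ à $n+1$ se ramène à montrer que l'image du sous-groupe $U^+ \cap U^{ma+}_n$ dans le quotient abélien
\[ U^{ma+}_n / U^{ma+}_{n+1} \;\simeq\; \bigoplus_{\alpha \in \Delta^+,\, \deg(\alpha) = n}\, \g g_{\alpha, k} \]
(isomorphisme fourni par \ref{3.3}.e et \ref{3.4}) est surjective.

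L'outil technique central est la formule \textit{commutateur-crochet} suivante, que je vérifierais directement dans l'algèbre complétée $\widehat\shu^+_k$ en exploitant son action sur $G^{pma}$: pour $u \in U^{ma+}_m$ et $v \in U^{ma+}_n$ dont les parties de degré $m$ et $n$ sont données respectivement par $x \in \bigoplus_{\deg\beta = m} \g g_{\beta, k}$ et $y \in \bigoplus_{\deg\gamma = n} \g g_{\gamma, k}$, le commutateur $(u,v) = uvu^{-1}v^{-1}$ appartient à $U^{ma+}_{m+n}$ et sa partie de degré $m+n$ est le crochet de Lie $[x,y]$. Ce calcul, mené en écrivant $u = 1 + x + (\text{degré} > m)$, $v = 1 + y + (\text{degré} > n)$ dans $\widehat\shu^+_k$ et en développant $uvu^{-1}v^{-1}$ modulo degré $m+n+1$, ne requiert aucune division et est donc valable en toute caractéristique; l'intégralité des puissances divisées $e_\alpha^{(n)}$ assure par ailleurs que les exponentielles classiques $\exp(r\, e_\alpha) = \sum_n r^n e_\alpha^{(n)}$ sont bien définies dans $U^{ma+}$ pour $\alpha \in \Phi^+$ et $r \in k$.

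C'est ici qu'intervient l'hypothèse $p=0$ ou $p > M$: par le corollaire \ref{2.3}, elle implique que l'algèbre de Lie $\g n^+_k$ est engendrée (comme $k$-algèbre de Lie) par les $e_i$ pour $i \in I$. Ainsi, pour tout $\alpha \in \Delta^+$ de degré $n$, tout $x \in \g g_{\alpha, k}$ est combinaison $k$-linéaire de crochets itérés de longueur $n$ des $e_i$. En itérant la formule commutateur-crochet à partir d'éléments $\exp(r_k e_{i_k}) \in U^+$, on construit pour chaque mot de longueur $n$ un élément de $U^+ \cap U^{ma+}_n$ dont la partie de degré $n$ est $r_1 \cdots r_n \cdot [e_{i_1}, [e_{i_2}, [\ldots, e_{i_n}]\ldots]]$. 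En variant les scalaires $r_k$ et en multipliant plusieurs tels commutateurs (le produit dans $U^{ma+}_n$ correspondant à la somme dans le quotient abélien), on atteint tout élément de $\bigoplus_{\deg\alpha = n} \g g_{\alpha, k}$, ce qui clôt la récurrence.

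Pour la seconde partie (densité de $G$ dans $G^{pma}$): comme $U^{ma+}$ est ouvert dans $G^{pma}$ (remarque \ref{6.3}.6.a) et $U^+ = G \cap U^{ma+}$ (\ref{3.17}), l'adhérence $\overline G$ de $G$ dans $G^{pma}$ est un sous-groupe fermé qui contient $\overline{U^+} = U^{ma+}$ ainsi que $N \subset G$; or $G^{pma}$ est engendré par $U^{ma+}$ et $N$ (via la $BN$-paire raffinée de \ref{3.16}), d'où $\overline G = G^{pma}$. L'obstacle principal dans ce plan est donc la vérification propre de l'étape commutateur-crochet en caractéristique positive quelconque; tout le reste découle alors formellement de ce que l'hypothèse $p > M$ impose à $\g n^+_k$ via \ref{2.3} \textemdash{} hypothèse dont le contre-exemple précédant la proposition montre qu'elle est essentiellement optimale.
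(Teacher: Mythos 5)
Votre preuve est correcte et suit essentiellement la même démarche que celle du texte : récurrence sur la filtration $(U^{ma+}_n)$, identification du quotient $U^{ma+}_n/U^{ma+}_{n+1}$ avec $\oplus_{\deg\alpha=n}\,\g g_{k\alpha}$, calcul du terme dominant d'un commutateur dans $\widehat\shu^+_k$ (donnant le crochet de Lie, sans aucune division), et utilisation du corollaire \ref{2.3} pour la génération de $\g n^+_k$ par les $e_i$ sous l'hypothèse sur la caractéristique. La seule différence, cosmétique, est que le texte effectue un unique commutateur $(\exp e_i)\,h\,(\exp -e_i)\,h^{-1}$ avec $h\in U^+\cap U^{ma+}_{n-1}$ fourni par l'hypothèse de récurrence, là où vous itérez des commutateurs d'éléments radiciels $\exp(r\,e_{i_k})$; la conclusion pour $G$ dense dans $G^{pma}$ est obtenue de façon équivalente ($G^{pma}=G.U^{ma+}$ chez l'auteur, engendrement par $U^{ma+}$ et $N$ chez vous).
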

\begin{rema*} Dans le cas simplement lac\'e, le r\'esultat est valable sans condition sur la caract\'eristique.
\end{rema*}
\begin{proof} D'apr\`es \ref{2.3} l'alg\`ebre de Lie $\g n^+_k$ est engendr\'ee par les $e_i$ pour $i\in I$. Montrons par r\'ecurrence sur $n$ que tout \'el\'ement $g\in U^{ma+}_n$ est congru \`a un \'el\'ement de $U^+$ modulo $U^{ma+}_{n+1}$, c'est clair pour $n=1$.
D'apr\`es \ref{3.3} $U^{ma+}_{n}/U^{ma+}_{n+1}$ est un groupe commutatif isomorphe \`a $\g g_n=\oplus_{deg(\qa)=n}\,\g g_{k\qa}$. D'apr\`es \ref{3.2} et les bonnes propri\'et\'es de la base des $[N]$ (\ref{2.9}.4), l'isomorphisme $\qf_n$ est donn\'e comme suit: on plonge $U^{ma+}$ dans $\widehat\shu^+_k$ et l'\'el\'ement $g\in U^{ma+}_n$ est congru \`a $1+\qf_n(g)$ modulo $\widehat\shu^+_{k,\geq{}n+1}=\prod_{deg(\qa)\geq{}n+1}\;\widehat\shu^+_{k\qa}$.
Il suffit de montrer le r\'esultat cherch\'e pour des \'el\'ements $g$ engendrant $U^{ma+}_{n}/U^{ma+}_{n+1}$, on va consid\'erer ceux tels que $\qf_n(g)$ soit de la forme $[e_i,v]$ avec $i\in I$ et $v\in \g g_{n-1}$.
Par hypoth\`ese il existe $h\in U^+_{n-1}$ tel que $\qf_{n-1}(h)=v$, donc $h$ est congru \`a $1+v$ modulo $\widehat\shu^+_{k,\geq{}n}$. Consid\'erons $h'=(exp\,e_i).h.(exp-e_i).h^{-1}\in U^+$. On calcule facilement ce produit dans $\widehat\shu^+_{k}/\widehat\shu^+_{k,\geq{}n+1}$, il est \'egal \`a $1+[e_i,v]$. Donc  $h'\in  U^{ma+}_n$ et $\qf_n(h')=\qf_n(g)$ \ie $g=h'$ modulo $U^{ma+}_{n+1}$.

Ainsi $U^+$ est dense dans $U^{ma+}$; mais $G^{pma}=G.U^{ma+}$ (\cf \ref{3.16}, \ref{3.17}) donc $G$ est dense dans $G^{pma}$.
\end{proof}

\begin{theo}\label{6.12} Supposons $\g g_k$ GK-simple et le corps $k$ infini de caract\'eristique nulle ou $>M$. Alors les groupes topologiques $G^{pma}$, $G^{cgm}$ et $G^{crr}$ sont isomorphes
\end{theo}

\begin{proof} Cela r\'esulte aussit\^ot des propositions \ref{6.7} et \ref{6.11}.
\end{proof}

\subsection{Simplicit\'e}\label{6.13}

On va maintenant g\'en\'eraliser le r\'esultat de simplicit\'e de Moody \cite{My-82}. On suit son sch\'ema de d\'emonstration en reproduisant m\^eme les parties inchang\'ees, \`a cause de la disponibilit\'e restreinte de cette r\'ef\'erence.

On note $G_{(1)}$ (resp. $G^{pma}_{(1)}$) le sous-groupe de $G$ (resp. $G^{pma}$) engendr\'e par les groupes radiciels $U_\qa$ pour $\qa\in\QF$ (resp. et par $U^{ma+}$). Il est normalis\'e par $T$ et les $\widetilde s_{\qa_i}$ donc est distingu\'e dans $G$ (resp. $G^{pma}$). D'apr\`es \ref{6.4} on a $Z'(G^{pma})\subset Z(G).G^{pma}_{(1)}$.

\begin{lemm}\label{6.14} 1) Si $\vert k\vert\geq{}4$, le groupe $G_{(1)}$ est parfait et \'egal au groupe d\'eriv\'e de $G$.

2) Le groupe $G^{pma}_{(1)}$ est topologiquement parfait (\ie \'egal \`a l'adh\'erence de son groupe d\'eriv\'e) et \'egal \`a l'adh\'erence du groupe d\'eriv\'e de $G^{pma}$, dans les cas suivants:

\quad a) La caract\'eristique $p$ de $k$ est nulle ou $>M$ et $\vert k\vert\geq{}4$.

\quad b) La matrice de Kac-Moody $A$ n'a pas de facteur de type affine et $k$ est infini.
\end{lemm}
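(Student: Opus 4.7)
The plan is to prove part 1 via commutator identities inside the rank-one $SL_2$-type subgroups, and then to deduce part 2 by combining part 1 with a density statement for $U^{ma+}$ inside $\overline{[G^{pma}_{(1)},G^{pma}_{(1)}]}$.

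For part 1, the key identity, obtained from (KMT4) together with $\alpha(\alpha^\vee(s))=s^2$, is
\[
[\alpha^\vee(s),x_\alpha(r)]=x_\alpha\bigl((s^2-1)r\bigr),\qquad s\in k^*,\;r\in k.
\]
Since $|k|\ge 4$ there exists $s\in k^*$ with $s^2\ne 1$, so $(s^2-1)k=k$; noting that $\alpha^\vee(s)=\widetilde s_\alpha^{-1}\widetilde s_\alpha(s^{-1})\in\langle U_\alpha,U_{-\alpha}\rangle\subset G_{(1)}$ by (KMT6), we deduce $U_\alpha\subset[G_{(1)},G_{(1)}]$ for every $\alpha\in\Phi$, whence $G_{(1)}$ is perfect. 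Since $G$ is generated by $T$ and the $U_\alpha$ (\ref{1.6}.5), one has $G=T\cdot G_{(1)}$; $T$ being abelian and normalising $G_{(1)}$ by (KMT4), the quotient $G/G_{(1)}$ is abelian, forcing $[G,G]\subset G_{(1)}=[G_{(1)},G_{(1)}]\subset[G,G]$, so all three groups coincide.

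For part 2, the same abelianity argument gives $[G^{pma},G^{pma}]\subset G^{pma}_{(1)}$ (as $G^{pma}/G^{pma}_{(1)}$ is a quotient of the abelian $T$), so both assertions reduce to showing $G^{pma}_{(1)}\subset\overline{[G^{pma}_{(1)},G^{pma}_{(1)}]}$. By part 1, $G_{(1)}\subset[G^{pma}_{(1)},G^{pma}_{(1)}]$, and it only remains to put $U^{ma+}$ into the closure. In case (a), Proposition \ref{6.11} furnishes the density of $U^+$ in $U^{ma+}$; combined with $U^+\subset G_{(1)}\subset[G^{pma}_{(1)},G^{pma}_{(1)}]$ this immediately gives $U^{ma+}\subset\overline{U^+}\subset\overline{[G^{pma}_{(1)},G^{pma}_{(1)}]}$, finishing this case.

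In case (b), density of $U^+$ in $U^{ma+}$ is unavailable, and one has to work layer-by-layer through the filtration $(U^{ma+}_n)_{n\ge 1}$ of \ref{3.4}. The plan is to prove by induction on $n$ that $[G^{pma}_{(1)},G^{pma}_{(1)}]\cdot U^{ma+}_n$ exhausts $U^{ma+}$: since by \ref{3.3} and \ref{3.2} one has $U^{ma+}_n/U^{ma+}_{n+1}\cong\bigoplus_{\deg(\alpha)=n}\g g_{k\alpha}$, it suffices to cover each graded piece. The real-root components are already handled by the inclusion $U_\alpha\subset G_{(1)}$. For an imaginary $\alpha$, decompose $\alpha=\beta+\gamma$ with $\beta,\gamma\in\Delta^+$ of smaller degree and form commutators of the type $[x_\beta(\lambda e),x_\gamma(\mu f)]$ (or their graded analogues when $\beta$ or $\gamma$ is imaginary), whose leading term lies in $[\g g_{k\beta},\g g_{k\gamma}]\subset\g g_{k\alpha}$; since $k$ is infinite, a Vandermonde-type argument on the parameters $\lambda,\mu$ then isolates each homogeneous contribution. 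The main technical obstacle — where the non-affine assumption enters decisively — is the Lie-algebraic statement that every imaginary root space $\g g_{k\alpha}$ is contained in $[\g n^+_k,\g n^+_k]$; this is the substitute, valid without any restriction on the characteristic in the non-affine case, for the equality $\g g^1_k=\g g_k$ of Corollary \ref{2.3} used in case (a), and its failure in the affine situation of Example \ref{6.8} is precisely what prevents the argument from extending beyond hypothesis (b).
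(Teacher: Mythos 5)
Your part 1 and case (a) of part 2 are correct and are exactly the paper's argument: the rank-one commutator $(\qa^\vee(s),x_\qa(r))=x_\qa((s^2-1)r)$ with $s^2\neq1$ gives $U_\qa\subset(G_{(1)},G_{(1)})$, and $G=T.G_{(1)}$, resp. $G^{pma}=T.G^{pma}_{(1)}$ plus the density of $U^+$ in $U^{ma+}$ (Proposition \ref{6.11}), finishes 1) and 2)a. The problem is case (b). Your argument hinges on the assertion that, for $A$ without affine factor, every imaginary root space satisfies $\g g_{k\qa}\subset[\g n^+_k,\g n^+_k]$ over an \emph{arbitrary} field $k$; you flag it as the main obstacle but give no proof or reference, and nothing in the paper provides it. In small characteristic the integral form $\g n^+_k=\g n^+_\Z\otimes k$ is in general strictly larger than the Lie algebra generated by the $e_i$ — this is precisely why Corollary \ref{2.3} needs $p=0$ or $p>-a_{i,j}$ (or the simply-laced hypothesis) — and Proposition \ref{2.2} only gives generation of $\g n^+_\Z$ as an $ad(\shu^+)$-module, i.e.\ using divided powers $ad(e_i^{(q)})$, which does not reduce to Lie brackets of elements of $\g n^+_k$. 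So the surjectivity of the bracket maps onto $\g g_{k\qa}$ that you need is not available at this level of generality, and this step is a genuine gap. Your diagnosis via Example \ref{6.8} is also off target: in that affine example the coroots of $\g{sl}_m$ span the traceless diagonal lattice over $\Z$, so the brackets do cover the imaginary root spaces; what fails in the affine case is something else (see below), namely the existence of suitable torus elements.

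The paper closes case (b) by a much cheaper device, Lemma \ref{6.15}: since $A$ has no affine factor and $k$ is infinite, for every $\qa\in\QD$ — imaginary roots included — there exists $t_\qa\in T\cap G_{(1)}$ with $\qa(t_\qa)\neq1$ (take $\ql=\sum a_j\qa_j^\vee\in Q^\vee$ with $\qa_i(\ql)<0$ for all $i$, which exists exactly in the non-affine case because in affine type the isotropic root vanishes on $Q^\vee$, and set $t_\qa=\ql(x)$ with $x\in k^*$ of large enough order). Then the very computation of your part 1, transported to the graded quotients $U^{ma+}_n/U^{ma+}_{n+1}\cong\bigoplus_{deg(\qa)=n}\,\g g_{k\qa}$ of \ref{3.3}--\ref{3.4}, shows that the commutator of $t_\qa\in G^{pma}_{(1)}$ with $u\in U^{ma+}_n$ of leading term $X_\qa$ has leading term $(\qa(t_\qa)-1)X_\qa$; hence each graded piece lies in $(G^{pma}_{(1)},G^{pma}_{(1)}).U^{ma+}_{n+1}$, and completeness of $U^{ma+}$ for the filtration gives $U^{ma+}\subset\overline{(G^{pma}_{(1)},G^{pma}_{(1)})}$. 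This uses only torus--unipotent commutators and no Lie-algebraic surjectivity. To salvage your unipotent-commutator version of case (b) you would have to actually prove the bracket statement you assume; otherwise replace it by this torus argument.
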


\begin{rema*} Dans les conditions de 2)b ci-dessus, si de plus $k$ n'est pas extension alg\'ebrique d'un corps fini, alors $G^{pma}_{(1)}$ est parfait.
\end{rema*}

\begin{proof} 1) Soient $\qa\in\QF$ et $r,r'\in k^*$, alors $\qa^\vee(r)\in G_{(1)}$ et $(\qa^\vee(r),x_\qa(r'))=x_\qa((r^2-1)r')$. Donc $U_\qa\subset(G_{(1)},G_{(1)})\subset(G,G)\subset G_{(1)}$ puisque $G=T.G_{(1)}$.

2) Le cas a) r\'esulte de 1),  de la proposition \ref{6.11} et de ce que $G^{pma}=T.G^{pma}_{(1)}$. Pour le cas b) on peut utiliser le lemme \ref{6.15} ci-dessous. Par un calcul analogue au pr\'ec\'edent on en d\'eduit que $U^{ma+}_n$ est parfait modulo $U^{ma+}_{n+1}$.
\end{proof}

\begin{lemm}\label{6.15} On suppose la matrice de Kac-Moody $A$ sans facteur de type affine et le corps $k$ infini, alors $\forall \qa\in\QD$ il existe $t_\qa\in T\cap G_{(1)}$ tel que $\qa(t_\qa)\not=1$.
Si $k$ n'est pas extension alg\'ebrique d'un corps fini on peut supposer $t=t_\qa$ ind\'ependant de $\qa$.
\end{lemm}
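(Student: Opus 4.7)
The plan is to split the argument into an algebraic input—constructing a single $\mu\in Q^\vee$ with $\alpha(\mu)\neq 0$ for every $\alpha\in\Delta$—and a group-theoretic translation via the relations of Proposition~\ref{1.8}.

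For the algebraic step, I will decompose $A$ into its irreducible components $A=\bigoplus_\ell A_\ell$ along a partition $I=\bigsqcup_\ell I_\ell$. Every $\alpha\in\Delta$ has support in exactly one $I_\ell$. By hypothesis no $A_\ell$ is of affine type, so Vinberg's classification (cf.\ \cite[Th.~4.3]{K-90}) produces, for each $\ell$, a vector $c^{(\ell)}\in\R_{>0}^{I_\ell}$ such that $A_\ell c^{(\ell)}$ has coordinates of a single nonzero sign (all positive in the finite case, all negative in the indefinite case). Since principal minors are invariant under transposition, $A_\ell^t$ is of the same type as $A_\ell$, so the same conclusion applies to $A_\ell^t$. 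A rational perturbation preserves the strict sign condition, and clearing denominators yields $c^{(\ell)}\in\Z_{>0}^{I_\ell}$ with $(A_\ell^t c^{(\ell)})_j$ of constant nonzero sign. Setting $\mu:=\sum_\ell\sum_{i\in I_\ell} c^{(\ell)}_i\alpha_i^\vee\in Q^\vee$, for $\alpha=\sum_j n_j\alpha_j\in\Delta$ supported in $I_\ell$ I will compute $\alpha(\mu)=\sum_{j\in I_\ell} n_j(A^t c)_j$; the $n_j$ share one sign (as $\alpha$ is a positive or negative root) and the $(A^t c)_j$ share one sign, so the sum is a nonzero integer.

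For the conversion, I will invoke $(\overline{\mathrm{KMT}}6)$ of Proposition~\ref{1.8}, which identifies $\alpha^\vee(r)$ with $\alpha^*(r)=\tilde s_\alpha^{-1}\tilde s_\alpha(r^{-1})$, a product of elements of $U_\alpha$ and $U_{-\alpha}$; hence $\alpha^\vee(r)\in G_{(1)}$ for every $\alpha\in\Phi$ and $r\in k^*$. Consequently $\mu(r)\in T\cap G_{(1)}$ for every $r\in k^*$, since $\mu$ is a $\Z$-linear combination of simple coroots. Given $\alpha\in\Delta$ and $n:=\alpha(\mu)\in\Z\setminus\{0\}$, the equation $X^n=1$ has at most $|n|$ solutions in $k$, so the infinitude of $k$ allows me to pick $r\in k^*$ with $r^n\neq 1$; then $t_\alpha:=\mu(r)$ satisfies $\alpha(t_\alpha)=r^n\neq 1$. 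For the sharpened statement, when $k$ is not algebraic over $\F_p$, $k$ contains a transcendental element, necessarily of infinite multiplicative order; choosing such an $r$ makes $t:=\mu(r)$ satisfy $\alpha(t)=r^{\alpha(\mu)}\neq 1$ simultaneously for every $\alpha\in\Delta$.

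The main obstacle is the algebraic step: one must invoke the non-affine hypothesis precisely in the form of Vinberg's positivity criterion, which rules out the possibility—realized in affine type components by the null root $\delta$—that an imaginary root could lie in the radical of the pairing $Q\times Q^\vee\to\Z$. Once $\mu$ is in hand, the passage to $T\cap G_{(1)}$ and the choice of the scalar $r$ reduce to formal manipulations with the defining relations and an elementary counting argument in the infinite field $k$.
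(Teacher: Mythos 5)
Your proof is correct and follows essentially the same route as the paper: both invoke Kac's Theorem 4.3 to produce an integral $\lambda\in Q^\vee$ with strictly positive coefficients and $\alpha_i(\lambda)$ of constant nonzero sign (the paper reduces to the indecomposable indefinite case, calling the finite-type case clear, whereas you treat all non-affine components uniformly), deduce $\alpha(\lambda)\neq 0$ for every root from the constant sign of its coefficients, note $\lambda(r)\in T\cap G_{(1)}$ because $\lambda\in Q^\vee$, and then choose $r$ of sufficiently large or infinite multiplicative order using the infiniteness of $k$. One cosmetic quibble: in characteristic $0$ the field need not contain an element transcendental over its prime field (e.g.\ $k=\Q$), but $k^*$ still contains elements of infinite multiplicative order, so your final step is unaffected.
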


\begin{proof} On peut supposer $A$ ind\'ecomposable et m\^eme de type ind\'efini, car le cas de type fini est clair. D'apr\`es \cite[4.3]{K-90} il existe alors $\ql=\sum\,a_j\qa_j^\vee\in Q^\vee\subset Y$ avec $a_j\in\N^*$ et $\qa_i(\ql)<0$ $\forall i,j\in I$. Soient $x\in k^*$ et $t=\ql(x)\in T$; comme $\ql\in Q^\vee$, $t\in G_{(1)}$.
Si $\qa=\sum\,n_i\qa_i\in\QD$, on a $\qa(t)=x^{\qa(\ql)}$; mais les $n_i$ sont tous de m\^eme signe et non tous nuls, donc $\qa(\ql)=\sum\,n_i\qa_i(\ql)\not=0$. Ainsi $\qa(t)\not=1$ si $x$ est  d'ordre $\geq{}\vert\qa(\ql)\vert$ dans $k^*$ ou, mieux, s'il est d'ordre infini.
\end{proof}

\begin{prop}\label{6.16} On suppose $G^{pma}_{(1)}$ topologiquement parfait et la matrice de Kac-Moody $A$ ind\'ecomposable. Soit $K$ un sous-groupe ferm\'e de $G^{pma}$ normalis\'e par $G^{pma} _{(1)}$ , alors $K\subset Z'(G^{pma})$ ou $K\supset G^{pma}_{(1)}$.
\end{prop}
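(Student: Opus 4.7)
I would follow the classical Bourbaki--Tits strategy for simplicity of groups with a $BN$-pair, adapted to the closed topological setting of $\g G^{pma}$ and to the fact that $K$ is only assumed normalized by the subgroup $G^{pma}_{(1)}$.

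\textbf{Step 1 (Reduction to a Bruhat cell).} Suppose $K\not\subset Z'(G^{pma})$; the aim is $K\supset G^{pma}_{(1)}$. Since $Z'(G^{pma})$ is the kernel of the action of $G^{pma}$ on the vectorial building $\shi^v_+$ (\ref{6.1}.1, \ref{6.4}), some element of $K$ moves a chamber. The subgroup $G^{pma}_{(1)}$ contains $U^{ma+}$ and each $U_{-\qa_i}$, hence every minimal parabolic $\g P_i^{pma}$, and therefore acts chamber--transitively on $\shi^v_+$; since it normalizes $K$, I may conjugate and, walking along a minimal gallery from $C_0$ to $kC_0$, produce some $k\in K\cap(B^{ma+}\widetilde s_i B^{ma+})$ for a simple reflection $s_i$.

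\textbf{Step 2 (Extraction of a root element).} By the refined Tits system (Proposition \ref{3.16}) I write $k=u\,\widetilde s_i\,t\,v$ with $u\in U_{\qa_i}$, $t\in T$, $v\in U^{ma+}$. For each $y\in U_{-\qa_i}\subset G^{pma}_{(1)}$ the commutator $(y,k)=yky^{-1}k^{-1}$ lies in $K$. Projecting modulo the closed normal subgroup $U^{ma}_{\Delta^+\setminus\{\qa_i\}}$ of $U^{ma+}$, the computation reduces to a commutator inside the rank--one group $\g A_{\qa_i}^Y$ of \S{}\ref{3.5}, where a direct $SL_2$-type calculation produces, for suitable $y$, a non--trivial element of $U_{\qa_i}$ or $U_{-\qa_i}$. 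Varying $y$ and using that $K$ is closed, I obtain a closed non--zero subgroup of $U_{\pm\qa_i}\cap K$.

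\textbf{Step 3 (Filling out a root subgroup and propagating).} This closed subgroup is stable under conjugation by $T\cap G^{pma}_{(1)}$, which contains all $\qa_j^\vee(k^*)$ and acts on $U_{\pm\qa_i}\cong(k,+)$ via the scalars $r\mapsto r^{\pm a_{i,j}}$; together with closedness of $K$, an argument in the spirit of Lemma \ref{6.15} (exploiting indecomposability of $A$ to find scaling characters with rich image) yields $U_{\pm\qa_i}\subset K$. Once a single $U_\qa\subset K$, the elements $\widetilde s_{\qa_j}\in G^{pma}_{(1)}$ conjugate $U_\qa$ to $U_{s_{\qa_j}\qa}$; by indecomposability of $A$ the $W^v$-orbit of $\qa$ covers all of $\Phi$, so $U_\beta\subset K$ for every $\beta\in\Phi$ and thus $K\supset\langle U^+,U^-\rangle$.

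\textbf{Step 4 (Conclusion via topological perfection).} Consider the Hausdorff topological group $Q:=G^{pma}_{(1)}/(K\cap G^{pma}_{(1)})$, a continuous quotient of $G^{pma}_{(1)}$ and hence topologically perfect by hypothesis. By Step~3 the image of every $U_\qa$ ($\qa\in\Phi$) is trivial in $Q$; since $G^{pma}_{(1)}$ is (topologically) generated by $U^{ma+}$ together with the $U_{\pm\qa_i}$, the group $Q$ is topologically generated by the image of $U^{ma+}$. But $U^{ma+}$ is pro--nilpotent through its canonical filtration $(U^{ma+}_n)_{n\ge1}$ (see \ref{3.4}), so its continuous image, and the closure of that image, are pro--nilpotent. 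A topologically perfect pro--nilpotent group is trivial (its lower central series is simultaneously stationary at the whole group and convergent to $\{1\}$), so $Q=\{1\}$, i.e.\ $K\supset G^{pma}_{(1)}$.

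The main obstacle is Step~2: producing a genuine non--trivial root element from an element of the Bruhat cell in the presence of the infinite ``tail'' $v\in U^{ma+}$. In the classical $SL_2$ setting the commutator calculation is immediate, but here the tail must be controlled either by working modulo the canonical filtration of $U^{ma+}$ or via a limiting argument using closedness of $K$; ensuring the projection of $(y,k)$ is non--trivial after dividing out this tail is the essential technical work.
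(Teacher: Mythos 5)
The decisive gaps are in Steps 1--3, exactly where you try to manufacture actual root subgroups inside $K$. In Step 1 no mechanism is given for landing in a rank-one cell: an element $k\in K\setminus Z'(G^{pma})$ lies in some $B^{ma+}wB^{ma+}$ with $\ell(w)\geq 1$, conjugating by $G^{pma}_{(1)}$ only replaces the displaced chamber $C_0$ by another chamber, and multiplying by elements of $B^{ma+}$ leaves $K$; ``walking along a minimal gallery'' does not by itself produce an element of $K\cap B^{ma+}\widetilde s_iB^{ma+}$ (for that one would at least need an element of $K$ fixing a chamber but not all chambers, which is not given). In Step 2, as you yourself note, projecting modulo $\g U^{ma}_{\Delta^+\setminus\{\qa_i\}}(k)$ only yields elements of $K$ whose image in $\g A^Y_{\qa_i}(k)$ is nontrivial; closedness of $K$ provides no sequence along which the tail in $\g U^{ma}_{\Delta^+\setminus\{\qa_i\}}(k)$ tends to $1$, so one never gets a nonzero element of $U_{\pm\qa_i}\cap K$, which is what Step 3 requires. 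Finally, the propagation in Step 3 is false as stated: the real roots of a Kac-Moody root system do not form a single $W^v$-orbit in general (already for $B_2$, $G_2$, or any $A$ with $\vert a_{ij}\vert\neq\vert a_{ji}\vert$), so even granting $U_{\pm\qa_i}\subset K$ one would need a further rank-two/commutation argument to reach the other simple root groups.

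Step 4, on the other hand, is essentially sound and coincides in substance with the paper's own finish: since $\pi(U^{ma+})$ is a subgroup, ``$Q$ generated by it'' just says $G^{pma}_{(1)}\subset KU^{ma+}$, and then one concludes by $\overline{(G^{pma}_{(1)},G^{pma}_{(1)})}\subset \overline{K\,(U^{ma+},U^{ma+})}\subset KU^{ma+}_2$, iterating along the filtration (each $KU^{ma+}_n$ being open hence closed), and finally using closedness of $K$. But the inclusion $G^{pma}_{(1)}\subset KU^{ma+}$ whenever $K\not\subset Z'(G^{pma})$ --- the very thing your Steps 1--3 were meant to supply --- is obtained in the paper without ever exhibiting root subgroups inside $K$: one checks that $(G^{pma},B^{ma+},N)$, $U^{ma+}$ and $K$ satisfy the hypotheses of Th\'eor\`eme 5 of \cite[IV {\S{}} 2 no 7]{B-Lie} except possibly solvability of $U$ and perfectness of $G_1$, and follows the proof of that theorem up to the point where these two hypotheses intervene, which is precisely where $G^{pma}_{(1)}\subset KU^{ma+}$ is reached. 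So the part you left as ``the essential technical work'' is the genuinely hard combinatorial core; you must either carry out a complete Moufang-type extraction there or, as the paper does, import it from Bourbaki.
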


\begin{rema*} Ainsi $G^{pma}_{(1)}/(Z'(G^{pma})\cap G^{pma}_{(1)})$ est topologiquement simple. Notons que, d'apr\`es \ref{3.18}, ce quotient ne d\'epend que de $A$ et $k$ (mais pas de $\shs$).
\end{rema*}

\begin{proof} Le syst\`eme de Tits $(G^{pma},B^{ma+},N)$, les groupes $U^{ma+}$ et $K$ satisfont aux conditions du th\'eor\`eme 5 de \cite[IV {\S{}} 2 n\degres{}7]{B-Lie} sauf peut-\^etre (2) ou (3) (avec $''U''=U^{ma+}$, $''H''=K$ et $''G_1''=G^{pma}_{(1)}$). On peut donc suivre la d\'emonstration de ce th\'eor\`eme jusqu'au point o\`u (2) et (3) sont utilis\'es.
Si $K\not\subset Z'(G^{pma})$ on obtient donc $G^{pma}_{(1)}\subset KU^{ma+}$. Les sous-groupes $KU^{ma+}_n$ sont ouverts donc ferm\'es; comme $G^{pma}_{(1)}$est topologiquement parfait, on a $G^{pma}_{(1)}\subset KU^{ma+}_n$, $\forall n\geq{}1$ (\cf \ref{3.4}).
Soit $g\in G^{pma}_{(1)}$, \'ecrivons $g=k_nu_n$ avec $k_n\in K$ et $u_n\in U^{ma+}_n$. La suite $u_n$ tend vers $1$, donc $k_n=gu_n^{-1}$ tend vers $g$ et comme $K$ est ferm\'e $g\in K$.
\end{proof}

\begin{lemm}\label{6.17} On suppose $G^{pma}_{(1)}$ topologiquement parfait et la matrice de Kac-Moody $A$ ind\'ecomposable. Soient $K$ un sous-groupe de $G^{pma}$ normalis\'e par $G^{pma}_{(1)}$ et $t\in T\cap G^{pma}_{(1)}$ tels que $K\not\subset Z'(G^{pma})$ et $\qa(t)\not=1$ $\forall\qa\in\QD$. Alors $t\in K$.
\end{lemm}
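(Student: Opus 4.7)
The strategy is to first apply Proposition~\ref{6.16} to the topological closure $\overline{K}$ of $K$ in $G^{pma}$ (equipped with the topology of \ref{6.3}.6), and then to upgrade the resulting inclusion $t \in \overline{K}$ to the stronger statement $t \in K$ by exploiting the regularity condition $\alpha(t) \neq 1$ for every $\alpha \in \Delta$.

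For the first step, $\overline{K}$ is a closed subgroup of $G^{pma}$ still normalized by $G^{pma}_{(1)}$, since conjugation is a continuous automorphism. The subgroup $Z'(G^{pma}) = \bigcap_{g \in G^{pma}} g B^{ma+} g^{-1}$ is closed as an intersection of closed conjugates of the closed Borel $B^{ma+}$, so $K \not\subset Z'(G^{pma})$ passes to $\overline{K} \not\subset Z'(G^{pma})$. Proposition~\ref{6.16} then gives $\overline{K} \supset G^{pma}_{(1)}$, and in particular $t \in \overline{K}$. Choose a sequence $k_n \in K$ with $k_n \to t$, and set $u_n = t^{-1} k_n$, so that $u_n \to 1$; for each prescribed $m \geq 1$ we then have $u_n \in U^{ma+}_m$ for all sufficiently large $n$.

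For the second step, the key observation is that since both $k_n$ and $t k_n t^{-1}$ lie in $K$ (the latter because $t \in G^{pma}_{(1)}$ normalizes $K$), the element
\[
\Phi(u_n) \;:=\; u_n^{-1}(t u_n t^{-1}) \;=\; k_n^{-1} (t k_n t^{-1})
\]
lies in $K \cap U^{ma+}$. The map $\Phi \colon U^{ma+} \to U^{ma+}$, $v \mapsto v^{-1}(t v t^{-1})$, preserves the filtration $(U^{ma+}_d)_{d\geq 1}$ of \ref{3.4} and induces, via the isomorphisms $U^{ma+}_d / U^{ma+}_{d+1} \cong \g g_d$ (see \ref{3.3} d and \ref{6.6}), the $k$-linear map $\mathrm{Ad}(t) - \mathrm{Id}$, which is bijective on each $\g g_{k\alpha}$ because $\alpha(t) \neq 1$, and hence on each finite-dimensional $\g g_d$. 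A filtration-wise inverse-limit argument, using the completeness of $U^{ma+}$, then shows that $\Phi$ is itself a bijection of $U^{ma+}$ carrying each $U^{ma+}_m$ onto itself. Moreover $K \cap U^{ma+}$ is stable under $\Phi$, since for $v \in K \cap U^{ma+}$ one has $t v t^{-1} \in K$ and hence $\Phi(v) \in K$.

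The plan is to conclude that the unique solution $v \in U^{ma+}$ of $\Phi(v) = \Phi(u_n)$, namely $v = u_n$, already belongs to $K$; once this is shown, $t = k_n u_n^{-1} \in K$ and we are done. The hard part will be this final inversion step: one must transfer the bijectivity of $\Phi$ on $U^{ma+}$ down to the non-closed subgroup $K \cap U^{ma+}$. I expect to handle it by a successive approximation along the filtration, at each level $m$ lifting the unique preimage in $\g g_m$ under $\mathrm{Ad}(t) - \mathrm{Id}$ to an element of $K \cap U^{ma+}_m$, using the density of $K \cap U^{ma+}$ in $U^{ma+}$ inherited from $\overline{K} \supset U^{ma+}$ together with the completeness of $U^{ma+}$ for its filtration. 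The regularity hypothesis $\alpha(t) \neq 1$ for \emph{every} $\alpha \in \Delta$ (including imaginary roots) is exactly what is needed to make this level-by-level lifting possible on each graded piece.
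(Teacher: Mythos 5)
Your first step (applying Proposition \ref{6.16} to $\overline K$, extracting $k_n\in K$ with $k_n\to t$, and analyzing the twisted commutator map $\Phi(v)=v^{-1}(tvt^{-1})$, which preserves the filtration $(U^{ma+}_d)$ and induces $\mathrm{Ad}(t)-\mathrm{Id}$ on each graded piece $\bigoplus_{deg(\alpha)=d}\g g_{k\alpha}$, invertible because $\alpha(t)\neq 1$) is sound and matches the paper's setup. The gap is in your final inversion step. You reduce the lemma to showing $u_n=t^{-1}k_n\in K$, but this is literally equivalent to $t\in K$ (since $k_n\in K$), so you have reduced the lemma to itself; and the mechanism you propose cannot close that loop. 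The successive approximation you sketch, using the density of $K\cap U^{ma+}$ in $U^{ma+}$ (which does follow from $\overline K\supset U^{ma+}$), produces at best a sequence of elements of $K$ converging to $u_n$, i.e. it re-proves $u_n\in\overline K$ -- which you already know -- and nothing more: $K$ is not assumed closed, so no limiting argument along the filtration can certify membership in $K$ itself. Equally, knowing that $\Phi$ is a bijection of $U^{ma+}$ stabilizing $K\cap U^{ma+}$ does not give surjectivity of $\Phi$ on $K\cap U^{ma+}$; a bijection of a complete group can map a dense non-closed subgroup strictly into itself, so from $\Phi(u_n)\in K$ you cannot conclude $u_n\in K$.

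The way out -- and it is the paper's proof -- is to aim the same machinery at a different equation: do not try to correct inside $K$, but conjugate the known element $k_n\in K$ onto $t$ by an element of $U^{ma+}$. Writing $k_s=u t$ with $u\in U^{ma+}$ for $s$ large, one constructs by successive approximation along the filtration elements $v_r\in U^{ma+}_r$ whose class in $U^{ma+}_r/U^{ma+}_{r+1}$ is $\sum_\alpha \frac{-1}{1-\alpha(t)}X_\alpha$ (where $\sum_\alpha X_\alpha$ is the class of the current error $u_r$), so that $v_r u_r t v_r^{-1}=u_{r+1}t$ with $u_{r+1}\in U^{ma+}_{r+1}$; the infinite product $v=\cdots v_2v_1$ converges in the complete group $U^{ma+}$ and satisfies $v(ut)v^{-1}=t$. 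Since $v\in U^{ma+}\subset G^{pma}_{(1)}$ normalizes $K$ and $ut=k_s\in K$, this gives $t\in K$ directly, with no need for the conjugator (or the correction $u_n$) to lie in $K$. You have all the ingredients for this -- the graded bijectivity coming from $\alpha(t)\neq1$ for all $\alpha\in\Delta$ and the completeness of $U^{ma+}$ -- but as written your argument targets the statement $u_n\in K$, which it cannot reach.
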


\begin{proof} D'apr\`es \ref{6.16} $\overline K$ contient $G^{pma}_{(1)}$. Soit $k_n$ une suite d'\'el\'ements de $K$ convergeant vers $t$. Pour $s$ assez grand $u=k_st^{-1}\in U^{ma+}$. On va montrer que $t$ est conjugu\'e dans $G^{pma}_{(1)}$ \`a $ut=k_s\in K$, donc $t\in K$.

Montrons qu'il existe des suites $u_n,v_n$ dans $U^{ma+}$ telles que, $\forall n\geq{}1$,

\qquad (1) $u_n,v_n\in U^{ma+}_n$, $u_1=u$,

\qquad (2) $v_nu_ntv_n^{-1}=u_{n+1}t$.

Si c'est le cas, la suite des produits $v_n.v_{n-1}.\cdots.v_2.v_1$ converge vers un \'el\'ement $v\in U^{ma+}$ et $vutv^{-1}=t$.

Supposons $u_1,\cdots,u_r$ et $v_1,\cdots,v_{r-1}$ d\'ej\`a construits. On sait que $U^{ma+}_r/U^{ma+}_{r+1}$ est isomorphe au groupe additif somme des $\g g_{k\qa}$ pour $deg(\qa)=r$. Notons $\sum_\qa\,X_\qa$ l'\'el\'ement correspondant \`a la classe de $u_r$.
On d\'efinit $v_r$ comme un \'el\'ement de $U^{ma+}_r$ dont la classe modulo $U^{ma+}_{r+1}$ correspond \`a $\sum_\qa\,{\frac{-1}{1-\qa(t)}}.X_\qa$. Soit $u_{r+1}=v_ru_rtv_r^{-1}t^{-1}$, sa classe modulo $U^{ma+}_{r+1}$ correspond \`a l'\'el\'ement $\sum_\qa\,({\frac{-1}{1-\qa(t)}}+1+{\frac{\qa(t)}{1-\qa(t)}}).X_\qa=0$. Donc $u_{r+1}\in U^{ma+}_{r+1}$.
\end{proof}

\begin{lemm}\label{6.18} Soient $G^{pma}$, $K$ et $t$ comme en \ref{6.17}. Si $u\in U^{ma+}$, il existe $v\in U^{ma+}$ tel que $vtv^{-1}t^{-1}=u$, en particulier $u\in K$. Donc $U^{ma+}\subset K$.
\end{lemm}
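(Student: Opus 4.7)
The strategy is to reduce the existence of $v$ to (a variant of) the iteration already carried out in the proof of Lemma~\ref{6.17}, and then deduce $u\in K$ using that $t\in K$ and that $K$ is normalised by $U^{ma+}\subset G^{pma}_{(1)}$.

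Observe first that the equation $vtv^{-1}t^{-1}=u$ is equivalent to $v^{-1}(ut)v=t$, so it amounts to conjugating $ut$ to $t$ by an element of $U^{ma+}$. This is precisely the assertion proved in the body of \ref{6.17}: inspection of that argument shows that the element conjugating $k_s=ut$ to $t$ was constructed from the single datum that $u\in U^{ma+}$ together with the condition $\alpha(t)\neq1$ for every $\alpha\in\Delta$, with no use of the hypothesis that $k_s$ came from $K$. Hence the same construction, applied to the $u\in U^{ma+}$ of the present lemma, yields $w\in U^{ma+}$ with $wutw^{-1}=t$, and we set $v:=w^{-1}\in U^{ma+}$ to obtain $vtv^{-1}=ut$, i.e.\ $vtv^{-1}t^{-1}=u$.

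For completeness let me recall the inductive mechanism. Setting $u_1=u$, one builds elements $w_n\in U^{ma+}_n$ and $u_{n+1}\in U^{ma+}_{n+1}$ satisfying $w_nu_ntw_n^{-1}=u_{n+1}t$. The class of $w_nu_ntw_n^{-1}t^{-1}$ in the abelian quotient $U^{ma+}_n/U^{ma+}_{n+1}\cong\bigoplus_{\deg\alpha=n}\mathfrak{g}_{k\alpha}$ is $\sum_\alpha\bigl(U_\alpha+(1-\alpha(t))W_\alpha\bigr)$, where $\sum U_\alpha$ and $\sum W_\alpha$ are the classes of $u_n$ and $w_n$; since $\alpha(t)\neq1$, choosing $W_\alpha:=-U_\alpha/(1-\alpha(t))$ forces $u_{n+1}\in U^{ma+}_{n+1}$. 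The infinite product $w_n\cdots w_2w_1$ converges in the complete group $U^{ma+}$ (filtration \ref{3.4}) to an element $w$ conjugating $ut$ to $t$.

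For the second assertion, Lemma~\ref{6.17} already gives $t\in K$, hence $t^{-1}\in K$. Since $U^{ma+}\subset G^{pma}_{(1)}$ and $K$ is normalised by $G^{pma}_{(1)}$, the conjugate $vtv^{-1}$ lies in $K$, whence $u=vtv^{-1}\cdot t^{-1}\in K$. As $u\in U^{ma+}$ was arbitrary, $U^{ma+}\subset K$.

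There is no real obstacle here: the analytic content (invertibility of $1-\alpha(t)$ layer by layer, convergence of the product in $U^{ma+}$) is the same as in \ref{6.17}, and the deduction $u\in K$ is a one-line consequence of the normality of $K$ under $G^{pma}_{(1)}$ together with $t\in K$.
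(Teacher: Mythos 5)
Your proof is correct and takes essentially the same route as the paper: the paper's own proof of \ref{6.18} simply reruns the same layer-by-layer recursion (solving $X_\alpha+(1-\alpha(t))W_\alpha=0$ in each quotient $U^{ma+}_n/U^{ma+}_{n+1}$ and using completeness of $U^{ma+}$) to build $v$ with $vtv^{-1}=ut$ directly, whereas you recycle the conjugator already constructed in the proof of \ref{6.17} and invert it --- a purely cosmetic difference. Your final deduction $u=vtv^{-1}t^{-1}\in K$ from $t\in K$ and the normalisation of $K$ by $G^{pma}_{(1)}\supset U^{ma+}$ is exactly the intended one.
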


\begin{proof} Montrons qu'il existe deux suites $u_n,v_n$ dans $U^{ma+}$ telles que:

\qquad (1) $v_n\in U^{ma+}_n$, $u^{-1}u_n\in U^{ma+}_{n+1}$, pour $n\geq{}0$,

\qquad (2) $v_n.\cdots.v_1.t.v_1^{-1}.\cdots.v_n^{-1}=u_{n}t$, pour $n\geq{}1$.

Si c'est le cas, la suite des produits $v_n.v_{n-1}.\cdots.v_2.v_1$ converge vers un \'el\'ement $v\in U^{ma+}$, $u_n$ tend vers $u$ et $vtv^{-1}=ut$.

Posons $u_0=u,v_0=1$ et supposons $u_0,\cdots,u_{r-1},v_0,\cdots,v_{r-1}$ d\'ej\`a construits. La classe de $u^{-1}u_{r-1}$ dans $U^{ma+}_r/U^{ma+}_{r+1}$ correspond \`a un \'el\'ement $\sum_\qa\,X_\qa$ de la somme des $\g g_{k\qa}$ pour $deg(\qa)=r$. On d\'efinit $v_r$ comme un \'el\'ement de $U^{ma+}_r$ dont la classe modulo $U^{ma+}_{r+1}$ correspond \`a $\sum_\qa\,{\frac{-1}{1-\qa(t)}}.X_\qa$.
Alors $u^{-1}.v_r.\cdots.v_1.t.v_1^{-1}.\cdots.v_r^{-1}.t^{-1}=u^{-1}.v_r.u_{r-1}.t.v_r^{-1}.t^{-1}$ a la m\^eme image dans $U^{ma+}_r/U^{ma+}_{r+1}$ que $v_r.u^{-1}.u_{r-1}.t.v_r^{-1}.t^{-1}$ (\ref{3.3}.f) qui correspond \`a $\sum_\qa\,({\frac{-1}{1-\qa(t)}}+1+{\frac{\qa(t)}{1-\qa(t)}}).X_\qa=0$. Donc, si on pose $u_r=v_r.\cdots.v_1.t.v_1^{-1}.\cdots.v_r^{-1}.t^{-1}$, on a bien $u^{-1}u_r\in U^{ma+}_{r+1}$.
\end{proof}

\begin{theo}\label{6.19} On suppose $G^{pma}_{(1)}$ topologiquement parfait, la matrice de Kac-Moody $A$ ind\'ecomposable non de type affine et le corps $k$ de caract\'eristique $0$ ou de caract\'eristique $p$ mais non alg\'ebrique sur $\F_p$. Alors pour tout sous-groupe $K$ de $G^{pma}$ normalis\'e par $G^{pma}_{(1)}$, soit $K$ est contenu dans $Z'(G^{pma})$, soit il contient $G^{pma}_{(1)}$. En particulier $G^{pma}_{(1)}/(Z'(G^{pma})\cap G^{pma}_{(1)})$ est un groupe simple.
\end{theo}

\begin{proof} Si $K\not\subset Z'(G^{pma})$, les hypoth\`eses des lemmes \ref{6.17} et \ref{6.18} sont v\'erifi\'ees (gr\^ace au lemme \ref{6.15}). Donc $U^{ma+}\subset K$. Mais $G^{pma}_{(1)}$ contient les $\widetilde s_\qa$ (pour $\qa\in\QF$) et normalise $K$, donc $K$ contient aussi les $U_\qa$ pour $\qa\in\QF$, c'est \`a dire $K\supset G^{pma}_{(1)}$.
\end{proof}

\begin{remas}\label{6.20} 1) On a vu que $G^{pma}_{(1)}$ est souvent topologiquement parfait (\ref{6.14}) et que $Z'(G^{pma})$ est assez souvent \'egal aux centres $Z(G)$ et $Z(G^{pma})$ (\ref{6.9}.1 et \ref{6.8}).

2) La simplicit\'e de $G^{pma}_{(1)}/(Z'(G^{pma})\cap G^{pma}_{(1)})$ en caract\'eristique $0$ est le r\'esultat essentiel de R. Moody dans \cite{My-82}. Il consid\`ere en fait un groupe d'automorphismes du compl\'et\'e de l'alg\`ebre de Kac-Moody simple associ\'ee \`a une matrice de Kac-Moody $A$ (ind\'ecomposable non de type affine). Il n'y a donc pas pour lui d'hypoth\`ese de GK-simplicit\'e de $\g g_k$ (et $Z'(G^{pma})=Z(G)=Z(G^{pma})=\{1\}$ pour son choix de groupe).

3) Pour un corps fini (de cardinal $\geq{}4$) et une matrice $A$ $2-$sph\'erique ind\'ecomposable, Carbone, Ershov et Ritter montrent la simplicit\'e de $G^{rr}_{(1)}$ \cite[th. 1.1]{CER-08}.
Ce groupe est l'adh\'erence de l'image de $G_{(1)}$ dans Aut$(\shi^v_+)$, c'est \`a dire, par compacit\'e, l'image de l'adh\'erence $\overline{G_{(1)}}$ de $G_{(1)}$ dans $G^{pma}$. Donc $G^{rr}_{(1)}=\overline{G_{(1)}}/(Z'(G^{pma})\cap \overline{G_{(1)}})$.
En grande caract\'eristique, \ref{6.11} montre que $\overline{G_{(1)}}=G^{pma}_{(1)}$; on retrouve alors un \'enonc\'e analogue \`a \ref{6.19}.

Si $k$ est fini mais assez grand, on sait aussi que $G_{(1)}/(Z(G)\cap G_{(1)})$ est simple \cite[th. 20]{CR-09}. On en d\'eduit facilement le m\^eme r\'esultat pour $k$ extension alg\'ebrique d'un corps fini.

4) Si $A$ est ind\'ecomposable de type affine non tordu, $G^{pma}_{(1)}/(Z'(G^{pma})\cap G^{pma}_{(1)})$ est le groupe des points sur $k(\!(t)\!)$ d'un groupe alg\'ebrique simple d\'eploy\'e, il est donc simple. Le m\^eme r\'esultat est sans doute encore vrai si on enl\`eve "non tordu" et "d\'eploy\'e".
\end{remas}

\bigskip

\medskip

Institut \'Elie Cartan, Unit\'e Mixte de Recherche 7502, Universit\'e de Lorraine, CNRS

Boulevard des aiguillettes, BP 70239, 54506 Vand\oe uvre l\`es Nancy Cedex (France)

Guy.Rousseau@iecn.u-nancy.fr

\end{document}